\theoremstyle{definition}
\newtheorem{Theorem}{Theorem}[section]
\newtheorem{Proposition}[Theorem]{Proposition}
\newtheorem{Lemma}[Theorem]{Lemma}
\newtheorem{Corollary}[Theorem]{Corollary}
\newtheorem*{Claim*}{Claim}
\newenvironment{proofC}{
    \begin{proof}[Proof of Claim.]
    \let\oldqed=\qedsymbol
    \renewcommand{\qedsymbol}{\oldqed$_{\mathrm{Claim}}$}
}{
    \end{proof}
}
\theoremstyle{definition}
\newtheorem{Definition}[Theorem]{Definition}
\newtheorem{Fact}[Theorem]{Fact}
\newtheorem{Example}[Theorem]{Example}
\newtheorem{Remark}[Theorem]{Remark}
\newtheorem{Observation}[Theorem]{Observation}
\newtheorem{Question}{Question}
\newtheorem{Theorem1}{Theorem}
\def\forces{\vdash}
\def\strok{\restriction}
\def\Ind#1#2{#1\setbox0=\hbox{$#1x$}\kern\wd0\hbox to 0pt{\hss$#1\mid$\hss}
\lower.9\ht0\hbox to 0pt{\hss$#1\smile$\hss}\kern\wd0}
\def\ind{\mathop{\mathpalette\Ind{}}}
\def\notind#1#2{#1\setbox0=\hbox{$#1x$}\kern\wd0
\hbox to 0pt{\mathchardef\nn=12854\hss$#1\nn$\kern1.4\wd0\hss}
\hbox to 0pt{\hss$#1\mid$\hss}\lower.9\ht0 \hbox to 0pt{\hss$#1\smile$\hss}\kern\wd0}
\def\dep{\mathop{\mathpalette\notind{}}}
\DeclareMathOperator{\wor}{\perp\hspace*{-0.4em}^{\mathit w}}
\DeclareMathOperator{\nwor}{\not\perp\hspace*{-0.4em}^{\mathit w}}
\DeclareMathOperator{\fwor}{\perp\hspace*{-0.4em}^{\mathit f}}
\DeclareMathOperator{\nfor}{\not\perp\hspace*{-0.4em}^{\mathit f}}
\DeclareMathOperator{\Mon}{\mathfrak C}
\DeclareMathOperator{\tp}{tp}
\DeclareMathOperator{\itp}{itp}
\DeclareMathOperator{\dcl}{dcl}
\DeclareMathOperator{\conv}{conv}
\DeclareMathOperator{\Th}{Th}
\DeclareMathOperator{\Aut}{Aut}
\DeclareMathOperator{\Lev}{Lev}
\DeclareMathOperator{\dom}{dom}
\DeclareMathOperator{\supin}{in}
\DeclareMathOperator{\fin}{fin}
\def\mm{\mathbf{m}}
\title{Countable models of weakly quasi-o-minimal theories I}  
\author[S.\ Moconja]{Slavko Moconja\ \orcidlink{0000-0003-4095-8830}}
\address[S.\ Moconja]{University of Belgrade, Faculty of mathematics, Belgrade, Serbia}
\email{slavko@matf.bg.ac.rs}
\author[P.\ Tanovi\'c]{Predrag Tanovi\'c\ \orcidlink{0000-0003-0307-7508}}
\address[P.\ Tanovi\'c]{Mathematical Institute of the Serbian Academy of Sciences and Arts, Belgrade, Serbia}
\email{tane@mi.sanu.ac.rs}
\thanks{The authors are supported by the Science Fund of the Republic of Serbia,
grant 7750027--SMART}
\begin{document} 

\begin{abstract} 
We introduce the notions of triviality and order triviality for global invariant types in an arbitrary first-order theory and show that they behave well in the NIP context. We show that these two notions agree for invariant global extensions of a weakly o-minimal type, in which case we say that the type is trivial. In the o-minimal case, we prove that every definable complete 1-type over a model is trivial. We prove that the triviality has several favorable properties; in particular, it is preserved in nonforking extensions of a weakly o-minimal type and under weak nonorthogonality of weakly o-minimal types. We introduce the notion of a shift in a linearly ordered structure that generalizes the successor function in infinite discrete orders. Then we apply the techniques developed to prove that every weakly quasi-o-minimal theory that admits a definable shift has $2^{\aleph_0}$ countable models.
\end{abstract}

 \maketitle
 
\section{Introduction}

In this paper, we continue the work on the classification theory for countable models of weakly quasi-o-minimal theories; recall that a complete first-order theory is weakly quasi-o-minimal if for some (equivalently, all by \cite[Theorem 1(ii)]{MTwmon}) $0$-definable linear order $<$ every definable\footnote{Throughout the paper, definable means definable with parameters.}
subset of $\Mon$ is a Boolean combination of unary $0$-definable sets and convex sets.
By classification, we mean, first, finding the relevant properties of $T$ that imply $I(T,\aleph_0)=2^{\aleph_0}$, and then, assuming that all these properties fail for $T$, finding a reasonable system of invariants for the isomorphism types of countable models of $T$. By reasonable invariants, objects that can be easily counted are usually meant, so that Vaught's conjecture for $T$ can be confirmed.
In \cite{MT}, we completed the classification for binary weakly quasi-o-minimal $T$; recall that $T$ is binary if every formula is equivalent to a Boolean combination of formulae in two free variables: Theorem 1 of
\cite{MT} states that each of the following five conditions implies $I(T,\aleph_0)=2^{\aleph_0}$:
\begin{enumerate}[label=(C\arabic*),left=0em,labelsep=.7em]
    \item\label{C1} $T$ is not small;
    \item\label{C2} There is a non-convex type $p\in S_1(T)$;
    \item\label{C3} There is a non-simple type $p\in S_1(T)$;
    \item\label{C4} There is a nonisolated forking extension of some $p\in S_1(T)$  over a 1-element domain; 
    \item\label{C5} There are infinitely many weak non-orthogonality classes of nonisolated types in $S_1(T)$.
\end{enumerate}
Then, in \cite[Theorem 2]{MT}, assuming that none of \ref{C1}-\ref{C4} holds, the invariants of countable models were described as certain sequences of dense order-types. In \cite[Theorem 8.10]{MT}, assuming in addition that \ref{C5} doesn't hold, the exact value of $I(T,\aleph_0)$ was calculated.       

Outside of the binary context, the above list is inadequate. In Section \ref{Section_Examples} we give examples of weakly o-minimal theories with three countable models that satisfy conditions \ref{C3} and \ref{C4}. As for the other conditions, \ref{C1} is evidently valid for general $T$, while in the article that continues this, \cite{MTwqom2}, we will prove the validity of \ref{C2} and conjecture a strengthening of \ref{C5}.   

The principal theorem of this paper introduces a novel significant condition.

\begin{Theorem1}\label{Theorem1_shift}
If a weakly quasi-o-minimal theory has a definable shift on $(\Mon,<)$, then $I(T,\aleph_0)=2^{\aleph_0}$.
\end{Theorem1}
The nonexistence of shifts, together with the failure of \ref{C1} and \ref{C2}, will suffice to confirm Vaught's and Martin's conjecture for a wide subclass of weakly quasi-o-minimal theories that includes all quasi-o-minimal and all almost $\aleph_0$-categorical theories (\cite{MTwqom2}); however, for the whole class, it is unlikely that the three conditions suffice.  

The motivation for Theorem \ref{Theorem1_shift} we found in the work of Alibek, Baizhanov, and Zambarnaya \cite{Alibek}, where they introduce the notion of quasi-successors as a generalization of the successor function in infinite discrete orders. Our notion of shifts, defined in Section \ref{Section_shifts}, is a modification of this notion.     
Since it is known that the existence of an infinite definable discrete order in a model of a countable complete first-order theory $T$ implies $I(T,\aleph_0)=2^{\aleph_0}$, see \cite{Tane}, it is interesting to know whether Theorem \ref{Theorem1_shift} holds outside the weakly quasi-o-minimal context, even in the following special case.     

\begin{Question}
If $<$ is a definable linear order on $\Mon$ and $f:\Mon\to\Mon$ is a definable increasing function such that $x<f(x)$ holds for all $x\in \Mon$, must $T$ have $2^{\aleph_0}$ countable models? 
\end{Question}

Throughout the paper, first-order model theory techniques will be used exclusively. We will rely on results from our recent paper \cite{MTwom}, in which the notion of weak o-minimality for complete types of an arbitrary first-order theory is introduced and several favorable model-theoretic properties of weakly o-minimal types are proved; these are reviewed in Section \ref{Section_preliminaries}. 
The key new notion in this work is that of trivial weakly o-minimal types. They play a central role in the proof of Theorem \ref{Theorem1_shift}, making the proof not generalize outside the weakly quasi-o-minimal context. We believe also that trivial types will play a decisive role in establishing the proof of Vaught's conjecture for weakly quasi-o-minimal theories.   

\smallskip 
In Section \ref{Section_preliminaries}, we give a thorough overview of the results from \cite{MTwom} related to the weakly o-minimal types that will be used. In Section \ref{Section_IT convex}, we prove some properties of convex weakly o-minimal types ($p\in S(A)$ is convex if its locus is a convex subset of some $A$-definable linear order $(D,<)$). 

In Section \ref{Section_trivial}, motivated by the notion of triviality for types in stable theories that was introduced and studied by Baldwin and Harrington in \cite{BH}, 
we introduce the notions of triviality and order-triviality for invariant types in an arbitrary theory. The idea behind the triviality of the type in the stable case is that every pairwise independent set of realizations is independent (as a set). Instead of sets of realizations, we consider sequences of realizations of $\mathfrak p_{\restriction A}$, where $\mathfrak p$ is an $A$-invariant global type, and consider them independent if they are Morley in $\mathfrak p$ over $A$; thus $\mathfrak p$ is trivial over $A$ if $(a_i,a_j)\models \mathfrak p^2_{\restriction A}$ (for all $i<j<\omega$) implies that $(a_i\mid i\in\omega)$ is Morley in $\mathfrak p$ over $A$.
For order-triviality, we require that $(a_i,a_{i+1})\models \mathfrak p^2_{\restriction A}$ (for all $i\in\omega$) implies that $(a_i\mid i\in\omega)$ is Morley; this cannot occur in the stable case. In the NIP case, we show that the triviality of $\mathfrak p$ over $A$ transfers to any $B\supseteq A$. We also prove some nice properties of weak orthogonality ($\wor$) for trivial and order-trivial types. 

In Section \ref{Section_trivial_wom} we introduce and study triviality in the context of weakly o-minimal types. 
If $\mathfrak p$ is $A$-invariant and $p=\mathfrak p_{\restriction A}$ is weakly o-minimal, we show that the triviality and order-triviality of $\mathfrak p$ agree; we also prove that the triviality of one $A$-invariant extension of $p$ implies triviality of the other; in that case, we say that $p$ is trivial. Surprisingly, trivial types are common in o-minimal theories; we show that if $M$ is an o-minimal structure, then all definable types in $S_1(M)$ are trivial. 
We prove in Theorem \ref{Theorem_nwor preserves triviality}: the triviality of a weakly o-minimal type $p$ transfers to its nonforking extensions, the triviality is preserved under $\nwor$ of weakly o-minimal types, and that $\wor$ of trivial types transfers to their nonforking extensions. We also prove that every trivial type over a finite domain in a theory with few countable models is both convex and simple. These results are used in Section \ref{Section_shifts}, in which we introduce shifts and prove Theorem \ref{Theorem1_shift}. Section \ref{Section_Examples} contains examples.

\section{Preliminaries}\label{Section_preliminaries} 

Throughout the paper, we use standard model-theoretical terminology and notation.  We work in $\Mon$, a large, saturated (monster) model of a complete, first-order (possibly multi-sorted) theory $T$ in a language $L$. 
The singletons and tuples of elements from $\Mon$ are denoted by $a,b,c,\dots$. The letters $A,B,A', B',\dots$ are reserved for small subsets (of cardinality $<|\Mon|$) of the monster, while $C,D,C',D',\dots$ are used to denote arbitrary sets of tuples. By a formula $\phi(x)$, we mean one with parameters. For a formula $\phi(x)$, we denote $\phi(C)= \{c\in C\mid \models \phi(c)\}$,  and similarly for (partial) types. 
A set $D$ is definable ($A$-definable) if $D=\phi(\Mon)$ for some formula $\phi(x)$ ($\phi(x)\in L_A$); similarly for type-definable sets. $S_x(C)$ denotes the set of all complete types over $C$ in variables $x$; if $T$ is one-sorted (e.g.\ o-minimal), then 
$S_n(C)$ denotes the set of all complete $n$-types over $C$ and $S(C)=\bigcup_{n\in\mathbb N}S_n(C)$. 
The following notation will be used often:
\begin{enumerate}[label=$\bullet$,left=1em,labelsep=.7em]
    \item $S_{\pi}(B)=\{p(x)\in S_x(B)\mid \pi(x)\subseteq p(x)\}$, where $\pi(x)$ is a partial type over $A$ and $B\supseteq A$.
\end{enumerate}
If $A=B$, then we also write $S_{\pi}$ instead of $S_{\pi}(A)$.

Types from $S(\Mon)$ are called global types.
A global type $\mathfrak p(x)$ is {\em $A$-invariant} if $(\phi(x,a_1)\leftrightarrow \phi(x,a_2))\in\mathfrak p$ for all $L_A$-formulae $\phi(x,y)$ and all tuples $a_1,a_2$ with $a_1\equiv a_2\,(A)$; $\mathfrak p$ is {\em invariant} if it is $A$-invariant for some $A$. Let $\mathfrak p$ be $A$-invariant and let $(I,<)$ be a linear order; we say that the sequence of tuples $(a_i\mid i\in I)$ is a {\em Morley sequence} in $\mathfrak p$ over $A$ if $a_i\models \mathfrak p_{\restriction A\,a_{<i}}$ holds for all $i\in I$. Note that we allow Morley sequences to have arbitrary (even finite) order-type. 

Dividing and forking have the usual meaning. Complete types over $A$, $p(x)$ and $q(y)$, are weakly orthogonal, denoted by $p\wor q$, if $p(x)\cup q(y)$ determines a complete type; $p$ is forking orthogonal to $q$, denoted by $p\fwor q$, if $\tp(a/bA)$ does not fork over $A$ for all $a\models p$ and $b\models q$. 

A partial type $p(x)$ over $A$ is {\em NIP} if there is no formula $\varphi(x,y)$, an $A$-indiscernible sequence $(a_n\mid n<\omega)$ in $p$, and a tuple $b$, such that $\models\varphi(a_n,b)$ iff $n$ is even. A partial type $p(x)$ over $A$ is {\em dp-minimal} if given any realization $a$ of $p$ and any two mutually $A$-indiscernible sequences, at least one of them is indiscernible over $Aa$; dp-minimal types are NIP.

The notation related to linear orders is mainly standard. Let $(D,<)$ be a linear order; $<$ is defined on the power set of $D$ by $X<Y$ iff $x<y$ for all $x\in X$ and $y\in Y$.
$X\subseteq D$ is convex if $a<c<b$ and $a,b\in X$ imply $c\in X$; 
$a\in D$ is an upper (lower) bound of $X$ if $X<a$ ($a<X$) is valid; $X$ is an initial part of $D$ if $a\in X$ and $b<a$ imply $b\in X$; similarly, the final parts are defined. We emphasize the following notation:
\begin{enumerate}[label=$\bullet$,left=1em,labelsep=.7em]
\item $\supin_{(D,<)}(X):=\{d\in D\mid (\exists x\in X)\ d\leqslant x\}$ is the smallest initial part of $D$ that contains $X$. 
\end{enumerate}
When the meaning of $<$ or $D$ is clear from the context, we omit it in the subscript and write $\supin_D(X)$ or $\supin (X)$. The smallest final part that contains $X$ is denoted by $\fin X$. Note that two convex subsets, $X_1$ and $X_2$, are equal if and only if $\supin (X_1)=\supin (X_2)$ and $\fin (X_1)=\fin (X_2)$.

\subsection{Relative definability} 
In this subsection, we recall some facts and conventions from \cite{MTwom}. 
Let $p(x)$ be a partial type over $A$. A subset $X\subseteq  p(\Mon)$ is {\em relatively $B$-definable within $p(\Mon)$} if $X=\phi(\Mon)\cap p(\Mon)$ for some $L_B$-formula $\phi(x)$ (usually we will have $A\subseteq B$); here, $\phi(x)$ is called a {\em relative definition} of $X$ within $p(\Mon)$, and we will also say that $X$ is {\em relatively defined by $\phi(x)$ within $p(\Mon)$}.

The family of all relatively $B$-definable subsets of a type-definable set is closed for finite Boolean combinations. Also,
if $X_i$ is relatively $B$-definable within $P_i$ ($1\leqslant i\leqslant n$), then $\prod_{1\leqslant i\leqslant n}X_i$ is relatively $B$-definable within $\prod_{1\leqslant i\leqslant n}P_i$.
In particular, the relative definability of  (finitary) relations within $P$ is well defined. We will say that a structure $(P,R_i)_{i\in I}$ is relatively definable if each relation $R_i$ is such within $P$. For example, if a relation $R\subseteq P^2$ is relatively defined by a formula $\phi(x,y)$ and if $(P,R)$ is a linear order, then we say that $\phi(x,y)$ relatively defines a linear order on $P$. Also, if $P$ and $Q$  are type-definable sets, then the relative definability of (graphs of) functions $f:P\to Q$ is well defined.

The universal properties of relatively definable structures are expressed by certain $L_{\infty,\omega}$-sentences, which we  call {\em $\tp$-universal sentences} and informally denote by $(\forall x_1\models p_1)\dots(\forall x_n\models p_n)\ \psi(x_1,\dots,x_n)$, where $p_1(x_1),\dots,p_n(x_n)$ are partial types and $\psi(x_1,\dots,x_n)$ is a first-order formula; formally, $(\forall x\models p)\ \phi(x)$ is $(\forall x)(\bigwedge_{\theta\in p}\theta(x)\rightarrow \phi(x))$. 
The properties of relatively definable relations (and their defining formulae) expressed by these sentences are called {\em $\tp$-universal properties}. For example, ``the formula $x\leq y$ relatively defines a preorder on $p(\Mon)$'' and ``$\leq$ is a relatively definable preorder on $p(\Mon)$'' are $\tp$-universal properties.
The following is a version of the compactness that will be applied when dealing with $\tp$-universal properties. 

\begin{Fact}\label{Fact_L_P_sentence}
Suppose that $p_1(x_1),\dots,p_n(x_n)$ are partial types that are closed for finite conjunctions and let $\phi(x_1,\dots,x_n)$ be an $L_{\Mon}$-formula such that $\Mon\models (\forall x_1\models p_1)\dots(\forall x_n\models p_n)\ \phi(x_1,\dots,x_n)$. Then there are   $\theta_i(x_i)\in p_i$ ($1\leqslant i\leqslant n$) such that: 
$$\Mon\models (\forall x_1\dots  x_n)\left(\bigwedge_{1\leqslant i\leqslant n}\theta_i'(x_i)\rightarrow \phi(x_1,\dots,x_n)\right)$$
for all formulae $\theta_i'(x_i)$ such that $\theta_i'(\Mon)\subseteq \theta_i(\Mon)$ ($1\leqslant i\leqslant n$).  
\end{Fact}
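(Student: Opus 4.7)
The plan is to view the hypothesis as an inconsistency statement and apply first-order compactness. Unfolding the $\tp$-universal notation, the assumption says that no tuple $(a_1,\ldots,a_n)$ with $a_i\models p_i$ satisfies $\neg\phi(a_1,\ldots,a_n)$ in $\Mon$. Equivalently, the partial type
\[
\Gamma(x_1,\ldots,x_n):=p_1(x_1)\cup\cdots\cup p_n(x_n)\cup\{\neg\phi(x_1,\ldots,x_n)\}
\]
has no realization in $\Mon$. Since $\Gamma$ uses only small parameters (those from each $p_i$ together with those of $\phi$) and $\Mon$ is a saturated monster, finite satisfiability in $\Mon$ would force $\Gamma$ to be realized; hence $\Gamma$ must already be finitely inconsistent.

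From this I would extract finite subsets $F_i\subseteq p_i$ such that $\bigcup_{i}F_i\cup\{\neg\phi(x_1,\ldots,x_n)\}$ is inconsistent, i.e.,
\[
\Mon\models(\forall x_1\cdots x_n)\Bigl(\bigwedge_{i}\bigwedge F_i(x_i)\to\phi(x_1,\ldots,x_n)\Bigr).
\]
Because each $p_i$ is closed under finite conjunctions by hypothesis, I can replace $\bigwedge F_i$ by a single formula $\theta_i(x_i)\in p_i$, which yields the main implication. The trailing clause about arbitrary $\theta_i'$ with $\theta_i'(\Mon)\subseteq\theta_i(\Mon)$ is then immediate: each such $\theta_i'$ implies $\theta_i$ in $\Mon$, so $\bigwedge_i\theta_i'(x_i)$ still implies $\phi(x_1,\ldots,x_n)$.

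There is no real obstacle here; the statement is essentially a linguistic reformulation of ordinary compactness, packaged for $\tp$-universal sentences. The only points worth flagging are the use of saturation of $\Mon$ to pass from ``no realizer in $\Mon$'' to ``no realizer at all'' (so that standard compactness applies to $\Gamma$), and the use of the closure-under-conjunction hypothesis, which is exactly what allows the finite conjunction $\bigwedge F_i$ to be collapsed into a single member $\theta_i$ of $p_i$ rather than a finite set.
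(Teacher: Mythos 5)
Your proof is correct and is exactly the standard compactness/saturation argument that the paper implicitly relies on (it states this as a Fact without proof). Both key points you flag — saturation of $\Mon$ to convert non-realizability into inconsistency, and closure under conjunction to collapse $\bigwedge F_i$ into a single $\theta_i\in p_i$ — are precisely the content of the statement, so there is nothing to add.
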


\noindent {\bf Convention.} Whenever $<$ is a relatively definable order on $p(\Mon)$, we assume the formula $x<y$ is its relative definition; similarly, $E(x,y)$ is a relative definition of an relatively definable equivalence relation $E$.

\begin{enumerate}[label=$\bullet$,left=1em,labelsep=.7em]
    \item Let $<$ be a relatively definable linear order on $p(\Mon)$. This is a tp-universal property, so by compactness, there is a formula $\theta(x)\in p$ such that the formula $x<y$ relatively defines a linear order on $\theta(\Mon)$, that is, $(\theta(\Mon),<)$ is a linear order. Orders of this form are called {\it definable extensions} of $(p(\Mon),<)$.
    
    \item Consider the structure $\mathcal P=(p(\Mon),<,E)$ where $<$ is a relatively definable linear order and $E$ a relatively definable convex equivalence relation on $p(\Mon)$. It is not hard to see that a finite conjunction of tp-universal properties is also tp-universal, so by compactness, there is a formula $\theta(x)\in p$ such that $(\theta(\Mon),<,E)$ is a definable extension of $\mathcal P$: $x<y$ defines a linear order and $E(x,y)$ a convex equivalence relation on $\theta(\Mon)$. 
\end{enumerate}

\subsection{Weakly o-minimal types}

In this subsection, we survey basic properties of weakly o-minimal types. A complete type $p\in S(A)$ is {\it weakly o-minimal} if there is a relatively $A$-definable linear order $<$ on $p(\Mon)$ such that each relatively definable subset of $(p(\Mon),<)$ consists of a finite number of convex components; in that case, $(p,<)$ is called a {\it weakly o-minimal pair over $A$}. The weak o-minimality of a type does not depend on the particular choice of the order: By \cite[Corollary 4.3]{MTwom}, if $(p,<)$ is a weakly o-minimal pair over $A$, then so is $(p,<')$ for each relatively $A$-definable linear order $<'$ on $p(\Mon)$. 

We introduced so-types\footnote{Here, {\em so} stands for {\em stationarily ordered}.} and so-pairs in \cite{MT}. It is easy to see that weakly o-minimal types (pairs) are so-types (so-pairs), that is, if $\mathbf p=(p,<)$ is a weakly o-minimal pair over $A$, then for each relatively definable subset $D\subseteq p(\Mon)$ one of the sets $D$ and $p(\Mon)\smallsetminus D$ is right-eventual in $(p(\Mon),<)$ (contains a final part of $(p(\Mon),<)$) and one of them is left-eventual. In particular, we can define the left ($\mathbf p_l$) and the right ($\mathbf p_r$) globalization of $\mathbf p$:
\begin{enumerate}[label=$\bullet$,left=1em,labelsep=.7em] 
\item $\mathbf p_l(x):=\{\phi(x)\in L_{\Mon}\mid \phi(\Mon)\cap p(\Mon) \mbox{ is left-eventual in } (p(\Mon),<)\}$ \ and

\item $\mathbf p_r(x):=\{\phi(x)\in L_{\Mon}\mid \phi(\Mon)\cap p(\Mon) \mbox{ is right-eventual in } (p(\Mon),<)\}$.
\end{enumerate}
It is easy to see that $\mathbf p_l$ and $\mathbf p_r$ are global $A$-invariant extensions of $p$; by \cite[Fact 2.13]{MTwom}, they are the only such extensions. Moreover, by \cite[Corollary 5.2]{MTwom}, they are the only global nonforking extensions of $p$. The forking extensions of $p$ have a simple description: By \cite[Corollary 5.2]{MTwom}, for any formula $\phi(x)$ consistent with $p(x)$ we have: $p(x)\cup \{\phi(x)\}$ forks over $A$ if and only if $p(x)\cup \{\phi(x)\}$ divides over $A$ if and only if
$\phi(x)$ relatively defines a bounded (from both sides) subset in $(p(\Mon),<)$. Such formulas are called {\it relatively $p$-bounded}. Note that the first two conditions do not refer to any specific order, so these formulae relatively define subsets that are bounded with respect to any relatively $A$-definable linear order.    

\begin{Fact}\label{Fact_basic_S_p_wom}
Let $\mathbf p= (p,<_p)$ be a weakly o-minimal pair over $A$ and let $B\supseteq A$. 
Consider the set $S_p(B)=\{q\in S_x(B)\mid p(x)\subseteq q(x)\}$.
\begin{enumerate}[(a),left=0em,labelsep=.7em]
\item  $q(\Mon)$ is a convex subset of $p(\Mon)$ and $(q,<_p)$ is a weakly o-minimal pair over $B$ for each $q\in S_p(B)$.
\item The set $S_p(B)$ is linearly ordered by:  $q<_pq'$ iff $q(\Mon)<_pq'(\Mon)$;  $(\mathbf p_{l})_{\restriction B}=\min S_p(B)$ and  $(\mathbf p_{r})_{\restriction B}=\max S_p(B)$.
\item For all $q\in S_p(B)$: $q$ forks over $A$ if and only if the locus $q(\Mon)$ is bounded (from both sides) in $(p(\Mon),<_p)$ if and only if $q$ contains a relatively $p$-bounded formula.
\item $(\mathbf p_{l})_{\restriction B}$ and  $(\mathbf p_{r})_{\restriction B}$ are the only nonforking extensions of $p$ in $S_p(B)$.
\end{enumerate}
\end{Fact}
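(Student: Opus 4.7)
The plan is to establish parts (a)--(d) sequentially, reducing each to facts from \cite{MTwom} recalled earlier. The heart of the matter is the convexity of $q(\Mon)$ in part (a). Given $a, c \models q$ and $b \in p(\Mon)$ with $a <_p b <_p c$, suppose $b \not\models q$ and pick $\phi(x) \in L_B$ with $\phi \in q$ but $\neg\phi(b)$. The set $X = \phi(\Mon) \cap p(\Mon)$ has finitely many convex components by weak o-minimality of $(p,<_p)$. Any $B$-automorphism of $\Mon$ preserves $p(\Mon)$, the relative definition of $<_p$ (an $L_A$-formula, hence $L_B$), and $X$, so it induces an order-preserving permutation of those finitely many components, which must be the identity. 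Applying this to an automorphism sending $a$ to $c$ (it exists since $a \equiv_B c$) forces $a$ and $c$ into a common component, and then $b$ lies there too by convexity of that component, contradicting $b \notin X$. Weak o-minimality of $(q, <_p)$ is then immediate: a relatively $B$-definable subset of $q(\Mon)$ is obtained by intersecting the convex set $q(\Mon)$ with some relatively $B$-definable subset of $p(\Mon)$, and intersecting a finite union of convex sets with a convex set preserves the finite-convex-components structure.

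For part (b), convexity of loci together with disjointness of distinct complete types makes $<_p$ a linear order on $S_p(B)$. To identify $(\mathbf{p}_l)_{\restriction B}$ as $\min S_p(B)$, I would fix $q \in S_p(B) \setminus \{(\mathbf{p}_l)_{\restriction B}\}$ and $a \models q$. Choosing $\phi \in (\mathbf{p}_l)_{\restriction B}$ with $\neg\phi(a)$ shows that the initial part inside $\phi(\Mon) \cap p(\Mon)$ is nonempty and lies strictly below $a$, so $\{x \in p(\Mon) : x <_p a\}$ is a nonempty initial part of $p(\Mon)$. Hence the $L_{B \cup \{a\}}$-formula $x <_p a$ belongs to $\mathbf{p}_l$, and any $a'' \models (\mathbf{p}_l)_{\restriction B \cup \{a\}}$ satisfies $a'' <_p a$ while also realizing $(\mathbf{p}_l)_{\restriction B}$. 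If some $a' \models (\mathbf{p}_l)_{\restriction B}$ had $a' >_p a$, convexity of $(\mathbf{p}_l)_{\restriction B}(\Mon)$ from (a) would place $a$ in this locus, contradicting $a \models q$. Thus $(\mathbf{p}_l)_{\restriction B}(\Mon) <_p q(\Mon)$; the case of $(\mathbf{p}_r)_{\restriction B}$ is symmetric.

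Part (c) follows by applying \cite[Corollary 4.2]{MTwom} to the formulas in $q$: forking of $q$ over $A$ is equivalent to $q$ containing a relatively $p$-bounded formula, which in turn forces $q(\Mon)$ to be bounded in $(p(\Mon),<_p)$. For the converse direction, if $q(\Mon)$ is bounded between some $c_1, c_2 \in p(\Mon)$, inconsistency of the partial type $p(x) \cup q(x) \cup \{x \leqslant_p c_1 \vee x \geqslant_p c_2\}$ yields via compactness a relatively $p$-bounded formula in $q$. For part (d), a nonforking $q \in S_p(B)$ has convex locus (by (a)) that is unbounded on at least one side (by (c)); a convex subset of $p(\Mon)$ unbounded on one side is an initial part, a final part, or all of $p(\Mon)$, and by (b) this forces $q \in \{(\mathbf{p}_l)_{\restriction B}, (\mathbf{p}_r)_{\restriction B}\}$. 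Both are nonforking, being restrictions of the global nonforking extensions $\mathbf{p}_l$ and $\mathbf{p}_r$ guaranteed by \cite[Corollary 4.2]{MTwom}.

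The main obstacle I anticipate is the min/max identification in (b), since it requires coordinating the convexity machinery from (a) with a subtle use of a parameter $a \notin B$ via the auxiliary formula $x <_p a$, and care must be taken to verify that the resulting initial part is nonempty (so that $x <_p a$ genuinely lies in $\mathbf{p}_l$). The other parts reduce cleanly to the previously established machinery once (a) is in hand.
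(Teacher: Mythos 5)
Your proposal is correct. Note, however, that the paper does not actually prove this Fact: its ``proof'' is a pointer to \cite{MTwom} (Lemma 2.6(ii) for (a), Corollary 2.7 for (b), and Corollary 4.2 for (c) and (d)), so what you have produced is a self-contained derivation of results the authors import wholesale. Your argument for (a) is the standard one: an automorphism over $B$ fixes $p(\Mon)$, $<_p$ and $\phi(\Mon)$, hence fixes each of the finitely many convex components of $\phi(\Mon)\cap p(\Mon)$ setwise, and this pins $a$ and $c$ (and hence the intermediate $b$) into one component. Your treatment of (b) correctly isolates the one genuinely delicate point, namely that the formula $x<_p a$ lies in $\mathbf p_l$; this rests on reading ``left-eventual'' as ``contains a \emph{nonempty} initial part,'' which is forced by the so-pair dichotomy (exactly one of $D$ and $p(\Mon)\smallsetminus D$ has an initial part as its leftmost convex component), so your nonemptiness claim is justified. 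Parts (c) and (d) reduce to the quoted \cite[Corollary 4.2]{MTwom} exactly as the paper intends; your observation that a convex subset of $p(\Mon)$ unbounded on one side must be an initial or final part, combined with the min/max identification of (b), is the right way to get (d). The only cosmetic slip is in (a), where you write ``relatively $B$-definable subset of $q(\Mon)$'' when the definition of a weakly o-minimal pair quantifies over relatively definable sets with arbitrary parameters; your argument applies verbatim to arbitrary parameters, so nothing is lost.
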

\begin{proof}The proofs for all statements can be found in \cite{MTwom}: part (a) corresponds to Lemma 3.6(ii), part (b) to Corollary 3.7, and parts (c) and (d) are derived from Corollary 5.2. 
\end{proof}   

The key notion in studying the forking independence of weakly o-minimal types in \cite{MTwom} was the right (and left) $\mathbf p$-genericity. 
Let $\mathbf p=(p,<)$ be a weakly o-minimal pair over $A$ and let $B\supseteq A$. By Fact \ref{Fact_basic_S_p_wom}(a), the sets $(q(\Mon)\mid q\in S_p(B))$ form a convex partition of $(p(\Mon),<_p)$. The maximal, or the rightmost element of this partition, is $(\mathbf p_{r})_{\restriction B}(\Mon)$; this motivates the definition of the right $\mathbf p$-genericity.

\begin{Definition}Let $\mathbf p=(p,<_p)$ be a weakly o-minimal pair over $A$ and let $B$ be {\it any} small set.
\begin{enumerate}[(a),left=0em,labelsep=.7em]
    \item $a$ is {\em right $\mathbf p$-generic} over $B$, denoted by $B\triangleleft^{\mathbf p} a$, if $a\models (\mathbf p_{r})_{\restriction AB}$; 
    \item $a$ is {\em left $\mathbf p$-generic} over $B$ if $a\models (\mathbf p_{l})_{\restriction AB}$;
    \item $\mathcal D_p(B):=\{a\in p(\Mon)\mid a\dep_A B\}$;
    \item $\mathcal D_p:=\{(x,y)\in p(\Mon)^2\mid x\dep_A y\}$.
\end{enumerate}
\end{Definition}
\begin{Remark}\label{Remark_pgeneric_first}
Let $\mathbf p=(p,<_p)$ be a weakly o-minimal pair over $A$. 
\begin{enumerate}[(a),left=0em,labelsep=.7em]
\item To avoid possible confusion, no special notation for the left $\mathbf p$-genericity was chosen. However, $B\triangleleft^{\mathbf p^*}a$ means that $a$ is left $\mathbf p$-generic over $B$, where $\mathbf p^*=(p,>_p)$ is the reverse of $\mathbf p$.
\item By Fact \ref{Fact_basic_S_p_wom}(d), the only nonforking extensions of $p$ in $S(AB)$ are $(\mathbf p_{l})_{\restriction AB}$ and $(\mathbf p_{r})_{\restriction AB}$. Therefore, $a\models p$ is (left or right) $\mathbf p$-generic over $B$ if and only if $a\ind_A B$; that is:

\noindent\hfill\hfill $a\ind_A B$ \ \ if and only if \ \ $B\triangleleft^{\mathbf p}a$ or $B\triangleleft^{\mathbf p^*}a$.\hfill($\triangleleft^{\mathbf p}$-comparability) 

\item By Fact \ref{Fact_basic_S_p_wom}(b) we have $(\mathbf p_{l})_{\restriction AB}=\min S_p(B)$, so the locus $(\mathbf p_{l})_{\restriction AB}(\Mon)$, which is the set of all left $\mathbf p$-generic elements over $B$, is an initial part of $p(\Mon)$. Similarly, the set of all right $\mathbf p$-generic elements over $B$, is a final part of $p(\Mon)$.

\item By Fact \ref{Fact_basic_S_p_wom}(c), the set $\mathcal D_p(B)$ consists of all non-$\mathbf p$-generic elements. 
Since the left $\mathbf p$-generic elements form an initial part and the right $\mathbf p$-generic a final part of $(p(\Mon),<_p)$, the set $\mathcal D_p(B)$, if nonempty, is a convex subset of $p(\Mon)$; in that case, by part (c), $a<_p\mathcal D_p(B)<_p b$ holds whenever $a$ is left $\mathbf p$-generic and $b$ right $\mathbf p$-generic over $B$. 
In particular, $\mathcal D_p(a)$ is a convex, bounded subset of $p(\Mon)$ for all $a\models p$.

\item The set $\mathcal D_p(B)$, if nonempty, is relatively $\bigvee$-definable over $AB$ within $p(\Mon)$, by which we mean that $\mathcal D_p(B)=p(\Mon)\cap\bigcup_{\phi\in\Phi}\phi(\Mon)$ holds for some family $\Phi$ of $L_{AB}$-formulas in variable $x$. Here, $\Phi$ can be taken to be the set of all relatively $p$-bounded $L_{AB}$-formulae, since by Fact \ref{Fact_basic_S_p_wom}(c) we have $a\in \mathcal D_p(B)$ if and only if $p(x)\cup \{\phi(x)\}\subseteq \tp(a/AB)$ for some relatively $p$-bounded formula $\phi(x)$. Therefore, $\mathcal D_p(B)$ is a convex relatively $\bigvee$-definable subset of $p(\Mon)$.  

\item $p\wor\tp(B/A)$  if and only if $(\mathbf p_{l})_{\restriction AB}=(\mathbf p_{r})_{\restriction AB}$. 
\end{enumerate}
\end{Remark}

The following theorem gathers key properties of right (left) $\mathbf p$-genericity, viewed as a binary relation on $p(\Mon)$; these are expressed as properties of the structure
$(p(\Mon), \triangleleft^{\mathbf p},<_p,\mathcal D_p)$.

\begin{Theorem}\label{Theorem4} Let $\mathbf p=(p,<_p)$ be a weakly o-minimal pair over $A$. 
\begin{enumerate}[(a),left=0em,labelsep=.7em] 
\item  \ $b\models p$ is right $\mathbf p$-generic over $a\models p$ if and only if $a$ is left $\mathbf p$-generic over $b$ \  ($a\triangleleft^{\mathbf p} b\Leftrightarrow b\triangleleft^{\mathbf p^*}a$). 
\item  $(p(\Mon), \triangleleft^{\mathbf{p}})$ is a strict partial order and $(p(\Mon), <_p)$ its linear extension. 
\item The relations $\mathcal D_{p}$ and the $\triangleleft^{\mathbf p}$-incomparability are the same, $<_p$-convex equivalence relation on $p(\Mon)$.  In particular, for all $a,b\models p$, $\mathcal D_p(a)$ is a convex subset of $(p(\Mon),<_p)$ and:
\begin{center}$a\in\mathcal D_p(b)\Leftrightarrow a\dep_A b\Leftrightarrow \lnot(a\triangleleft^{\mathbf p} b\vee b\triangleleft^{\mathbf p} a)\Leftrightarrow b\dep_A a\Leftrightarrow b\in\mathcal D_p(a)$.
\end{center}
\item The quotient $(p(\Mon) / \mathcal{D}_p, <_p)$ is a dense linear order.
\end{enumerate}   
\end{Theorem}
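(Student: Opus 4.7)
The proof proceeds in order, with (a) being the main technical step.

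\textbf{Part (a).} I reformulate using the blob picture from Remark \ref{Remark_pgeneric_first}(c)--(d) and Fact \ref{Fact_basic_S_p_wom}(b): $a \triangleleft^{\mathbf{p}} b$ is equivalent to $\mathcal{D}_p(a) <_p b$, and $b \triangleleft^{\mathbf{p}^*} a$ to $a <_p \mathcal{D}_p(b)$. The equivalence to be proved is symmetric under $\mathbf{p} \leftrightarrow \mathbf{p}^*$, so one direction suffices. Assuming $\mathcal{D}_p(a) <_p b$, we get $a <_p b$ and $b \ind_A a$; by Remark \ref{Remark_pgeneric_first}(b), if $a \ind_A b$ can be derived, then $a <_p b$ forces $a$ to be left $\mathbf{p}$-generic over $b$. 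Thus the task reduces to the symmetry of $\ind_A$ on realizations of $p$. I would prove this by contradiction: given, via Fact \ref{Fact_basic_S_p_wom}(c), an $L_A$-formula $\phi(x,y)$ with $\phi(a,b)$ and $\phi(\Mon, b) \cap p(\Mon)$ relatively $p$-bounded, extract a Morley sequence $(b_i)_{i<\omega}$ in $\mathbf{p}_r$ over $Aa$ with $b_0 = b$. By $Aa$-indiscernibility, $\phi(a, b_i)$ holds for every $i$, and by Fact \ref{Fact_L_P_sentence} localizing the tp-universal boundedness, each slice $\phi(\Mon, b_i) \cap p(\Mon)$ is relatively $p$-bounded. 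Since each $b_{i+1}$ is right $\mathbf{p}$-generic over $Aab_i$, it lies strictly above the bounded set $\phi(\Mon, b_i) \cap p(\Mon)$; the slices thus ``shift right'' with $b_i$, contradicting that they all contain the fixed point $a$ while $(b_i)$ is $<_p$-cofinal in $p(\Mon)$.

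\textbf{Parts (b) and (d).} For (b), irreflexivity follows from $a \in \mathcal{D}_p(a)$ (via the relatively $p$-bounded formula $x = a$); transitivity is the direct computation that $\mathcal{D}_p(a) <_p b$ and $\mathcal{D}_p(b) <_p c$ with $b \in \mathcal{D}_p(b)$ yield $\mathcal{D}_p(a) <_p c$. Linearity of $<_p$ as an extension follows from $a \triangleleft^{\mathbf{p}} b \Rightarrow a <_p b$. For (d), given $\mathcal{D}_p$-classes $[a] <_p [c]$ (i.e., $a \triangleleft^{\mathbf{p}} c$), take a Morley pair $(b_0, c_0)$ in $\mathbf{p}_r$ over $Aa$; then $c_0 \equiv_{Aa} c$, and applying an $Aa$-automorphism $\sigma$ sending $c_0 \to c$ produces $b = \sigma(b_0)$ with $a \triangleleft^{\mathbf{p}} b \triangleleft^{\mathbf{p}} c$ by $A$-invariance of $\mathbf{p}_r$, yielding $[a] <_p [b] <_p [c]$.

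\textbf{Part (c).} Combining Remark \ref{Remark_pgeneric_first}(b) with (a), $a \ind_A b$ iff $b \triangleleft^{\mathbf{p}} a$ or $a \triangleleft^{\mathbf{p}} b$, a condition manifestly symmetric in $a, b$; thus $\dep_A$ on $p(\Mon)$ coincides with $\triangleleft^{\mathbf{p}}$-incomparability and is symmetric. Reflexivity is from $a \in \mathcal{D}_p(a)$, and the $<_p$-convexity of $\mathcal{D}_p$ as an equivalence relation is inherited from the $<_p$-convexity of each $\mathcal{D}_p(a)$. Transitivity is the subtle point: assuming $a \dep_A b$, $b \dep_A c$, but, for contradiction, $a \not\dep_A c$ with WLOG $a \triangleleft^{\mathbf{p}} c$, one has $\mathcal{D}_p(a) <_p c$ and $a <_p \mathcal{D}_p(c)$, hence $a <_p b <_p c$, with $[a, b] \cap p(\Mon) \subseteq \mathcal{D}_p(a)$ and $[b, c] \cap p(\Mon) \subseteq \mathcal{D}_p(c)$. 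A careful analysis of the positions of $\tp(b/Aac)$ inside $S_p(Aac)$ (using Fact \ref{Fact_basic_S_p_wom}(a)--(b) applied to the weakly o-minimal pair obtained by restriction) together with the $\bigvee$-definability of $\mathcal{D}_p(Aac)$ from Remark \ref{Remark_pgeneric_first}(e) yields the contradiction.

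The main obstacle is the symmetry of $\ind_A$ on realizations of $p$ required by (a); the transitivity in (c) is a secondary subtle point whose full proof requires a more delicate $S_p(Aac)$-analysis. Once these are in hand, parts (b) and (d) follow easily by combinations of the blob picture and standard Morley/automorphism arguments.
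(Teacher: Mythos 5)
First, note that the paper does not actually prove Theorem \ref{Theorem4}: all four parts are imported verbatim from the companion paper (part (a) is \cite[Lemma 4.8(ii)]{MTwom}, parts (b)--(d) are \cite[Theorem 4]{MTwom}), so there is no internal argument to compare yours against. Your reductions are largely the right ones --- in particular you correctly identify that everything in (a) (and hence the symmetry statement in (c)) boils down to the symmetry of $\ind_A$ on $p(\Mon)$, and your derivations of (b) and (d) from the equivalence $a\triangleleft^{\mathbf p}b\Leftrightarrow \mathcal D_p(a)<_p b$ (which is legitimately recoverable from Remark \ref{Remark_pgeneric_first}(b)--(d) without circularity) are fine. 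But the proof of symmetry itself, which is the entire mathematical content of the theorem, does not work as written.

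Concretely: suppose $\models\phi(a,b)$ with $\phi(\Mon,b)\cap p(\Mon)$ relatively $p$-bounded, and let $(b_i)_{i<\omega}$ be Morley in $\mathbf p_r$ over $Aa$ with $b_0=b$. You correctly get $\models\phi(a,b_i)$ for all $i$, each slice $\phi(\Mon,b_i)\cap p(\Mon)$ relatively $p$-bounded (hence contained in $\mathcal D_p(b_i)$), and $\phi(\Mon,b_i)\cap p(\Mon)<_p b_{i+1}$. But this configuration is perfectly consistent: it describes an increasing chain of bounded convex-ish sets, the $i$-th bounded above by $b_{i+1}$, all overlapping at the fixed point $a$. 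The slices do not ``shift right'' --- by your own observation they all contain $a$ --- and each is bounded by \emph{later} terms of the sequence, so no uniform bound is violated. Moreover an $\omega$-indexed (or any small) Morley sequence is not $<_p$-cofinal in $p(\Mon)$, and cofinality would not help anyway. What one actually extracts from this picture is only the asymmetric statement $b_i\dep_A b_{i+1}$ versus $b_{i+1}\ind_A b_i$, which is exactly the phenomenon whose impossibility you are trying to prove; the argument is circular at the decisive moment. A genuine proof of symmetry here needs an additional idea (in \cite{MTwom} it occupies a substantial part of Section 4, using the so-property in \emph{both} variables of $\phi(x,y)$ together with an exchange-type analysis of the bounding elements). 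The same applies to the transitivity of $\dep_A$ in part (c), which you explicitly leave as ``a careful analysis''; that too is part of the imported \cite[Theorem 4]{MTwom} and is not a routine consequence of the facts listed in Section \ref{Section_preliminaries}.
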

\begin{proof}
(a) is \cite[Lemma 5.8(ii)]{MTwom}, (b)-(d) are from \cite[Theorem 4]{MTwom}.    
\end{proof}

\begin{Remark}\label{Remark_trianglep_binary_basicprops}
Let $\mathbf p=(p,<_p)$ be a weakly o-minimal pair over $A$ and let $a,b\models p$. 
\begin{enumerate}[(a),left=0em,labelsep=.7em]
\item Theorem \ref{Theorem4}(c) implies the following version of $\triangleleft^{\mathbf p}$-comparability: $a\ind_A b\Leftrightarrow (a\triangleleft^{\mathbf p} b\vee b\triangleleft^{\mathbf p} a)$.

\item By \cite[Remark 5.14]{MTwom}, each of the following conditions is equivalent to $a\triangleleft^{\mathbf p} b$:\\
$(a,b)\models (\mathbf p_r^2)_{\restriction A}$; \   $(b,a)\models  (\mathbf p_l^2)_{\restriction A}$; \ $a<_pb\land a\ind_A b$; \
  $a<_p \mathcal D_p(b)$; \ $\mathcal D_p(a)<_p b$; \ $\mathcal D_p(a)<_p\mathcal D_p(b)$.
  \end{enumerate}
\end{Remark}

\begin{Definition}
Two weakly o-minimal pairs over $A$, $\mathbf p=(p,<_p)$ and $\mathbf q=(q,<_q)$, are said to be {\it directly non-orthogonal}, denoted by $\delta_A(\mathbf p,\mathbf q)$, if $p\nwor q$ and  whenever $a\models p$ is right $\mathbf p$-generic over $b\models q$, then $b$ is left $\mathbf q$-generic over $a$, and conversely, whenever $b$ is right $\mathbf q$-generic over $a$, then $a$ is left $\mathbf p$-generic over $b$. 
\end{Definition}

Intuitively, $\delta_A(\mathbf p,\mathbf q)$ describes that orders $<_p$ and $<_q$ ``have the same direction''. This is justified by the next theorem, where we proved that $\delta_A$ is an equivalence relation and that whenever $\mathbf p\nwor \mathbf q$ (meaning $p\nwor q$), the pair $\mathbf p$ has the same direction with exactly one of the pairs $\mathbf q$ and $\mathbf q^*$; that is, $\delta_A(\mathbf p,\mathbf q)$ or $\delta_A(\mathbf p,\mathbf q^*)$.

\begin{Theorem}[{\cite[Theorem 6.9]{MTwom}}]\label{Theorem_nonorthogonality}
Let $\mathcal{W}_A$ denote the set of all weakly o-minimal types over $A$ and $\mathcal P_A$ the set of all such pairs over $A$. Let $\mathcal{W}_A(\Mon)$  be the set of all realizations of types from $\mathcal{W}_A$.
\begin{enumerate}[(a),left=0em,labelsep=.7em]
    \item $\nwor$ is an equivalence relation on both $\mathcal{W}_A$ and $\mathcal P_A$. \
    \item  $\delta_A$ is an equivalence relation on $\mathcal P_A$; $\delta_A$ refines  $\nwor$  by splitting each class into two classes, each of them consisting of the reverses of pairs from the other class.
    \item $x\dep_A y$ is an equivalence relation on $\mathcal{W}_A(\Mon)$. 
    \item $\nfor$ is an equivalence relation on $\mathcal{W}_A$.
\end{enumerate}
\end{Theorem}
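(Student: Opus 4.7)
All four assertions share the same shape: reflexivity and symmetry are routine, and the interesting content is transitivity (plus symmetry of $\dep_A$ across different types for (c)). Reflexivity of $\nwor$ and $\nfor$ on $\mathcal W_A$ is clear, since two distinct realizations of a weakly o-minimal $p$ are distinguished by $<_p$; reflexivity and same-type symmetry of $\dep_A$ are Theorem \ref{Theorem4}(c); reflexivity of $\delta_A(\mathbf p,\mathbf p)$ is Theorem \ref{Theorem4}(a); and symmetry of $\nwor$, $\nfor$, $\delta_A$ is built into the definitions. The four relations are tightly coupled: by Remark \ref{Remark_pgeneric_first}(d,f), $p\nwor q$ is equivalent to the existence of some $b\models q$ with $\mathcal D_p(b)\ne\emptyset$, which in particular shows that $p\nfor q$ coincides with $p\nwor q$ on $\mathcal W_A$; at the level of realizations, $a\dep_A b$ trivially implies $\tp(a/A)\nwor\tp(b/A)$. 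My plan is therefore to prove (b) first, extract (a) as a corollary, and then bootstrap to (c) and (d).

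The crux is transitivity of $\delta_A$. Assume $\delta_A(\mathbf p,\mathbf q)$ and $\delta_A(\mathbf q,\mathbf r)$. I would choose $a\models p$, then $b\models(\mathbf q_l)_{\restriction Aa}$, then $c\models(\mathbf r_l)_{\restriction Aab}$, so that $a\triangleleft^{\mathbf p}b$ and $b\triangleleft^{\mathbf q}c$ by the assumed direct non-orthogonalities. Using the characterization of $\triangleleft^{\mathbf p}$ via the convex loci $\mathcal D_p(\cdot)$ from Remark \ref{Remark_trianglep_binary_basicprops}(b), together with transitivity of $\triangleleft^{\mathbf q}$ (Theorem \ref{Theorem4}(b)) and density of $q(\Mon)/\mathcal D_q$ (Theorem \ref{Theorem4}(d)), the convex positions of $a,b,c$ are compatible and determine a uniquely oriented dependence between $a$ and $c$, which is the $\delta_A(\mathbf p,\mathbf r)$ condition at this single witness. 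Lifting from one witness to the $\tp$-universal definition of $\delta_A$ is a compactness step via Fact \ref{Fact_L_P_sentence}. The splitting claim in (b) is then immediate from $\triangleleft^{\mathbf p}$-comparability (Remark \ref{Remark_pgeneric_first}(b)): any dependent pair $(a,b)$ realizes exactly one of $(\mathbf q_l)_{\restriction Aa}$ and $(\mathbf q_r)_{\restriction Aa}$, which fixes the orientation and partitions each $\nwor$-class into exactly two $\delta_A$-classes. Part (a) for pairs is a direct consequence, and (a) for types is the same statement with orientations forgotten.

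For (c), symmetry of $\dep_A$ across distinct types (not covered by Theorem \ref{Theorem4}(c)) follows by a duality argument using (b): if $\delta_A(\mathbf p,\mathbf q)$ and $a\dep_A b$, then $a\in\mathcal D_p(b)$ translates via the direct non-orthogonality into $b\in\mathcal D_q(a)$, so $b\dep_A a$. For transitivity, if $a\dep_A b$ and $b\dep_A c$ with respective types $p,q,r\in\mathcal W_A$, then $p\nwor q$ and $q\nwor r$, hence $p\nwor r$ by (a). Using (b) one may assume $\delta_A(\mathbf p,\mathbf q)$ and $\delta_A(\mathbf q,\mathbf r)$; then $b\in\mathcal D_q(a)\cap\mathcal D_q(c)$ (Remark \ref{Remark_pgeneric_first}(d)) forces the convex loci $\mathcal D_p(a)$ and $\mathcal D_r(c)$ into incompatible convex position if $a\ind_A c$ is assumed, yielding $a\dep_A c$. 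Part (d) then follows from the already-noted coincidence of $\nfor$ with $\nwor$ on $\mathcal W_A$. The main obstacle I anticipate is the uniformization step in paragraph two: $\delta_A(\mathbf p,\mathbf r)$ is by design a statement about \emph{all} appropriately generic pairs, so propagating direction through a three-type chain requires both the single-witness reduction via density (Theorem \ref{Theorem4}(d)) and its lifting to the $\tp$-universal statement via Fact \ref{Fact_L_P_sentence}.
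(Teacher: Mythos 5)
A preliminary remark: the paper contains no proof of this theorem --- it is imported wholesale as \cite[Theorem 5.9]{MTwom} --- so your argument can only be judged on its own terms, and it has a genuine error at a load-bearing point. You claim that $p\nfor q$ coincides with $p\nwor q$ on $\mathcal W_A$, reading Remark \ref{Remark_pgeneric_first}(d),(f) as saying that $p\nwor q$ is equivalent to $\mathcal D_p(b)\neq\emptyset$ for some $b\models q$. What Remark \ref{Remark_pgeneric_first}(f) actually gives is that $p\nwor q$ is equivalent to $(\mathbf p_l)_{\restriction Ab}\neq(\mathbf p_r)_{\restriction Ab}$, i.e.\ to $S_p(Ab)$ having at least two elements; both of those elements may be nonforking, in which case $\mathcal D_p(b)=\emptyset$. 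The implication $a\dep_A b\Rightarrow \tp(a/A)\nwor\tp(b/A)$ is correct, but the converse fails: in $\Th(\mathbb Q,<,P)$ with $P$ dense and codense, the types $p$ and $q$ generated by $P(x)$ and $\lnot P(x)$ over $\emptyset$ satisfy $p\nwor q$, yet every $a\models p$ and $b\models q$ are independent, since each completion of $p(x)\cup q(y)$ is decided by $x<y$ or $y<x$ and the corresponding loci are unbounded initial/final parts of $p(\Mon)$. The paper itself treats $\nwor$ and $\nfor$ as distinct relations (see Fact \ref{Fact_delta_implies_deltaB_fworB} and the corollary after Theorem \ref{Theorem_nwor preserves triviality}, which would be vacuous under your identification). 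Since your entire proof of part (d) is ``it follows from the coincidence of $\nfor$ with $\nwor$,'' part (d) is simply not proved: $\nfor$ is a strictly finer relation and its transitivity is a separate statement requiring its own argument.

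The second problem is that the crux --- transitivity of $\delta_A$, hence of $\nwor$ --- is not actually carried out. After choosing $a\models p$, $b$, $c$ suitably generically, you assert that ``the convex positions of $a,b,c$ are compatible and determine a uniquely oriented dependence between $a$ and $c$.'' But the substantive content of transitivity is precisely that $p\nwor r$ holds at all, i.e.\ that a realization of $r$ genuinely splits $S_p(A c)$ into two types; nothing in the single-type facts you cite (Theorem \ref{Theorem4}) produces this, and the cross-type machinery (Corollary \ref{Cor_triangle_mathbf_pq_properties}, Theorem \ref{Theorem_triangle_mathcal F}) is only stated for pairs already known to lie in a common $\delta_A$-class --- which is exactly what is to be proved, and which in the source paper is developed \emph{after} Theorem 5.9. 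So this step is either circular or missing, and the same issue infects your treatment of (c), where ``forces the convex loci into incompatible convex position'' silently invokes the convexity and comparability of $\mathcal D$-classes across distinct types, i.e.\ Theorem \ref{Theorem_triangle_mathcal F}(b), which already contains the statement being proved.
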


That $\nwor$ and $\nfor$ are equivalence relation on the set of all complete 1-types over the same domain in a weakly o-minimal theory was first proved by Baizhanov in \cite{Baizhanov1999} (where $\fwor$ is called almost orthogonality). 

 \begin{Fact}\label{Fact_notdelta}
Suppose that $\mathbf p=(p,<_p)$ and $\mathbf q=(q,<_q)$ are weakly o-minimal pairs over $A$ and $\mathbf p\nwor\mathbf q$. 
\begin{enumerate}[(a),left=0em,labelsep=.7em]
\item  Exactly one of $\delta_A(\mathbf p,\mathbf q)$ and $\delta_A(\mathbf p,\mathbf q^*)$ holds. 

\item $\lnot\delta_A(\mathbf p,\mathbf q)$ if and only if $\delta_A(\mathbf p,\mathbf q^*)$ if and only if $a\triangleleft^{\mathbf q} b\triangleleft^{\mathbf p}a$ holds for some $a\models p$ and $b\models q$. 

\item  If $p=q$, then $\delta_A(\mathbf p,\mathbf q)$ if and only if $\mathbf p_r=\mathbf q_r$ if and only if $\triangleleft^{\mathbf p}=\triangleleft^{\mathbf q}$ on $p(\Mon)$.
\end{enumerate}
\end{Fact}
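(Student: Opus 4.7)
The plan is to deduce (a) directly from Theorem \ref{Theorem_nonorthogonality}(b), and then use (a) together with an unpacking of the definition of $\delta_A$ to handle (b) and (c). For (a), the hypothesis $\mathbf p \nwor \mathbf q$ puts $\mathbf p$, $\mathbf q$, and $\mathbf q^*$ into one and the same $\nwor$-class of $\mathcal P_A$, since reversal does not alter the underlying type. By Theorem \ref{Theorem_nonorthogonality}(b), this class splits into exactly two $\delta_A$-subclasses, each consisting of the reverses of pairs from the other; thus $\mathbf q$ and $\mathbf q^*$ necessarily lie in opposite subclasses, and $\mathbf p$ is $\delta_A$-related to exactly one of them. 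The first biconditional in (b) is immediate from (a).

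For the remaining biconditional in (b), I would unpack the two clauses of $\delta_A(\mathbf p, \mathbf q)$ and show that both reduce to the non-existence of $a \models p$, $b \models q$ with $a \triangleleft^{\mathbf q} b \triangleleft^{\mathbf p} a$. Suppose $b \triangleleft^{\mathbf p} a$; then $a \ind_A b$, hence $b \ind_A a$ by symmetry of $\ind$ on realizations of weakly o-minimal types (Theorem \ref{Theorem_nonorthogonality}(c)), so $\triangleleft^{\mathbf q}$-comparability (Remark \ref{Remark_pgeneric_first}(b)) forces $b$ to be either left or right $\mathbf q$-generic over $a$. The hypothesis $p \nwor q$ combined with Remark \ref{Remark_pgeneric_first}(f) yields $(\mathbf q_l)_{\restriction Aa} \neq (\mathbf q_r)_{\restriction Aa}$, so the two alternatives are mutually exclusive. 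Hence ``$b$ is left $\mathbf q$-generic over $a$'' is equivalent to $\lnot(a \triangleleft^{\mathbf q} b)$, and the first clause of $\delta_A(\mathbf p, \mathbf q)$ becomes ``there is no pair $(a,b)$ with $a \triangleleft^{\mathbf q} b \triangleleft^{\mathbf p} a$''. The second clause is symmetric and reduces to the same statement, giving (b).

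Part (c) follows by combining (b) with the observation that, since $p = q$, the set of $A$-invariant global extensions of $q$ coincides with $\{\mathbf p_l, \mathbf p_r\}$ (Fact \ref{Fact_basic_S_p_wom}(d)); so either $\mathbf p_r = \mathbf q_r$ or $\mathbf p_r = \mathbf q_l$. In the first case $\triangleleft^{\mathbf p}$ and $\triangleleft^{\mathbf q}$ are read off the same global type and therefore coincide on $p(\Mon)^2$; the forbidden configuration from (b) becomes $a \triangleleft^{\mathbf p} b \triangleleft^{\mathbf p} a$, excluded by the antisymmetry of $\triangleleft^{\mathbf p}$ (Theorem \ref{Theorem4}(b)), so $\delta_A(\mathbf p, \mathbf q)$ holds. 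In the second case $\triangleleft^{\mathbf q}$ coincides with $\triangleleft^{\mathbf p^*}$, and Theorem \ref{Theorem4}(a) rewrites the forbidden configuration as the single condition $b \triangleleft^{\mathbf p} a$, which is realizable (the hypothesis $p \nwor q = p$ ensures $\triangleleft^{\mathbf p}$ is nonempty via Remark \ref{Remark_pgeneric_first}(f)); hence $\lnot \delta_A(\mathbf p, \mathbf q)$. These two cases yield all three equivalences of (c).

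The main delicate step is the reduction in (b): verifying that both clauses in the definition of $\delta_A(\mathbf p, \mathbf q)$ reduce to the single forbidden configuration $a \triangleleft^{\mathbf q} b \triangleleft^{\mathbf p} a$. This is where the non-degeneracy from $p \nwor q$ is used essentially, to keep left- and right-generics over $a$ strictly distinct. The remaining steps are bookkeeping with the structure theory recalled in Section \ref{Section_preliminaries}.
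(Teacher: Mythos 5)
Your argument is correct, but it is worth noting how it relates to what the paper actually does: the paper's ``proof'' of this Fact is essentially a list of citations --- (a) to Theorem \ref{Theorem_nonorthogonality}(b), and (b), (c) to Remark 5.3 and Lemma 5.4 of \cite{MTwom} --- so for parts (b) and (c) you are supplying a self-contained derivation where the paper defers entirely to the companion paper. Your part (a) coincides with the paper's. For (b), your reduction is sound: given $b\triangleleft^{\mathbf p}a$ you get $a\ind_A b$, hence $b\ind_A a$ by Theorem \ref{Theorem_nonorthogonality}(c), hence exactly one of ``$b$ left $\mathbf q$-generic over $a$'' and $a\triangleleft^{\mathbf q}b$ holds (exclusivity coming from $p\nwor q$ via Remark \ref{Remark_pgeneric_first}(f)); this correctly turns each clause of the definition of $\delta_A$ into the non-existence of the configuration $a\triangleleft^{\mathbf q}b\triangleleft^{\mathbf p}a$, and the two clauses are contrapositives of one another. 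For (c), the case split on $\mathbf q_r\in\{\mathbf p_r,\mathbf p_l\}$ works, with Theorem \ref{Theorem4}(a) collapsing the forbidden configuration in the second case to the single realizable condition $b\triangleleft^{\mathbf p}a$. Two small bookkeeping points: the statement that $\mathbf p_l,\mathbf p_r$ are the \emph{only} global $A$-invariant (equivalently, nonforking) extensions of $p$ is the one quoted in the text just before Fact \ref{Fact_basic_S_p_wom} (from \cite[Fact 1.12, Corollary 4.2]{MTwom}), rather than Fact \ref{Fact_basic_S_p_wom}(d) itself, which is phrased for restrictions to small $B$; and in closing (c) you should make explicit that $\triangleleft^{\mathbf p}\neq\triangleleft^{\mathbf p^*}$ because $\triangleleft^{\mathbf p}$ is a nonempty strict partial order, so the equivalence with ``$\triangleleft^{\mathbf p}=\triangleleft^{\mathbf q}$'' really does discriminate between the two cases. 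What your route buys is independence from the companion paper; what it costs is having to re-verify the exclusivity of left/right genericity at each step, which the cited results of \cite{MTwom} package once and for all.
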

\begin{proof}
(a) follows from Theorem \ref{Theorem_nonorthogonality}(b), (b) from \cite[Remark 6.3]{MTwom}, and (c) from \cite[Lemma 6.4]{MTwom}.
\end{proof}

As a consequence of Theorem \ref{Theorem_nonorthogonality} and Fact \ref{Fact_notdelta}(c), we have the following definition.

\begin{Definition}
Let $\mathcal F$ be a part of a $\delta_A$-class of weakly o-minimal pairs and let $\mathcal F(\Mon)$ be the set of all realizations of types from $\mathcal F$. 
\begin{enumerate}[(a),left=0em,labelsep=.7em]
\item For $a,b\in \mathcal F(\Mon)$ 
define \ $a\triangleleft^{\mathcal F} b$ \ if and only if \ $a\triangleleft ^{\mathbf q}b$ holds for some (or equivalently, {\it any} according to Fact \ref{Fact_notdelta}(c)) weakly o-minimal pair $\mathbf q=(q,<_q)\in\mathcal F$ such that $b\models q$. 

\item $\mathcal D_{\mathcal F}:=\{(x,y)\in \mathcal F(\Mon)^2\mid x\dep_A y\}$.
\end{enumerate}
\end{Definition}

\begin{Theorem}[{\cite[Theorem 5]{MTwom}}]\label{Theorem5}
Let $\mathcal F$ be a part of a $\delta_A$-class of non-algebraic weakly o-minimal pairs over $A$. Then there is an $A$-invariant order $<^{\mathcal F}$ on $\mathcal F(\Mon)$ such that the structure
$(\mathcal F(\Mon),\triangleleft^{\mathcal F},<^{\mathcal F},\mathcal D_{\mathcal F})$ satisfies the following conditions:  
\begin{enumerate}[(a),left=0em,labelsep=.7em]
\item $(\mathcal{F}(\Mon), \triangleleft^{\mathcal F})$ is a strict partial order and $(\mathcal{F}(\Mon), <^{\mathcal F})$ is its linear extension.

\item Relations $\mathcal D_{\mathcal F}$ and  the $\triangleleft^{\mathcal F}$-incomparability are the same, $<^{\mathcal F}$-convex equivalence relation on $\mathcal{F}(\Mon)$.  The quotient $(\mathcal{F}(\Mon)/\mathcal D_{\mathcal F},<^{\mathcal F})$ is a dense linear order.
\item For each $p\in S(A)$ represented in $\mathcal F$, the order $<_p=(<^{\mathcal F})_{\restriction p(\Mon)}$ is relatively $A$-definable and the pair $(p,<_p)$ is directly non-orthogonal to pairs from $\mathcal F$.   $(p(\Mon), \triangleleft^{\mathbf p},<_p,\mathcal D_p)$ is a substructure of $(\mathcal F(\Mon),\triangleleft^{\mathcal F}, <^{\mathcal F},\mathcal D_{\mathcal F})$.  
\end{enumerate}
Moreover, if $(\mathbf p_i=(p_i,<_i)\mid i\in I)$ is a family of pairs from $\mathcal F$, with the types $(p_i\mid i\in I)$ mutually distinct, then the order $<_{\mathcal F}$ can be chosen to extend each $<_i$ for $i\in I$. 
\end{Theorem}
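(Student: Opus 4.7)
My plan is to verify properties (a)--(c) for $\triangleleft^{\mathcal F}$ and $\mathcal D_{\mathcal F}$ directly from their definitions and the preceding facts, and then to construct the $A$-invariant linear extension $<^{\mathcal F}$ realizing the prescribed family of orders from the ``moreover'' clause.

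First, I would verify that $\triangleleft^{\mathcal F}$ is a well-defined strict partial order on $\mathcal F(\Mon)$. Well-definedness is Fact \ref{Fact_notdelta}(c); irreflexivity follows from non-algebraicity (we have $a \dep_A a$ whenever $\tp(a/A)$ is non-algebraic, so $a$ is never right $\mathbf p$-generic over itself). Transitivity within a single type is Theorem \ref{Theorem4}(b); across different types $a \models p$, $b \models q$, $c \models r$ with pairs $\mathbf p, \mathbf q, \mathbf r \in \mathcal F$, I would exploit pairwise $\delta_A$-equivalence of these pairs to propagate consistent ``directions'', converting ``$b$ right $\mathbf q$-generic over $a$'' and ``$c$ right $\mathbf r$-generic over $b$'' into the desired ``$c$ right $\mathbf r$-generic over $a$'' through a short chain of direction-coherence implications supplied by $\delta_A$.

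The equivalence relation property of $\mathcal D_{\mathcal F}$ is Theorem \ref{Theorem_nonorthogonality}(c) restricted to $\mathcal F(\Mon)$; that it coincides with $\triangleleft^{\mathcal F}$-incomparability follows by combining $\triangleleft^{\mathbf p}$-comparability (Remark \ref{Remark_trianglep_binary_basicprops}(a)) with $\delta_A$ across types: if $a \ind_A b$, the coherent directions force exactly one of $a \triangleleft^{\mathcal F} b$ or $b \triangleleft^{\mathcal F} a$. Convexity of $\mathcal D_{\mathcal F}$-classes under $\triangleleft^{\mathcal F}$ then follows from transitivity and this equivalence. For density of the quotient $(\mathcal F(\Mon)/\mathcal D_{\mathcal F}, \triangleleft^{\mathcal F})$, given two strictly $\triangleleft^{\mathcal F}$-comparable classes, I would pick any $a \models p \in \mathcal F$ from the smaller one, use density of $(p(\Mon)/\mathcal D_p, <_p)$ from Theorem \ref{Theorem4}(d) to realize an intermediate $\mathcal D_p$-class, and verify via $\delta_A$ that it sits strictly between the two given $\mathcal D_{\mathcal F}$-classes. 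Statement (c) is then immediate since $\triangleleft^{\mathbf p}$, $<_p$, and $\mathcal D_p$ are by construction the restrictions of $\triangleleft^{\mathcal F}$, $<^{\mathcal F}$, and $\mathcal D_{\mathcal F}$ to $p(\Mon)$.

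To construct $<^{\mathcal F}$, I would lift the dense linear quotient order on $\mathcal F(\Mon)/\mathcal D_{\mathcal F}$ to $\mathcal F(\Mon)$ by fixing a linear order inside each $\mathcal D_{\mathcal F}$-class: on each $p_i$-slice use the prescribed $<_i$, and on any $p$-slice where $p$ is represented in $\mathcal F$ but absent from the family, use the unique relatively $A$-definable order from the pair in $\mathcal F$ of type $p$. For elements of different types sitting in the same $\mathcal D_{\mathcal F}$-class, pick an $A$-invariant interleaving of type-slices one $\Aut(\Mon/A)$-orbit of classes at a time, using choice on orbit representatives and transporting by automorphisms. The hardest step is this last one: ensuring both $A$-invariance and that the restriction to each $p_i(\Mon)$ remains exactly $<_i$, because distinct types in $\mathcal F$ may be $\Aut(\Mon/A)$-conjugate and a multi-type $\mathcal D_{\mathcal F}$-class can mix conjugate type-slices. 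A careful orbit-by-orbit selection, exploiting the relative $A$-definability of each $<_{p_i}$ and the coherence provided by $\delta_A$, should push the interleaving consistently through the automorphism action.
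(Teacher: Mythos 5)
This theorem is not proved in the present paper at all: it is quoted verbatim from the companion paper (\cite[Theorem 5]{MTwom}), so there is no in-paper argument to compare yours against. Judged on its own merits, your outline for parts (a)--(c) is reasonable in shape, but it leans on statements that are themselves downstream of this theorem (e.g.\ the density and transitivity clauses of Corollary \ref{Cor_triangle_mathbf_pq_properties} are read off from Theorem \ref{Theorem_triangle_mathcal F}, which presupposes the order $<^{\mathcal F}$), and the cross-type transitivity of $\triangleleft^{\mathcal F}$ --- converting ``$b$ right $\mathbf q$-generic over $a$'' and ``$c$ right $\mathbf r$-generic over $b$'' into ``$c$ right $\mathbf r$-generic over $a$'' --- is exactly the hard content of the result; saying it follows ``through a short chain of direction-coherence implications'' is not a proof but a restatement of what must be shown.

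The concrete gap is in your construction of $<^{\mathcal F}$. Your plan to interleave the type-slices inside a multi-type $\mathcal D_{\mathcal F}$-class ``one $\Aut(\Mon/A)$-orbit of classes at a time, using choice on orbit representatives and transporting by automorphisms'' is not well defined: the setwise stabilizer in $\Aut(\Mon/A)$ of a single $\mathcal D_{\mathcal F}$-class can act nontrivially on that class, so an arbitrary interleaving chosen on a representative need not be preserved when transported around the orbit, and you get no $A$-invariant order this way. The difficulty dissolves if instead you fix once and for all a linear order $\prec$ on the set of complete types represented in $\mathcal F$ (this set is permuted trivially by $\Aut(\Mon/A)$, so any such $\prec$ is automatically $A$-invariant) and declare, for $a,b$ in the same $\mathcal D_{\mathcal F}$-class, that $a<^{\mathcal F}b$ iff either $\tp(a/A)=\tp(b/A)=p$ and $a<_pb$, or $\tp(a/A)\prec\tp(b/A)$; across distinct classes use $\triangleleft^{\mathcal F}$. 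One checks directly that this is a linear order, is $A$-invariant, restricts to $<_p$ on each $p(\Mon)$ (using that each $<_p$ linearly extends $\triangleleft^{\mathbf p}$ with convex $\mathcal D_p$-classes), and extends each prescribed $<_i$ since the $p_i$ are mutually distinct. I would recommend replacing the orbit-transport step with this, and either proving the cross-type transitivity and symmetry statements from first principles or conceding that they are the substance of \cite[Theorem 5]{MTwom} rather than consequences of the facts quoted here.
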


\begin{Theorem}\label{Theorem_triangle_mathcal F}
Let $\mathcal F$ be part of a $\delta_A$-class of weakly o-minimal pairs over $A$ and let $<_{\mathcal F}$ be given by Theorem \ref{Theorem5}. Then for all small $B$, $\mathbf p=(p,<_p)\in\mathcal F$, and  $a,b,c\in \mathcal{F}(\Mon)$ the following hold true: 
\begin{enumerate}[(a),left=0em,labelsep=.7em]
\item (Existence) \ There exists  $c\models p$ such that $B\triangleleft^{\mathcal F}c$;
\item (Density) \ If $B\triangleleft^{\mathcal F} b$, then there is  $c$ such that $B\triangleleft^{\mathbf p} c\triangleleft^{\mathcal F} b$; 
\item (Symmetry) \ $a\ind_A b$ if and only if $b\ind_A a$; 
\item   \   
$a \triangleleft^{\mathcal F} b\Leftrightarrow b\triangleleft^{\mathcal F^*} a$ \ ($b$ is right $\mathcal F$-generic over $a$ if and only if $a$ is left $\mathcal F$-generic over $b$.) 
\item  ($\triangleleft^{\mathcal F}$-comparability) \ $a\ind_A B\Leftrightarrow (B \triangleleft^{\mathcal F} a\vee B\triangleleft^{\mathcal F^*} a)$
\ and \ 
$a\ind_A b\Leftrightarrow (a \triangleleft^{\mathcal F} b\vee b\triangleleft^{\mathcal F} a)$;
\item  (Transitivity) \ $B\triangleleft^{\mathcal F} a \triangleleft^{\mathcal F} b$ \ implies \ $B\triangleleft^{\mathcal F} b$;
 \item  $B\triangleleft^{\mathcal F} a \land a\dep_A b$ implies $B\triangleleft^{\mathcal F} b\land a\dep_{B}b$;
\item   $B\triangleleft^{\mathcal F} a<^{\mathcal F}b$ implies $B\triangleleft^{\mathcal F} b$;  
\item   $B\triangleleft^{\mathcal F} a<^{\mathcal F}b\land  \, a\ind_B b$ implies $Ba\triangleleft^{\mathcal F} b$; 
\item   $B\triangleleft^{\mathcal F}a<^{\mathcal F}b<^{\mathcal F}c\land a\dep_{B}c$ implies  $b\dep_{B} a \land b\dep_{B}c$. 
\end{enumerate}
\end{Theorem}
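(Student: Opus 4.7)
The plan is to derive all ten items from Theorem \ref{Theorem5} combined with the single-pair analogues in Theorem \ref{Theorem4}, Remark \ref{Remark_trianglep_binary_basicprops}, and Fact \ref{Fact_basic_S_p_wom}; a convenient order is (a), (d), (e), (c), (b), (h), (f), (i), (g), (j). The definitional batch (a)--(e) is largely immediate. For (a), realize $(\mathbf p_r)_{\restriction AB}$. For (d), unwind the definition of $\triangleleft^{\mathcal F^*}$ via the pair $\mathbf q\in\mathcal F$ with $b\models q$ and apply Theorem \ref{Theorem4}(a). For (e) with $a,b\in\mathcal F(\Mon)$, pick $\mathbf q\in\mathcal F$ containing $b$ and invoke Fact \ref{Fact_basic_S_p_wom}(d); the arbitrary-$B$ version is analogous. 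Symmetry (c) is then inherited from (e) and (d), and density (b) combines (a) (applied over parameter set $Ab$) with the density of the quotient $(\mathcal F(\Mon)/\mathcal D_{\mathcal F},<^{\mathcal F})$ from Theorem \ref{Theorem5}(b).

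The order-transitivity block (h), (f), (i) hinges on the observation that $G_B := \{x\in\mathcal F(\Mon)\mid B\triangleleft^{\mathcal F}x\}$ is a final part of $(\mathcal F(\Mon),<^{\mathcal F})$. Within each $p\in\mathcal F$ represented in $G_B$ we have $G_B\cap p(\Mon)=\{a\models p\mid \mathcal D_p(B)<_p a\}$ by the parametrized form of Remark \ref{Remark_trianglep_binary_basicprops}(b); these per-type final parts glue coherently because $<^{\mathcal F}$ extends each $<_p$ (Theorem \ref{Theorem5}, ``moreover'' clause). Then (h) is immediate and (f) follows via Theorem \ref{Theorem5}(a). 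For (i), apply (h) to obtain $b\ind_A B$, combine with the hypothesis $b\ind_B a$ (symmetric of $a\ind_B b$) via left transitivity of forking to get $b\ind_A Ba$, and apply (e) with parameter set $Ba$ to conclude $Ba\triangleleft^{\mathcal F}b$ or $Ba\triangleleft^{\mathcal F^*}b$; the latter restricts via (d) to $b\triangleleft^{\mathcal F}a$, contradicting $a<^{\mathcal F}b$ by Theorem \ref{Theorem5}(a).

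For (g), the first part uses that $\triangleleft^{\mathcal F}$-incomparability equals $\mathcal D_{\mathcal F}$ (Theorem \ref{Theorem5}(b)), so $G_B$ is a union of $\mathcal D_{\mathcal F}$-classes and the conclusion $B\triangleleft^{\mathcal F}b$ is immediate. For the second part $a\dep_B b$, assume $a\ind_B b$; then $b\ind_A B$ (from the first part of (g)) and $b\ind_B a$ (symmetry) combine by left transitivity to yield $b\ind_A Ba$, so (e) gives $Ba\triangleleft^{\mathcal F}b$ or $Ba\triangleleft^{\mathcal F^*}b$; each alternative restricts over $A$ to a $\triangleleft^{\mathcal F}$-comparability of $a$ and $b$, contradicting $a\dep_A b$ via (e). Item (j) is handled by transferring the Theorem \ref{Theorem5} structure from $A$ to $B$: the family of right-generic $B$-extensions of pairs in $\mathcal F$ forms a part of a $\delta_B$-class $\mathcal F_B$, and the analog of Theorem \ref{Theorem5}(b) over $B$ produces a $<^{\mathcal F}$-convex equivalence on $G_B=\mathcal F_B(\Mon)$ coinciding with $\dep_B$; hence $a\dep_B c$ places $a,c$ in a common convex class containing the interposed $b$, giving both $b\dep_B a$ and $b\dep_B c$.

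The main obstacle is the transfer of structure from $A$ to $B$ underlying (j): one must verify that the right-generic extensions of pairs in $\mathcal F$ form a $\delta_B$-class and that $<^{\mathcal F}$ (which is $A$-invariant, hence $B$-invariant) serves as the Theorem \ref{Theorem5} order for $\mathcal F_B$. This will reduce to checking that direct non-orthogonality is preserved under passage to right-generic extensions, which should follow from the relative definability of each $<_p$ together with the $A$-invariance of the genericity notions. Once the transfer is set up, the convexity argument closes (j) at once.
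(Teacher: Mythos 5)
The paper does not prove this theorem at all: its ``proof'' is a citation to \cite{MTwom} (Theorem 5.11 and Proposition 5.12 there), so your attempt to rebuild everything from the single-pair facts in Section \ref{Section_preliminaries} is necessarily a different route. Your handling of (a), (c), (d), (e) is essentially correct: these really do unwind to the definition of $\delta_A$, Theorem \ref{Theorem4}(a), Fact \ref{Fact_basic_S_p_wom}(d) and Remark \ref{Remark_pgeneric_first}(b). The problems begin exactly where a set of parameters $B$ and two \emph{different} types from $\mathcal F$ interact.

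The central gap is the claim that $G_B=\{x\in\mathcal F(\Mon)\mid B\triangleleft^{\mathcal F}x\}$ is a final part of $(\mathcal F(\Mon),<^{\mathcal F})$ and is $\mathcal D_{\mathcal F}$-closed. For (g) your argument is circular: Theorem \ref{Theorem5}(b) identifies $\mathcal D_{\mathcal F}$ with the \emph{element-to-element} $\triangleleft^{\mathcal F}$-incomparability; it says nothing about genericity over an arbitrary set $B$, and ``$G_B$ is a union of $\mathcal D_{\mathcal F}$-classes'' is precisely the first half of (g), not something you may assume. For (h), knowing that $G_B\cap p(\Mon)$ is a final part of each $(p(\Mon),<_p)$ does not make $G_B$ a final part of $\mathcal F(\Mon)$: if $a\in G_B\cap p(\Mon)$ and $a<^{\mathcal F}b$ with $b\models q\neq p$, nothing in the per-type picture tells you where $b$ sits relative to $(\mathbf q_r)_{\restriction AB}(\Mon)$. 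The missing ingredient in both cases is the transfer of direct non-orthogonality from $A$ to $B$ --- i.e.\ that $\delta_B((\mathbf p_r)_{\restriction AB},(\mathbf q_r)_{\restriction AB})$ holds, which is Fact \ref{Fact_delta_implies_deltaB_fworB}(a) of the paper; you only gesture at this for (j) and propose to re-derive it, when it is available as a quoted fact and is needed already for (b), (g), (h), (i). Relatedly, the density step (b) as sketched does not work: applying (a) over $Ab$ produces an element right-generic over $b$, which is on the wrong side; the correct argument realizes $((\mathbf p_B)_l)_{\restriction ABb}$ where $\mathbf p_B=((\mathbf p_r)_{\restriction AB},<_p)$ and uses $\delta_B(\mathbf p_B,\mathbf q_B)$ to convert left $\mathbf p_B$-genericity over $b$ into $c\triangleleft^{\mathbf q}b$. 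Finally, in (i) the step from $a\ind_Bb$ to $b\ind_Ba$ is symmetry over the base $AB$, not over $A$, and the ``left transitivity of forking'' you invoke is not a general first-order fact --- both need to be justified via Fact \ref{Fact_basic_S_p_wom}(d) applied to the types over $AB$, again after the transfer to $B$ has been set up. With Fact \ref{Fact_delta_implies_deltaB_fworB} put at the front of the argument, your outline for (i) and (j) can be repaired, but as written the proposal assumes the hard half of the theorem.
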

\begin{proof}
(a)-(f) are Theorem 6.11(i)-(vi) of \cite{MTwom} and (g)-(j) are Proposition 6.12(iii)-(vi) of \cite{MTwom}. 
\end{proof} 

Most applications of Theorem \ref{Theorem_triangle_mathcal F} will be in the case where $\mathcal F=\{\mathbf p,\mathbf q\}$ (with $\delta_A(\mathbf p,\mathbf q)$ assumed). This is summarized in the next corollary.

\begin{Corollary}\label{Cor_triangle_mathbf_pq_properties}
 $\delta_A(\mathbf p,\mathbf q)$ implies that for all $B$, $a\models p$ and $b\models q$:
\begin{enumerate}[(a),left=0em,labelsep=.7em]
\item (Existence) \ There exists  $a'\models p$ such that $B\triangleleft^{\mathbf p}a'$;
\item (Density) \ If $B\triangleleft^{\mathbf q} b$, then there is  $a'\models p$ such that $B\triangleleft^{\mathbf p} a'\triangleleft^{\mathbf q} b$; 
\item (Symmetry) \ $a\ind_A b$ if and only if $b\ind_A a$; 
\item   \   
$a \triangleleft^{\mathbf q} b\Leftrightarrow b\triangleleft^{\mathbf p^*} a$ \ ($b$ is right $\mathbf q$-generic over $a$ if and only if $a$ is left $\mathbf p$-generic over $b$.) 
\item  ($\triangleleft^{\mathbf p}$-comparability) \ $a\ind_A B\Leftrightarrow (B \triangleleft^{\mathbf p} a\vee B\triangleleft^{\mathbf p^*} a)$
\ and \ 
$a\ind_A b\Leftrightarrow (a \triangleleft^{\mathbf q} b\vee b\triangleleft^{\mathbf p} a)$;
\item  (Transitivity) \ $B\triangleleft^{\mathbf p} a \triangleleft^{\mathbf q} b$ \ implies \ $B\triangleleft^{\mathbf q} b$; 
 \item  $B\triangleleft^{\mathbf p} a \land a\dep_A b$ implies $B\triangleleft^{\mathbf q} b\land a\dep_{B}b$.
\end{enumerate}
\end{Corollary}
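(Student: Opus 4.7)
The plan is to observe that this corollary is simply the specialization of Theorem \ref{Theorem_triangle_mathcal F} to the two-element family $\mathcal F=\{\mathbf p,\mathbf q\}$, so the proof reduces to invoking that theorem and unwinding the definitions of $\triangleleft^{\mathcal F}$ and $<^{\mathcal F}$.

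First, set $\mathcal F:=\{\mathbf p,\mathbf q\}$. The hypothesis $\delta_A(\mathbf p,\mathbf q)$ says precisely that $\mathcal F$ lies inside a single $\delta_A$-class, so Theorem \ref{Theorem5} applies and supplies an $A$-invariant order $<^{\mathcal F}$ on $\mathcal F(\Mon)=p(\Mon)\cup q(\Mon)$ satisfying all the listed compatibilities; in particular $<^{\mathcal F}$ restricts to $<_p$ and $<_q$ on the respective loci. Then Theorem \ref{Theorem_triangle_mathcal F} is available for this $\mathcal F$, and we only need to translate each of its clauses (a)--(g) into the language of the corollary.

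The translation uses the very definition of $\triangleleft^{\mathcal F}$: for $x\in\mathcal F(\Mon)$, the relation $y\triangleleft^{\mathcal F}x$ unfolds to $y\triangleleft^{\mathbf p}x$ when $x\models p$ and to $y\triangleleft^{\mathbf q}x$ when $x\models q$; analogously for $\triangleleft^{\mathcal F^*}$ with the reversed pairs. Hence (a) is Theorem \ref{Theorem_triangle_mathcal F}(a) with $\mathbf p$ as the chosen pair, giving the desired $a'\models p$. For (b), the hypothesis $B\triangleleft^{\mathbf q}b$ is the same as $B\triangleleft^{\mathcal F}b$ (since $b\models q$), and Theorem \ref{Theorem_triangle_mathcal F}(b) produces $a'$ with $B\triangleleft^{\mathbf p}a'\triangleleft^{\mathcal F}b$, where the last relation unfolds as $a'\triangleleft^{\mathbf q}b$. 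Parts (c), (f), (g) are word-for-word specializations. For (d), Theorem \ref{Theorem_triangle_mathcal F}(d) gives $a\triangleleft^{\mathcal F}b\Leftrightarrow b\triangleleft^{\mathcal F^*}a$; since $b\models q$ the left-hand side becomes $a\triangleleft^{\mathbf q}b$, while since $a\models p$ and we are on the reversed $\delta_A$-class the right-hand side becomes $b\triangleleft^{\mathbf p^*}a$. Part (e) splits into its two halves: the first (comparability with $B$) is already in Theorem \ref{Theorem_triangle_mathcal F}(e); the second (comparability of $a,b$) likewise, where the right disjunct on the generic side is read off according to which pair governs $a$ versus $b$.

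Essentially no obstacle is expected beyond this bookkeeping; the nontrivial content--existence of $<^{\mathcal F}$, density of the $\mathcal D_{\mathcal F}$-quotient, symmetry of forking, the genericity/ordering transitivity statements--is all absorbed into the cited theorems. The only point that warrants explicit mention is the dictionary between $\triangleleft^{\mathcal F}$ and the pairwise $\triangleleft^{\mathbf p}, \triangleleft^{\mathbf q}, \triangleleft^{\mathbf p^*}, \triangleleft^{\mathbf q^*}$, which was designed precisely so that such specializations become mechanical.
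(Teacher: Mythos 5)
Your proposal is correct and is exactly the route the paper takes: the corollary is stated immediately after the remark that it "summarizes" Theorem \ref{Theorem_triangle_mathcal F} in the case $\mathcal F=\{\mathbf p,\mathbf q\}$, so the intended proof is precisely the specialization and unwinding of $\triangleleft^{\mathcal F}$ that you carry out. Your clause-by-clause dictionary matches the paper's definitions, so nothing further is needed.
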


\begin{Corollary}\label{Corolary_a_1_equiv_a_2_AB}
Suppose that $p\in S(A)$ is a weakly o-minimal type and $a_1,a_2\models p$. Then for all $B$:  
\begin{center}
  $a_1\dep_A a_2 \ \land \ a_1\ind_A B$ \ implies \ $a_1\equiv a_2 \,(AB)$.   
\end{center}
\end{Corollary}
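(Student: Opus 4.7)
The plan is to reduce the problem to showing that both $a_1$ and $a_2$ realize the same $\mathbf{p}$-generic extension of $p$ over $AB$. Fix any weakly o-minimal pair $\mathbf{p}=(p,<_p)$ over $A$. Since $a_1\ind_A B$, the $\triangleleft^{\mathbf p}$-comparability (Remark \ref{Remark_pgeneric_first}(b)) tells us that either $B\triangleleft^{\mathbf p}a_1$ or $B\triangleleft^{\mathbf p^*}a_1$. Replacing $\mathbf p$ by its reverse $\mathbf p^*=(p,>_p)$ if necessary (which does not affect the $\mathcal D_p$-equivalence relation, by Theorem \ref{Theorem4}(c)), I may assume without loss of generality that $B\triangleleft^{\mathbf p}a_1$.

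Now I apply Theorem \ref{Theorem_triangle_mathcal F}(g) with $\mathcal F=\{\mathbf p\}$, a trivial $\delta_A$-class containing just one pair: from $B\triangleleft^{\mathbf p}a_1$ and $a_1\dep_A a_2$ I conclude that $B\triangleleft^{\mathbf p}a_2$. (Alternatively, one can invoke Corollary \ref{Cor_triangle_mathbf_pq_properties}(g) taking $\mathbf q=\mathbf p$.) Hence both $a_1$ and $a_2$ are right $\mathbf p$-generic over $B$, meaning $a_1,a_2\models (\mathbf p_r)_{\restriction AB}$. Since $(\mathbf p_r)_{\restriction AB}$ is a complete type over $AB$ (it is the maximum of $S_p(AB)$ by Fact \ref{Fact_basic_S_p_wom}(b)), it follows that $a_1\equiv a_2\,(AB)$.

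There is essentially no obstacle here; the content is already packaged in the machinery summarized in Theorem \ref{Theorem_triangle_mathcal F}. The only conceptual point worth highlighting is why we are free to replace $\mathbf p$ by $\mathbf p^*$: the hypothesis $a_1\dep_A a_2$ is symmetric in the two directions of the order, so the conclusion we are after is insensitive to this choice. After this reduction, the entire argument is a one-line application of the transfer property (g) of right $\mathbf p$-genericity to the $\mathcal D_p$-equivalent pair $(a_1,a_2)$.
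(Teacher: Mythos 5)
Your proof is correct and follows essentially the same route as the paper: choose the orientation of the order so that $B\triangleleft^{\mathbf p}a_1$, then apply Corollary \ref{Cor_triangle_mathbf_pq_properties}(g) with $\mathbf q=\mathbf p$ to transfer right $\mathbf p$-genericity over $B$ from $a_1$ to the $\mathcal D_p$-equivalent element $a_2$. The paper compresses your explicit "replace $\mathbf p$ by $\mathbf p^*$" step into the phrase "choose $\mathbf p$ such that $B\triangleleft^{\mathbf p}a_1$", but the argument is identical.
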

\begin{proof}
Assume $a_1\dep_A a_2 \ \land \ a_1\ind_A B$. Choose $\mathbf p=(p,<)$, a weakly o-minimal pair over $A$, such that $B\triangleleft^{\mathbf p} a_1$. Then by Corolary \ref {Cor_triangle_mathbf_pq_properties}(g), applied to $\mathbf p=\mathbf q$, $B\triangleleft^{\mathbf p} a_1$ and $a_1\dep_A a_2$ together imply $B\triangleleft^{\mathbf p} a_2$. Therefore, $a_1\equiv a_2\,(AB)$.  
\end{proof}

\begin{Fact}[{\cite[Corollary 6.14]{MTwom}}]\label{Fact_delta_implies_deltaB_fworB}
Suppose that $\mathbf p,\mathbf q$ are weakly o-minimal pairs over $A$ with $\delta_A(\mathbf p,\mathbf q)$. Let $B\supseteq A$. Define $p_B=(\mathbf p_r)_{\restriction B}$, $\mathbf p_B=(p_B,<_p)$ and analogously define $q_B$ and $\mathbf q_B$.   
    \begin{enumerate}[(a),left=0em,labelsep=.7em]
\item $\delta_B(\mathbf p_B,\mathbf q_B)$ holds;  in particular, $p_B\nwor q_B$ and $\mathbf p_r\nwor \mathbf q_r$.
\item If  $p\nfor q$, then $p_B\nfor q_B$ and, in particular, $\mathbf p_r\nfor\mathbf q_r$.  
\end{enumerate}
\end{Fact}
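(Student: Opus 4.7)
The plan is to address (a) and (b) separately, both relying on the identification $(\mathbf p_B)_r = \mathbf p_r$ and symmetrically $(\mathbf q_B)_r = \mathbf q_r$. This holds because $\mathbf p_r$ is a global $A$-invariant (hence $B$-invariant) extension of $p_B$ that remains right-eventual on $p_B(\Mon)$, which by Fact \ref{Fact_basic_S_p_wom}(b) is a final part of $p(\Mon)$; by Fact \ref{Fact_basic_S_p_wom}(d), the weakly o-minimal pair $(p_B,<_p)$ has only two global invariant extensions, $(\mathbf p_B)_l$ and $(\mathbf p_B)_r$, of which only the latter matches the right direction. Under this identification, $\triangleleft^{\mathbf p_B}$ over base $B$ reads as $\triangleleft^{\mathbf p}$ over $B$, and similarly for $\mathbf q$.

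For (a), I first show $p_B \nwor q_B$. Fix $b \models q_B$. Existence (Corollary \ref{Cor_triangle_mathbf_pq_properties}(a), applied over $Bb$) gives $a_1 \models p$ with $Bb \triangleleft^{\mathbf p} a_1$, so $a_1 \models p_B$ and $a_1 \models (\mathbf p_r)_{\restriction Bb}$. Density (Corollary \ref{Cor_triangle_mathbf_pq_properties}(b), using $B \triangleleft^{\mathbf q} b$) gives $a_2 \models p$ with $B \triangleleft^{\mathbf p} a_2 \triangleleft^{\mathbf q} b$, so $a_2 \models p_B$, and by Corollary \ref{Cor_triangle_mathbf_pq_properties}(d) $a_2$ is left-$\mathbf p$-generic over $Ab$; in particular $a_2 \not\models (\mathbf p_r)_{\restriction Ab}$ and a fortiori $a_2 \not\models (\mathbf p_r)_{\restriction Bb}$. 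Thus $a_1 \not\equiv_{Bb} a_2$ while both realize $p_B$, witnessing $p_B \nwor q_B$. Now by Fact \ref{Fact_notdelta}(a), exactly one of $\delta_B(\mathbf p_B, \mathbf q_B)$ and $\delta_B(\mathbf p_B, \mathbf q_B^*)$ holds; if the latter, Fact \ref{Fact_notdelta}(b) produces $a \models p_B$ and $b \models q_B$ with $a \triangleleft^{\mathbf q_B} b \triangleleft^{\mathbf p_B} a$, which under the identification above becomes $Ba \triangleleft^{\mathbf q} b$ and $Bb \triangleleft^{\mathbf p} a$, hence $a \triangleleft^{\mathbf q} b$ and $b \triangleleft^{\mathbf p} a$, contradicting $\delta_A(\mathbf p, \mathbf q)$ by Fact \ref{Fact_notdelta}(b). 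So $\delta_B(\mathbf p_B, \mathbf q_B)$, and $\mathbf p_r \nwor \mathbf q_r$ is this $p_B \nwor q_B$ read at the global-invariant level.

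For (b), assume $p \nfor q$ and pick $a \models p_B$ via Existence. Because $p \nfor q$, there exist $a' \models p$ and $b' \models q$ with $a' \dep_A b'$; an $A$-automorphism sending $a'$ to $a$ supplies $b \models q$ with $a \dep_A b$. Applying Corollary \ref{Cor_triangle_mathbf_pq_properties}(g) to $B \triangleleft^{\mathbf p} a$ and $a \dep_A b$ yields $B \triangleleft^{\mathbf q} b$ and $a \dep_B b$, so $b \models q_B$ and $p_B \nfor q_B$; $\mathbf p_r \nfor \mathbf q_r$ follows at the global-invariant level. The main subtlety lies in step two of part (a), where one needs $a_1$ and $a_2$ to yield genuinely different types over $Bb$: this is exactly the asymmetry between right- and left-$\mathbf p$-genericity over $Ab$, ultimately guaranteed because $\delta_A(\mathbf p, \mathbf q)$ entails $p \nwor q$.
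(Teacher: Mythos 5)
Your proof is correct. Note that the paper itself gives no argument for this statement --- it is quoted verbatim from \cite[Corollary 5.14]{MTwom} --- so there is nothing internal to compare against; what you have produced is a sound self-contained derivation from the other facts recorded in Section \ref{Section_preliminaries}. The key identification $(\mathbf p_B)_r=\mathbf p_r$ is right, and both halves of the argument go through: for (a), the pair $a_1\models(\mathbf p_r)_{\restriction Bb}$ versus $a_2\models(\mathbf p_l)_{\restriction Ab}$ genuinely witnesses $p_B\nwor q_B$ because $\delta_A(\mathbf p,\mathbf q)$ gives $p\nwor q$, hence $(\mathbf p_l)_{\restriction Ab}\neq(\mathbf p_r)_{\restriction Ab}$ by Remark \ref{Remark_pgeneric_first}(f), and the reduction of the direction question to Fact \ref{Fact_notdelta}(b) over $B$ and then over $A$ is clean; for (b), transporting a forking pair by an $A$-automorphism onto a right $\mathbf p$-generic $a$ and invoking Corollary \ref{Cor_triangle_mathbf_pq_properties}(g) is exactly what is needed. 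One small citation slip: the uniqueness of the two global invariant extensions of $p_B$ is not Fact \ref{Fact_basic_S_p_wom}(d) (which concerns nonforking extensions in $S_{p}(B)$) but the statement, recorded just before that fact, that $\mathbf p_l$ and $\mathbf p_r$ are the only global $A$-invariant extensions of a weakly o-minimal type; applied to the pair $(p_B,<_p)$ over $B$ it gives what you want. The closing claims ``$\mathbf p_r\nwor\mathbf q_r$'' and ``$\mathbf p_r\nfor\mathbf q_r$'' read as the corresponding statements over arbitrary small $B\supseteq A$, which is what you have proved, so the ``in particular'' clauses are covered.
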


\begin{Fact}[{\cite[Proposition 7.4]{MTwom}}]
    Every weakly o-minimal type is dp-minimal, so NIP. 
\end{Fact}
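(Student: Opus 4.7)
The plan is to verify the alternation-count characterization of dp-minimality given in the preliminaries: show that for every formula $\varphi(x,y)$, every $A$-indiscernible sequence $I=(b_i)_{i<\omega}$ of $|y|$-tuples, and every $a\models p$, the sequence $(\varphi(a,b_i))_{i<\omega}$ has fewer than four alternations. Suppose for contradiction that some $\varphi$, $I$ and $a$ witness at least four alternations; fix a weakly o-minimal pair $\mathbf p=(p,<_p)$ over $A$.

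The first step is to extract a uniform bound on complexity within $p(\Mon)$. For every $b$, the set $X_b:=\varphi(\Mon,b)\cap p(\Mon)$ is relatively $Ab$-definable, hence by weak o-minimality a finite union of $<_p$-convex components; the statement ``$X_b$ is a union of at most $N$ convex components of $(p(\Mon),<_p)$'' is a tp-universal property of $b$ over $\tp(b_0/A)$, so Fact \ref{Fact_L_P_sentence} and compactness yield a uniform bound $N=N_\varphi$ valid for all $b\equiv b_0\,(A)$. Passing to a sub-indiscernible via Ramsey, I may assume that every $X_i:=X_{b_i}$ has exactly $k\leqslant N$ convex components, delimited by cuts $L_{i,1}<R_{i,1}<\cdots<L_{i,k}<R_{i,k}$ in the Dedekind completion of $(p(\Mon),<_p)$, and that for each pair $(j,j')$ the qualitative comparison of $L_{i,j}$ with $L_{i',j'}$ (viewed as cuts)---and analogously for the $R$'s and mixed $L$-$R$ pairs---is determined solely by the order of $i$ and $i'$.

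The heart of the argument is to invoke the uniqueness of global $A$-invariant extensions of $p$, only $\mathbf p_l$ and $\mathbf p_r$, as recorded before Fact \ref{Fact_basic_S_p_wom} and reinforced by part (b) of that fact. From this I would argue that for each fixed $j$ the cut-sequence $(L_{i,j})_{i<\omega}$ either sits on a single side of $a$ throughout or crosses $a$ exactly once; otherwise two cofinal subsequences of $(b_i)$ placing $L_{i,j}$ on opposite sides of $a$ would give rise to two distinct global $A$-invariant extensions of $p$ concentrated in the same convex slice of $p(\Mon)$, contradicting the dichotomy $\{\mathbf p_l,\mathbf p_r\}$. Aggregating this across the $2k$ cuts and using the coherence that $L_{i,j}$ and $R_{i,j}$ arise from the same component of $X_{b_i}$, one bounds the number of alternations of the relation $a\in X_i$.

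The main obstacle is sharpening the bound from some coarse function of $k$ down to the tight constant three demanded by the Kaplan--Onshuus--Usvyatsov characterization. A coarse bound would already yield NIP, but dp-minimality requires that any fourth alternation be excluded. This needs a delicate combinatorial analysis of how the cut-sequences $(L_{i,j})_i$ and $(R_{i,j})_i$ interleave under indiscernibility; informally, a fourth alternation would let one read off a third global $A$-invariant extension of $p$ from the Ehrenfeucht--Mostowski type of $I$, contradicting the dichotomy. If this tight argument proves elusive, a safer fallback is to first deduce NIP from the bounded-alternation bound and then upgrade to dp-minimality via the mutually-indiscernible-sequences characterization, using Theorem \ref{Theorem_triangle_mathcal F} to control how $a$ can relate to two independent invariant-extension directions at once.
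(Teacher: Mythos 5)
This statement is an imported result (cited from \cite[Proposition 6.4]{MTwom}); the present paper contains no proof of it, so your proposal can only be judged on its own merits. On those merits it is incomplete: it establishes (essentially) that the type is NIP, but not that it is dp-minimal, and you acknowledge this yourself. The NIP half is sound in outline: the uniform bound $N_\varphi$ on the number of convex components is a legitimate compactness/tp-universality argument, and each cut sequence $(L_{i,j})_i$ crosses $a$ at most once. However, your justification of that last step is off. The correct reason is simply monotonicity: the comparison of the cuts $L_{i,j}$ and $L_{i',j}$ is determined by $\tp(b_ib_{i'}/A)$, hence by the order of $i,i'$, so each cut sequence is increasing, decreasing, or constant, and a monotone sequence of cuts passes a fixed point at most once. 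The dichotomy $\{\mathbf p_l,\mathbf p_r\}$ of $A$-invariant extensions concerns types of \emph{realizations of $p$} over larger sets; the $L_{i,j}$ are cuts induced by the external parameters $b_i$, and no ``third invariant extension'' is produced by a non-monotone crossing pattern, so that appeal does not do the work you assign to it.

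The genuine gap is the step you flag as ``the main obstacle'': passing from the bound $2N_\varphi$ to the bound $3$ required for dp-minimality. This is not a matter of sharpening a routine estimate; the combinatorics you have set up genuinely permit four alternations. Concretely, with $k=2$ components both drifting leftward (all four cut sequences decreasing), the fixed point $a$ can first be swept through by the left component and later by the right one, producing the pattern $0,1,0,1,0$ while every cut sequence is monotone and crosses $a$ exactly once. Nothing in your argument excludes this configuration; excluding it requires additional model-theoretic input about how the convex components of $\varphi(\Mon,b_i)\cap p(\Mon)$ are located relative to the genericity/forking structure on $p(\Mon)$ for an indiscernible sequence of parameters, and neither your ``delicate combinatorial analysis'' nor the fallback via mutually indiscernible sequences and Theorem \ref{Theorem_triangle_mathcal F} is actually carried out. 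As written, the proposal proves NIP but leaves dp-minimality unproved.
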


We have already mentioned that, unlike weak o-minimality, weak quasi-o-minimality of (one-sorted) $T$ does not depend on the choice of a 0-definable order: if $T$ is weakly quasi-o-minimal with respect to one 0-definable order, then it is so with respect to any other 0-definable order. This can be deduced from the following fact.

\begin{Proposition}[{\cite[Proposition 4.7]{MTwom}}]\label{Proposition_wqom_iff_all_1types_wom}
A complete first-order theory $T$ with infinite models is weakly quasi-o-minimal if and only if every type $p\in S_1(T)$ is weakly o-minimal.    
\end{Proposition}

A ``local" context of weak quasi-o-minimality was introduced in \cite[Definition 4.8]{MTwom}: A partial type $\Pi(x)$ is weakly quasi-o-minimal over $A$ if $\Pi(x)$ is over $A$ and every type $p\in S_\Pi(A)$ is weakly o-minimal. In that case, we also say that the set $\Pi(\Mon)$ is weakly quasi-o-minimal over $A$. 
This definition was motivated (and justified) by the next proposition whose corollary is that the theory $T$ is weakly quasi-o-minimal if and only if the formula $x=x$ is weakly quasi-o-minimal.  

\begin{Proposition}[{\cite[Proposition 4.9]{MTwom}}] 
Let $P$ be type-definable over $A$. Then $P$ is weakly quasi-o-minimal over $A$ if and only if there exists a relatively $A$-definable linear order $<$ on $P$ such that every relatively definable subset of $P$ is a Boolean combination of $<$-convex and relatively $A$-definable sets. In that case, the latter is true for any relatively $A$-definable linear order $<$ on $P$.
\end{Proposition}

\section{Interval types, convex types, and almost weakly o-minimal theories}\label{Section_IT convex}

In this section, we study convex weakly o-minimal types. The main technical result is Proposition \ref{Proposition_convex_type_witness}, in which we prove that the convexity of a weakly o-minimal type $p\in S(A)$ does not essentially depend on the choice of a relatively $A$-definable order on $p(\Mon)$: for any such order, there exists an $A$-definable extension of $(p(\Mon),<)$ in which $p(\Mon)$ is a convex subset. We also introduce almost weakly o-minimal theories which nearly approximate weakly o-minimal theories within weakly quasi-o-minimal theories. 

Let $(D,<)$ be an $A$-definable linear order. A formula $\phi(x)$ is said to be {\em convex with respect to $(D,<)$} if it implies $x\in D$ and defines a convex subset of $D$; in general, the meaning of $(D,<)$ will be clear from the context, so we simply say that $\phi(x)$ is convex. For a convex formula $\phi(x)$, by $\phi^-(x)$ and $\phi^+(x)$ we will denote formulae $x\in D\land x<\phi(\Mon)$ and $x\in D\land x>\phi(\Mon)$ respectively. Note that both $\phi^-(x)$ and $\phi^+(x)$ are convex formulae given over the same parameters as $\phi(x)$, and that $\phi^-(\Mon)<\phi(\Mon)<\phi^+(\Mon)$ is a convex partition of $D$.

By an {\em interval type of $(D,<)$ over $B\supseteq A$} we mean a maximal partial type $\Pi(x)$ consisting of convex $L_B$-formulae. Since for each convex $L_B$-formula $\phi(x)$ exactly one of $\phi^-(x)$, $\phi(x)$ and $\phi^+(x)$ belongs to an interval type in $(D,<)$ over $B$, an interval type in $(D,<)$ over $B$ may be described as a consistent set of convex $L_B$-formulae $\Pi(x)$ such that for each convex $L_B$-formula $\phi(x)$ either $\phi(x)\in \Pi(x)$ or $\phi(x)$ is inconsistent with $\Pi(x)$.  
The set of all interval types in $(D,<)$ over $B\supseteq A$ is denoted by $IT(B)$. Note that, although we do not emphasize it, $IT(B)$ depends on $A$, $D$ and $<$; however, these will always be clear from the context. The set $IT(B)$ is endowed with a compact Hausdorff topology in the usual way. 

For $a\in D$, {\em the interval type of $a$ in $(D,<)$ over $B\supseteq A$}, denoted by $\itp(a/B)$ (we again suppress $A$, $D$ and $<$), is the set of all convex $L_B$-formulas that are satisfied by $a$. It is easy to see that $\itp(a/B)\in IT(B)$. For a type $p\in S_x(B)$ that implies $x\in D$, by $p^{\conv}$ we denote the subtype consisting of all convex $L_B$-formulae from $p$. Again, it is easy to see that $p^{\conv}\in IT(B)$.

In the following lemma, we list some well-known facts about interval types.

\begin{Lemma}\label{Lemma_interval_types}
Let $B\supseteq A$.
\begin{enumerate}[(a),left=0em,labelsep=.7em]
    \item For all $\Pi\in IT(B)$, $\Pi(\Mon)$ is a convex subset of $D$.
    \item For distinct $\Pi_1,\Pi_2\in IT(B)$, $\Pi_1(\Mon)$ and $\Pi_2(\Mon)$ are disjoint and $<$-comparable.
    \item The space $IT(B)$ is naturally linearly ordered by $<$.
    \item For all $p\in S_x(B)$ implying $x\in D$, $p^{\conv}(\Mon)=\conv (p(\Mon))$ (the convex hull of $p(\Mon)$).
    \item If $\Pi\in IT(B)$ and if $p\in S_x(B)$ is a completion of $\Pi$, then $\Pi=p^{\conv}$ and $\Pi(\Mon)=\conv(p(\Mon))$. 
    \item Let $p,q\in S_x(A)$ be distinct. Then: $p^{\conv}=q^{\conv}$ if and only if $\conv(p(\Mon))=\conv (q(\Mon))$ if and only if there are $a,a'\models p$ and $b\models q$ such that $a'<b<a''$. 
\end{enumerate}
\end{Lemma}
\begin{proof}
(a) Clearly, $\Pi(\Mon)$ is convex as an intersection of convex sets.
For (b), if $\phi(x)\in\Pi_1\smallsetminus\Pi_2$, then $\phi(x)$ is inconsistent with $\Pi_2$, so $\Pi_1(\Mon)\cap\Pi_2(\Mon)=\emptyset$.
Now, (c) follows directly from (a) and (b).

(d) It is clear that $p^{\conv}(\Mon)$ contains $\conv(p(\Mon))$. If $a\in D$ and $a\notin \conv(p(\Mon))$, then $a<p(\Mon)$ or $a>p(\Mon)$. Without loss, suppose that the former holds. By compactness, there is a $\theta(x)\in p(x)$ such that $a<\theta(\Mon)$. Denote by $\theta^{\conv}(x)$ the formula that describes the convex hull of $\theta(\Mon)\cap D$. Then $a<\theta^{\conv}(\Mon)$ and $\theta^{\conv}(x)\in p^{\conv}$, so $a\notin p^{\conv}(\Mon)$.

(e) Clearly, $\Pi\subseteq p^{\conv}$ holds, so $\Pi=p^{\conv}$ follows by the maximality of $\Pi$, and $\Pi(\Mon)=\conv(p(\Mon))$ follows by (d). 

(f) Easy.
\end{proof}

In the literature, the term ``convex type" is commonly used to denote an interval type, but here we will follow \cite{MT} and assign it a different meaning.

\begin{Definition}\label{Definition_convex}
A complete type $p\in S(A)$ is {\it convex} if there exists an $A$-definable order $(D,<)$ in which $p(\Mon)$ is convex. 
\end{Definition}

\begin{Lemma}\label{Lemma_D_p_rel_def_convex}
Let $\mathbf p=(p,<)$ be a weakly o-minimal pair over $A$ and let $(D,<)$ be an $A$-definable extension of $(p(\Mon),<)$. Then there is a family $\Phi(x,y)=\{\phi_i(x,y)\mid i\in I\}$ of $L_{A}$-formulae such that for all $a\models p$, the disjunction $\bigvee_{i\in I}\phi_i(x,a)$ relatively defines $\mathcal D_p(a)$ within $p(\Mon)$ and for all $i\in I$ the set $\phi_i(\Mon,a)$ is a convex subset of $D$. 
\end{Lemma}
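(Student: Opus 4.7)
My plan is to start from Remark~\ref{Remark_pgeneric_first}(e), which gives
\[
\mathcal D_p(a) \;=\; p(\Mon)\cap\bigcup_{\psi\in\Psi_0}\psi(\Mon,a),
\]
where $\Psi_0$ is the family of $L_A$-formulas $\psi(x,y)$ such that $\psi(x,a)$ is relatively $p$-bounded for every $a\models p$. The obstruction to taking $\Phi=\Psi_0$ is that $\psi(\Mon,a)$ need not be convex in $D$; its witnesses in $D\smallsetminus p(\Mon)$ can be scattered. My strategy is to replace each $\psi$ by the $D$-convex hull of $\{a\}$ together with a collection of $\psi$-witnesses pruned by a formula from $p$.

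For each $\psi\in\Psi_0$ and each $\theta(z)\in p$, set
\[
\phi_{\psi,\theta}(x,y) \;:=\; \exists z\bigl(\psi(z,y)\land\theta(z)\land\bigl((y\le x\le z)\lor(z\le x\le y)\bigr)\bigr).
\]
For $a\models p$, the set $\phi_{\psi,\theta}(\Mon,a)$ is the union of the $D$-intervals $[\min(a,z),\max(a,z)]_D$ as $z$ ranges over $\psi(\Mon,a)\cap\theta(\Mon)$; since all these intervals contain $a$, their union is convex in $D$. Coverage is automatic: given $b\in\mathcal D_p(a)$, Remark~\ref{Remark_pgeneric_first}(e) supplies $\psi\in\Psi_0$ with $\models\psi(b,a)$, and since $b\in p(\Mon)\subseteq\theta(\Mon)$ for every $\theta\in p$, the choice $z=b$ shows $b\in\phi_{\psi,\theta}(\Mon,a)$.

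What remains is to pick, for each $\psi\in\Psi_0$, a formula $\theta_\psi\in p$ so that $\phi_{\psi,\theta_\psi}(\Mon,a)\cap p(\Mon)\subseteq\mathcal D_p(a)$ uniformly in $a\models p$; the family $\Phi:=\{\phi_{\psi,\theta_\psi}\mid\psi\in\Psi_0\}$ will then verify the lemma. Since $\phi_{\psi,\theta}(\Mon,a)\cap p(\Mon)$ is automatically convex in $(p(\Mon),<_p)$ (as the intersection of a convex subset of $D$ with $p(\Mon)$), Fact~\ref{Fact_basic_S_p_wom}(c) reduces this to boundedness of that trace in $(p(\Mon),<_p)$, i.e.\ to $\phi_{\psi,\theta_\psi}\in\Psi_0$. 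To produce $\theta_\psi$ I intend to apply the compactness principle Fact~\ref{Fact_L_P_sentence} to the tp-universal content of weak o-minimality of $p$ and of the relative $p$-boundedness of $\psi$, obtaining $\theta_\psi\in p$ that forces every witness $z\in\psi(\Mon,a)\cap\theta_\psi(\Mon)$ to lie between $p$-realizing bounds of $\psi(\Mon,a)\cap p(\Mon)$ in $(D,<)$; such a $\theta_\psi$ makes the relevant convex hull in $D$ bounded by $p$-elements, and its $p$-trace is then relatively $p$-bounded. The main obstacle is precisely this compactness transfer: relative $p$-boundedness is not itself a single tp-universal sentence, because the bounding pair in $p(\Mon)$ varies with $a$. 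Handling this requires first reformulating the boundedness in terms of the tp-universal convex-component structure of $\psi(\Mon,a)\cap p(\Mon)$, and then descending to a definable extension $(\theta_\psi(\Mon),<)$ of $(p(\Mon),<_p)$ via Fact~\ref{Fact_L_P_sentence} before the boundedness statement becomes first-order.
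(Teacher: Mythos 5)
Your construction is essentially the paper's, with a small variation: the paper prunes each relatively $p$-bounded $\psi(x,a)$ by a formula $\theta\in p$ and replaces it with the $D$-convex hull of $\psi(\Mon,a)\cap\theta(\Mon)$, whereas you take the hull of $\{a\}\cup(\psi(\Mon,a)\cap\theta(\Mon))$, which makes convexity automatic for every choice of $\theta$. The coverage argument (take $z=b$) is fine, and the reduction of the remaining containment to ``$\phi_{\psi,\theta_\psi}(x,a)$ is relatively $p$-bounded'' via Fact~\ref{Fact_basic_S_p_wom}(c) is also fine.

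The gap is that you never actually produce $\theta_\psi$, and the route you sketch for producing it is misdirected. You are trying to extract $\theta_\psi$ from a tp-universal statement that is uniform in the parameter $a$, and you correctly observe that relative $p$-boundedness of $\psi(x,a)$ is not tp-universal in $a$ because the bounding pair in $p(\Mon)$ varies with $a$; but the proposed repair (``reformulating the boundedness in terms of the tp-universal convex-component structure \dots\ and then descending to a definable extension before the boundedness statement becomes first-order'') is not carried out and it is not clear it can be. The correct move is much more elementary and sidesteps uniformity entirely: fix a \emph{single} $a\models p$ and choose $b_1,b_2\models p$ with $b_1\triangleleft^{\mathbf p}a\triangleleft^{\mathbf p}b_2$, so that $b_1<\mathcal D_p(a)<b_2$ and hence $p(x)\cup\{\psi(x,a)\}\vdash b_1<x<b_2$ (because $\psi(p(\Mon),a)\subseteq\mathcal D_p(a)$). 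Ordinary compactness over the parameters $Aab_1b_2$ then yields $\theta_\psi(x)\in p$, which we may take to imply $x\in D$, with $\models\theta_\psi(x)\land\psi(x,a)\rightarrow b_1<x<b_2$; since also $b_1<a<b_2$, the set $\phi_{\psi,\theta_\psi}(\Mon,a)$ lies in $(b_1,b_2)_D$ and is therefore relatively $p$-bounded. Uniformity over all $a'\models p$ is then free: ``$\phi_{\psi,\theta_\psi}(x,a')$ is relatively $p$-bounded'' is an $\Aut_A(\Mon)$-invariant property of $a'$, so it transfers from $a$ to every realization of $p$. With that step filled in, your family $\Phi=\{\phi_{\psi,\theta_\psi}\mid\psi\in\Psi_0\}$ does prove the lemma.
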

\begin{proof}
Let $\Phi_0(x,a)$ denote the set of all relatively $p$-bounded $L_{Aa}$-formulae in variable $x$. 
By Remark \ref{Remark_pgeneric_first}(e), the disjunction $\bigvee \Phi_0(x,a)$ relatively defines $\mathcal D_p(a)$ within $p(\Mon)$. Define: \
\begin{center}
$\Phi(x,a)=\{\phi(x,a)\in \Phi_0\mid \mbox{ $\phi(\Mon,a)$ is a convex subset of $D$}\}$.
\end{center}
We claim that the family $\Phi(x,y)$ satisfies the conclusion of the lemma: $\bigvee \Phi(x,a)$ relatively defines $\mathcal D_p(a)$ within $p(\Mon)$. To prove this, it suffices to show that for each formula $\phi(x,a)\in \Phi_0(x,a)$ there exists a $\phi^*(x,a)\in \Phi(x,a)$ such that $\phi(p(\Mon),a)\subseteq \phi^*(p(\Mon),a)$. Let $\phi(x,a)\in \Phi_0$ and let $b_1,b_2\models p$ satisfy $b_1\triangleleft^{\mathbf p}a\triangleleft^{\mathbf p}b_2$. By Remark \ref{Remark_trianglep_binary_basicprops} we have $b_1<\mathcal D_p(a)<b_2$. 
Since the set $\phi(p(\Mon),a)$ is a subset of $\mathcal D_p(a)$ we have: $p(x)\cup\{\phi(x,a)\}\vdash b_1<x<b_2$. By compactness, there is a formula $\theta(x)\in p$, without loss implying $x\in D$, such that $\models \theta(x)\land \phi(x,a)\rightarrow b_1<x<b_2$. Let $\phi^*(x,a)$ be a formula that defines the convex hull of $\phi(\Mon,a)\cap \theta(\Mon)$ in $(D,<)$. Clearly, $b_1<\phi^*(\Mon,a)<b_2$, so $\phi^*(x,a)$ is a relatively $p$-bounded formula, and $\phi(x,a)\in \Phi(x,a)$.  As $\theta(x)\in p$, the formula $\theta(x)\land \phi(x,a)$ relatively defines the same subset of $p(\Mon)$ as $\phi(x,a)$ does; clearly, this implies the desired conclusion $\phi^*(p(\Mon),a) \supseteq \phi(p(\Mon),a)$ and completes the proof of the lemma.  
\end{proof}

\begin{Proposition}\label{Proposition_convex_type_witness} 
Suppose that $\mathbf p=(p,<)$ is a weakly o-minimal pair over $A$ and that $p$ is convex. Then there is a formula $\theta(x)\in p$ such that $(\theta(\Mon),<)$ is an $A$-definable extension of $(p(\Mon),<)$ and $p(\Mon)$ is a convex subset of $\theta(\Mon)$; we will say that $\theta(x)$ witnesses the convexity of $\mathbf p$.     
\end{Proposition}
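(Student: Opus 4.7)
The plan is to compare the given order $<$ with a convexity-witnessing order $<^*$ and conclude via tp-universality. I start by fixing an $A$-definable linear order $<^*$ on some $A$-definable set $D^*=\delta(\Mon)$ witnessing that $p$ is convex, so $p(\Mon)$ is a $<^*$-convex subset of $D^*$. The restriction of $<^*$ to $p(\Mon)$ is a relatively $A$-definable linear order, so Corollary 3.3 of \cite{MTwom} tells us that $(p,<^*)$ is also a weakly o-minimal pair over $A$. By Fact \ref{Fact_notdelta}(c), after replacing $<^*$ by its reverse if needed, I may assume $\delta_A((p,<),(p,<^*))$; in particular the two orders induce the same $\triangleleft^{\mathbf p}$-relation on $p(\Mon)$.

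The central technical step is to show that $<$ and $<^*$ actually agree as binary relations on $p(\Mon)$. For pairs $(a,b)\in p(\Mon)^2$ with $a\ind_A b$ this is immediate from Remark \ref{Remark_trianglep_binary_basicprops}(b): the equivalence $a<b\Leftrightarrow a\triangleleft^{\mathbf p}b\Leftrightarrow a<^* b$ holds under $a\ind_A b$. For $\dep_A$-pairs (two points in a common $\mathcal D_p$-class) a further argument is required, and this is the step I expect to be the main obstacle; the idea is that each $\mathcal D_p$-class is simultaneously $<$- and $<^*$-convex in $p(\Mon)$ by Theorem \ref{Theorem4}(c), and the $<^*$-convexity of $p(\Mon)$ inside $D^*$, combined with the already-established agreement of the two orders between classes, constrains how $<$ and $<^*$ can disagree within a single class.

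Once agreement is established on $p(\Mon)$, I apply Fact \ref{Fact_L_P_sentence} to the conjunction of the tp-universal statements asserting that $x<y$ defines a linear order on $p(\Mon)$, that $(x<y)\leftrightarrow(x<^*y)$ on $p(\Mon)^2$, and that $\delta(x)$ holds (the last coming from $p(x)\vdash\delta(x)$). This produces a single formula $\theta(x)\in p$ with $\theta\vdash\delta$ such that $(\theta(\Mon),<)$ is an $A$-definable linear order extending $(p(\Mon),<)$ and such that $<$ coincides with $<^*$ on $\theta(\Mon)$. Since $\theta(\Mon)\subseteq D^*$ and $p(\Mon)$ is a $<^*$-convex subset of $D^*$, it is also $<^*$-convex in $\theta(\Mon)$, and hence $<$-convex in $\theta(\Mon)$ by the agreement of the two orders there. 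This $\theta$ witnesses the convexity of $\mathbf p$.
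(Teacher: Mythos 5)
Your argument stands or falls with the claim that $<$ and $<^*$ coincide as binary relations on $p(\Mon)$, and that claim is false in general; the case of $\dep_A$-pairs, which you defer with only a heuristic, is exactly where it breaks. Two relatively $A$-definable linear orders on $p(\Mon)$ that are directly non-orthogonal are forced to agree only on independent pairs (this is all that Remark \ref{Remark_trianglep_binary_basicprops}(b) and Fact \ref{Fact_notdelta}(c) give you); inside a single $\mathcal D_p$-class they can be completely different, e.g.\ reversed. Concretely, let $T=\Th(\mathbb Q^2,<_1,E)$ where $<_1$ is the lexicographic order and $E$ is the convex equivalence relation of equal first coordinates, and let $<_2$ be the $\emptyset$-definable linear order that agrees with $<_1$ on $E$-inequivalent pairs and reverses it within each $E$-class. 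There is a unique $p\in S_1(\emptyset)$; it is weakly o-minimal and (trivially) convex, $\mathcal D_p$ is $E$ restricted to $p(\Mon)=\Mon$, and $(p,<_1)$, $(p,<_2)$ are directly non-orthogonal with the same $\triangleleft^{\mathbf p}$, yet they disagree on every pair of distinct $E$-equivalent elements. The $<^*$-convexity of $p(\Mon)$ in $D^*$ imposes no constraint here (the locus is everything), so the ``further argument'' you hope for cannot exist, and the subsequent compactness step has nothing to be applied to.

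The paper's proof never attempts to identify the two orders. It instead uses Lemma \ref{Lemma_D_p_rel_def_convex} to write $\mathcal D_p(a)$ as the trace on $p(\Mon)$ of a union of relatively $Aa$-definable sets each of which is $<$-convex in the ambient $D$, observes that $p(x)$ together with ``$x$ lies $<$-above all these sets'' forces $a<_p x$ in the convexity-witnessing order (and dually from the right), extracts a single $\theta(x)\in p$ by compactness, and then, given $a'<c<b'$ with $a',b'\models p$ and $\models\theta(c)$, sandwiches $c$ strictly $<_p$-between two realizations of $p$ and invokes the $<_p$-convexity of $p(\Mon)$. The lesson is that one only needs to transfer \emph{betweenness modulo $\mathcal D_p$-classes} from $<$ to $<_p$, not equality of the orders; any repair of your approach should aim at that weaker statement.
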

\begin{proof}
Let $(D,<)$ be an $A$-definable extension of $(p(\Mon),<)$. Since $p$ is convex, there is a linear order $A$-definable $(D_p,<_p)$ that contains $p(\Mon)$ as a convex subset; by replacing the domains $D$ and $D_p$ by their intersection, we can assume $D=D_p$. Denote $\mathfrak p=(p,<_p)$. Reversing the order $<_p$ if necessary, we may assume that $<$ and $<_p$ have the same orientation; that is, $\mathbf p_r=\mathfrak p_r$. 

Let $a\models p$ and let $\Phi(x,y)$ be the set of formulas given by Lemma \ref{Lemma_D_p_rel_def_convex}:  $\bigvee\Phi(x,a)$ relatively defines $\mathcal D_p(a)$ within $p(\Mon)$ and for all $\phi(x,a)\in \Phi(x,a)$ the set $\phi(\Mon,a)$ is a convex subset of $D$. Then $\{\phi(\Mon,a)<x\mid \phi(x,a)\in \Phi(x,a)\}$ is a type, denote it by $\bigcup\Phi(D,a)<x$. Similarly, define the type $x<\bigcup\Phi(D,a)$. 
Since $\bigvee\Phi(x,a)$ relatively defines the set $\mathcal D_p(a)$ within $p(\Mon)$, we have $p(x)\cup (\bigcup\Phi(D,a)<x)\vdash (\mathbf p_r)_{\restriction Aa}(x)$ and, since $\mathbf p_r=\mathfrak p_r$, $p(x)\cup (\bigcup\Phi(D,a)<x)\vdash a<_px$. Similarly, for all $b\models p$ we have $p(x)\cup (x<\bigcup\Phi(D,b))\vdash x<_p b$. So:
\begin{equation*}p(x)\cup \left(\bigcup\Phi(D,a)< x\right)\vdash a<_p x \ \mbox{ and } \ p(x)\cup \left(x<\bigcup\Phi(D,b)\right)\vdash x<_p b .
\end{equation*}
By compactness, there is a formula $\theta(x)\in p(x)$ that implies $x\in D$ such that:
\begin{equation}\{\theta(x)\}\cup \left(\bigcup\Phi(D,a)< x\right)\vdash a<_p x \ \mbox{ and } \ \{\theta(x)\}\cup \left(x<\bigcup\Phi(D,b)\right)\vdash x<_p b .
\end{equation}
\setcounter{equation}{0} 

We claim that $\theta(x)$ satisfies the conclusion of the lemma: $p(\Mon)$ is convex in $(\theta(\Mon),<)$. To prove it, assume $a',b'\models p$ and $\models a'<c<b'\land \theta(c)$; we need to prove $c\models p$. Choose $a,b\models p$ satisfying $a\triangleleft^{\mathbf p}a'$ and $b'\triangleleft^{\mathbf p}b$. Since each formula from $\Phi(x,a)$ defines a convex subset of $D$, the locus of the type $\bigcup\Phi(D,a)<x$ is a final part of $(D,<)$; clearly, it contains $a'$, so $a'<c\in D$ implies  $c\models (\bigcup\Phi(D,a)<x)$, which, together with $\models \theta(c)$, by (1), implies $a<_pc$. Similarly, $c<_pb$. Since $p(\Mon)$ is convex in $(\theta(\Mon),<_p)$ and $a<_p c<_p b$, we conclude $c\models p$, as desired. 
\end{proof}
\setcounter{equation}{0}  

As a corollary, we obtain an important characterization of convexity for weakly o-minimal types. 

\begin{Corollary}\label{Corollary_convex_iff_isolated_in_IT} 
Let $p\in S_x(A)$ be weakly o-minimal. 
\begin{enumerate}[(a),left=0em,labelsep=.7em]
\item The convexity of $p$ does not depend on the choice of a relatively $A$-definable order on $p(\Mon)$, that is, if $p$ is convex then every relatively $A$-definable order on $p(\Mon)$ has an $A$-definable extension in which $p(\Mon)$ is a convex set.
\item  The following conditions are equivalent:
\begin{enumerate}[(1),left=0em,labelsep=.7em]
    \item $p$ is convex;
    \item For all $A$-definable linear orders $(D,<)$ with $p(\Mon)\subseteq D$, $p$ is an isolated point of $S_{p^{\conv}}$;
    \item There exists an $A$-definable linear order $(D,<)$ and an interval type $\Pi\in IT(A)$ such that $p$ is an isolated point of $S_{\Pi}$.
\end{enumerate}
\end{enumerate}
\end{Corollary}
\begin{proof} (a) is immediate, so we prove (b).
(1)$\Rightarrow$(2) Suppose that $p$ is convex and that $(D,<)$ is an $A$-definable linear order such that $p(\Mon)\subseteq D$; then $p^{\conv}\in IT(A)$. 
By Proposition \ref{Proposition_convex_type_witness} there is a formula $\theta(x)\in p$ such that $p(\Mon)$ is convex in $(\theta(\Mon),<)$. Replacing $\theta(x)$ with $\theta(x)\land x\in D$, we obtain $\theta(\Mon)\subseteq D$; note that $p(\Mon)$ is still convex in $(\theta(\Mon),<)$. By Lemma \ref{Lemma_interval_types}(d) we know that $p^{\conv}(\Mon)$ is the convex hull of $p(\Mon)$, so since $p(\Mon)$ is convex in $(\theta(\Mon),<)$, 
we conclude $p^{\conv}(\Mon)\cap\theta(\Mon)=p(\Mon)$. 
Therefore, $p^{\conv}(x)\cup \{\theta(x)\}\vdash p(x)$; $p$ is an isolated point of $S_{p^{\conv}}$. 

(2)$\Rightarrow$(3) is trivial, so we prove (3)$\Rightarrow$(1).  Assume that $\theta(x)\in p(x)$ implies $x\in D$ and isolates $p$ within $S_\Pi$; then $p(\Mon)=\Pi(\Mon)\cap\theta(\Mon)$. As $\Pi(\Mon)$ is convex in $(D,<)$, it follows that $p(\Mon)$ is convex in $(\theta(\Mon),<)$. Therefore, $p$ is a convex type.  
\end{proof}

The following example shows that the weak o-minimality of $p$ cannot be omitted in Proposition \ref{Proposition_convex_type_witness}. 

\begin{Example}\label{Example_convex_depends on order}(A convex type $p\in S_1(T)$ of a saturated linearly ordered structure $(M,\triangleleft, \dots)$ such that $p(M)$ is not a $\triangleleft$-convex subset of any 0-definable subset of $M$.)\\
Let $\mathcal K$ be the class of all finite structures $(A,<,\triangleleft,C_n)_{n\in\omega}$ such that:
\begin{enumerate}[label=$\bullet$,left=1em,labelsep=.7em]
    \item $<$ and $\triangleleft$ are linear orders;
    \item Each $C_n$ is a $<$-convex subset of $A$;
    \item $C_0\supseteq C_1\supseteq C_2\supseteq \dots$.
\end{enumerate}
Clearly, $\mathcal K$ is closed for substructures and has the amalgamation property. Let $\mathcal M=(M,<,\triangleleft,C_n)_{n\in\omega}$ be the Fra\"iss\'e limit of $\mathcal K$ and let $T=\Th(\mathcal M)$. It is routine to verify that $\mathcal M$ is saturated and that $T$ is model complete and eliminates quantifiers.  Clearly, $(M,<)$ and $(M,\triangleleft)$ are dense endless linear orders. 

By elimination, the set $\{C_n(x)\mid n\in\omega\}$ determines a complete type $p(x)\in S_1(T)$. Clearly, $p$ is convex, as witnessed by $(M,<)$. Let $D\subseteq M$ be any 0-definable set that contains $p(M)$. We claim that $p(M)$ is not convex in $(D,\triangleleft)$. By compactness, there is $n$ such that $C_n\subseteq D$, so it suffices to show that $p$ is not convex in $(C_n,\triangleleft)$. 
By construction, for each $b\in M$ the set $\{x\in p(M)\mid x\triangleleft b\}$ is dense and codense in each $(C_n,<)$ ($n\in\omega$), and consequently also in $(p(M),<)$; similarly, for $\{x\in p(M)\mid b\triangleleft x\}$. Thus, for each $b\in M$ there are $a_1,a_2\in p(M)$ such that $a_1\triangleleft b\triangleleft a_2$. Choose $b\in C_n\smallsetminus p(M)$ to conclude that $p(M)$ is not convex in $(C_n,\triangleleft)$. Therefore, $p$ is a convex type, but $p(M)$ is not a $\triangleleft$-convex subset of any 0-definable subset of $M$.
\end{Example}

\begin{Definition}
A complete first-order theory $T$ is {\it almost weakly o-minimal} if every type $p\in S_1(T)$ is weakly o-minimal and convex. 
\end{Definition}

Clearly, weakly o-minimal theories are almost weakly o-minimal. It is easy to see that almost weakly o-minimal theories are weakly quasi-o-minimal: by Proposition \ref{Proposition_wqom_iff_all_1types_wom}, weak o-minimality of all types from $S_1(T)$ implies weak quasi-o-minimality of $T$. Therefore:
$$\mbox{ $T$ weakly o-minimal   \ $\Rightarrow$    \ $T$ almost weakly o-minimal \ $\Rightarrow$ \ $T$ weakly quasi-o-minimal.}$$ 
In Examples \ref{Examle_almostwom_not_wom} and \ref{Example_wqom_not_almostwom}, we demonstrate that both implications are non-reversible in general. Prior to that, we provide a characterization of almost weak o-minimality.

\begin{Proposition}\label{Proposition_everywom convex iff It finite}
$T$ is almost weakly o-minimal if and only if it is weakly quasi-o-minimal and for some (equivalently, any) $0$-definable order $<$ every interval type $\Pi\in IT(\emptyset)$ has finitely many completions in $S_1(T)$.
\end{Proposition}
\begin{proof}
First, assume that $T$ is almost weakly o-minimal and we prove that for any $0$-definable order $<$ every interval type $\Pi\in IT(\emptyset)$ has finitely many completions in $S_1(T)$. Consider the space $S_{\Pi}$. It is compact as a closed subspace of $S_1(T)$ and discrete by Corollary \ref{Corollary_convex_iff_isolated_in_IT}.  Therefore, $S_{\Pi}$ is finite.

Now, assume that $T$ is weakly quasi-o-minimal and that for some $0$-definable order $<$ every interval type $\Pi\in IT(\emptyset)$ has finitely many completions in $S_1(T)$. Then every type $p\in S_1(T)$ is an isolated point of the space $S_{p^{\conv}}$, so by Corollary \ref{Corollary_convex_iff_isolated_in_IT}, $p$ is convex; $p$ is weakly o-minimal by the assumed weak quasi-o-minimality of $T$. Therefore, every type from $S_1(T)$ is weakly o-minimal and convex; $T$ is almost weakly o-minimal.  
\end{proof}

Let $T$ be weakly quasi-o-minimal and let $<$ be a 0-definable order. Then $T$ is weakly o-minimal with respect to $<$ if and only if the locus of every type $p\in S_1(T)$ is convex in $(\Mon,<)$, that is, every interval type has a unique completion. Therefore, the previous proposition suggests that the almost weak o-minimality is a close approximation of the weak o-minimality.

\begin{Example}\label{Examle_almostwom_not_wom}(Almost weakly o-minimal theory which is not weakly o-minimal with respect to any 0-definable order.)\\
Consider the colored order $(\mathbb Q,<,I_n,P_n,R_n)_{n\in\mathbb N}$ such that:
\begin{enumerate}[label=$\bullet$,left=1em,labelsep=.7em]
    \item $(\mathbb Q,<)$ is a usual linear order and $I_n$ is the open interval $(n,n+1)$ ($n\in \mathbb N$);
    \item $\{P_n,R_n\}$ is a partition of $I_n$ into (topologically) dense pieces for all $n\in\mathbb N$.
\end{enumerate}
It is not hard to see that after naming all natural numbers as constants, the underlying theory $T$ eliminates quantifiers and is almost weakly o-minimal.
It is also clear that $T$ is not weakly o-minimal with respect to $<$. However, $T$ is not weakly o-minimal with respect to any 0-definable order: Let  $\triangleleft$ be a 0-definable order and let $p\in S_1(T)$ be the type of infinitely large element. There is a unique complete 2-type extending $p(x)\cup p(y)\cup\{x<y\}$ because every automorphism, $f$, of $(p(\Mon),<)$ extends to an automorphism of $\Mon$ (defined by $f(x)=x$ for $x\notin p(\Mon)$). This implies that $\triangleleft$ agrees with one of $<$ and $>$ on $p(\Mon)$; without loss, assume that it agrees with $<$. 
By compactness, there is $n\in \mathbb N$ such that $\triangleleft$ and $<$ agree for all $x\geqslant n$. This implies that for any $m>n$, both $R_{m}$ and $P_m$ contain an infinite number of $\triangleleft$-convex components, so $T$ is not weakly o-minimal with respect to $\triangleleft$.
\end{Example}

\begin{Example}\label{Example_wqom_not_almostwom}(A weakly quasi-o-minimal theory that is not almost weakly o-minimal.)\\
Consider the colored order $\mathcal M=(\mathbb Q,<,P_n)_{n\in\mathbb N}$, in which $(P_n\mid n\in\mathbb N)$ is a partition of $\mathbb Q$ into topologically dense subsets. It is easy to see that the theory $\Th(\mathcal M)$ eliminates quantifiers and is weakly quasi-o-minimal. There is a unique interval type $\Pi$. Clearly, the space $S_{\Pi}=S_1(T)$ is infinite and compact, so contains a non-isolated type; that type is not convex by Corollary \ref{Corollary_convex_iff_isolated_in_IT}, so the theory $\Th(\mathcal M)$ is not almost weakly o-minimal.    
\end{Example}

The rest of this section contains a few technical facts that will be used in the proof of Theorem \ref{Theorem1_shift}.

 \begin{Lemma}\label{Lemma_wqom_interval_type}
Suppose that $\Th(\Mon,<,\dots)$ is weakly quasi-o-minimal.
\begin{enumerate}[(a),left=0em,labelsep=.7em] 
\item  $\delta_A(\mathbf p,\mathbf q)$ holds for all $p,q\in S_1(A)$ extending the same interval type ($\mathbf p=(p,<),\mathbf q=(q,<)$). 
\item If $\mathcal F$ is a $\delta_A$-class, then the order $<^{\mathcal F}$ in Theorem \ref{Theorem5} can be chosen so that for each $\Pi\in IT(A)$ with $\Pi(\Mon)\cap\mathcal F(\Mon)\neq\emptyset$ one of $(<^{\mathcal F})_{\restriction \Pi(\Mon)}=\ <$ and $(<^{\mathcal F})_{\restriction \Pi(\Mon)}=\ >$ holds.
\end{enumerate}
\end{Lemma}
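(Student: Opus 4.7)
The plan for (a) is to combine the convex-hull characterisation of interval types (Lemma \ref{Lemma_interval_types}(e)) with the failure criterion for $\delta_A$ from Fact \ref{Fact_notdelta}(b); for (b), my strategy is to extract from (a) a canonical direction $\epsilon_\Pi\in\{<,>\}$ for every interval type meeting $\mathcal F(\Mon)$, check that $\triangleleft^{\mathcal F}$ respects $\epsilon_\Pi$ on each $\Pi$-fiber, and then adapt the construction of $<^{\mathcal F}$ from Theorem \ref{Theorem5}. For (a) I would dispose of the trivial case $p=q$ and then assume $p\neq q$. To get $p\nwor q$ I would use Lemma \ref{Lemma_interval_types}(e): $\Pi(\Mon)$ is the convex hull of both $p(\Mon)$ and $q(\Mon)$, and since these sets are disjoint, for every $b\models q$ there are $a_-,a_+\models p$ with $a_-<b<a_+$, producing completions of $p(x)\cup q(y)$ realising both $x<y$ and $x>y$. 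For the direction condition I would argue by contradiction via Fact \ref{Fact_notdelta}(b): if $\lnot\delta_A(\mathbf p,\mathbf q)$ we obtain $a\models p$, $b\models q$ with $a\triangleleft^{\mathbf q} b\triangleleft^{\mathbf p} a$; the first gives $b\models(\mathbf q_r)_{\restriction Aa}$, and since $a\in\Pi(\Mon)\smallsetminus q(\Mon)$ is sandwiched strictly between elements of $q(\Mon)$, the formula $x>a$ defines a right-final part of $(q(\Mon),<)$ and hence belongs to $\mathbf q_r$, forcing $b>a$; symmetrically $a>b$, contradicting the linearity of $<$.

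For (b) I would first combine (a) with Theorem \ref{Theorem_nonorthogonality}(b) to note that for every $\Pi\in IT(A)$ with $\Pi(\Mon)\cap\mathcal F(\Mon)\neq\emptyset$, all pairs $(p,<)$ with $p\supseteq\Pi$ represented in $\mathcal F$ lie in a single $\delta_A$-class, and that class is either $\mathcal F$ itself or its reverse; this pins down the unique $\epsilon_\Pi\in\{<,>\}$ such that $(p,\epsilon_\Pi)\in\mathcal F$ for such $p$. I would then rerun the right-eventual argument from (a) to show that $\triangleleft^{\mathcal F}$ restricted to the fiber $F_\Pi:=\Pi(\Mon)\cap\mathcal F(\Mon)$ is consistent with $\epsilon_\Pi$: whenever $a\triangleleft^{\mathcal F}b$ with $b\models q$ and both lie in $F_\Pi$, then $\mathbf q=(q,\epsilon_\Pi)\in\mathcal F$ and $b\models(\mathbf q_r)_{\restriction Aa}$ together force $a\,\epsilon_\Pi\,b$ (the case $p=q$ is Remark \ref{Remark_trianglep_binary_basicprops}(b); the case $p\neq q$ uses that $a\in\Pi(\Mon)\smallsetminus q(\Mon)$ as in (a)).

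To finish I would consider the enlarged $A$-invariant relation
\[
\triangleleft^{+}\ :=\ \triangleleft^{\mathcal F}\ \cup\ \bigcup_{\Pi}\{(a,b)\in F_\Pi\times F_\Pi : a\,\epsilon_\Pi\,b\},
\]
and argue, using the fiberwise consistency together with the transitivity properties in Theorem \ref{Theorem_triangle_mathcal F}(f)--(j), that $\triangleleft^{+}$ is a strict partial order. The construction of $<^{\mathcal F}$ in Theorem \ref{Theorem5} can then be re-run with $\triangleleft^{+}$ in place of $\triangleleft^{\mathcal F}$, producing an $A$-invariant linear extension which by construction restricts to $\epsilon_\Pi$ on each $F_\Pi$, giving the required order. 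The hard part will be verifying the acyclicity of $\triangleleft^{+}$ across fibers with possibly opposing values of $\epsilon_\Pi$; here the key point is that each newly added edge is confined to a single fiber, so any cycle using such edges must close via $\triangleleft^{\mathcal F}$-steps, and these are already compatible with each fiber's chosen $\epsilon_\Pi$ by the consistency check.
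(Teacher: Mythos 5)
Part (a) of your proposal is correct and is essentially the paper's own argument: $p\nwor q$ comes from the shared convex hull (Lemma \ref{Lemma_interval_types}(d)--(e)), and the direction condition comes from observing that $a\triangleleft^{\mathbf q}b$ forces $a<b$ (the formula $x>a$ is right-eventual in $(q(\Mon),<)$ because $a$ lies strictly inside the convex hull of $q(\Mon)$), which rules out the configuration $a\triangleleft^{\mathbf q}b\triangleleft^{\mathbf p}a$ of Fact \ref{Fact_notdelta}(b).

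For (b) you take a genuinely different and much heavier route. The paper's proof is a one-liner: by (a), Fact \ref{Fact_notdelta}(a), and the fact that $\mathcal F$ is a $\delta_A$-class, each type $p$ represented in $\mathcal F$ has a unique orientation $\epsilon_{\Pi_p}\in\{<,>\}$ with $(p,\epsilon_{\Pi_p})\in\mathcal F$, and this $\epsilon$ depends only on the interval type of $p$; one then simply feeds the family $\bigl((p,\epsilon_{\Pi_p})\mid p \mbox{ represented in } \mathcal F\bigr)$ into the ``moreover'' clause of Theorem \ref{Theorem5}. You instead rebuild the order by enlarging $\triangleleft^{\mathcal F}$ to $\triangleleft^{+}$ and rerunning the construction of Theorem \ref{Theorem5}. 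This can be made to work, but the two steps you leave as sketches are precisely the nontrivial ones. For the acyclicity of $\triangleleft^{+}$, the clean argument is to pass to the quotient by $\mathcal D_{\mathcal F}$ (Theorem \ref{Theorem5}(b)): your consistency check shows that new edges between $\triangleleft^{\mathcal F}$-comparable points are redundant, and every remaining new edge is confined to a single $\mathcal D_{\mathcal F}$-class intersected with a single fiber $F_{\Pi}$, where $\epsilon_\Pi$ is already a strict linear order, so no cycle can project nontrivially to the linearly ordered quotient. On the other hand, ``rerunning the construction of Theorem \ref{Theorem5} with $\triangleleft^{+}$'' cannot be justified from the statement of Theorem \ref{Theorem5} alone, since that construction lives in \cite{MTwom}; this is the weak point of your version. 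What your extra work buys is an explicit treatment of the cross-type pairs $a\models p$, $b\models q$ with $p\neq q$ extending the same $\Pi$ and $a\dep_A b$, which the literal wording of the ``moreover'' clause (agreement of $<^{\mathcal F}$ with $<_i$ on each single locus $p_i(\Mon)$) does not address; the paper's one-line proof leaves this to the construction in \cite{MTwom}.
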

\begin{proof}
(a)  Let $\Pi\in IT(A)$ and let $p,q\in S_{\Pi}$ be distinct completions of $\Pi$. First, we show $p\nwor q$.  We know $p^{\conv}=q^{\conv}=\Pi$, so by Lemma \ref{Lemma_interval_types}(d) the sets $p(\Mon)$ and $q(\Mon)$ have the same convex hull in $(\Mon,<)$. Thus, for any $a\models p$ there are $b_1,b_2\models q$ such that $b_1<a<b_2$; clearly, $\tp(ab_1)\neq \tp(ab_2)$ witnesses $p\nwor q$. 
Next, we claim that $a\triangleleft^{\mathbf q} b$ implies $a<b$. 
Assume that $b$ is right $\mathbf q$-generic over $a$. By Remark \ref{Remark_pgeneric_first}(c), the set of all right $\mathbf q$-generic elements over $a$, which is the locus of $\tp(b/Aa)$, is a final part of $(q(\Mon),<)$; since $q(\Mon)$ is included in the convex hull of $p(\Mon)$, some $\mathbf q$-generic element, say $b'$, satisfies $a<b'$;  $a<b$ follows, proving the claim. Similarly, we see that $b\triangleleft^{\mathbf p}a$ implies $b<a$.
Therefore, $a\triangleleft^{\mathbf q}b\triangleleft^{\mathbf p}a$ is impossible. By Fact \ref{Fact_notdelta} this implies $\delta_A(\mathbf p,\mathbf q)$. 

(b) Follows from part (a) and the ``moreover" part of Theorem \ref{Theorem5}.
\end{proof}

 \noindent
{\bf Convention.} Whenever the theory $T=\Th(\Mon,<,\dots)$ is weakly quasi-o-minimal, $\mathcal F$ is a part of a $\delta_A$-class, and $<^{\mathcal F}$ satisfies the conclusion of Theorem \ref{Theorem5}, we will assume that $<^{\mathcal F}$ also satisfies the conclusion of the previous lemma.

\begin{Lemma}\label{Lemma_forking_on_interval_type}
Suppose that $\Th(\Mon,<,\dots)$ is weakly quasi-o-minimal.
For each $\Pi\in IT(A)$ define:  
\begin{center}$S_{\Pi}=\{p\in S_1(A)\mid \Pi(x)\subseteq p(x)\}  \ \ \ \mbox{ and } \ \ \ \mathcal D_{\Pi}=\{(a,b)\in \Pi(\Mon)^2\mid a\dep_A b\}.$
\end{center}
\begin{enumerate}[(a),left=0em,labelsep=.7em] 
\item   $(a,b)\in\mathcal D_{\Pi}$ \ if and only if \ $a,b\in \Pi(\Mon)$ and $\tp(a/Ab)$ contains a $\Pi$-bounded formula.
\item $\mathcal D_{\Pi}$ is a convex equivalence relation on $\Pi(\Mon)$. 
\end{enumerate}
\end{Lemma}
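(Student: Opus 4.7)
The plan is to exploit that in the weakly quasi-o-minimal setting every complete $1$-type over $A$ is weakly o-minimal, so Fact \ref{Fact_basic_S_p_wom}(c) is available, and then translate between ``relatively $p$-bounded'' and ``$\Pi$-bounded'' via the identification $\Pi(\Mon)=p^{conv}(\Mon)$ from Lemma \ref{Lemma_interval_types}(d). Part (a) follows from this translation in both directions, and part (b) falls out of Theorem \ref{Theorem5}(b) applied to the $\delta_A$-class supplied by Lemma \ref{Lemma_wqom_interval_type}(a).

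For the ``$\Leftarrow$'' direction of (a), suppose $\phi(x,b)\in\tp(a/Ab)$ is $\Pi$-bounded and write $p=\tp(a/A)$. Then $\phi(\Mon,b)\cap\Pi(\Mon)$ is sandwiched between some $c_1,c_2\in\Pi(\Mon)$. I would replace each $c_i$ by some $c_i'\in p(\Mon)$ on the same side: if $c_i\in p(\Mon)$ take $c_i'=c_i$, otherwise use that $c_i$ lies strictly between two elements of $p(\Mon)$ (since $\Pi(\Mon)$ is the convex hull of $p(\Mon)$ in $D$) and pick $c_i'$ among those. Then $c_1',c_2'\in p(\Mon)$ still bound $\phi(\Mon,b)\cap p(\Mon)$, so $\phi(x,b)$ is relatively $p$-bounded, and Fact \ref{Fact_basic_S_p_wom}(c) yields $a\dep_A b$.

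For the ``$\Rightarrow$'' direction of (a), given $a\dep_A b$, I fix an $A$-definable extension $(D,<)$ of $(p(\Mon),<)$ and adapt the construction in Lemma \ref{Lemma_D_p_rel_def_convex} with the parameter $b$ in place of $a\models p$: the argument goes through because $\mathcal D_p(b)$ is still a bounded convex subset of $p(\Mon)$, boundedness being supplied by left and right $\mathbf p$-generic elements over $b$, which exist as realizations of the consistent types $(\mathbf p_l)_{\restriction Ab}$ and $(\mathbf p_r)_{\restriction Ab}$. This yields $\phi^*(x,b)\in\tp(a/Ab)$ that is relatively $p$-bounded with $\phi^*(\Mon,b)$ convex in $D$. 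To finish I verify $\phi^*$ is $\Pi$-bounded: fix $b_1,b_2\in p(\Mon)$ with $b_1<\phi^*(\Mon,b)\cap p(\Mon)<b_2$; if some $x\in\phi^*(\Mon,b)\cap\Pi(\Mon)$ satisfied $x\leqslant b_1$, then since $a\in\phi^*(\Mon,b)\cap p(\Mon)$ satisfies $a>b_1$, the $D$-convexity of $\phi^*(\Mon,b)$ would force $b_1\in\phi^*(\Mon,b)$, contradicting $b_1\notin\phi^*(\Mon,b)\cap p(\Mon)$; the other side is symmetric, so $\phi^*(\Mon,b)\cap\Pi(\Mon)\subseteq(b_1,b_2)$.

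For part (b), Lemma \ref{Lemma_wqom_interval_type}(a) shows that $\mathcal F:=\{(q,<)\mid q\in S_\Pi\}$ is part of a single $\delta_A$-class, with $\mathcal F(\Mon)=\Pi(\Mon)$ and $\mathcal D_\mathcal F=\mathcal D_\Pi$; Theorem \ref{Theorem5}(b) then asserts that $\mathcal D_\Pi$ is a $<^\mathcal F$-convex equivalence relation on $\Pi(\Mon)$, which, by the convention following Lemma \ref{Lemma_wqom_interval_type}, makes it $<$-convex. The main obstacle I anticipate is the forward direction of (a) — specifically, checking that Lemma \ref{Lemma_D_p_rel_def_convex}'s construction adapts from $a\models p$ to arbitrary $b\in\Pi(\Mon)$; once a $D$-convex witnessing formula is produced, the elementary convexity argument converting ``relatively $p$-bounded'' to ``$\Pi$-bounded'' is immediate.
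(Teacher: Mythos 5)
Your proof is correct, and parts (a)($\Leftarrow$) and (b) coincide with the paper's own argument: the backward direction is exactly the observation that a $\Pi$-bounded formula is relatively $p$-bounded because $\Pi(\Mon)=p^{conv}(\Mon)$ is the convex hull of $p(\Mon)$, and (b) is verbatim the combination of Lemma \ref{Lemma_wqom_interval_type} with Theorem \ref{Theorem5}(b). The only real divergence is in (a)($\Rightarrow$), where the paper is more direct: from a relatively $p$-bounded $\phi(x,b)\in\tp(a/Ab)$ one has $p(x)\cup\{\phi(x,b)\}\vdash a_1<x<a_2$ for some $a_1,a_2\models p$, and a single application of compactness yields $\theta(x)\in p$ with $\theta(x)\land\phi(x,b)\vdash a_1<x<a_2$; the formula $\theta(x)\land\phi(x,b)\in\tp(a/Ab)$ then has its entire solution set trapped between $a_1,a_2\in\Pi(\Mon)$ and is therefore $\Pi$-bounded. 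Your detour through the convex-hull construction of Lemma \ref{Lemma_D_p_rel_def_convex} is sound --- that construction indeed needs only that $\mathcal D_p(b)$ be a bounded convex subset of $p(\Mon)$, which holds for an arbitrary parameter $b$ --- but it buys nothing here, since $\Pi$-boundedness does not require the witnessing formula to define a convex set. One small point to tighten: your final verification only concludes that $\phi^*(\Mon,b)\cap\Pi(\Mon)$ lies in $(b_1,b_2)$, whereas $\Pi$-boundedness (as recalled in Section 5) asks this of the whole set $\phi^*(\Mon,b)$; your own convexity argument already gives the stronger statement, since it applies to any $x\in\phi^*(\Mon,b)\subseteq D$ and not only to $x\in\Pi(\Mon)$.
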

\begin{proof} 
(a) Let $(a,b)\in \Pi(\Mon)$ and let $p=\tp(a/A)$. First, suppose that $(a,b)\in\mathcal D_{\Pi}$. Then $a\dep_A b$, so there is a relatively $p$-bounded formula $\phi(x)\in\tp(a/Ab)$ in $(p(\Mon),<)$; that is, there are $a_1,a_2\models p$ such that $p(x)\cup \{\phi(x)\}\vdash a_1<x<a_2$. 
By compactness, there is $\theta(x)\in p$ such that 
$\theta(x)\land\phi(x)\vdash a_1<x<a_2$. Clearly, $\theta(x)\land \phi(x)\in \tp(a/Ab)$ is a $\Pi$-bounded formula, proving the left-to-right implication. To prove the other, it suffices to note that, since $\Pi(\Mon)$ is the convex hull of $p(\Mon)$, every $\Pi$-bounded formula $\phi(x)\in\tp(a/Ab)$ is also $p$-bounded, so $\phi(x)$ witnesses $a\dep_Ab$. 

(b) Let $\mathcal F$ be the $\delta_A$-class of $(p,<)$, where $p$ is a completion of $\Pi$; by Lemma \ref{Lemma_wqom_interval_type}(a), $(q,<)\in\mathcal F$ for all $q\in S_\Pi$. Let $<^{\mathcal F}$ be given by Lemma \ref{Lemma_wqom_interval_type}(b). By Theorem \ref{Theorem5}(b), $\mathcal D_{\mathcal F}=\{(a,b)\in \mathcal F(\Mon)^2\mid a\dep_A b\}$ is a $<^{\mathcal F}$-convex equivalence relation on $\mathcal F(\Mon)$. Since $\mathcal D_{\mathcal F\restriction \Pi(\Mon)}=\mathcal D_{\Pi}$ and $(<^{\mathcal F})$ is equal to $<$ or $>$ on $\Pi(\Mon)$, we conclude that $\mathcal D_{\Pi}$ is $<$-convex on $\Pi(\Mon)$. 
\end{proof}

\section{Order-trivial types}\label{Section_trivial}

The notion of triviality for types in stable theories was introduced and studied by Baldwin and Harrington in \cite{BH}: a stationary type $p\in S_x(A)$ is trivial if, for all $B\supseteq A$, any pairwise independent triple of realizations of the nonforking extension of $p$  is independent (as a set) over $B$. They showed that triviality of the type $p$ is preserved in all nonforking extensions and restrictions, suggesting that triviality is a property of the global nonforking extension of $p$, which is also a unique $A$-invariant, global extension of $p$. Therefore, a global type $\mathfrak p$ is trivial if for some (equivalently all) sets $A$ over which $\mathfrak p$ is invariant, any pairwise independent over $A$ set of realizations of $\mathfrak p_{\restriction A}$ is independent over $A$. Now, if we want to consider an $A$-invariant type $\mathfrak p$ in an arbitrary first-order theory, then it is natural to consider Morley sequences in $\mathfrak p$ over $A$ as independent. 

\begin{Definition}\label{Def_trivial_global_type}
A global type  $\mathfrak p\in S_x(\Mon)$  is {\em trivial over $A$} if it is non-algebraic, $A$-invariant and whenever the members of the sequence $I=(a_i\mid i\in \omega)$ are such that $(a_i,a_j)$ is a Morley sequence in $\mathfrak p$ over $A$ for all $i<j\in\omega$, then $I$ is a Morley sequence in $\mathfrak p$ over $A$. 
\end{Definition}

\begin{Remark}\phantomsection\label{Remark_basic_trivial}
\begin{enumerate}[(a),left=0em,labelsep=.7em]
\item  Let $T$ be stable and let $p\in S_x(A)$ be stationary and non-algebraic. Denote by $\mathfrak p$ the global nonforking extension of $p$. Then $p$ is trivial in the Baldwin-Harrington sense if and only if $\mathfrak p$ is trivial over $A$ in the sense of Definition \ref{Def_trivial_global_type}.

\item  If $T$ is a binary theory, then any non-algebraic, $A$-invariant global type is trivial over $A$. Examples of binary theories are: the theory of random graphs, theories of colored orders, etc.

\item  If a global type $\mathfrak p$ is trivial over $A$, then so is every finite power $\mathfrak p^n$.
\end{enumerate}
\end{Remark}

\begin{Remark}\label{Remark_trivial_basic2}
Let $\mathfrak p\in S_x(\Mon)$ be a non-algebraic, $A$-invariant type.
\begin{enumerate}[(a),left=0em,labelsep=.7em]
\item  An equivalent way to state that $\mathfrak p$ is trivial over $A$ is: for every $n\in\mathbb N$, the type \ $\bigcup_{i<j< n}(\mathfrak p^2)_{\restriction A}(x_i,x_j)$ \ implies a unique complete type over $A$ in variables $x_0,\dots,x_{n-1}$ (in which case this type must be $(\mathfrak p^n)_{\restriction A}(x_0,\dots,x_{n-1})$).

\item  Similarly as in \cite{BH}, by a {\em $\mathfrak p$-triangle}  over $A$ we will mean a triplet $(a_0,a_1,a_2)$ of realizations of $\mathfrak p_{\restriction A}$ which is not a Morley sequence in $\mathfrak p$ over $A$, but each pair $(a_i,a_j)$, $i<j<3$, is. When $\mathfrak p$ is not trivial over $A$, we can find a finite (possibly empty) Morley sequence $\bar a$ in $\mathfrak p$ over $A$ such that $\mathfrak p$ is not trivial over $A\bar a$, as witnessed by a triangle. To do this, choose minimal $n$ such that some sequence of size $n+3$, say $(a_0,a_1,\ldots, a_{n+2})$, realizes the type $\bigcup_{i<j< n+3}(\mathfrak p^2)_{\restriction A}(x_i,x_j)$ but does not realize $(\mathfrak p^n)_{\restriction A}(x_0,\dots,x_{n+2})$. Then put $\bar a=(a_0,\ldots,a_{n-1})$ and observe that $(a_n,a_{n+1},a_{n+2})$ is a $\mathfrak p$-triangle over $\bar aA$. 
\end{enumerate}
\end{Remark}

 Poizat in \cite{Goode} introduced triviality in the context of stable theories. A stable theory $T$ is trivial if, for all parameter sets $A$, any family of pairwise independent tuples over $A$ is independent over $A$. 
 It is easy to see that every global non-algebraic type in a trivial, stable theory is trivial over any parameter set over which it is based.  
The following fact is worth noting, but since we will not use it later, we leave the proof to the reader.

\begin{Fact} 
A complete theory $T$ is stable and trivial if and only if every global non-algebraic type is trivial (in the sense of Definition \ref{Def_trivial_global_type}) over some small set of parameters.      
\end{Fact}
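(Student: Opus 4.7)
The plan is to prove each implication separately.

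For $(\Rightarrow)$, I would fix a global non-algebraic $\mathfrak p$. By stability, $\mathfrak p$ is definable over some small $A$, hence $A$-invariant with $\mathfrak p_{\restriction A}$ stationary. Given a pairwise-$\mathfrak p^2_{\restriction A}$ sequence $(a_i)_{i<\omega}$, in the stable context this amounts exactly to a pairwise $A$-independent sequence of realizations of $\mathfrak p_{\restriction A}$. Poizat-triviality of $T$ then gives $A$-independence of the set $\{a_i:i<\omega\}$, so $a_n\ind_A a_{<n}$ for every $n$; stationarity of $\mathfrak p_{\restriction A}$ bootstraps this to $a_n\models\mathfrak p_{\restriction Aa_{<n}}$, i.e.\ $(a_i)$ is Morley.

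For $(\Leftarrow)$, the plan splits into securing stability and then (Poizat-)triviality. To get stability, I would suppose $T$ unstable, fix $\phi(x,y)$ with the order property, and construct a global non-algebraic type that is not $A$-invariant for any small $A$ — contradicting the hypothesis, since triviality requires invariance. Concretely, in a sufficiently saturated $\Mon$ I would take a very long $\emptyset$-indiscernible sequence $(a_i)_{i<\lambda}$ with $\phi(a_i,a_j)\Leftrightarrow i<j$, pick a cut ordinal $\beta<\lambda$ of large cofinality, and complete the consistent partial type
\[
\Sigma_\beta(x) := \{\phi(x,a_j):j\geq\beta\}\cup\{\lnot\phi(x,a_i):i<\beta\}
\]
to a global $\mathfrak p_\beta$. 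A Ramsey / Erd\H{o}s--Rado extraction inside $(a_i)$ would provide, for each candidate small $A$, a conjugate pair $a_i\equiv_A a_j$ with $i<\beta\leq j$, forcing $\mathfrak p_\beta$ to decide $\phi(x,a_i)$ and $\phi(x,a_j)$ identically — incompatible with the construction of $\Sigma_\beta$.

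Having secured stability, I would translate the hypothesis via Remark \ref{Remark_basic_trivial}(a): in the stable context Definition \ref{Def_trivial_global_type} coincides with Baldwin--Harrington triviality. Thus every stationary non-algebraic $p\in S(A)$ is BH-trivial. Standard stable forking calculus — decomposing an arbitrary pairwise $A$-independent family of tuples $\bar a_1,\dots,\bar a_n$ through stationary components over a saturated model containing $A$ and invoking BH-triviality on each component — then upgrades this to Poizat-triviality of $T$.

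The main obstacle is the stability step, specifically ensuring that $\mathfrak p_\beta$ fails $A$-invariance \emph{uniformly} in the small set $A$. The pathological case to rule out is that of an $A$ which is itself cofinal in $\beta$ from within the indiscernible sequence, since such an $A$ could partition $\{a_i\}$ compatibly with the cut at $\beta$. Handling this requires a careful cardinal-arithmetic choice of $\lambda$ and of the cofinality of $\beta$ relative to the bound on the size of small sets (and, in marginal cases, an appeal to the full triviality clause of the hypothesis rather than bare invariance). Everything else, given stability, reduces to classical Baldwin--Harrington/Poizat bookkeeping.
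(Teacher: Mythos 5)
The paper offers no proof of this Fact --- it is explicitly left to the reader --- so there is nothing to compare your argument against; I assess it on its own terms. Your $(\Rightarrow)$ direction is correct: basing $\mathfrak p$ over a small model $M$ over which it is definable, the pairs realizing $(\mathfrak p^2)_{\restriction M}$ are exactly the pairwise $M$-independent realizations of $\mathfrak p_{\restriction M}$, Poizat-triviality upgrades pairwise independence to independence of the whole set, and stationarity converts $a_n\ind_M a_{<n}$ back into $a_n\models \mathfrak p_{\restriction Ma_{<n}}$. The Poizat half of $(\Leftarrow)$ is also fine modulo the bookkeeping you invoke, with one point worth making explicit: the hypothesis only yields triviality of $\mathfrak p$ over \emph{some} small $B$, whereas Remark \ref{Remark_basic_trivial}(a) needs it over the base $A$ of the stationary type being tested; in the stable case this is repaired by the Baldwin--Harrington preservation of triviality under nonforking extensions and restrictions, but it must be said.

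The genuine gap is the stability step, and it sits exactly where you flag it. For $\mathfrak p_\beta$ to fail $A$-invariance you need $a_i\equiv_A a_j$ with $i<\beta\leqslant j$, and neither of your tools produces this for an arbitrary small $A$: pigeonhole fails because $|S_1(A)|$ can exceed the length of any sequence fitting inside $\Mon$ (small sets may have any cardinality below $|\Mon|$), and an Erd\H{o}s--Rado extraction only yields an $A$-indiscernible \emph{subsequence}, with no control over whether it meets both sides of the prescribed cut. Moreover the obstruction is not merely technical: if $A$ contains a subset of $\{a_i\mid i<\beta\}$ cofinal in the cut (for instance $A$ equal to the whole sequence, which is small whenever $\lambda<|\Mon|$), then already in the DLO prototype there exist completions of $\Sigma_\beta$ that genuinely \emph{are} $A$-invariant, and nothing in your construction guarantees that the single completion you fixed in advance avoids invariance over every small $A$ simultaneously. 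The claim you need --- that instability yields a global type invariant over no small set --- is true, but it has to be arranged differently: for example, index the sequence by a dense linear order of size $|\Mon|$ and work with a genuine two-sided gap whose cofinality from each side equals the degree of saturation, so that no small set can be cofinal in either side; or exploit that an $A$-invariant type cannot fork over $A$ and exhibit a cut type that divides over every small set; or fall back on the counting characterization of stability. As written, your proposal names the obstacle honestly but does not overcome it, so the $(\Leftarrow)$ direction remains incomplete.
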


Below, we will introduce order-triviality, a strong form of triviality for global types. The motivation for this comes from the weakly o-minimal case: we will prove in Lemma \ref{Lemma wom left trivial iff right trivial} that for every weakly o-minimal pair $(p,<)$ over $A$, the type $\mathbf p_r$ is trivial over $A$ if and only if it is order-trivial over $A$.

\begin{Definition}
A global type  $\mathfrak p\in S_x(\Mon)$  is {\em order-trivial over $A$} if it is non-algebraic, $A$-invariant and whenever the sequence $I=(a_i\mid i\in \omega)$ is such that $(a_i,a_{i+1})$ is a Morley sequence in $\mathfrak p$ over $A$ for all $i\in\omega$, then $I$ is a Morley sequence in $\mathfrak p$ over $A$. 
\end{Definition}

\begin{Remark}\phantomsection\label{Remark_basic_ordertrivial}
\begin{enumerate}[(a),left=0em,labelsep=.7em]
\item  A global $A$-invariant type $\mathfrak p\in S_x(\Mon)$ is order-trivial over $A$ if and only if 
for every $n\in\mathbb N$, the type $\bigcup_{i< n}(\mathfrak p^2)_{\restriction A}(x_i,x_{i+1})$ implies a unique complete type over $A$ in variables $x_0,\dots,x_{n}$ (in which case this type must be $(\mathfrak p^{n+1})_{\restriction A}(x_0,\dots,x_{n})$).

\item  If $\mathfrak p$ is order-trivial over $A$, then $\mathfrak p$ is trivial over $A$ and every power $\mathfrak p^n$ is order-trivial over $A$.

\item  A non-algebraic symmetric global invariant type $\mathfrak p$ is {\it not} order-trivial over a small set of parameters (where $\mathfrak p$ is symmetric if $\mathfrak p^2(x,y)=\mathfrak p^2(y,x)$). 
Indeed, if $\mathfrak p$ is symmetric and $A$-invariant, take $(a_0,a_1)\models \mathfrak p^2_{\restriction A}$ and consider the sequence $a_0,a_1,a_0,a_1,\dots$; clearly, this is not a Morley sequence in $\mathfrak p$ over $A$, although each consecutive pair is. In particular, there are no order-trivial types in stable theories nor in the theory of random graphs. 
\end{enumerate}
\end{Remark}

All examples of indiscernible sequences of order-trivial types that we know of are strictly increasing with respect to some definable partial order; yet, it remains uncertain if this is always the case.

\begin{Question}
If $\mathfrak p(x)\in S_x(\Mon)$ is order-trivial over $A$, must there exist a $B\supseteq A$ and a $B$-definable partial order such that all Morley sequences in $\mathfrak p$ over $B$ are strictly increasing?    
\end{Question}

\begin{Proposition}\label{Prop_basic_order_trivial}
Suppose that $\mathfrak p\in S(\Mon)$ is order-trivial over $A$ and that the type $p=\mathfrak p_{\restriction A}$ is NIP. Let $I=(a_i\mid i\in \omega)$ be a Morley sequence in $\mathfrak p$ over $A$ and let $B\supseteq A$. Then $I$ is a Morley sequence in $\mathfrak p$ over $B$ if and only if $a_0\models \mathfrak p_{\restriction B}$.
\end{Proposition}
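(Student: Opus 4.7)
The forward direction is immediate: the first element of any Morley sequence in $\mathfrak p$ over $B$ realizes $\mathfrak p_{\restriction B}$ by definition.

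For the backward direction, assume $a_0\models \mathfrak p_{\restriction B}$. I prove by induction on $n\geq 0$ that $(a_0,\ldots,a_n)\models \mathfrak p^{n+1}_{\restriction B}$; the base case is the hypothesis. For the step $n\to n+1$, observe that the tail $(a_n,a_{n+1},\ldots)$ is itself a Morley sequence in $\mathfrak p$ over $A$, and the inductive hypothesis gives $a_n\models \mathfrak p_{\restriction Ba_{<n}}$. Applying the proposition to this tail with $B$ replaced by the larger set $B'=Ba_{<n}$, in the length-two case below, yields $a_{n+1}\models \mathfrak p_{\restriction B'a_n}=\mathfrak p_{\restriction Ba_{\leq n}}$, completing the step. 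The hypotheses of the proposition (order-triviality of $\mathfrak p$ over $A$ and NIP of $p$) are unchanged under this enlargement of $B$, so the reduction is clean, and it suffices to prove:

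\emph{Length-two case.} If $I=(a_i:i<\omega)$ is Morley in $\mathfrak p$ over $A$ and $a_0\models \mathfrak p_{\restriction B}$, then $a_1\models \mathfrak p_{\restriction Ba_0}$.

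I argue by contradiction: suppose a formula $\phi(x;\bar b,a_0)\in \mathfrak p$, with $\bar b\in B$, satisfies $\models\lnot\phi(a_1;\bar b,a_0)$. The tail $(a_i:i\geq 1)$ is Morley in $\mathfrak p$ over $Aa_0$, hence $Aa_0$-indiscernible; by NIP the truth values $\phi(a_i;\bar b,a_0)$ for $i\geq 1$ are eventually constant, and the standard identification (valid under NIP) of the eventual value along a Morley sequence in an $A$-invariant type with the value of the formula in that type forces this eventual value to be $1$. Thus the pattern begins with $0$ (at $i=1$) but ends with $1$'s, producing at least one flip. To extract the contradiction, one couples this flip with order-triviality of $\mathfrak p$ over $A$: pick an auxiliary $c\models \mathfrak p_{\restriction B\cup I}$, so that $\models\phi(c;\bar b,a_0)$ and each $(a_i,c)$ is a Morley pair in $\mathfrak p$ over $A$, and splice $c$ (together with further independent Morley-generic auxiliaries, if needed) into $I$ to produce a sequence whose every consecutive pair lies in $\mathfrak p^2_{\restriction A}$. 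By order-triviality of $\mathfrak p$ over $A$ the spliced sequence is Morley over $A$; the resulting rigid structure of the $A$-type of $c$ relative to the $a_i$'s, combined with $A$-invariance of $\mathfrak p$ and the fact that $a_0\models \mathfrak p_{\restriction B}$, then conflicts with the assumed discrepancy $\lnot\phi(a_1;\bar b,a_0)$ versus $\models\phi(c;\bar b,a_0)$.

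\emph{Main obstacle.} The delicate step is the splicing construction. Morley pairs are inherently asymmetric in the Morley direction: while each $(a_i,c)$ is a Morley pair over $A$, $(c,a_j)$ typically is not, so a naive insertion of $c$ between two elements of $I$ does not preserve the consecutive-Morley-pair property. Designing a workable splicing, likely involving several independent Morley-generic auxiliaries arranged so that the Morley direction is respected throughout, is where the NIP bounded-alternation property and the rigidity imposed by order-triviality on Morley sequences over $A$ must combine to force the contradiction.
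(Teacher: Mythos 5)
Your reduction to the length-two case is sound, but the length-two case is where the entire content of the proposition lives, and your argument for it stops exactly at the decisive step: you acknowledge that the splicing construction is unresolved, and indeed the framing you chose --- inserting an auxiliary $c$ (or several) \emph{into the given sequence} $I$ --- cannot work for the reason you yourself identify: $(c,a_j)$ will not be a Morley pair, and no arrangement of finitely many auxiliaries inside $I$ repairs this, because the elements of $I$ were never chosen generically over the auxiliaries. A secondary gap is your appeal to ``the standard identification of the eventual value along a Morley sequence with the value in the type'': in this local setting (only the type $p$ is NIP, not the theory) that fact is not free, and the paper never needs it.

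The paper's proof resolves the obstacle by abandoning $I$ altogether and building a \emph{fresh} alternating sequence of $n$-tuples $J=(\bar c^k\mid k\in\omega)$, where $\bar a=(a_0,\dots,a_{n-1})$ is a failing initial segment with $\phi(\bar x,b)\in\tp(\bar a/B)\smallsetminus \mathfrak p^n$: set $\bar c^0=\bar a$, and at stage $k$ first pick the gluing element $c_0^k\models\mathfrak p_{\restriction Bc^{k-1}_{n-1}}$ and then complete $\bar c^k$ to a $B$-conjugate of $\bar a$ when $k$ is even and to a realization of $\mathfrak p^n_{\restriction B}$ when $k$ is odd (possible precisely because $c_0^k\equiv a_0\,(B)$, which is where the hypothesis $a_0\models\mathfrak p_{\restriction B}$ enters). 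Every consecutive pair of the flattened sequence realizes $\mathfrak p^2_{\restriction A}$ by construction, so order-triviality over $A$ certifies the whole flattened sequence as Morley over $A$; hence $J$ is $A$-indiscernible, while $\phi(\bar c^k,b)$ holds exactly for even $k$, contradicting NIP of $(\mathfrak p^n)_{\restriction A}$. The point you were missing is that order-triviality lets you \emph{manufacture} the indiscernible witness from scratch --- consecutive genericity is all you ever need to verify --- rather than having to thread new elements through a pre-existing Morley sequence. If you want to keep your length-two reduction, the same construction with $n=2$ (alternating $B$-conjugates of $(a_0,a_1)$ with realizations of $\mathfrak p^2_{\restriction B}$, glued by fresh first coordinates) completes your argument.
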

\begin{proof} The left-to-right implication is clear. For the converse, suppose on the contrary that $a_0\models \mathfrak p_{\restriction B}$ but there is $n\in\omega$ such that $\bar a=(a_0,\dots,a_{n-1})$ is not Morley in $\mathfrak p$ over $B$. Choose a formula $\phi(\bar x,b)\in \tp(\bar a/B)$ such that $\phi(\bar x,b)\notin \mathfrak p^n$. Define the sequence $J=(\bar c^i\mid i\in\omega)$, where $\bar c^i=(c_0^i,\dots,c_{n-1}^i)\models (\mathfrak p^n)_{\restriction A}$, in the following way: Let $\bar c^0=\bar a$. Suppose that $J_{<k}=(\bar c^i\mid i<k)$ has already been defined. First choose $c_0^k\models \mathfrak p_{\restriction Bc_{n-1}^{k-1}}$ and then $\bar c^k=(c_0^k,\dots,c_{n-1}^k)$ such that:
\begin{equation}\tag{$\dagger$}\label{eq prop 2.9}
\bar c^{k}\equiv \bar a\,(B)\mbox{ if $k$ is even \  \ and \ \ }\bar c^{k}\models \mathfrak p^n_{\restriction B}\mbox{ if $k$ is odd}.
\end{equation}
Note that this is possible as $c_0^k,a_0\models\mathfrak p_{\restriction B}$.
Consider the sequence: $$c_0^0,\dots,c_{n-1}^0,c_0^1,\dots,c_{n-1}^1,\dots, c_0^k,\dots,c_{n-1}^k,\dots.$$
Note that, by our choice of $c_0^k$ and $\bar c^k$,  any pair of consecutive elements of this sequence
realizes the type $(\mathfrak p^2)_{\restriction A}$, so, by order-triviality, the sequence is Morley in $\mathfrak p$ over $A$. It follows that $J$ is a Morley sequence in $\mathfrak p^n$ over $A$. 
By our choice of $\phi(\bar x,b)$, condition (\ref{eq prop 2.9}) implies: $\models \phi(\bar c^k,b)$ if and only if $k$ is even. Therefore, the indiscernible sequence $J$ and the formula $\phi(\bar x,b)$ show that the type $(\mathfrak p^n)_{\restriction A}$ is not NIP. Contradiction!
\end{proof}

\begin{Corollary}\label{Cor_otrivialoverA_otrivialoverB}
Suppose that $\mathfrak p\in S(\Mon)$ is order-trivial over $A$ and that the type $p=\mathfrak p_{\restriction A}$ is NIP. Then $\mathfrak p$ is order-trivial over any $B\supseteq A$.
\end{Corollary}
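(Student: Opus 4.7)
The plan is to derive this corollary directly from Proposition \ref{Prop_basic_order_trivial}, which has already done the heavy lifting.

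First, I would observe that $B$-invariance of $\mathfrak p$ is free: since $\mathfrak p$ is $A$-invariant and $A\subseteq B$, it is also $B$-invariant. Moreover, $\mathfrak p_{\restriction B}$ remains NIP, since NIP is inherited by extensions of the parameter set in the sense that if $p=\mathfrak p_{\restriction A}$ is NIP then so is any $\mathfrak p_{\restriction B}$ (NIP for $\mathfrak p$ is a property of the formulae, and passing to a larger parameter set introduces no new indiscernible-sequence/formula pairs that weren't already available for $\mathfrak p_{\restriction A}$).

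Next, let $I=(a_i\mid i\in\omega)$ be a sequence witnessing the hypothesis of order-triviality over $B$, i.e.\ $(a_i,a_{i+1})\models (\mathfrak p^2)_{\restriction B}$ for every $i\in\omega$. I want to show that $I$ is a Morley sequence in $\mathfrak p$ over $B$. Since $A\subseteq B$, in particular $(a_i,a_{i+1})\models (\mathfrak p^2)_{\restriction A}$ for every $i$, so order-triviality of $\mathfrak p$ over $A$ immediately gives that $I$ is a Morley sequence in $\mathfrak p$ over $A$.

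Now I apply Proposition \ref{Prop_basic_order_trivial} to the sequence $I$ and the set $B$: from $(a_0,a_1)\models (\mathfrak p^2)_{\restriction B}$ we read off $a_0\models \mathfrak p_{\restriction B}$, which is exactly the hypothesis needed in the converse direction of the proposition. Hence $I$ is Morley in $\mathfrak p$ over $B$, as required. There is no real obstacle here; the work sits in Proposition \ref{Prop_basic_order_trivial}, and this corollary is essentially a one-line deduction: the pair-at-each-step condition over $B$ forces, via order-triviality over $A$, Morleyness over $A$, and then a single instance of the NIP-based transfer in Proposition \ref{Prop_basic_order_trivial} upgrades Morleyness from $A$ to $B$.
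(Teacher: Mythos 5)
Your proposal is correct and follows essentially the same route as the paper: first use order-triviality over $A$ to see that the sequence is Morley over $A$, then invoke Proposition \ref{Prop_basic_order_trivial} with $a_0\models\mathfrak p_{\restriction B}$ to upgrade Morleyness to $B$. (Your aside about NIP transferring to $\mathfrak p_{\restriction B}$ is unnecessary, since the proposition only requires $\mathfrak p_{\restriction A}$ to be NIP, which is the corollary's hypothesis verbatim.)
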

\begin{proof}
Let $I=(a_i\mid i\in\omega)$ be a sequence of realizations of $\mathfrak p_{\restriction B}$ such that $(a_i,a_{i+1})\models (\mathfrak p^2)_{\restriction B}$ for all $i\in\omega$. We need to show that $I$ is a Morley sequence in $\mathfrak p$ over $B$. Note that $(a_i,a_{i+1})\models(\mathfrak p^2)_{\restriction A}$ for each $i\in\omega$, so by order-triviality of $\mathfrak p$ over $A$, $I$ is a Morley sequence in $\mathfrak p$ over $A$. Since $a_0$ realizes $\mathfrak p_{\restriction B}$, by Proposition \ref{Prop_basic_order_trivial}, $I$ is a Morley sequence in $\mathfrak p$ over $B$. Therefore, $\mathfrak p$ is order-trivial over $B$. 
\end{proof}

\begin{Corollary}\label{Cor wom lifts to omega for o trivial}
Let $\mathfrak p,\mathfrak q\in S(\Mon)$ be order-trivial over $A$ such that $p=\mathfrak p_{\restriction A}$ and $q=\mathfrak q_{\restriction A}$ are NIP types. 
\begin{enumerate}[(a),left=0em,labelsep=.7em]
\item If $r\in S(A)$ and $p\wor r$, then $\mathfrak (\mathfrak p^{\omega})_{\restriction A}\wor r$.
\item  If $p\wor q$ then $(\mathfrak p^{\omega})_{\restriction A}\wor (\mathfrak q^{\omega})_{\restriction A}$.
\end{enumerate}
\end{Corollary}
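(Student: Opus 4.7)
The plan is to prove part $(a)$ as a direct consequence of Proposition \ref{Prop_basic_order_trivial}, and then bootstrap part $(b)$ from $(a)$ by applying it twice with the roles of $\mathfrak p$ and $\mathfrak q$ swapped, followed by a short compactness argument.

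For $(a)$, fix any realization $b\models r$. Weak orthogonality $p\wor r$ tells us that $p$ has a unique extension in $S(Ab)$, and since $\mathfrak p_{\restriction Ab}$ is one such extension, this unique extension must be $\mathfrak p_{\restriction Ab}$. Now let $I=(a_i\mid i\in\omega)\models (\mathfrak p^{\omega})_{\restriction A}$ be arbitrary; in particular $a_0\models p$, so $a_0\models \mathfrak p_{\restriction Ab}$. Since $\mathfrak p$ is order-trivial over $A$ and $p$ is NIP, Proposition \ref{Prop_basic_order_trivial} applies with $B=Ab$ and yields that $I$ is Morley in $\mathfrak p$ over $Ab$, i.e.\ $\tp(I/Ab)=(\mathfrak p^{\omega})_{\restriction Ab}$. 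Since this completely determines $\tp(I,b/A)$, and any two realizations of $r$ are $A$-conjugate, the type $(\mathfrak p^{\omega})_{\restriction A}(\bar x)\cup r(y)$ is complete; that is, $(\mathfrak p^{\omega})_{\restriction A}\wor r$.

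For $(b)$, note first that $\wor$ is symmetric, so $p\wor q$ gives $q\wor p$. Applying $(a)$ with $(\mathfrak q,p)$ in place of $(\mathfrak p,r)$ yields $(\mathfrak q^{\omega})_{\restriction A}\wor p$; restricting to the first $n$ coordinates gives $(\mathfrak q^{n})_{\restriction A}\wor p$, and by symmetry $p\wor (\mathfrak q^{n})_{\restriction A}$ for every $n\in\omega$. Since $(\mathfrak q^{n})_{\restriction A}\in S_n(A)\subseteq S(A)$, a second application of $(a)$, now with $r=(\mathfrak q^{n})_{\restriction A}$, yields $(\mathfrak p^{\omega})_{\restriction A}\wor (\mathfrak q^{n})_{\restriction A}$ for every $n$. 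A routine compactness argument then finishes the job: any potential witness to $(\mathfrak p^{\omega})_{\restriction A}\not\wor (\mathfrak q^{\omega})_{\restriction A}$ would be a formula in finitely many variables $x_0,\dots,x_{m-1},y_0,\dots,y_{n-1}$, hence would already witness $(\mathfrak p^{m})_{\restriction A}\not\wor (\mathfrak q^{n})_{\restriction A}$, contradicting what has just been proved.

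The main obstacle is not conceptual but a matter of bookkeeping: one must carefully choose which of $\mathfrak p$, $\mathfrak q$ plays the ``order-trivial'' role in each application of $(a)$, and invoke the symmetry of $\wor$ to reorient the hypothesis. The heart of the argument is the single step behind $(a)$, which rests on the interplay between the NIP extension uniqueness guaranteed by Proposition \ref{Prop_basic_order_trivial} and the observation that weak orthogonality forces the first coordinate of any Morley sequence in $\mathfrak p$ over $A$ to already realize $\mathfrak p_{\restriction Ab}$.
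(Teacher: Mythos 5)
Your proof is correct and follows essentially the same route as the paper: part (a) rests on Proposition \ref{Prop_basic_order_trivial} exactly as in the paper's argument, and part (b) is the same double application of (a) using the symmetry of $\wor$. The only difference is that in (b) you pass through the finite powers $(\mathfrak q^{n})_{\restriction A}$ and a compactness step, whereas the paper applies (a) directly with the infinitary type in the role of $r$; your version is marginally more careful, since the statement of (a) only allows $r\in S(A)$, i.e.\ a finitary type.
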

\begin{proof}
(a) Let $c\models r$, $\bar a=(a_0,a_1,\dots)\models(\mathfrak p^\omega)_{\restriction Ac}$ and $\bar a'=(a_0',a_1',\dots)\models(\mathfrak p^\omega)_{\restriction A}$; we need to prove $\bar a\equiv\bar a'\,(Ac)$. Note that $p\wor r$ implies $a_0\equiv a_0'\,(Ac)$, so $a_0'\models \mathfrak p_{\restriction Ac}$, which together with $\bar a'\models(\mathfrak p^\omega)_{\restriction A}$, by Proposition \ref{Prop_basic_order_trivial}, implies $\bar a'\models(\mathfrak p^\omega)_{\restriction Ac}$. Therefore, $\bar a\equiv\bar a'\,(Ac)$, as desired. 

(b) By (a), $(\mathfrak p^\omega)_{\restriction A}\wor q$, so again by (a), $(\mathfrak p^\omega)_{\restriction A}\wor (\mathfrak q^\omega)_{\restriction A}$.
\end{proof}

\section{Trivial weakly o-minimal types}\label{Section_trivial_wom}

\begin{Lemma}\label{Lemma wom left trivial iff right trivial}
Let $\mathbf p=(p,<)$ be a weakly o-minimal pair over $A$. The following are equivalent:   
\begin{enumerate}[(1),left=0em,labelsep=.7em]
\item $\mathbf p_r$ is trivial over $A$;  
\item Every $\triangleleft^{\mathbf p}$-increasing sequence of realizations of $p$ is Morley in $\mathbf p_r$ over $A$; 
\item $\mathbf p_r$ is order-trivial over $A$;
\item Every $\triangleleft^{\mathbf p}$-decreasing sequence of realizations of $p$ is Morley in $\mathbf p_l$ over $A$; 
\item $\mathbf p_l$ is order-trivial over $A$.
\end{enumerate}
\end{Lemma}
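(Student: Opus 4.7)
The plan is to split the five conditions into two blocks---(1), (2), (3) concerning $\mathbf p_r$ and (4), (5) concerning $\mathbf p_l$---prove the internal equivalences of each block by unwinding definitions, and then close the circle via the cross-implication (3) $\Leftrightarrow$ (5), which is where the real content lies.

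For the block (1)--(3), first I would use Remark \ref{Remark_trianglep_binary_basicprops}(b), which identifies $(a,b)\models(\mathbf p_r^2)_{\restriction A}$ with $a\triangleleft^{\mathbf p}b$, so that (2) and (3) say literally the same thing. For (1) $\Leftrightarrow$ (3), the strict partial order property of $\triangleleft^{\mathbf p}$ from Theorem \ref{Theorem4}(b) gives transitivity, so the consecutive hypothesis $a_i\triangleleft^{\mathbf p}a_{i+1}$ iteratively upgrades to the all-pairs hypothesis $a_i\triangleleft^{\mathbf p}a_j$ for $i<j$; since the conclusions of (1) and (3) coincide, the hypotheses being equivalent closes the block. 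The block (4) $\Leftrightarrow$ (5) is then the same argument applied to the reverse pair $\mathbf p^*=(p,>_p)$, using that $(\mathbf p^*)_r=\mathbf p_l$ and, by Remark \ref{Remark_pgeneric_first}(a), that $a\triangleleft^{\mathbf p^*}b\Leftrightarrow b\triangleleft^{\mathbf p}a$, so that a $\triangleleft^{\mathbf p}$-decreasing sequence is exactly a $\triangleleft^{\mathbf p^*}$-increasing one.

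For the cross-implication, I would prove (3) $\Rightarrow$ (5) directly and obtain (5) $\Rightarrow$ (3) by the same argument applied to $\mathbf p^*$. Assuming (3), let $(a_i)_{i\in\omega}$ satisfy $a_{i+1}\triangleleft^{\mathbf p}a_i$ for all $i$ (the hypothesis of (5)). For each $n$, the \emph{reversed} finite tuple $(a_n,a_{n-1},\dots,a_0)$ has its consecutive pairs in $\triangleleft^{\mathbf p}$-increasing order, so the finite form of order-triviality (Remark \ref{Remark_basic_ordertrivial}(a)) forces it to realize the unique type $(\mathbf p_r^{n+1})_{\restriction A}$. Reading this back in the original order shows that $\tp(a_0,a_1,\dots,a_n/A)$ is a single type determined entirely by $\mathbf p$ and $A$. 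Since any Morley sequence $(b_j)_{j\in\omega}$ in $\mathbf p_l$ over $A$ likewise satisfies $b_{j+1}\triangleleft^{\mathbf p}b_j$ (unwinding left $\mathbf p$-genericity), the same argument applied to $(b_j)$ yields the same fixed type for $\tp(b_0,\dots,b_n/A)$. Taking unions over $n$ gives $(a_i)\equiv(b_j)\,(A)$ as $\omega$-sequences, so $(a_i)$ is itself Morley in $\mathbf p_l$ over $A$. The main subtlety will be keeping the bookkeeping between $\mathbf p_r/\mathbf p_l$ and between finite segments and $\omega$-sequences clean; the finite formulations in Remarks \ref{Remark_trivial_basic2}(a) and \ref{Remark_basic_ordertrivial}(a) are what legitimize the ``reverse a finite segment'' maneuver, and no appeal to NIP is needed inside this lemma itself.
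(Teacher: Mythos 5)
Your proof is correct and follows essentially the same route as the paper: both reduce all five conditions to the finite statement that $\bigcup_{i<n}(\mathbf p_r^2)_{\restriction A}(x_i,x_{i+1})$ has a unique completion over $A$, use the transitivity of $\triangleleft^{\mathbf p}$ to equate the hypotheses of triviality and order-triviality, and pass between $\mathbf p_r$ and $\mathbf p_l$ by reversing tuples via $(\mathbf p_l^2)_{\restriction A}(x,y)=(\mathbf p_r^2)_{\restriction A}(y,x)$. The only difference is cosmetic bookkeeping in which implications are proved directly.
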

\begin{proof}
Recall that $(p(\Mon), \triangleleft^{\mathbf p})$ is a strict partial order and that, by Remark \ref{Remark_trianglep_binary_basicprops}(b),  $a\triangleleft^{\mathbf p}b$, $(a,b)\models (\mathbf p_r^2)_{\restriction A}$, and $(b,a)\models (\mathbf p_l^2)_{\restriction A}$ are mutually equivalent for all $a,b\models p$; in particular, $(\mathbf p_l^2)_{\restriction A}(x,y)=(\mathbf p_r^2)_{\restriction A}(y,x)$.

(1)$\Rightarrow$(2) Suppose that $\mathbf p_r$ is trivial over $A$ and let $I=(a_i\mid i\in \omega)$ be a $\triangleleft^{\mathbf p}$-increasing sequence. Then for all $i<j<\omega$ we have $a_i\triangleleft^{\mathbf p} a_j$; hence, $(a_i,a_j)\models (\mathbf p_r^2)_{\restriction A}$. Due to the triviality of $\mathbf p_r$, $I$ is a Morley sequence in $\mathbf p_r$ over $A$. 

(2)$\Leftrightarrow$(3) Every $\triangleleft^\mathbf p$-increasing sequence is Morley if and only if:

\begin{enumerate}[label=(r),left=0em,labelsep=1em]
\item For all $n\in \mathbb N$ the type  $\bigcup_{i<n}(\mathbf p_r^2)_{\restriction A}(x_i,x_{i+1})$ has a unique completion over $A$.
\end{enumerate}
By Remark \ref{Remark_basic_ordertrivial}, this is equivalent to the order-triviality of $\mathbf p_r$ over $A$. 

As (3)$\Rightarrow$(1) is immediate from the definitions, we conclude (1)$\Leftrightarrow$(2)$\Leftrightarrow$(3). 

(4)$\Leftrightarrow$(5) is similar to (2)$\Leftrightarrow$(3). 

(2)$\Leftrightarrow$(4) Every $\triangleleft^{\mathbf p}$-increasing sequence of realizations of $p$ is Morley in $\mathbf p_r$ over $A$ if and only if condition (r) holds. 
As  $(\mathbf p_l^2)_{\restriction A}(x,y)=(\mathbf p_r^2)_{\restriction A}(y,x)$,  condition (r) is equivalent with:
\begin{enumerate}[label=(l),left=0em,labelsep=1em]
\item For all $n\in \mathbb N$ the type  $\bigcup_{i<n}(\mathbf p_l^2)_{\restriction A}(x_i,x_{i+1})$ has a unique completion over $A$,
\end{enumerate}
which holds if and only if every $\triangleleft^{\mathbf p}$-decreasing sequence of realizations of $p$ is Morley in $\mathbf p_l$ over $A$.
\end{proof}

\begin{Definition}
A weakly o-minimal type $p\in S(A)$ is {\em trivial} if one (equivalently, both) of its $A$-invariant globalizations is (order-) trivial over $A$.
\end{Definition}

\noindent{\bf Convention.}  \ For $\mathbf p=(p,<)$ a weakly o-minimal pair over $A$, by $A\triangleleft^{\mathbf p}x_1\triangleleft^{\mathbf p} x_2\triangleleft^{\mathbf p} \dots \triangleleft^{\mathbf p}x_{n}$ we will denote the partial type $\bigcup_{1\leqslant i< n} (\mathbf p_r^2)_{\restriction A}(x_i,x_{i+1})$.

\begin{Remark}\label{Remark trivial wom expressing via triangleleft}
Let $\mathbf p=(p,<)$ be a weakly o-minimal pair over $A$.  As a consequence of Lemma \ref{Lemma wom left trivial iff right trivial} we have the following: 

\begin{enumerate}[(a),left=0em,labelsep=.7em]
\item $p$ is trivial if and only if $A\triangleleft^{\mathbf p}x_1\triangleleft^{\mathbf p} x_2\triangleleft^{\mathbf p} \dots \triangleleft^{\mathbf p}x_{n}$ determines a complete type for all $n\in\mathbb N$. 

\item If $p$ is trivial, then a sequence $I=(a_i\mid i< \omega)$ of realizations of $p$ is Morley in a nonforking globalization of $p$ over $A$ if and only if $I$ is strictly $\triangleleft^{\mathbf p}$-monotone; equivalently, if the sequence $(\mathcal D_p(a_i)\mid i<\omega)$ is strictly $<$-monotone. 
\end{enumerate}
\end{Remark}

\begin{Lemma}\label{Lemma_triv_preserved_in_nonforking}
Let $\mathbf p=(p,<)$ be a weakly o-minimal pair over $A$. Suppose that $p$ is trivial.
\begin{enumerate}[(a),left=0em,labelsep=.7em]
 \item  Every nonforking extension of $p$ is trivial.
 \item The sequence $I=(a_i\mid i< \omega)$ of realizations of $p$ is a Morley sequence in $\mathbf p_r$ over $B\supseteq A$ if and only if \ $B\triangleleft^{\mathbf p} a_0\triangleleft^{\mathbf p}a_1\triangleleft^{\mathbf p} a_2\triangleleft^{\mathbf p}\dots$.
\end{enumerate}
\end{Lemma}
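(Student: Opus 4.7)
The strategy for (a) is to reduce to the two $A$-invariant globalizations of $p$. By Fact \ref{Fact_basic_S_p_wom}(d), any nonforking extension of $p$ to $B\supseteq A$ is either $(\mathbf p_r)_{\restriction B}$ or $(\mathbf p_l)_{\restriction B}$; by the left-right symmetry it suffices to treat $q:=(\mathbf p_r)_{\restriction B}$. The key observation is that the right globalization $\mathbf q_r$ of the pair $(q,<)$ over $B$ coincides with $\mathbf p_r$: the locus $q(\Mon)$, being the set of right $\mathbf p$-generic elements over $B$, is a final part of $p(\Mon)$ by Remark \ref{Remark_pgeneric_first}(c), so right-eventuality of a subset inside $(q(\Mon),<)$ and inside $(p(\Mon),<)$ coincide. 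Now, triviality of $p$ together with Lemma \ref{Lemma wom left trivial iff right trivial} gives that $\mathbf p_r$ is order-trivial over $A$; since $p$ is NIP (the final Fact of Section \ref{Section_preliminaries}.2), Corollary \ref{Cor_otrivialoverA_otrivialoverB} upgrades this to order-triviality of $\mathbf p_r$ over $B$. Thus $\mathbf q_r=\mathbf p_r$ is an order-trivial $B$-invariant globalization of $q$, so $q$ is trivial.

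For the forward direction of (b), an $I$ that is Morley in $\mathbf p_r$ over $B$ satisfies $a_0\models(\mathbf p_r)_{\restriction B}$, which is $B\triangleleft^{\mathbf p}a_0$; for each $i\ge 0$, restricting $a_{i+1}\models (\mathbf p_r)_{\restriction Ba_{\leqslant i}}$ to $Aa_i$ yields $(a_i,a_{i+1})\models (\mathbf p_r^2)_{\restriction A}$, which by Remark \ref{Remark_trianglep_binary_basicprops}(b) is exactly $a_i\triangleleft^{\mathbf p}a_{i+1}$.

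For the reverse direction, assume $B\triangleleft^{\mathbf p}a_0\triangleleft^{\mathbf p}a_1\triangleleft^{\mathbf p}\cdots$. The proof proceeds in two steps. First, the sequence $I$ is $\triangleleft^{\mathbf p}$-increasing, so by triviality of $p$ and Lemma \ref{Lemma wom left trivial iff right trivial} ((1)$\Leftrightarrow$(2)), $I$ is a Morley sequence in $\mathbf p_r$ over $A$. Second, $B\triangleleft^{\mathbf p}a_0$ says $a_0\models(\mathbf p_r)_{\restriction B}$; since $\mathbf p_r$ is order-trivial over $A$ and $p$ is NIP, Proposition \ref{Prop_basic_order_trivial} applied to $\mathfrak p=\mathbf p_r$ promotes $I$ to a Morley sequence in $\mathbf p_r$ over $B$.

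The main conceptual point throughout is the transfer from the base $A$, where $\triangleleft^{\mathbf p}$ and triviality of $p$ live natively, to the larger base $B$. In both parts the bridge is provided by NIP-ness of weakly o-minimal types, which activates Corollary \ref{Cor_otrivialoverA_otrivialoverB} for (a) and Proposition \ref{Prop_basic_order_trivial} for (b); no finer analysis of forking extensions is required.
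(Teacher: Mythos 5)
Your proposal is correct and follows essentially the same route as the paper: part (a) reduces to the two nonforking extensions and invokes Lemma \ref{Lemma wom left trivial iff right trivial} plus Corollary \ref{Cor_otrivialoverA_otrivialoverB} via NIP, and part (b) combines Remark \ref{Remark_trianglep_binary_basicprops}/Lemma \ref{Lemma wom left trivial iff right trivial} with Proposition \ref{Prop_basic_order_trivial}. Your extra observation in (a) that $\mathbf q_r=\mathbf p_r$ just makes explicit a step the paper leaves implicit.
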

\begin{proof}(a) Let $q\in S(B)$ be a nonforking extension of $p$. 
Then $q=(\mathbf p_r)_{\restriction B}$ or $q=(\mathbf p_l)_{\restriction B}$. 
Without loss assume $q=(\mathbf p_r)_{\restriction B}$; then $\mathbf q_r=\mathbf p_r$ where $\mathbf q=(q,<)$. Since $p$ is trivial, according to Lemma \ref{Lemma wom left trivial iff right trivial} $\mathbf p_r$ is order-trivial over $A$, so by Corollary \ref{Cor_otrivialoverA_otrivialoverB}, $\mathbf p_r$ is order-trivial over $B$. Hence $\mathbf q_r$ is trivial, so $q$ is trivial by Lemma \ref{Lemma wom left trivial iff right trivial}. 

(b) First, assume \ $B\triangleleft^{\mathbf p} a_0\triangleleft^{\mathbf p}a_1\triangleleft^{\mathbf p} a_2\triangleleft^{\mathbf p}\dots$. Since $I$ is strictly $\triangleleft^{\mathbf p}$-increasing, by Remark \ref{Remark trivial wom expressing via triangleleft}, it is a Morley sequence in $\mathbf p_r$ over $A$. Since $a_0\models (\mathbf p_r)_{\restriction B}$, Proposition \ref{Prop_basic_order_trivial} applies: $I$ is a Morley sequence in $\mathbf p_r$ over $B$. This proves one direction of the equivalence. To prove the other, assume $I$ is Morley in $\mathbf p_r$ over $B$. Then $a_0\models (\mathbf p_r)_{\restriction B}$ implies $B\triangleleft^{\mathbf p} a_0$. Since $I$ is Morley over $A$,   $a_0\triangleleft^{\mathbf p}a_1\triangleleft^{\mathbf p} a_2\triangleleft^{\mathbf p}\dots$ follows. Therefore, \ $B\triangleleft^{\mathbf p}a_0\triangleleft^{\mathbf p}a_1\triangleleft^{\mathbf p} a_2\triangleleft^{\mathbf p}\dots$ 
\end{proof}

In the following lemma, we establish an important property of trivial types, which will be referred to as the strong transitivity property later in the text.  
 
\begin{Lemma}[Strong transitivity]\label{Lemma_trivial_strong_trans}
Suppose that $\mathbf p$ and $\mathbf q$ are directly nonorthogonal weakly o-minimal pairs over $A$ and that $p$ is trivial. Then $B\triangleleft^{\mathbf p}a\triangleleft^{\mathbf q} b$ implies $Ba\triangleleft^{\mathbf q} b$ for all $B\supseteq A$. 
\end{Lemma}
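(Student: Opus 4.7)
The plan is to apply Theorem \ref{Theorem_triangle_mathcal F}(i). Let $\mathcal F$ be a part of the $\delta_A$-class containing both $\mathbf p$ and $\mathbf q$, equipped with the $A$-invariant order $<^{\mathcal F}$ of Theorem \ref{Theorem5}. The hypotheses $B\triangleleft^{\mathbf p} a$ and $a\triangleleft^{\mathbf q} b$ immediately give $B\triangleleft^{\mathcal F} a$ and $a\triangleleft^{\mathcal F} b$, hence $a<^{\mathcal F} b$ by Theorem \ref{Theorem_triangle_mathcal F}(a)--(b). Once we show $a\ind_B b$, Theorem \ref{Theorem_triangle_mathcal F}(i) will yield $Ba\triangleleft^{\mathcal F} b$, which, since $b\models q$, is exactly the desired $Ba\triangleleft^{\mathbf q} b$. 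Without loss of generality we assume $A\subseteq B$, so that the whole argument takes place over $B$.

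To establish $a\ind_B b$, suppose for contradiction that $a\dep_B b$. By Lemma \ref{Lemma_triv_preserved_in_nonforking}(a), the type $p_B:=(\mathbf p_r)_{\restriction B}$ is trivial and weakly o-minimal, and $(\mathbf p_B)_r=\mathbf p_r$ where $\mathbf p_B=(p_B,<_p)$. Applying Fact \ref{Fact_basic_S_p_wom}(c) to $\mathbf p_B$, the dependence $a\dep_B b$ produces a relatively $p_B$-bounded formula $\varphi(x,b)\in \tp(a/Bb)$, so $\varphi(\Mon,b)\cap p_B(\Mon)$ is a convex bounded subset of $(p_B(\Mon),<_p)$ containing $a$.

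I would then derive a contradiction using NIP, in the style of the proof of Proposition \ref{Prop_basic_order_trivial}. By Lemma \ref{Lemma wom left trivial iff right trivial}, $\mathbf p_r$ is order-trivial over $A$. Exploiting this order-triviality to concatenate blocks into a single Morley sequence, one builds an $A$-indiscernible sequence of tuples $(\bar c^k\mid k\in\omega)$ of realisations of $p$ whose blocks alternate: for even $k$, $\bar c^k$ realises $\tp(\bar a/B)$ for a carefully chosen tuple $\bar a$ containing $a$ (so its positions remain inside $\varphi(\Mon,b)\cap p_B(\Mon)$), while for odd $k$, $\bar c^k$ realises the generic Morley type $(\mathbf p_r^n)_{\restriction B}$, placed $\triangleleft^{\mathbf p}$-above the bounded region (and hence avoiding $\varphi$). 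A formula over $B\cup\{b\}$ built from $\varphi(\cdot,b)$ then alternates on this $A$-indiscernible sequence, contradicting NIP of $p$ (the final Fact in the preliminaries). Hence $a\ind_B b$, and Theorem \ref{Theorem_triangle_mathcal F}(i) closes the proof.

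The hard part is the precise design of the alternation: because $\varphi(\Mon,b)\cap p_B(\Mon)$ is convex in $p_B$, a single-element monotone Morley chain can enter and leave the bounded region at most once, so the alternation must be engineered at the block level rather than at the element level. The right choice of $\bar a$ and the right separating formula, combined with the density in Corollary \ref{Cor_triangle_mathbf_pq_properties}(b) (to place $\tp(\bar a/B)$-copies densely within $\mathcal D_{p_B}(b)$) and with order-triviality of $\mathbf p_r$ (to glue them to generic blocks placed above $\mathcal D_{p_B}(b)$), is what delivers the infinitely many alternations required for the NIP contradiction.
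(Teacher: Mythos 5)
Your first step is fine: given $a\ind_B b$, Theorem \ref{Theorem_triangle_mathcal F}(i) (applied to a part $\mathcal F$ of the $\delta_A$-class containing $\mathbf p$ and $\mathbf q$) does yield $Ba\triangleleft^{\mathcal F}b$, hence $Ba\triangleleft^{\mathbf q}b$. The gap is in the second step: you never actually prove $a\ind_B b$, and the NIP-alternation argument you sketch cannot be made to work in the form described. The obstruction is exactly the one you half-acknowledge and then wave away. Any sequence to which order-triviality of $\mathbf p_r$ over $A$ applies must be $\triangleleft^{\mathbf p}$-increasing (consecutive pairs must realize $(\mathbf p_r^2)_{\restriction A}$), hence $<_p$-increasing. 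Your even blocks must lie inside $\varphi(\Mon,b)\cap p_B(\Mon)\subseteq\mathcal D_{p_B}(b)$, a \emph{bounded convex} subset of $(p_B(\Mon),<_p)$, while your odd blocks, being Morley over $Bb$, lie strictly above $\mathcal D_{p}(Bb)$. An increasing sequence can pass from the first region to the second at most once and can never return, so it cannot interleave the two kinds of blocks; and passing to blocks of length $n$ does not help, since the blocks themselves are linearly ordered by the same monotonicity. More generally, by weak o-minimality $\varphi(\Mon,b)\cap p_B(\Mon)$ has finitely many convex components, so \emph{any} formula of this shape alternates only finitely often on \emph{any} monotone sequence --- you never reach the infinitely many alternations needed to contradict NIP. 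The reason the analogous construction succeeds in Proposition \ref{Prop_basic_order_trivial} is that there the two block-types (copies of $\bar a$ over $B$ and generics over $B$) have the \emph{same} first-coordinate type over $B$ and impose no conflicting order constraints; here they occupy disjoint, $<_p$-separated regions.

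The paper's proof is much shorter and avoids this entirely: by density (Corollary \ref{Cor_triangle_mathbf_pq_properties}(b)) choose $a'$ with $a\triangleleft^{\mathbf p}a'\triangleleft^{\mathbf q}b$; triviality of $p$, via Lemma \ref{Lemma_triv_preserved_in_nonforking}(b), upgrades $B\triangleleft^{\mathbf p}a\triangleleft^{\mathbf p}a'$ to ``$(a,a')$ is Morley in $\mathbf p_r$ over $B$'', i.e.\ $Ba\triangleleft^{\mathbf p}a'$; then ordinary transitivity over the base $Ba$ gives $Ba\triangleleft^{\mathbf p}a'\triangleleft^{\mathbf q}b\Rightarrow Ba\triangleleft^{\mathbf q}b$. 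Note that this derives the conclusion directly; if you insist on first establishing $a\ind_B b$, the only route I see is to prove $Ba\triangleleft^{\mathbf q}b$ as above and then invoke symmetry, which makes your detour through Theorem \ref{Theorem_triangle_mathcal F}(i) redundant.
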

\begin{proof}
Assume $B\triangleleft^{\mathbf p}a\triangleleft^{\mathbf q}b$. Choose $a'$ with $a\triangleleft^{\mathbf p}a'\triangleleft^{\mathbf q}b$; that is possible by the density property (Corollary \ref {Cor_triangle_mathbf_pq_properties}(b)). By Lemma \ref{Lemma_triv_preserved_in_nonforking}(b), $B\triangleleft^{\mathbf  p}a\triangleleft^{\mathbf  p}a'$ implies that $(a,a')$ is a Morley sequence in $\mathbf p_r$ over $B$, so $Ba\triangleleft^{\mathbf  p} a'$ holds. By the transitivity property  (Corollary \ref {Cor_triangle_mathbf_pq_properties}(f))  $Ba\triangleleft^{\mathbf  p}a'\triangleleft^{\mathbf q}b$ implies  $Ba\triangleleft^{\mathbf q}b$.
\end{proof}

\begin{Lemma}\label{Lemma_trivial_iff_Dp=Dq}
For a weakly o-minimal type $p\in S(A)$ the following are equivalent:
\begin{enumerate}[(1),left=0em,labelsep=.7em]
    \item $p$ is trivial;
    \item $\mathcal D_q(a)=\mathcal D_p(a)$ for all nonforking extensions $q$ of $p$ and all $a\models q$;
    \item $\mathcal D_q(a)=\mathcal D_p(a)$ for all right (with respect to a fixed order) nonforking extensions $q$ of $p$ and all $a\models q$.
\end{enumerate}
\end{Lemma}
\begin{proof}
(1)$\Rightarrow$(2) Let $\mathbf p=(p,<)$ be a weakly o-minimal pair over $A$, and assume that $p$ is trivial. Let $q\in S(B)$ be a nonforking extension of $p$ and let $a\models q$; we need to prove $\mathcal D_p(a)=\mathcal D_q(a)$.  As $q$ is a nonforking extension of $p$, $a$ is left or right $\mathbf p$-generic over $B$; without loss, assume  $B\triangleleft^{\mathbf p} a$. 
By Theorem \ref{Theorem_triangle_mathcal F}(g) we know that for all $b\models p$, $B\triangleleft^{\mathbf p}a\land a\dep_A b$ implies $B\triangleleft^{\mathbf p}b\land a\dep_Bb$; hence,
$\mathcal D_p(a)\subseteq \mathcal D_q(a)$. If this inclusion were proper, then there would be some $a'\in\mathcal D_q(a)$ with $a\ind_A a'$; in particular, $a'\models q$ and thus $B\triangleleft^{\mathbf p}a'$. By the $\triangleleft^{\mathbf p}$-comparability property, Corollary \ref {Cor_triangle_mathbf_pq_properties}(e), $a\ind_A a'$ implies $a\triangleleft^{\mathbf p}a'$ or $a'\triangleleft^{\mathbf p}a$. In the first case, we have $B\triangleleft^{\mathbf p}a\triangleleft^{\mathbf p}a'$, which,
by  Lemma \ref{Lemma_triv_preserved_in_nonforking}(b), implies that $(a,a')$ is Morley over $B$ and, in particular,  $a'\ind_B a$; this contradicts $a'\in \mathcal D_q(a)$. The second case is dealt with similarly.  Therefore, $\mathcal D_p(a)=\mathcal D_q(a)$, as desired. 

(2)$\Rightarrow$(3) is obvious, so we prove (3)$\Rightarrow$(1). 
Suppose that $\mathbf p=(p,<)$ is a weakly o-minimal pair over $A$ such that $\mathcal D_p(a)=\mathcal D_q(a)$ holds for all right nonforking extensions $q$ and all $a\models q$. 
We will prove that every $\triangleleft^{\mathbf p}$-increasing sequence $I=(a_i\mid i\in \omega)$ of realizations of $p$ is Morley in $\mathbf p_r$ over $A$; by Lemma \ref{Lemma wom left trivial iff right trivial}, this implies that $p$ is trivial.
By induction on $n>0$, we prove that $(a_0,a_1,\dots,a_n)$ is Morley (in $\mathbf p_r$ over $A$). The case $n=1$ is clear, so assume that $(a_0,a_1,\dots,a_n)$ is Morley for some $n>0$; in particular, $\bar a_{<n}\triangleleft^{\mathbf p} a_n\triangleleft^{\mathbf p}a_{n+1}\triangleleft^{\mathbf p}\ldots$.
Put $q=(\mathbf p_r)_{\restriction A\bar a_{<n}}$ and $\mathbf q=(q,<)$. Then $q$ is a right nonforking extension of $p$, $\mathbf p_r=\mathbf q_r$, and $a_n,a_{n+1}\models q$. By the  assumption, we have $\mathcal D_p(a_n)=\mathcal D_q(a_{n})$ and $\mathcal D_p(a_{n+1})=\mathcal D_q(a_{n+1})$. As $I$ is $\triangleleft^{\mathbf p}$-increasing, we get $\mathcal D_p(a_n)<\mathcal D_p(a_{n+1})$; hence $\mathcal D_q(a_n)<\mathcal D_q(a_{n+1})$; the latter implies that $(a_n,a_{n+1})$ is Morley in $\mathbf q_r$ over $A\bar a_{<n}$, so $(a_0,\ldots,a_n,a_{n+1})$ is Morley in $\mathbf p_r$ over $A$, finishing the proof. 
\end{proof}

\begin{Lemma}\label{Lemma trivial wom morley in nonforking is left or right morley}
Let $\mathbf p=(p,<)$ be a weakly o-minimal pair over $A$ such that $p$ is trivial, and let $q\in S(B)$ be a nonforking extension of $p$. Suppose that $\mathfrak q$ is a global nonforking extension of $q$. Then every
Morley sequence in $\mathfrak q$ over $B$ is a Morley sequence in $\mathbf p_r$ or $\mathbf p_l$ over $A$.
\end{Lemma}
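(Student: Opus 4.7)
The plan is a case analysis on which of $\mathbf q_r,\mathbf q_l$ equals $\mathfrak q$. Since $q$ is a nonforking extension of $p$, Fact~\ref{Fact_basic_S_p_wom}(d) gives $q\in\{(\mathbf p_r)_{\restriction B},(\mathbf p_l)_{\restriction B}\}$; by replacing $\mathbf p$ with its reverse $\mathbf p^*$ I shall assume $q=(\mathbf p_r)_{\restriction B}$. Setting $\mathbf q=(q,<)$, which is a weakly o-minimal pair over $B$ by Fact~\ref{Fact_basic_S_p_wom}(a), its only global nonforking extensions are $\mathbf q_r$ and $\mathbf q_l$. Moreover $\mathbf p_r$ is $A$-invariant, hence $B$-invariant, and is a right-eventual (in $<$) global extension of $q$, so Fact~1.12 of \cite{MTwom} forces $\mathbf p_r=\mathbf q_r$.

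When $\mathfrak q=\mathbf q_r=\mathbf p_r$ the conclusion is immediate: any Morley sequence in $\mathbf p_r$ over $B$ is a fortiori Morley in $\mathbf p_r$ over $A$. So suppose $\mathfrak q=\mathbf q_l$ and let $(a_i\mid i<\omega)$ be a Morley sequence in $\mathbf q_l$ over $B$. The key observation I would establish is that every relatively $p$-bounded formula $\phi(x)$ is also relatively $q$-bounded, because $\phi(q(\Mon))\subseteq\phi(p(\Mon))$ and the latter is bounded in $(p(\Mon),<)$. In conjunction with the forking characterization in Fact~\ref{Fact_basic_S_p_wom}(c), this yields, for any $c\in q(\Mon)$ and $D\subseteq q(\Mon)$,
\[
c\dep_A D\ \Longrightarrow\ c\dep_B D.
\]

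Now $(a_i)$ is $B$-indiscernible (hence $A$-indiscernible), strictly $<$-decreasing, and satisfies $a_j\ind_B a_{<j}$ for each $j$; by the contrapositive of the displayed implication, $a_j\ind_A a_{<j}$. By $\triangleleft^{\mathbf p}$-comparability (Remark~\ref{Remark_pgeneric_first}(b)), $a_j$ is right- or left-$\mathbf p$-generic over $Aa_{<j}$, and since $a_j<_p a_k$ for every $k<j$ the right-generic alternative is excluded; hence $a_j\models(\mathbf p_l)_{\restriction Aa_{<j}}$ and $(a_i)$ is Morley in $\mathbf p_l$ over $A$. The principal obstacle in the proof is this transfer of independence from base $B$ down to base $A$, which the displayed implication achieves.
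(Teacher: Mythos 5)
There is a genuine gap, and it sits exactly where you flag the ``principal obstacle''. Your key claim --- that every relatively $p$-bounded formula is relatively $q$-bounded because $\phi(q(\Mon))\subseteq\phi(p(\Mon))$ --- is false: boundedness must be witnessed \emph{inside} $(q(\Mon),<)$, and $q(\Mon)$ is a proper final part of $(p(\Mon),<)$, so a $p$-bounded set whose lower bound lies below $q(\Mon)$ can meet $q(\Mon)$ in an initial part of $q(\Mon)$ having no lower bound there. Consequently the displayed implication $c\dep_A D\Rightarrow c\dep_B D$ fails for tuples $D$ (for singletons it is true, but by Theorem \ref{Theorem_triangle_mathcal F}(g), not by your boundedness argument). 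In fact the lemma as you have read it --- with no triviality assumption on $p$ --- is false. In the affine reduct $(\mathbb R,m,<)$ of Example \ref{Example_trivover0} take $A=\emptyset$, $B=\{b\}$, $q=(\mathbf p_r)_{\restriction b}$ (locus $(b,+\infty)$) and $\mathfrak q=\mathbf q_l$. A Morley sequence $(a_0,a_1,a_2)$ in $\mathbf q_l$ over $B$ satisfies $0<a_2-b\ll a_1-b\ll a_0-b$; then $2a_1-a_0=b+2(a_1-b)-(a_0-b)<b<a_2<a_1$ with $2a_1-a_0,\,a_1\in\dcl(a_0a_1)$, so $a_2\dep_{\emptyset}a_0a_1$ and the sequence is Morley in neither $\mathbf p_l$ nor $\mathbf p_r$ over $\emptyset$. (The formula $m(y_1,y_0,y_1)<x<y_1$ evaluated at $(a_0,a_1)$ is relatively $p$-bounded but not relatively $q$-bounded --- its trace on $q(\Mon)$ is $(b,a_1)$ --- which is precisely the failure of your claim.)

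The statement is missing the hypothesis that $p$ is trivial; the paper's proof uses it silently in its very first line (the lemma lives in the section on trivial types and is only ever applied to trivial types). With triviality the argument is short and quite different from yours: $q$ is trivial by Lemma \ref{Lemma_triv_preserved_in_nonforking}, so for a Morley sequence $(a_i)$ in $\mathfrak q$ over $B$ the sequence $(\mathcal D_q(a_i))_i$ is strictly $<$-monotone by Remark \ref{Remark trivial wom expressing via triangleleft}; Lemma \ref{Lemma_trivial_iff_Dp=Dq} gives $\mathcal D_p(a_i)=\mathcal D_q(a_i)$, so $(\mathcal D_p(a_i))_i$ is strictly $<$-monotone and the sequence is Morley in $\mathbf p_r$ or $\mathbf p_l$ over $A$, by that Remark again. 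Your reduction to $q=(\mathbf p_r)_{\restriction B}$ and your treatment of the case $\mathfrak q=\mathbf q_r$ are fine; it is exactly the case $\mathfrak q=\mathbf q_l$ that cannot be handled without triviality, and triviality is what replaces your false transfer of boundedness.
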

\begin{proof} First, note that $\mathfrak q\notin \{\mathbf p_r,\mathbf p_l\}$ is possible: For example, let $a\triangleleft^{\mathbf p} b$ realize $p$, let $q=\tp(b/a)$, and let $\mathfrak q$ be the left globalization of $(q,<)$. 

By Lemma \ref{Lemma_triv_preserved_in_nonforking}, the triviality of $p$ is preserved in nonforking extensions, so $q$ is trivial. 
Suppose that $I=(a_i\mid i\in\omega)$ is a Morley sequence in $\mathfrak q$ over $B$. By Remark \ref{Remark trivial wom expressing via triangleleft}, the sequence $(\mathcal D_q(a_i)\mid i\in\omega)$ is strictly $<$-monotone, so by
Lemma \ref{Lemma_trivial_iff_Dp=Dq} the sequence $(\mathcal D_p(a_i)\mid i\in\omega)$
is also strictly $<$-monotone; by Remark \ref{Remark trivial wom expressing via triangleleft} again, $I$ is a Morley sequence in $\mathbf p_r$ or $\mathbf p_l$ over $A$.
\end{proof}

\begin{Remark}\label{Remark_notrivial_wom_witness}Let $\mathbf p=(p,<)$ be a weakly o-minimal pair over $A$. 
\begin{enumerate}[(a),left=0em,labelsep=.7em]
\item Recall that a sequence $(a_1,a_2,a_3)$ of realizations of $p$ is a $\mathbf p_r$-triangle over $A$ if it is not Morley in $\mathbf p_r$ over $A$, but every pair $(a_i,a_j) \ (i<j)$ is so; equivalently, $A\triangleleft^{\mathbf p}a_1 \triangleleft^{\mathbf p}a_2\triangleleft^{\mathbf p} a_3$ and $a_2\dep_{Aa_1}a_3$ (since by Theorem \ref{Theorem_triangle_mathcal F}(i), $a_1 \triangleleft^{\mathbf p}a_2\triangleleft^{\mathbf p} a_3$ and $a_2\ind _{Aa_1} a_3$ together imply $a_1a_2\triangleleft^{\mathbf p}a_3$).
The latter can happen if and only if the partial type $A\triangleleft^{\mathbf p}x\triangleleft^{\mathbf p} y\triangleleft^{\mathbf p}z$ is incomplete, in which case, by symmetry, there exists a $\mathbf p_l$-triangle over $A$. In particular, $\mathbf p_r$-triangles and $\mathbf p_l$-triangles over $A$ simultaneously exist.
\item Let $\mathfrak p=(p,<_p)$ be another weakly o-minimal pair over $A$. Then $\triangleleft^{\mathfrak p}\in\{\triangleleft^{\mathbf p}, \triangleleft^{\mathbf p^*}\}$, so
$A\triangleleft^{\mathbf p}x\triangleleft^{\mathfrak p} y\triangleleft^{\mathfrak p}z$  equals $A\triangleleft^{\mathbf p}x\triangleleft^{\mathbf p} y\triangleleft^{\mathbf p}z$ or $A\triangleleft^{\mathbf p}z\triangleleft^{\mathbf p} y\triangleleft^{\mathbf p}x$. By (a), we conclude that $\mathfrak p_r$-triangles and $\mathbf p_r$-triangles exist simultaneously (for all $\mathfrak p=(p,<_p)$).
\item By Remark \ref{Remark_trivial_basic2} the non-triviality of $\mathbf p_r$ over $A$ can be witnessed by a finite (possibly empty) Morley sequence $\bar b$ in $\mathbf p_r$ over $A$ and elements $a_1,a_2,a_3$ that form the $\mathbf p_r$-triangle over $A\bar b$. 
This can be expressed by: \ $\bar b\triangleleft^{\mathbf p}  a_1 \triangleleft^{\mathbf p_{\bar b}}a_2\triangleleft^{\mathbf p_{\bar b}} a_3 \ \ \ \mbox{and} \ \ \  a_2\dep_{A\bar ba_1} a_3$ \ (where $\mathbf p_{\bar b}=((\mathbf p_r)_{\restriction A\bar b},<)$).
\end{enumerate}
\end{Remark} 

\begin{Lemma}\label{Lemma_3trivial}
The following conditions are equivalent for any weakly o-minimal type $p\in S(A)$. 
\begin{enumerate}[(1),left=0em,labelsep=.7em]
    \item The type $A\triangleleft^{\mathbf p}x\triangleleft^{\mathbf p} y\triangleleft^{\mathbf p} z$ has a unique completion over $A$;
    \item There are no $\mathbf p_r$-triangles over $A$ for some/all weakly o-minimal pairs $\mathbf p=(p,<)$ over $A$;
    \item $(\mathbf p_r)_{\restriction Aa}\wor (\mathbf p_l)_{\restriction Aa}$ for all $a\models p$ and some/all weakly o-minimal pairs $\mathbf p=(p,<)$ over $A$;
    \item $(\mathbf p_r^3)_{\restriction A}(x,y,z)=(\mathbf p_l^3)_{\restriction A}(z,y,x)$ for some/all weakly o-minimal pairs $\mathbf p=(p,<)$ over $A$.
\end{enumerate}
\end{Lemma}
\begin{proof}
(1)$\Leftrightarrow$(2) follows by Remark \ref{Remark_notrivial_wom_witness}.  

(1) $\Leftrightarrow$(3) Let $\mathbf p=(p,<)$ be a weakly o-minimal pair over $A$, and let $a\models p$. The type $A\triangleleft^{\mathbf p}x\triangleleft^{\mathbf p}y\triangleleft^{\mathbf p}z$ has at least two completions in $S_{xyz}(A)$ if and only if the type $A\triangleleft^{\mathbf p}x\triangleleft^{\mathbf p}a\triangleleft^{\mathbf p}z$ has at least two completions in $S_{xz}(Aa)$. The latter is equivalent to $(\mathbf p_l)_{\restriction Aa}\nwor (\mathbf p_r)_{\restriction Aa}$, because $p(x)\cup \{x\triangleleft^{\mathbf p}a\}$ is equivalent to $(\mathbf p_l)_{\restriction Aa}(x)$ and $p(z)\cup \{a\triangleleft^{\mathbf p}z\}$ to $(\mathbf p_r)_{\restriction Aa}(z)$. 

(1)$\Rightarrow$(4) is immediate, so it remains to prove $\lnot{\text{(1)}}\Rightarrow \lnot{\text{(4)}}$. Assume that $A\triangleleft^{\mathbf p}x\triangleleft^{\mathbf p} y\triangleleft^{\mathbf p} z$ has at least two completions. 
Let $(a,b,c)$ be a Morley sequence in $\mathbf p_r$ over $A$, and let $(c,b',a)$ be a Morley sequence in $\mathbf p_l$ over $A$; 
it suffices to prove $b\nequiv b'\,(Aac)$. Denote $p_a=(\mathbf p_r)_{\restriction Aa}$ and $\mathbf p_a=(p_a,<)$, and consider $\Pi(x)=a\triangleleft^{\mathbf p}x\triangleleft^{\mathbf p}c$ as a partial type over $Aac$. By Remark \ref{Remark_pgeneric_first}(c), the set $p_a(\Mon)$, which is defined by $a\triangleleft^{\mathbf p}x$, is a final part of $(p(\Mon),<)$. Similarly,  $x\triangleleft^{\mathbf p}c$ defines an initial part of $(p(\Mon),<)$, so $\Pi(\Mon)$ is a convex subset of $(p(\Mon),<)$. 
If $q\in S_x(Aac)$ is a completion of $\Pi(x)$, then $q$ is an extension of $p$, so by the weak o-minimality of $p$, $q(\Mon)$ is convex within $(p(\Mon),<)$ and therefore also within $(\Pi(\Mon),<)$; since $\Pi(x)$ is incomplete by $\lnot(1)$, $q(\Mon)$ is a proper subset of $\Pi(\Mon)$.
Now, we claim that the locus of $\tp(b/Aac)$, call it $P$, is an initial part of $\Pi(\Mon)$.  
To prove this, note that the sequence $(b,c)$ is Morley in $\mathbf p_r$ over $Aa$, so $b\triangleleft^{\mathbf p_a}c$, that is, $b$ is left $\mathbf p_a$-generic over $c$. We conclude that $P$ consists of all left $(\mathbf p_a)_l$-generic elements over $c$, so, by Remark \ref{Remark_pgeneric_first}(c), $P$ is an initial part of $p_a(\Mon)$. Combining this with the facts that $\Pi(\Mon)$ is an initial part of $p_a(\Mon)$ and that $P\subseteq \Pi(\Mon)$, we conclude that $P$ is an initial part of $\Pi(\Mon)$.
Analogously, we see that the locus of $\tp(b'/Aac)$ is a proper final part of $\Pi(\Mon)$. Therefore, $b\not\equiv b'\,(Aac)$.
\end{proof}

\begin{Theorem}\label{Theorem_nwor preserves triviality}
\begin{enumerate}[(a),left=0em,labelsep=.7em]
\item Nonforking extensions of a trivial weakly o-minimal type are all trivial, but the triviality of some nonforking extension does not guarantee the triviality of the type.  

\item  The triviality is preserved under $\nwor$ of weakly o-minimal types over the same domain.

\item  The weak orthogonality of trivial weakly o-minimal types transfers to their nonforking extensions. 
\end{enumerate}
\end{Theorem}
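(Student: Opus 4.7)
For part (a), the positive direction is exactly Lemma \ref{Lemma_triv_preserved_in_nonforking}(a), so I would simply cite it. For the negative direction, I would exhibit an explicit weakly o-minimal type $p$ over $A$ that admits a $\mathbf p_r$-triangle over $A$ (and so is not trivial, by Remark \ref{Remark_notrivial_wom_witness}(d)) but whose certain nonforking extension $p'$ over some $B\supseteq A$ has no triangle. A natural source of such examples is a ``two-layered'' weakly o-minimal structure in which the triangle lives entirely in one layer and a parameter $b\in B$ cuts off that layer.

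For part (b), the plan is to argue by contradiction. First I would reverse $\mathbf q$ if needed so that $\delta_A(\mathbf p,\mathbf q)$. Assuming $q$ is not trivial, I would use Remark \ref{Remark_notrivial_wom_witness}(d) to produce a finite (possibly empty) Morley sequence $\bar b$ in $\mathbf q_r$ over $A$ together with a $\mathbf q_{\bar b}$-triangle over $A\bar b$. By Fact \ref{Fact_delta_implies_deltaB_fworB}(a) the relation $\delta_{A\bar b}(\mathbf p_{\bar b},\mathbf q_{\bar b})$ still holds, and by Lemma \ref{Lemma_triv_preserved_in_nonforking}(a) the type $p_{\bar b}$ is still trivial, so by replacing $A$ with $A\bar b$ I may assume the triangle is already over $A$: that is, $b_1\triangleleft^{\mathbf q}b_2\triangleleft^{\mathbf q}b_3$ with $b_2\dep_{Ab_1}b_3$. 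Using density in $\mathcal F=\{\mathbf p,\mathbf q\}$ (Corollary \ref{Cor_triangle_mathbf_pq_properties}(b)), I would pick $a_3\models p$ with $Ab_1 b_2\triangleleft^{\mathbf p}a_3\triangleleft^{\mathbf q}b_3$. Applying the strong transitivity Lemma \ref{Lemma_trivial_strong_trans} to the trivial type $p$ and the configuration $Ab_1 b_2\triangleleft^{\mathbf p}a_3\triangleleft^{\mathbf q}b_3$ then yields $Ab_1 b_2 a_3\triangleleft^{\mathbf q}b_3$, and in particular $Ab_1 b_2\triangleleft^{\mathbf q}b_3$. This says that $(b_1,b_2,b_3)$ is Morley in $\mathbf q_r$ over $A$, contradicting the triangle assumption.

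For part (c), the central tool is the $\omega$-lifting of weak orthogonality. By Corollary \ref{Cor wom lifts to omega for o trivial}(b), triviality of $p,q$ together with $p\wor q$ yields $(\mathbf p_r^\omega)_{\restriction A}\wor(\mathbf q_r^\omega)_{\restriction A}$. Let $B\supseteq A$ and let $p',q'$ be nonforking extensions of $p,q$; by Fact \ref{Fact_basic_S_p_wom}(d), and possibly reversing one order, I may assume $p'=(\mathbf p_r)_{\restriction B}$ and $q'=(\mathbf q_r)_{\restriction B}$. Given $a\models p'$ and $b\models q'$, I would extend $a$ to a Morley sequence $\bar a$ in $\mathbf p_r$ over $B$ with $a_0=a$. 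By Corollary \ref{Cor wom lifts to omega for o trivial}(a) applied to $\mathfrak p=\mathbf p_r$ and $r=q$, we obtain $\bar a\ind_A b$, so $\tp(\bar a/Ab)=(\mathbf p_r^\omega)_{\restriction Ab}$. The main remaining step is to promote this to $\tp(\bar a/Bb)=(\mathbf p_r^\omega)_{\restriction Bb}$, which I would carry out using the $B$-indiscernibility of $\bar a$, the dp-minimality of weakly o-minimal types (Fact stated in the preliminaries), and Proposition \ref{Prop_basic_order_trivial}. Restricting to the first coordinate then gives $\tp(a/Bb)=(\mathbf p_r)_{\restriction Bb}$, which pins down $\tp(ab/B)$ and proves $p'\wor q'$.

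The main obstacle is this final promotion step in part (c): lifting the independence $\bar a\ind_A b$ (coming from the $\omega$-orthogonality over $A$) to $\bar a\ind_B b$ over the larger base $B$. This step relies essentially on triviality—through the $\omega$-lifting and Proposition \ref{Prop_basic_order_trivial}—and on the dp-minimality of the weakly o-minimal types; without triviality the $\omega$-power step is unavailable and the whole strategy breaks down.
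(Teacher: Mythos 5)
Your proof of part (b) breaks at the density step. Having reduced to a $\mathbf q_r$-triangle $b_1\triangleleft^{\mathbf q}b_2\triangleleft^{\mathbf q}b_3$ with $b_2\dep_{Ab_1}b_3$, you invoke Corollary \ref{Cor_triangle_mathbf_pq_properties}(b) to find $a_3\models p$ with $Ab_1b_2\triangleleft^{\mathbf p}a_3\triangleleft^{\mathbf q}b_3$. But the hypothesis of the density property is $Ab_1b_2\triangleleft^{\mathbf q}b_3$, and this is exactly what the triangle denies: $b_2\dep_{Ab_1}b_3$ says that $b_3$ is \emph{not} right $\mathbf q$-generic over $Ab_1b_2$. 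In fact no such $a_3$ can exist at all, since plain transitivity (Corollary \ref{Cor_triangle_mathbf_pq_properties}(f)) already turns $Ab_1b_2\triangleleft^{\mathbf p}a_3\triangleleft^{\mathbf q}b_3$ into $Ab_1b_2\triangleleft^{\mathbf q}b_3$ with no appeal to triviality whatsoever --- a sign that your contradiction comes from an impossible configuration rather than from the triviality of $p$. The repair is to interleave \emph{two} realizations of the trivial type strictly between $b_2$ and $b_3$, where density does apply (since $b_2\triangleleft^{\mathbf q}b_3$ holds over $A$): from $Ab_1\triangleleft^{\mathbf q}b_2\triangleleft^{\mathbf p}a\triangleleft^{\mathbf p}a'\triangleleft^{\mathbf q}b_3$ and $b_2\dep_{Ab_1}b_3$, Theorem \ref{Theorem_triangle_mathcal F}(j) transfers the forking to give $a\dep_{Ab_1}a'$, and this contradicts the triviality of $p$ via Lemma \ref{Lemma_triv_preserved_in_nonforking}(b) or strong transitivity. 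This is precisely the paper's argument, run with the roles of $p$ and $q$ exchanged.

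Part (c) opens correctly with Corollary \ref{Cor wom lifts to omega for o trivial}, but the ``promotion'' of $\tp(\bar a/Ab)=(\mathbf p_r^\omega)_{\restriction Ab}$ to $\tp(\bar a/Bb)=(\mathbf p_r^\omega)_{\restriction Bb}$ --- which you yourself flag as the main obstacle --- is not something Proposition \ref{Prop_basic_order_trivial} can deliver: that proposition reduces ``$\bar a$ is Morley over $Bb$'' to ``$a_0\models(\mathbf p_r)_{\restriction Bb}$'', which is exactly the conclusion you are after, so the appeal is circular. The paper closes this gap with a genuine NIP alternation argument: assuming $p_1\nwor q_1$ over $B$, it fixes a configuration $a_0\triangleleft^{\mathbf q_1}b_0\triangleleft^{\mathbf q_1}b_1\triangleleft^{\mathbf p_1}a_1$ with $\tp(a_0b_0/B)\neq\tp(a_1b_1/B)$, stacks $B$-conjugates of it into $\triangleleft$-increasing sequences $I$ and $J$ (Morley over $B$, hence over $A$, by triviality), uses the $\omega$-lifting of $p\wor q$ over $A$ to make $(a_nb_n)_{n<\omega}$ indiscernible over $A$, and then observes that the separating formula alternates along this sequence, contradicting NIP. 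Some such construction is unavoidable; the tools you list do not substitute for it. Finally, in part (a) the counterexample is only gestured at; the paper's Example \ref{Example_trivover0} (the affine reduct $(\mathbb R,m,<)$ of the ordered group of reals, whose unique $1$-type is nontrivial over $\emptyset$ yet trivial over $\{0\}$) shows the mechanism is not a ``layer being cut off'' but the enlargement of $\dcl$ by the new parameter, which destroys the pairwise independence required for a triangle.
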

\begin{proof}(a) The first clause is Lemma \ref{Lemma_triv_preserved_in_nonforking}(a); the second will be justified in Example \ref{Example_trivover0}. 

(b) Let $p$ and $q$ be weakly o-minimal types over $A$ with $p\nwor q$. Assuming that $p$ is non-trivial, we will prove that $q$ is also non-trivial. Choose relatively $A$-definable orders $<_p$ and $<_q$ such that the weakly o-minimal pairs $\mathbf p=(p,<_p)$ and $\mathbf q=(q,<_q)$ are directly nonorthogonal. 
By Remark \ref{Remark_notrivial_wom_witness}(c) there are $D\supseteq A$ and $a_1,a_2,a_3$ such that $D\triangleleft^{\mathbf p}a_1\triangleleft^{\mathbf p_D}a_2\triangleleft^{\mathbf p_D}a_3$ and $a_2\dep_{Da_1} a_3$, where $\mathbf p_D=((\mathbf p_r)_{\strok D},<)$; note that in particular $Da_1\triangleleft^{\mathbf p}a_2\triangleleft^{\mathbf p}a_3$ holds.
By  density  (Corollary \ref {Cor_triangle_mathbf_pq_properties}(b)) there are $b,b'$ satisfying $a_2\triangleleft^{\mathbf q}b\triangleleft^{\mathbf q}b'\triangleleft^{\mathbf p}a_3$. By Theorem \ref{Theorem_triangle_mathcal F}(j),
$Da_1\triangleleft^{\mathbf p}a_2\triangleleft^{\mathbf q} b\triangleleft^{\mathbf q}b'\triangleleft^{\mathbf p} a_3$ and $a_2\dep_{Da_1} a_3$ together imply
$b\dep_{Da_1} b'$. 
If $q$ were trivial, then by  strong transitivity  (Lemma \ref{Lemma_trivial_strong_trans}) $Da_1\triangleleft^{\mathbf q}b\triangleleft^{\mathbf q}b'$ would imply $Da_1b\triangleleft^{\mathbf q}b'$, which contradicts $b\dep_{Da_1} b'$. Therefore, $q$ is non-trivial, proving (b).

(c)  Suppose that $p,q\in S(A)$ are trivial weakly o-minimal types with $p\wor q$ and that $p_1,q_1\in S(B)$ are their nonforking extensions. We need to prove $p_1\wor q_1$. 
Suppose on the contrary that $p_1\nwor q_1$. By part (a) of the theorem, both $p_1$ and $q_1$ are trivial, since $p$ and $q$ are. 
Choose relatively $A$-definable orders $<_p$ and $<_q$ such that the weakly o-minimal pairs $\mathbf p_1=(p_1,<_p)$ and $\mathbf q_1=(q_1,<_q)$ are directly nonorthogonal. Let $\mathbf p=(p,<_p)$ and $\mathbf q=(q,<_q)$.
Choose $a_0,a_1\models p_1$ and $b_0,b_1\models q_1$ such that \ $a_0\triangleleft^{\mathbf q_1}b_0\triangleleft^{\mathbf q_1}b_1\triangleleft^{\mathbf p_1}a_1$. Then \ $\tp(a_0b_0/B)\neq \tp(a_1b_1/B)$. To see this, first observe that $p_1\nwor q_1$ implies $\tp(a_0b_0/B)\neq \tp(a_1b_0/B)$ as $a_0$ is left and $a_1$ is right $\mathbf p_1$-generic over $b_0$; also observe that 
$b_0\triangleleft^{\mathbf q_1}b_1\triangleleft^{\mathbf p_1}a_1$ implies $\tp(a_1b_0/B)=\tp(a_1b_1/B)$; therefore, $\tp(a_0b_0/B)\neq \tp(a_1b_1/B)$. 
Witness this by a formula $\phi(x,y,c)\in \tp(a_0b_0/B)$ such that $\phi(x,y,c)\notin \tp(a_1b_1/B)$. 
Define sequences $I=(a_n\mid n<\omega)$ and $J=(b_n\mid n<\omega)$ such that the following holds for all $n\geqslant 1$:
$$a_{2n-1}\triangleleft^{\mathbf p_1}a_{2n}\ \ \ \ \mbox{ and }\ \ \ \ a_0b_0a_1b_1\equiv a_{2n}b_{2n}a_{2n+1}b_{2n+1}\, (B).$$
Note that for all $n<\omega$, $a_{2n}\triangleleft^{\mathbf q_1}b_{2n}\triangleleft^{\mathbf q_1}b_{2n+1}\triangleleft^{\mathbf p_1}a_{2n+1}\triangleleft^{\mathbf p_1}a_{2n+2}$ holds.
Hence, $I$ is strictly $\triangleleft^{\mathbf p_1}$-increasing, so, by Remark \ref{Remark trivial wom expressing via triangleleft}, $I$ is a Morley sequence in $(\mathbf p_1)_r$ over $B$. Similarly, $J$ is a Morley sequence in $(\mathbf q_1)_r$ over $B$. By Lemma \ref{Lemma trivial wom morley in nonforking is left or right morley}, $I$ is a Morley sequence in a nonforking globalization of $p$ over $A$; as it is $<_p$-increasing, $I$ is a Morley sequence in $\mathbf p_r$ over $A$. Similarly, $J$ is a Morley sequence in $\mathbf q_r$ over $A$. By Corollary \ref{Cor wom lifts to omega for o trivial}(b),  $\tp(I/A)\wor \tp(J/A)$ as $p\wor q$. In particular, $I$ and $J$ are mutually indiscernible over $A$ and the sequence $(a_nb_n\mid n<\omega)$ is indiscernible over $A$. Since $a_0b_0a_1b_1\equiv a_{2n}b_{2n}a_{2n+1}b_{2n+1}\,(B)$ implies $\models\phi(a_{2n},b_{2n},c)\land\lnot \phi(a_{2n+1},b_{2n+1},c)$, the type $\tp(a_0b_0/A)$ is not NIP. Contradiction.
\end{proof}

For directly non-orthogonal weakly o-minimal pairs we can strengthen part (c) of the theorem: the transfer of weak and forking non-orthogonality occurs in both directions, from and towards non-forking extensions.

\begin{Corollary}
Suppose that $\mathbf p=(p,<_p)$ and $\mathbf q=(q,<_q)$ are weakly o-minimal pairs over $A$ such that $p$ and $q$ are trivial. Let $B\supseteq A$, $p_B=(\mathbf p_r)_{\restriction B}$, and $q_B=(\mathbf q_r)_{\restriction B}$.
\begin{enumerate}[(a),left=0em,labelsep=.7em]
\item  $p\wor q \Rightarrow p_B\wor q_B$ \ and  \ $\delta_A(\mathbf p,\mathbf q) \Rightarrow p_B\nwor q_B$.

\item  $\delta_A(\mathbf p,\mathbf q)$ implies  \ $p\fwor q \Leftrightarrow p_B\fwor q_B$. 
\end{enumerate}
\end{Corollary}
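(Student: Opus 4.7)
The plan is to reduce each clause to a previously established transfer result, with the only nontrivial argument being the forward direction of (b), which is handled by a $\triangleleft$-comparability case split combined with strong transitivity for trivial types. For part (a), the implication $p\wor q \Rightarrow p_B\wor q_B$ is an immediate application of Theorem \ref{Theorem_nwor preserves triviality}(c): since $p_B$ and $q_B$ are nonforking extensions (by Fact \ref{Fact_basic_S_p_wom}(d)) of the trivial weakly o-minimal types $p$ and $q$, weak orthogonality lifts from $A$ to $B$. The implication $\delta_A(\mathbf p,\mathbf q) \Rightarrow p_B\nwor q_B$ is explicitly part of Fact \ref{Fact_delta_implies_deltaB_fworB}(a) and does not even rely on triviality.

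For part (b), the direction $p_B\fwor q_B \Rightarrow p\fwor q$ is just the contrapositive of Fact \ref{Fact_delta_implies_deltaB_fworB}(b), so it too is automatic. The substantive direction is $p\fwor q \Rightarrow p_B\fwor q_B$. Given $a\models p_B$ and $b\models q_B$, one has $B\triangleleft^{\mathbf p}a$ and $B\triangleleft^{\mathbf q}b$. I would use $p\fwor q$ to deduce $a\ind_A b$, and then invoke $\triangleleft$-comparability (Corollary \ref{Cor_triangle_mathbf_pq_properties}(e)) to split into two cases: $a\triangleleft^{\mathbf q}b$ or $b\triangleleft^{\mathbf p}a$. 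In the first case, $B\triangleleft^{\mathbf p}a\triangleleft^{\mathbf q}b$, and strong transitivity (Lemma \ref{Lemma_trivial_strong_trans}), applied with the trivial middle type $p$, yields $Ba\triangleleft^{\mathbf q}b$, whence $a\ind_B b$ after symmetry (Corollary \ref{Cor_triangle_mathbf_pq_properties}(c)). In the second case, $B\triangleleft^{\mathbf q}b\triangleleft^{\mathbf p}a$, and strong transitivity applied to the reversed pair $(\mathbf q,\mathbf p)$ (directly non-orthogonal by symmetry of $\delta_A$, Theorem \ref{Theorem_nonorthogonality}(b)) with trivial middle type $q$ gives $Bb\triangleleft^{\mathbf p}a$, so again $a\ind_B b$.

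The main obstacle, and precisely the reason that both triviality hypotheses are needed, is this case split: strong transitivity upgrades $B\triangleleft x \triangleleft y$ to $Bx\triangleleft y$ only when the type of the \emph{middle} element $x$ is trivial. In the setup of (b) that middle element is alternately a realization of $p$ and of $q$, so both must be assumed trivial; the weaker transitivity in Corollary \ref{Cor_triangle_mathbf_pq_properties}(f) alone would not suffice to exclude $a\dep_B b$.
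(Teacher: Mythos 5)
Your proposal is correct and follows essentially the same route as the paper's proof: part (a) by direct citation of Theorem \ref{Theorem_nwor preserves triviality}(c) and Fact \ref{Fact_delta_implies_deltaB_fworB}, and part (b) by reducing to $a\ind_A b$, splitting via $\triangleleft$-comparability, and applying strong transitivity (Lemma \ref{Lemma_trivial_strong_trans}); the paper merely compresses your two symmetric cases into a ``without loss'' step. Your closing remark correctly pinpoints why triviality of both types is used.
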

\begin{proof}
(a)  The first claim is Theorem \ref{Theorem_nwor preserves triviality}(c) and the second follows by Fact \ref{Fact_delta_implies_deltaB_fworB}(a). 

(b) Assume $\delta_A(\mathbf p,\mathbf q)$. By Fact \ref{Fact_delta_implies_deltaB_fworB}(b) we have $p\nfor q \Rightarrow p_B\nfor q_B$. 
To prove the other direction, assume $p\fwor q$ and let $a\models p_B$ and $b\models q_B$; we need to prove $a\ind_B b$. $p\fwor q$ implies $a\ind_A b$, 
so $a\triangleleft^{\mathbf q} b$ or $b\triangleleft^{\mathbf p} a$. Without loss, assume $a\triangleleft^{\mathbf q} b$. By  strong transitivity (Lemma \ref{Lemma_trivial_strong_trans}), $B\triangleleft^{\mathbf p}a\triangleleft^{\mathbf q}b$ implies $Ba\triangleleft^{\mathbf q}b$; hence, $b\ind_B a$, as desired.  
\end{proof}

We have already shown that every $\triangleleft^{\mathbf p}$-increasing sequence of realizations of a trivial type is Morley in $\mathbf p_r$. This generalizes to realizations of trivial types from the same $\delta_A$-class in the following way: 

\begin{Definition}
Let $\mathcal F$ be a $\delta_A$-class of weakly o-minimal pairs over $A$ and let $(I,<_I)$ be a linear order. A sequence $(a_i\mid i\in I)$  of realizations of types from $\mathcal F$ is {\em $\mathcal F$-independent over $A$} if $(a_{j}\mid j<i)\triangleleft^{\mathcal F} a_i$ holds for all $i\in I$. 
\end{Definition}

\begin{Proposition}
Let $\mathcal F$ be a $\delta_A$-class such that the types appearing in $\mathcal F$ are trivial. Then every $\triangleleft^{\mathcal F}$-increasing sequence of realizations of types from $\mathcal F$ is $\mathcal F$-independent over $A$. 
\end{Proposition}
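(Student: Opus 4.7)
The goal is to show that for each $i\in I$, the tuple $a_i$ is right $\mathcal F$-generic over $A\cup\{a_j : j<i\}$, i.e., $a_i\models (\mathbf q_r)_{\restriction A\{a_j : j<i\}}$ where $\mathbf q\in\mathcal F$ is the pair with $a_i\models q$. By finite character of types, it suffices to establish this over every finite subset of $\{a_j : j<i\}$. So the plan is to reduce to the finite statement: for every finite strictly increasing sub-sequence $i_1<\cdots<i_n$ from $I$,
\[
Aa_{i_1}\ldots a_{i_{n-1}}\triangleleft^{\mathcal F} a_{i_n},
\]
and prove this by induction on $n\geqslant 1$.

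The base case $n=1$ is immediate: $A\triangleleft^{\mathcal F} a_{i_1}$ merely says that $a_{i_1}$ realizes its type over $A$. For the inductive step, let $\mathbf q_n,\mathbf q_{n+1}\in\mathcal F$ be the pairs with $a_{i_n}\models q_n$ and $a_{i_{n+1}}\models q_{n+1}$. Set $B=Aa_{i_1}\ldots a_{i_{n-1}}$. The inductive hypothesis gives $B\triangleleft^{\mathbf q_n} a_{i_n}$, and the $\triangleleft^{\mathcal F}$-increasing assumption gives $a_{i_n}\triangleleft^{\mathbf q_{n+1}} a_{i_{n+1}}$. Since $\mathbf q_n$ and $\mathbf q_{n+1}$ both lie in the same $\delta_A$-class, $\delta_A(\mathbf q_n,\mathbf q_{n+1})$ holds; since $q_n$ is trivial by hypothesis, the strong transitivity property (Lemma \ref{Lemma_trivial_strong_trans}) applies with $\mathbf p=\mathbf q_n$, $\mathbf q=\mathbf q_{n+1}$, $a=a_{i_n}$, $b=a_{i_{n+1}}$, yielding
\[
Ba_{i_n}\triangleleft^{\mathbf q_{n+1}} a_{i_{n+1}},
\]
which is exactly the desired conclusion for $n+1$.

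There is no substantial obstacle here: the proof is essentially a single-step induction whose engine is strong transitivity. The only points requiring a little care are the compactness reduction to finite initial segments (needed because $I$ is an arbitrary linear order, not necessarily well ordered) and the bookkeeping that pairs each $a_{i_k}$ with its corresponding $\mathbf q_k\in\mathcal F$, so that the $\delta_A$-relatedness and triviality hypotheses of Lemma \ref{Lemma_trivial_strong_trans} are both visibly satisfied in each step.
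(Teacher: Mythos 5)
Your proof is correct and follows essentially the same route as the paper's: reduce to finite (sub)sequences, induct on length, and drive the inductive step with the strong transitivity property of Lemma \ref{Lemma_trivial_strong_trans}, whose triviality hypothesis is supplied by the assumption on $\mathcal F$. The only cosmetic difference is that you spell out the finite-character reduction and the pairing of each $a_{i_k}$ with its pair $\mathbf q_k\in\mathcal F$, which the paper leaves implicit.
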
 
\begin{proof}
It suffices to prove the claim for finite sequences. We proceed by induction on the length of a sequence. The claim is trivial for sequences of length $\leq 2$, so suppose that we have a $\triangleleft^{\mathcal F}$-increasing sequence $(a_1,\dots,a_n,a_{n+1})$ for $n\geqslant 2$. By the induction hypothesis, $(a_j\mid j<i)\triangleleft^{\mathcal F}a_i$ holds for all $i\leqslant n$. In particular, we have $(a_j\mid j<n)\triangleleft^{\mathcal{F}} a_n\triangleleft^{\mathcal{F}}a_{n+1}$. By strong transitivity (Lemma \ref{Lemma_trivial_strong_trans}) we conclude $(a_j\mid j\leqslant n)\triangleleft^{\mathcal F}a_{n+1}$, and we are done.
\end{proof}

\subsection{Trivial 1-types in o-minimal theories}

Throughout this subsection, $T$ denotes an o-minimal theory (with respect to some dense linear order $<$). Recall that an element $a$ of an o-minimal structure is said to be non-trivial if there is an open interval $I$ containing $a$ and a definable function $f:I\times I\to \Mon$ that is strictly increasing in both coordinates; otherwise, $a$ is trivial. By the Trichotomy Theorem (\cite{PeS}), $a$ is non-trivial if and only if there is a definable group structure on some open, convex neighborhood of $a$. 
On the other hand, as
shown in \cite{MRS}, all elements of $\Mon$ are trivial if and only if $(\Mon,\dcl)$ is a degenerate pregeometry if and only if $T$ is binary. In fact, 
if $a$ is trivial, then it follows from Lemmas 2.1 and 2.2 of \cite{MRS} that $\tp(a)$ is trivial. In general, the converse is not true, as illustrated by the following example. 
 
\begin{Example}\phantomsection\label{Example_trivial_real_additive}
Consider the ordered group of reals $(\mathbb R,+,<,0)$; clearly, it is an o-minimal structure. There is a unique type $p\in S_1(T)$ that contains  $0<x$ and the pair $\mathbf p=(p,<)$ is weakly o-minimal. 
For all $A$, the type $(\mathbf p_{r})_{\restriction A}$ is determined by $Gp(A)<x$, where $Gp(A)$ is the subgroup generated by $A$. Using this, it is easy to see that $\mathbf p_r$ is order-trivial over $\emptyset$:
an increasing sequence of realizations of $p$, $(a_i\mid i\in\omega)$, is Morley in $\mathbf p_r$ over $\emptyset$ if and only if $Gp(a_i)<a_{i+1}$ for all $i\in \omega$, or equivalently,  if $a_i\triangleleft^{\mathbf p} a_{i+1}$  for all $i\in\omega$. Therefore, $\mathbf p_r$ is trivial over $\emptyset$, although the $\mathrm{dcl}$-pregeometry on $p(\Mon)$ is obviously not degenerate.
\end{Example}
 
Therefore, the triviality of the type is not equivalent to the triviality of its realizations. In the proof of Proposition \ref{Proposition_Ramakrishnan}, we will see that the triviality of $\tp(+\infty)$ is related to the existence of ``uniform bounds on growth" of functions, studied by Friedman and Miller in \cite{FM} and Ramakrishnan in \cite{Ramakrishnan}; we adapt the existence of uniform bounds to the context of 1-types as follows:

\begin{Definition}\label{Definition_p_germs_bdd}
Let $\mathfrak p\in S_1(\Mon)$. We say that {\it $\mathfrak p$-germs are $A$-bounded} if $\mathfrak p$ is $A$-invariant and for all $B\supseteq A$ and all relatively $B$-definable functions $f:\mathfrak p_{\restriction B}(\Mon)\to \mathfrak p_{\restriction B}(\Mon)$, there exists a relatively $A$-definable function $g:p(\Mon)\to p(\Mon)$ such that $f(x)\leqslant g(x)$ for all $x\in \mathfrak p_{\restriction B}(\Mon)$. 
\end{Definition} 

Equivalently, $\mathfrak p$-germs are $A$-bounded if every relatively definable function that maps the locus of $\mathfrak p$ in a larger monster into itself is bounded by a relatively $A$-definable such function.

\begin{Remark}\label{Remark_boundedgerms}
If $\mathfrak p$-germs are $A$-bounded and $B,f$ are as in Definition \ref{Definition_p_germs_bdd}, then there are relatively $A$-definable functions $g,g' :\mathfrak p_{\restriction B}(\Mon)\to \mathfrak p_{\restriction B}(\Mon)$ with $g'(x)\leqslant f(x)\leqslant g(x)$. By the Monotonicity Theorem, $f^{-1}$ is well defined and relatively $B$-definable, so there is a relatively $A$-definable $g_1$ with $f^{-1}(x)\leqslant g_1(x)$. Since $g_1$ is strictly increasing on $p(\Mon)$, $g_1^{-1}(f^{-1}(x))\leqslant x$ for all $x\in \mathfrak p_{\restriction B}(\Mon)$.
Then $g_1^{-1}(x)\leqslant f(x)$ holds for all $x\in \mathfrak p_{\restriction B}(\Mon)$, so  $g'=g_1^{-1}$ works.    
\end{Remark}
 
Let $p\in S_1(A)$ and let $a\models p$. Recall that the set $\mathcal D_p(a)$ is the union of all relatively $Aa$-definable subsets of $p(\Mon)$ that are bounded in $(p(\Mon),<)$; since $p(\Mon)$ is a convex subset of the monster, each of these bounded subsets is included in a segment with endpoints in $\dcl(Aa)\cap p(\Mon)$. Therefore, the set $\mathcal D_p(a)$ equals the union of all segments $[f(a),g(a)]$, where $f,g:p(\Mon)\to p(\Mon)$ are relatively $A$-definable functions; $\mathcal D_p(a)$ can also be described as the convex hull of $\dcl(Aa)\cap p(\Mon)$.   

\begin{Lemma}\label{Lemma_ominimal}
A type $p\in S_1(A)$ (in an o-minimal theory) is trivial if and only if $\mathbf p_r$-germs are $A$-bounded if and only if $\mathbf p_l$-germs are $A$-bounded.
\end{Lemma}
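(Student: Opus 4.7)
The plan is to reduce both equivalences to the characterization of triviality given in Lemma \ref{Lemma_trivial_iff_Dp=Dq}: $p$ is trivial iff $\mathcal{D}_q(a)=\mathcal{D}_p(a)$ for every nonforking extension $q$ of $p$ and every $a\models q$. By the paragraph just preceding that lemma, in the o-minimal setting $\mathcal{D}_p(a)$ is the union of intervals $[h_1(a),h_2(a)]$ as $h_1,h_2:p(\Mon)\to p(\Mon)$ range over relatively $A$-definable functions, and likewise for $q=(\mathbf{p}_r)_{\restriction B}$ the set $\mathcal{D}_q(a)$ is the union of $[f_1(a),f_2(a)]$ over relatively $B$-definable $f_1,f_2:q(\Mon)\to q(\Mon)$. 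Since $\mathcal{D}_p(a)\subseteq\mathcal{D}_q(a)$ holds automatically by Theorem \ref{Theorem_triangle_mathcal F}(g), the whole content of the equality is the reverse inclusion, which I will match directly to the statement that $\mathbf{p}_r$-germs are $A$-bounded.

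For the direction $p$ trivial $\Rightarrow$ $\mathbf{p}_r$-germs are $A$-bounded, fix $B\supseteq A$, a relatively $B$-definable $f:q(\Mon)\to q(\Mon)$, and $a\models q$. Since $q$ is non-algebraic and $f(a)\in\dcl(Ba)\setminus\dcl(B)$, the formula $x=f(a)$ divides over $B$ (using that $f(a)$ has infinitely many $B$-conjugates), so $f(a)\dep_B a$ and hence $f(a)\in\mathcal{D}_q(a)$. Triviality gives $f(a)\in\mathcal{D}_p(a)$, so $f(a)\leq h(a)$ for some partial $A$-definable $h$ with $h(a)\in p(\Mon)$. Completeness of $p$ propagates both $a\in\operatorname{dom}(h)$ and $h(a)\in p(\Mon)$ to every realization of $p$, so the graph of $h$ intersected with $p(\Mon)^2$ furnishes a relatively $A$-definable total $g:p(\Mon)\to p(\Mon)$ with $g(a)\geq f(a)$; the $L_B$-formula $f(x)\leq g(x)$ then lies in $\tp(a/B)=q$ and so bounds $f$ uniformly on $q(\Mon)$.

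For the converse, assume $\mathbf{p}_r$-germs are $A$-bounded and pick $c\in\mathcal{D}_q(a)$, so $f_1(a)\leq c\leq f_2(a)$ for some relatively $B$-definable $f_1,f_2:q(\Mon)\to q(\Mon)$. Remark \ref{Remark_boundedgerms} (which provides both an upper and a lower relatively $A$-definable bound) yields $g_1,g_2:p(\Mon)\to p(\Mon)$ relatively $A$-definable with $g_1(a)\leq f_1(a)$ and $f_2(a)\leq g_2(a)$, so $c\in[g_1(a),g_2(a)]\subseteq\mathcal{D}_p(a)$ (using that $p(\Mon)$ is convex in $\Mon$). Hence $\mathcal{D}_q(a)\subseteq\mathcal{D}_p(a)$, and Lemma \ref{Lemma_trivial_iff_Dp=Dq} gives triviality of $p$. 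The equivalence with $A$-boundedness of $\mathbf{p}_l$-germs is proved identically by taking $q=(\mathbf{p}_l)_{\restriction B}$, and Lemma \ref{Lemma wom left trivial iff right trivial} ensures that the two one-sided notions of triviality of $p$ coincide, closing the chain.

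The only delicate point is the passage, in the forward direction, from a partial $A$-definable witness $h$ with $h(a)\in p(\Mon)$ to a relatively $A$-definable total function $p(\Mon)\to p(\Mon)$; the key observation is that $h(x)\in p(\Mon)$ for every $x\models p$, a soft consequence of the completeness of $p$ over $A$ rather than of any specifically o-minimal phenomenon. Everything else is a direct dictionary between ``being $\leq$ some element of $\dcl(Aa)\cap p(\Mon)$'' and ``being $\leq g(a)$ for some relatively $A$-definable $g:p(\Mon)\to p(\Mon)$''.
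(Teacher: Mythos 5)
Your argument is correct and follows essentially the same route as the paper: both directions turn on the description of $\mathcal D_p(a)$ as the convex hull of $\dcl(Aa)\cap p(\Mon)$ (equivalently, a union of segments with relatively $A$-definable endpoints), the forward direction is the paper's verbatim, and your converse establishes exactly the paper's intermediate claim that $\mathcal D_{(\mathbf p_r)_{\restriction B}}(a)=\mathcal D_p(a)$.

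One small wrinkle in the converse: you then cite Lemma \ref{Lemma_trivial_iff_Dp=Dq}, whose hypothesis as stated quantifies over \emph{all} nonforking extensions of $p$, whereas boundedness of $\mathbf p_r$-germs only gives you the equality for the right-handed ones $(\mathbf p_r)_{\restriction B}$; the left-handed case is not covered (and deducing it from boundedness of $\mathbf p_l$-germs would be circular here). This is harmless because the proof of the $(\Leftarrow)$ direction of Lemma \ref{Lemma_trivial_iff_Dp=Dq} only ever uses extensions of the form $(\mathbf p_r)_{\restriction Aa_{<n}}$ — which is precisely why the paper, instead of citing that lemma, reruns its short induction on $\triangleleft^{\mathbf p}$-increasing sequences. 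Either note this about the cited proof or reproduce the induction, and your argument closes completely.
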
 
\begin{proof}
We will prove that $p\in S_1(A)$ is trivial if and only if $\mathbf p_r$-germs are $A$-bounded; a similar proof establishes the other equivalence. By Lemma \ref{Lemma_trivial_iff_Dp=Dq} it suffices to prove that $\mathcal D_q(a)=\mathcal D_p(a)$ holds for all right nonforking extensions $q$ of $p$ and all $a\models q$, if and only if $\mathbf p_r$-germs are $A$-bounded.

For $(\Rightarrow)$, let $B\supseteq A$ and $f:q(\Mon)\to q(\Mon)$ be relatively $B$-definable. Denote $q=(\mathbf p_r)_{\restriction B}$;
by the assumption, for any $a\models q$ we have $\mathcal D_q(a)=\mathcal D_p(a)$. Then $f(a)\in \mathcal D_p(a)$, so there exists $b\in \mathcal D_p(a)\cap\dcl(Aa)$ such that $f(a)\leqslant b$; clearly, $b=g(a)$ for some relatively $A$-definable function $g$.

For $(\Leftarrow)$, suppose that $\mathbf p_r$-germs are $A$-bounded. We prove that $\mathcal D_{(\mathbf p_r)_{\restriction B}}(a)=\mathcal D_p(a)$ holds for all $B\supseteq A$ and all $a\models (\mathbf p_r)_{\restriction B}$.  Let $[f_1(a),f_2(a)]\subseteq \mathcal D_{(\mathbf p_r)_{\restriction B}}(a)$, where $f_1(a),f_2(a)\in \dcl(Ba)$. By Remark \ref{Remark_boundedgerms}, there are 
$g_1(a),g_2(a)\in \dcl(Aa)\cap p(\Mon)$ such that $g_1(a)\leqslant f_1(a)<f_2(a)\leqslant g_2(a)$. Then $[f_1(a),f_2(a)]\subseteq [g_1(a),g_2(a)]\subseteq \mathcal D_p(a)$. This finishes the proof.
\end{proof}
 
\begin{Proposition}\label{Proposition_Ramakrishnan} 
If $(M,<, \dots)$ is a densely ordered o-minimal structure, then every definable type $p\in S_1(M)$ is trivial.  
\end{Proposition}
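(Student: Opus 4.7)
The plan is to apply \ref{Lemma_ominimal}, reducing the triviality of a non-algebraic definable $p\in S_1(M)$ to $M$-boundedness of its $\mathbf p_r$-germs. (A realized type admits no non-algebraic invariant globalization, so the algebraic case is vacuous.) The first step classifies the definable non-algebraic types in o-minimal theories: since every $M$-definable subset of $M$ is a finite Boolean combination of intervals with endpoints in $M\cup\{\pm\infty\}$, the $M$-definable cut in $M$ induced by $p$ is determined by an element $a\in M\cup\{\pm\infty\}$ together with a side, so $p$ is of the form $a^+$ or $a^-$; passing to the reversed order if necessary, one may assume $p=a^+$. In particular $\mathbf p_r=\mathbf p_l$ is the unique $M$-invariant global extension.

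Given $B\supseteq M$ and a relatively $B$-definable $f:\mathfrak p_{\restriction B}(\Mon)\to\mathfrak p_{\restriction B}(\Mon)$, I write $f(x)=h(x,\bar c)$ for an $L_M$-definable function $h(x,\bar y)$ and a tuple $\bar c\in B$. I then apply o-minimal cell decomposition to $h$ over $M$: there is an $M$-definable partition of $\mathrm{dom}(h)$ into finitely many cells on each of which $h$ is continuous and, after further refinement via the monotonicity theorem, monotone in every variable. Completeness of $\mathfrak p_{\restriction M\bar c}$ forces $(x,\bar c)$ to lie in a single fixed cell $C$ for every $x\models\mathfrak p_{\restriction M\bar c}$; and since this type has infinitely many realizations, $C$ has the interval-in-$x$ form $C=\{(x,\bar y):\bar y\in D,\ \alpha(\bar y)<x<\beta(\bar y)\}$ with $D$, $\alpha$, $\beta$ all $M$-definable. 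The desired $g$ is then built as an $M$-definable selection (available from definable Skolem functions in o-minimal theories) of $\max\{h(x,\bar y):\bar y\in J(x)\}$, where $J(x)\subseteq D$ is an $M$-definable closed ``admissible region'' arranged to contain $\bar c$ whenever $x\models\mathfrak p_{\restriction M\bar c}$; the maximum exists by the o-minimal extreme value theorem on closed cells, and one caps $g$ from below by an auxiliary $M$-definable map into $p(\Mon)$ to guarantee $g(x)\in p(\Mon)$.

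The principal obstacle is the construction of $J(x)$: it must genuinely contain $\bar c$ for every $x\models\mathfrak p_{\restriction M\bar c}$ while simultaneously admitting an $M$-definable supremum of $h(x,\cdot)$. This is a characteristically o-minimal uniform growth bound, in the spirit of the results of Friedman--Miller and Ramakrishnan, whose verification proceeds by induction on $|\bar c|$ from cell decomposition and the monotonicity theorem. For a single parameter $\bar c=c$, two-variable cell decomposition gives $J(x)$ essentially explicitly from the cell $C$ and the monotonicity of $h$ in $y$; for the inductive step, one first bounds $h(x,y,\bar c')$ by an $L_M$-definable $\tilde h(x,y)$ via the induction hypothesis and then invokes the base case.
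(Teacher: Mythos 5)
Your overall route coincides with the paper's: reduce, via Lemma \ref{Lemma_ominimal}, to showing that $\mathbf p_r$-germs are $M$-bounded, normalize $p$ to $\tp(+\infty/M)$ (your classification of definable $1$-types as $a^{\pm}$ or $\pm\infty$ is that normalization), absorb the extra parameters into an $M$-definable family by compactness, and then appeal to a uniform-growth bound for $M$-definable families. The difference is that the paper obtains that bound by quoting \cite[Theorem 1.2]{Ramakrishnan}, whereas you attempt to prove it, and this is where there is a genuine gap. What you call the ``principal obstacle''---an $M$-definable region $J(x)$ containing $\bar c$ for all $x\models\mathfrak p_{\restriction M\bar c}$ on which $h(x,\cdot)$ has an attained, finite maximum---is not a routine consequence of cell decomposition and monotonicity, even for one parameter: any region that eventually captures every admissible value of the parameter must exhaust the parameter cell, and $h(x,\cdot)$ may be unbounded there. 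For example, $h(x,y)=x/(1-y)$ on $0<y<1$ over a real closed field satisfies $\sup_{0<y<1}h(x,y)=\infty$ for every $x$, yet $g(x)=x^2$ eventually dominates each $h(\cdot,c)$; the admissible region must shrink toward the boundary of the cell at an $x$-dependent rate chosen so that each fixed $c$ is still eventually captured, and producing such a rate definably is precisely the content of the Friedman--Miller and Ramakrishnan theorems \cite{FM,Ramakrishnan}, not a corollary of cell decomposition. Your inductive step has the same problem in amplified form: the induction hypothesis as you set it up bounds a one-variable function of $x$ by an $M$-definable function of $x$, so it does not produce the two-variable $M$-definable majorant $\tilde h(x,y)$ you invoke; that would require a parametrized version of the statement which you have neither formulated nor proved.

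Two smaller inaccuracies. First, $\mathbf p_r\neq\mathbf p_l$ for every non-algebraic weakly o-minimal type (for $c\models p$ the formula $x<c$ is left- but not right-eventual), so your parenthetical claim of a unique $M$-invariant global extension is false, though nothing in your argument depends on it. Second, densely ordered o-minimal structures need not have definable Skolem functions (consider a dense linear order), so the ``definable selection'' is not automatic; for extracting the supremum of a bounded definable subset of the line this can be circumvented, but it should not be asserted as a general feature of o-minimal theories. The repair for the main gap is simply the paper's move: once compactness yields an $M$-definable $F:D\times\Mon\to\Mon$ with $\lim_{x\to+\infty}F(d,x)=+\infty$ for all $d\in D$, cite \cite[Theorem 1.2]{Ramakrishnan} for an $M$-definable $g$ with $F(d,x)\leqslant g(x)$ for all sufficiently large $x$.
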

\begin{proof}  Suppose that $p$ is definable and nonalgebraic. 
After possibly modifying the order $<$, while preserving the o-minimality, we may assume that $p=\tp(+\infty/M)$. 
We will show that $\mathbf p_r$-germs are $M$-bounded; consequently, by Lemma \ref{Lemma_ominimal}, $p$ is trivial. Let
$B\supseteq M$ and let $f:(\mathbf p_r)_{\restriction B}(\Mon)\to (\mathbf p_r)_{\restriction B}(\Mon)$ be a relatively $b$-definable function ($b\in B^n$); this is also expressed by (the formula) $\displaystyle\lim_{x\to +\infty}f(x)=+\infty$. By compactness, there are a $M$-definable set $D\subseteq \Mon^n$ and a $M$-definable function $F:D\times \Mon\to \Mon$ such that $b\in D$, $F(b,x)=f(x)$, and
$\displaystyle\lim_{x\to +\infty}F(d,x)=+\infty$ (for all $d\in D$). By \cite[Theorem 1.2]{Ramakrishnan}, there exists an $M$-definable function $g:\Mon\to\Mon$ such that for all $d\in D$: $F(d,x)\leqslant g(x)$ holds for all sufficiently large $x\in M$. In particular, $f(x)=F(b,x)\leqslant g(x)$ holds for some (equivalently, all) $x\in (\mathbf p_r)_{\restriction B}(\Mon)$, as desired.
 \end{proof}

In \cite{MTwqom2}, we will generalize the previous proposition to the context of nonisolated definable weakly o-minimal types. 

\begin{Example}[$\emptyset$-invariant type which is trivial over $\{0\}$, but not over $\emptyset$]\label{Example_trivover0}\

\noindent
Consider the affine reduct of the additive ordered group of reals $(\mathbb R,m,<)$, where $m(x,y,z):=x-y+z$. This structure is o-minimal with a unique type $p\in S_1(\emptyset)$ and exactly three complete 2-types determined by $x<y$, $y<x$, and $x=y$, respectively;  for example, if $a<b$, then $x\longmapsto \frac{x-a}{b-a}$ is an automorphism mapping $(a,b)$ to $(0,1)$, so $\tp(a,b)=\tp(0,1)$. In particular, every pair of distinct elements is independent. However, $p$ is non-trivial, since the triple $(0,1,2)$ is not a Morley sequence in $\mathbf p_r$ due to $m(1,0,1)=2$.  
Let $p_0(x)$ be the complete type over $\{0\}$ determined by $0<x$; clearly, $p_0(x)=(\mathbf p_r)_{\restriction \{0\}}$.    In a similar way as in Example \ref{Example_trivial_real_additive}, we may conclude that $p_0$ is trivial; this is because the structures
$(\mathbb R,m,<,0)$ and $(\mathbb R,+,<,0)$ are interdefinable. 
\end{Example}

\begin{Example}\label{Example non evetuallly trivial}
Consider the ordered group of rationales $(\mathbb Q,+,<,0)$, elementarily embedded in the group of reals. By Proposition \ref{Proposition_Ramakrishnan} we know that all definable types belonging to $S_1(\mathbb Q)$ are trivial; we will show that these are the only trivial types there. Let $p\in S_1(\mathbb Q)$ be non-definable. Then there is an irrational number $\alpha$ such that $p=\tp(\alpha/\mathbb Q)$. Since $\alpha$ is the unique realization of $p$ in $\mathbb R$, for each $a\in p(\Mon)$ we have $\dcl(\mathbb Qa)\cap p(\Mon)=\{a\}$. This implies that there are exactly three extensions of $p(x)\cup p(y)$ in $S_2(\mathbb Q)$, determined by $x<y$, $x=y$, and $y<x$, respectively. In particular, every two distinct realizations of $p$ are independent over $\mathbb Q$. It follows that if $a<b$ are distinct realizations of $p$, then $(a,\frac{a+b}{2},b)$ is a triangle of realizations of $p$, so $p$ is non-trivial.
\end{Example}

\subsection{Omitting trivial weakly o-minimal types}

In this subsection, we will prove that in a countable theory with few countable models, every trivial weakly o-minimal type over a finite domain, say $p\in S(A)$, is both convex and simple; in fact, we prove that if $p$ is either non-simple or non-convex, then an arbitrary countable endless linear order-type can be represented as $((\mathcal D_p(x)\mid x\in M),<)$ for some countable $M\models T$. This generalizes and is motivated by \cite[Proposition 7.8]{MT}, where we prove that in a binary theory with few countable models every so-type, and hence every weakly o-minimal type, over a finite domain is convex and simple; recall that every weakly o-minimal type in a binary theory is trivial.   
Also, recall that a weakly o-minimal type $p\in S(A)$ is {\it simple} if $x\dep_Ay$ is a relatively definable (equivalence) relation on $p(\Mon)$ (in which case, by $\Aut_{A}(\Mon)$-invariance, it is relatively $A$-definable); that is, $\mathcal D_p=\{(x,y)\in p(\Mon)^2\mid x\dep_A y\}$ is relatively definable within $p(\Mon)^2$ or, equivalently, $\mathcal D_p(a)$ is relatively definable for some/all $a\models p$. 
In the next proposition, we characterize weakly o-minimal types that are both convex and simple; again, by $a\triangleleft^{\mathbf p}x\triangleleft^{\mathbf p}b$ we denote the partial type $p(x)\cup(\mathcal D_p(a)<x)\cup (x<\mathcal D_p(b))$ (where $\mathbf p=(p,<)$ is a weakly o-minimal pair over $A$).

\begin{Proposition}\label{Prop middle definable iff simple and convex}
Suppose that $\mathbf p=(p,<)$ is a weakly o-minimal pair over $A$ and $A\triangleleft^{\mathbf p}a\triangleleft^{\mathbf p}b$. Then $p$ is both simple and convex if and only if the type $a\triangleleft^{\mathbf p}x\triangleleft^{\mathbf p}b$ has a definable locus.
\end{Proposition}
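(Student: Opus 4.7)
The plan is to establish both implications by assembling the machinery of Section~\ref{Section_preliminaries}.

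\emph{Forward direction.} Assume $p$ is simple and convex. By Proposition~\ref{Proposition_convex_type_witness}, fix $\theta(x)\in p\cap L_A$ such that $(\theta(\Mon),<)$ is an $A$-definable extension of $(p(\Mon),<)$ with $p(\Mon)$ convex inside it. Combining simplicity with Lemma~\ref{Lemma_D_p_rel_def_convex} yields $\theta$-convex formulas $\phi_a\in L_{Aa}$ and $\phi_b\in L_{Ab}$ relatively defining $\mathcal D_p(a)$ and $\mathcal D_p(b)$. Using the density property of Corollary~\ref{Cor_triangle_mathbf_pq_properties}, pick $c_1,c_2\models p$ with $c_1\triangleleft^{\mathbf p}a$ and $b\triangleleft^{\mathbf p}c_2$, and replace $\phi_a,\phi_b$ by $\phi_a^*(x):=\phi_a(x)\wedge c_1<x<c_2$ and $\phi_b^*(x):=\phi_b(x)\wedge c_1<x<c_2$. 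Convexity of $p(\Mon)$ in $(\theta(\Mon),<)$ forces $[c_1,c_2]_\theta\subseteq p(\Mon)$, so $\phi_a^*(\Mon)=\mathcal D_p(a)$ and $\phi_b^*(\Mon)=\mathcal D_p(b)$ as genuine subsets of $\Mon$. Then the locus $L$ is the definable $\theta$-interval
\[
\theta(x)\wedge c_1<x<c_2\wedge\forall y\,(\phi_a^*(y)\to y<x)\wedge\forall z\,(\phi_b^*(z)\to x<z),
\]
which by convexity sits inside $p(\Mon)$ and therefore equals the locus of $a\triangleleft^{\mathbf p}x\triangleleft^{\mathbf p}b$.

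\emph{Backward direction: simplicity.} Assume $L$ is defined by a formula $\chi(x,a,b)\in L_{Aab}$, so $\chi(\Mon)\subseteq p(\Mon)$. For any $c\models p$, noting that every realization of $p$ is right $\mathbf p$-generic over $A$, density and existence (Corollary~\ref{Cor_triangle_mathbf_pq_properties}) provide $c',d\models p$ with $c'\triangleleft^{\mathbf p}c\triangleleft^{\mathbf p}d$; both pairs $(c',c)$ and $(c,d)$ are $\Aut_A$-conjugate to $(a,b)$, so $\chi(\Mon,c',c)=L_{c',c}\subseteq p(\Mon)$ and $\chi(\Mon,c,d)=L_{c,d}\subseteq p(\Mon)$. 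The formula $\theta(y)\wedge\forall x\,(\chi(x,c,d)\to y<x)$ relatively defines $\{y\in p(\Mon):y\leqslant\mathcal D_p(c)\}$, and $\theta(y)\wedge\forall x\,(\chi(x,c',c)\to x<y)$ relatively defines $\{y\in p(\Mon):y\geqslant\mathcal D_p(c)\}$; their conjunction relatively defines $\mathcal D_p(c)$. By $\Aut_A$-invariance this renders $x\dep_A y$ a relatively $A$-definable equivalence relation on $p(\Mon)$, which is the simplicity of $p$.

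\emph{Backward direction: convexity, and main obstacle.} Convexity will follow from Corollary~\ref{Corollary_convex_iff_isolated_in_IT} once $p$ is shown to be isolated in $S_\Pi$ (with $\Pi=p^{conv}$) by an $L_A$-formula. From $\chi\vdash p$ and $\Aut_A$-invariance, for each $\theta'\in p$ the tp-universal statement $(\forall y,z\models\mathbf p_r^2\restriction A)\,\forall x\,(\chi(x,y,z)\to\theta'(x))$ holds, and Fact~\ref{Fact_L_P_sentence} produces a single $L_A$-formula $\sigma_{\theta'}(y,z)\in\mathbf p_r^2\restriction A$ with $\sigma_{\theta'}(y,z)\vdash\forall x\,(\chi(x,y,z)\to\theta'(x))$. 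Combining these $\sigma_{\theta'}$ with density of realizations of $\mathbf p_r^2\restriction A$ inside $p(\Mon)$ (to verify that formulas of the form $\exists y,z\,(\sigma_{\theta'}(y,z)\wedge\chi(x,y,z))$ lie in $p$) and a further compactness step against the interval-type structure of $\Pi$, one distills an $L_A$-formula $\Psi\in p$ satisfying $\Psi(\Mon)\cap\Pi(\Mon)\subseteq p(\Mon)$, which isolates $p$ in $S_\Pi$. This convexity step is the delicate part of the proof: the passage from the $L_{Aab}$-formula $\chi$ to a single $L_A$-isolator of $p$ in $S_\Pi$ demands a careful interplay between tp-universal compactness, the interval-type completions in $S_\Pi$, and the $\Aut_A$-invariant behaviour of $\chi$ across the realizations of $\mathbf p_r^2\restriction A$.
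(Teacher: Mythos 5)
Your forward direction (simple and convex implies the locus is definable) and the simplicity half of the backward direction are correct. The former is essentially the paper's argument; the latter is a harmless variant of it (the paper extracts $\mathcal D_p(e)$ as $\theta(a,x,b)\land\lnot\theta(a,x,e)\land\lnot\theta(e,x,b)$, you extract it by quantifying over the conjugate loci $L_{c',c}$ and $L_{c,d}$ --- both work for the same reason, namely that any pair realizing $(\mathbf p_r^2)_{\restriction A}$ is $\Aut_A$-conjugate to $(a,b)$).

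The genuine gap is the convexity half of the backward direction, which you flag as ``the delicate part'' and do not carry out; moreover, the strategy you sketch cannot be completed as described. For each $\theta'\in p$, Fact~\ref{Fact_L_P_sentence} does give $\sigma_{\theta'}(y,z)\in(\mathbf p_r^2)_{\restriction A}$ with $\sigma_{\theta'}(y,z)\vdash\forall x\,(\chi(x,y,z)\to\theta'(x))$, and hence a formula $\rho_{\theta'}(x):=\exists y,z\,(\sigma_{\theta'}(y,z)\wedge\chi(x,y,z))$ lying in $p$ and implying $\theta'$ --- but this achieves nothing beyond taking $\rho_{\theta'}=\theta'$. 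Isolating $p$ in $S_\Pi$ requires one formula implying \emph{all} of $p$ modulo $\Pi$ simultaneously, and no compactness step can assemble that from a family indexed by the (infinitely many) $\theta'\in p$: every step of your construction would go through verbatim even if $p$ were not isolated in $S_\Pi$. The missing idea is to exploit the \emph{convexity of the locus $L$ itself} rather than trying to isolate $p$ in its interval type. Since $L_{a,b}$ is a convex subset of $p(\Mon)$, one has $\{\chi(x_1,a,b),\chi(x_2,a,b),\,x_1<x<x_2\}\cup p(x)\vdash\chi(x,a,b)$; compactness extracts a single $\pi(x)\in p$ such that the first-order implication $(\forall x_1,x,x_2)\bigl(\chi(x_1,y,z)\wedge\chi(x_2,y,z)\wedge x_1<x<x_2\wedge\pi(x)\rightarrow\chi(x,y,z)\bigr)$ belongs to $\tp(ab/A)$. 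Then for $e_1<d<e_2$ with $e_1,e_2\models p$ and $d\in\pi(\Mon)$, conjugating $(a,b)$ to a pair $(a',b')$ with $a'\triangleleft^{\mathbf p}e_1$ and $e_2\triangleleft^{\mathbf p}b'$ places $e_1,e_2$ in $\chi(\Mon,a',b')$, the implication yields $\chi(d,a',b')$, and hence $d\models p$; so $\pi$ itself witnesses the convexity of $p$, with no detour through $S_\Pi$ or Corollary~\ref{Corollary_convex_iff_isolated_in_IT}.
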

\begin{proof}
Let $(D,<)$ be a definable extension of $(p(\Mon),<)$.
Let $P$ denote the locus of $a\triangleleft^{\mathbf p}x\triangleleft^{\mathbf p}b$. Notice that $P$ is a $\mathcal D_p$-closed subset of $p(\Mon)$: if $c\in P$ then $\mathcal D_p(c)\subseteq P$; this holds since by Remark \ref{Remark_trianglep_binary_basicprops}(b) $a\triangleleft^{\mathbf p}c\triangleleft^{\mathbf p}b$ is equivalent with $a<\mathcal D_p(c)<b$.

$(\Leftarrow)$ Suppose that $P$ is definable. Then $P$ is $Aab$-definable, since it is clearly $Aab$-invariant. Let $\theta(y,x,z)$ be an $L_A$-formula such that $\theta(a,x,b)$ defines $P$. First, we prove that $p$ is simple, that is, $\mathcal D_p(e)$ is relatively definable within $p(\Mon)$ for some/all  $e\models p$. In fact, we prove more: $\mathcal D_p(e)$ is definable. Without loss, assume $e\in P$. 
We {\it claim} that $\mathcal D_p(e)$ is defined by the following formula:
$$\sigma(x)=\theta(a,x,b)\land\lnot \theta(a,x,e)\land\lnot\theta(e,x,b).$$
Indeed, note that $\sigma(x)$ is equivalent with $a\triangleleft^{\mathbf p}x\triangleleft^{\mathbf p}b\land x\ntriangleleft^{\mathbf p}e \land e\ntriangleleft^{\mathbf p}x$. For all $x\models p$,  $x\ntriangleleft^{\mathbf p}e \land e\ntriangleleft^{\mathbf p}x$ expresses the $\triangleleft^\mathbf{p}$-incompatibility of $x$ and $e$, so by Theorem \ref{Theorem4}(c) is equivalent with $x\in \mathcal D_p(e)$. As $\mathcal D_p(e)\subset P$, we conclude that $\sigma(x)$ is equivalent with $x\in \mathcal D_p(e)$. This proves the claim, so $p$ is simple. 
It remains to prove that $p$ is convex. Note that:
$$\{\theta(a,x_1,b),\theta(a,x_2,b),x_1<x<x_2\}\cup p(x)\vdash \theta(a,x,b).$$
Indeed, if $e_1,e,e_2$ satisfy the left-hand side (for $x_1,x,x_2$ respectively), then $e_1,e,e_2\models p$ and $a\triangleleft^{\mathbf p}e_1<e<e_2\triangleleft^{\mathbf p}b$, so $a\triangleleft^{\mathbf p}e\triangleleft^{\mathbf p}b$, and therefore $\models\theta(a,e,b)$. By compactness, find $\pi(x)\in p(x)$ such that:
\begin{equation}\label{eq convex}\tag{$\dagger$}
 (\forall x_1,x,x_2)(\theta(y,x_1,z)\land\theta(y,x_2,z)\land x_1<x<x_2\land\pi(x)\rightarrow \theta(y,x,z))\in\tp_{y,z}(a,b/A).
\end{equation}
Moreover, we can assume that $\pi(\Mon)\subseteq D$.
We now {\em claim} that $\pi(x)$ witnesses the convexity of $p$: $p(\Mon)$ is a $<$-convex subset of $(\pi(\Mon),<)$. Take $e_1,e_2\models p$ and $d\in\pi(\Mon)$ such that $e_1<d<e_2$; we need to show $d\in p(\Mon)$. 
Choose $a',b'\models p$ so that $a'\triangleleft^{\mathbf p}e_1$ and $e_2\triangleleft^{\mathbf p}b'$; then $a'\triangleleft^{\mathbf p}e_1<d<e_2\triangleleft^{\mathbf p}b'$. This implies $a'\triangleleft^{\mathbf p}e_1\triangleleft^{\mathbf p}b'$  and $a'\triangleleft^{\mathbf p}e_2\triangleleft^{\mathbf p}b'$, so $\models\theta(a',e_1,b')\land\theta(a',e_2,b')$ is valid. Also $a'\triangleleft^{\mathbf p}b'$, so $a'b'\equiv ab\ (A)$, hence from (\ref{eq convex}) we have:
$$\models (\theta(a',e_1,b')\land\theta(a',e_2,b')\land e_1<d<e_2\land\pi(d))\rightarrow \theta(a',d,b')\,.$$
The left side of this implication is valid, so $\models \theta(a',d,b')$. In particular, $d\models p$, and we are done.

$(\Rightarrow)$ Suppose now that $p$ is simple and convex. By Lemma \ref{Proposition_convex_type_witness}, there is $\pi(x)\in p(x)$ witnessing the convexity: $(\pi(\Mon),<)$ is a definable extension of $(p(\Mon),<)$ and $p(\Mon)$ is convex in $\pi(\Mon)$. 
By the simplicity of $p$, for $e\models p$, $\mathcal D_p(e)$ is a relatively $Ae$-definable subset of $p(\Mon)$, defined by $\phi(x,e)$ say.
Then the locus of $a\triangleleft^{\mathbf p}x\triangleleft^{\mathbf p}b$ is clearly defined by $\pi(x)\land \phi(\Mon,a)<x<\phi(\Mon,b)$. 
\end{proof}

\begin{Theorem}\label{Theorem trivial implies simple and convex}
Suppose that $T$ is countable and $I(\aleph_0,T)<2^{\aleph_0}$. Then every trivial weakly o-minimal type over a finite domain is simple and convex. 
\end{Theorem}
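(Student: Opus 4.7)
By Proposition \ref{Prop middle definable iff simple and convex}, a weakly o-minimal pair $\mathbf p=(p,<)$ over $A$ has $p$ simple and convex if and only if, for some (equivalently, any) $a,b\models p$ with $A\triangleleft^{\mathbf p}a\triangleleft^{\mathbf p}b$, the locus of the partial type $\pi(x):=a\triangleleft^{\mathbf p}x\triangleleft^{\mathbf p}b$ is definable. The plan is to prove the contrapositive: assuming $p$ is trivial, weakly o-minimal over a finite $A$, and $\pi(\Mon)$ is \emph{not} definable, I will construct $2^{\aleph_0}$ pairwise non-isomorphic countable models of $T$. Condition (C1) from the introduction guarantees that $T$ is small, so $S_1(Aab)$ is countable, and so is the closed subspace $\Sigma\subseteq S_1(Aab)$ of completions of $\pi$.

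The first step is to extract infinitely many non-isolated completions in $\Sigma$. The space $\Sigma$ is linearly ordered by $<_p$ via the order on its loci, each of which is a convex subset of $p(\Mon)$ (Fact \ref{Fact_basic_S_p_wom}(a) and Theorem \ref{Theorem4}(c)). If every $q\in\Sigma$ were isolated in $S_1(Aab)$ by some formula $\phi_q$, then the loci $\phi_q(\Mon)$ together with a convexity witness for $\mathbf p$ (Proposition \ref{Proposition_convex_type_witness}) and the relative definitions of the $\mathcal D_p$-classes at the boundary (Lemma \ref{Lemma_D_p_rel_def_convex}) could be pieced together into a formula equivalent to $\pi$, contradicting non-definability. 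Using the density of the quotient $p(\Mon)/\mathcal D_p$ (Theorem \ref{Theorem4}(d)), I then extract a $<_p$-monotone sequence $(q_n)_{n<\omega}$ in $\Sigma$ of distinct non-isolated completions whose loci lie in pairwise distinct $\mathcal D_p$-classes.

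For each $X\subseteq\omega$, I apply the Omitting Types Theorem to build a countable model $M_X\models T$ containing $Aab$ that realizes $q_n$ if and only if $n\in X$. The crucial point is that such omissions are mutually compatible, and this is where triviality enters decisively: by Lemma \ref{Lemma_triv_preserved_in_nonforking}, Remark \ref{Remark trivial wom expressing via triangleleft}, and the strong transitivity property (Lemma \ref{Lemma_trivial_strong_trans}), the interaction between realizations of distinct $q_n$'s factors through the convex $\mathcal D_p$-quotient, so realizing or omitting any one $q_n$ imposes no constraint on the others, and each chosen omission remains consistent with $T\cup\tp(ab/A)$.

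The main obstacle is showing $M_X\not\cong M_Y$ for $X\neq Y$. Since the pair $(a,b)$ is not canonically defined inside $M_X$, one must identify an abstract isomorphism invariant recovering $X$. My plan is to argue that any isomorphism $f\colon M_X\to M_Y$ can be composed with an $\Aut(M_Y/A)$-automorphism so as to send $(a,b)$ to a pair $(a',b')$ realizing $\tp(ab/A)$; then by triviality together with Theorem \ref{Theorem_triangle_mathcal F}, the pattern of realized completions of $a'\triangleleft^{\mathbf p}x\triangleleft^{\mathbf p}b'$ in $M_Y$ must match the pattern of $q_n$'s realized inside $(a,b)$ in $M_X$, forcing $X=Y$. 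Making this rigorous — ensuring that no hidden automorphism can permute the $q_n$'s — is the technical heart of the proof and may require refining $(q_n)$ to carry distinguishing definable markers before applying omitting types.
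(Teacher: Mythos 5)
Your reduction to Proposition \ref{Prop middle definable iff simple and convex} is the right starting point, and it matches the paper's strategy, but the construction you build on top of it cannot work as described. The decisive problem is that, \emph{because $p$ is trivial}, the partial type $\pi(x)=a\triangleleft^{\mathbf p}x\triangleleft^{\mathbf p}b$ is already a \emph{complete} type over $Aab$: by Remark \ref{Remark trivial wom expressing via triangleleft}(a) the partial type $x_1\triangleleft^{\mathbf p}x_2\triangleleft^{\mathbf p}x_3$ determines a complete $3$-type over $A$, so any two realizations of $\pi$ have the same type over $Aab$ (the paper makes exactly this observation at the end of the proof of its Claim). Hence your space $\Sigma$ is a singleton; there are no infinitely many completions $q_n$, \emph{a fortiori} none lying in distinct $\mathcal D_p$-classes, and the entire coding of subsets $X\subseteq\omega$ by ``which $q_n$ are realized'' never gets off the ground. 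What non-definability of $\pi(\Mon)$ buys you is only that this single complete type is non-isolated over $Aab$ --- one omissible type, not a family to switch on and off independently. Your final paragraph also concedes the non-isomorphism argument as an unresolved gap, and it genuinely is one: with the parameters $a,b$ not canonically recoverable, ``the pattern of realized completions between $a'$ and $b'$'' is not an invariant you have shown to be well defined.

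The paper's proof repairs both defects at once by changing what is being varied. Absorb the finite domain into the language so $p\in S_1(\emptyset)$, and for each countable endless linear order $\mathbb I=(I,<_I)$ take a $\triangleleft^{\mathbf p}$-increasing sequence $A_I=(a_i\mid i\in I)$, which is Morley in $\mathbf p_r$ by triviality. One then omits the single type $\Pi_I(x)=p(x)\cup\bigcup_{i\in I}\,x\notin\mathcal D_p(a_i)$ in a countable model $M_{\mathbb I}\supseteq A_I$; if the Omitting Types Theorem failed, an isolating formula $\theta(x,\bar a)$ would (after trimming to a convex component and locating it between two consecutive $\mathcal D_p(a_{i_k})<\mathcal D_p(a_{i_{k+1}})$) define the locus of the complete type $a_{i_k}\triangleleft^{\mathbf p}x\triangleleft^{\mathbf p}a_{i_{k+1}}$, whence Proposition \ref{Prop middle definable iff simple and convex} gives simplicity and convexity --- contradiction. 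Finally, $(p(M_{\mathbb I})/\mathcal D_p,<)\cong\mathbb I$ is a parameter-free isomorphism invariant of $M_{\mathbb I}$ (since $p$ and $\mathcal D_p$ are $\Aut(\Mon)$-invariant), so the $2^{\aleph_0}$ pairwise non-isomorphic endless countable orders yield $2^{\aleph_0}$ models with no automorphism issues to confront. If you want to salvage your outline, replace ``selectively realize completions of one interval $(a,b)$'' by ``prescribe the order type of the $\mathcal D_p$-quotient,'' and route the use of non-definability through the omitting-types step exactly as above.
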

\begin{proof}
Assuming that $p$ is a trivial weakly o-minimal type over a finite domain that is not both convex and simple, we will prove $I(\aleph_0,T)=2^{\aleph_0}$. We may assume that $\mathbf p=(p,<)$ is a weakly o-minimal pair over $\emptyset$ (as for finite $A$, $I(\aleph_0,T_A)=2^{\aleph_0}$ implies $I(\aleph_0,T)=2^{\aleph_0}$).  
For every countable endless linear order $\mathbb I=(I,<_I)$ we will find a countable $M_{\mathbb I}\models T$ such that $(p(M_{\mathbb I})/\mathcal D_p,<)\cong \mathbb I$; clearly, for nonisomorphic orders $\mathbb I$ and $\mathbb J$ the corresponding models $M_{\mathbb I}$ and $M_{\mathbb J}$ will be nonisomorphic. As there are $2^{\aleph_0}$ many pairwise nonisomorphic such orders, the desired conclusion follows.     

Fix $\mathbb I=(I,<_I)$ and let $A_I=(a_i\mid i\in I)$ be a $\triangleleft^{\mathbf p}$-increasing sequence of realizations of $p$; since $p$ is trivial, $A_I$ is a Morley sequence in $\mathbf p_r$ over $\emptyset$. Put \ $\Pi_I(x):=p(x)\cup\bigcup_{i\in I}x\notin\mathcal D_p(a_i)$. 
 
\begin{Claim*} 
There exists a countable model $M_{\mathbb I}\supseteq A_I$ that omits $\Pi_I(x)$.
\end{Claim*}

\begin{proofC}
Suppose not. Then, by the Omitting Types Theorem there is an $L$-formula $\theta(x,\bar y)$ and a finite subsequence of $A_I$, $\bar a=(a_{i_1},\dots,a_{i_n})$, such that $\theta(x,\bar a)$ is consistent and $\theta(x,\bar a)\vdash \Pi_I(x)$. 
Moreover, we can assume that $\theta(\Mon,\bar a)$ is a convex subset of $(p(\Mon),<)$; indeed, $\theta(x,\bar a)\vdash p(x)$ holds, so according to the weak o-minimality of $p$,  $\theta(\Mon,\bar a)\subseteq  p(\Mon) $ has finitely many convex components in $(p(\Mon),<)$, and we can replace $\theta(x,\bar a)$ with a formula that defines one of them so that $\theta(x,\bar a)\vdash \Pi_I(x) $ still holds. 
Therefore, $D:=\theta(\Mon,\bar a)$ is a convex set that is disjoint from each of the (convex) sets $(\mathcal D_p(a_i)\mid i\in I)$. This implies that for each $i\in I$ either $\mathcal D_p(a_i)<D$ or $D<\mathcal D_p(a_i)$ holds, so every element $e\in D$ is $\triangleleft^{\mathbf p}$-comparable to $a_i$ and we can arrange $A_{\mathbb I}\cup \{e\}$ into a $\triangleleft^{\mathbf p}$-increasing sequence. 
Then the triviality of $p$ implies that this sequence is Morley in $\mathbf p_r$ over $\emptyset$, and that it remains such after replacing $e$ with any other $e'\in D$; in particular, $e\equiv e'\,(A_{\mathbb I})$ holds, so $\theta(x,\bar a)$ isolates a complete type $q\in S(A_{\mathbb I})$. Since $\theta(x,\bar a)$ uses only parameters $\bar a$, the type $q_{\restriction \bar a}\in S(\bar a)$ is isolated by $\theta(x,\bar a)$ ($q_{\restriction \bar a}(\Mon)=D$), and $q_{\restriction \bar a}\wor\tp(A_{\mathbb I}/\bar a)$ holds. Now we prove:
\begin{equation}
    \mbox{ $\mathcal D_p(a_{i_k})<D<\mathcal D_p(a_{i_{k+1}})$ \ holds for some $k<n$.  }
\end{equation}\setcounter{equation}{0}
We have a sequence of convex sets $\mathcal D_p(a_{i_0})<\mathcal D_p(a_{i_1})<\dots <\mathcal D_p(a_{i_n})$ that are disjoint from $D$ (which is also convex). Therefore, to prove (1), it suffices to exclude $D<\mathcal D_p(a_{i_0})$ and $\mathcal D_p(a_{i_n})<D$. 
Since $\mathbb I$ is endless, the type $\mathbf p_{r\restriction \bar a}$ is realized in $A_{\mathbb I}$, so $q_{\restriction \bar a}\wor\mathbf p_{r\restriction \bar a}$. 
We conclude that the locus $q_{\restriction \bar a}(\Mon)=D$ does not contain any elements above $\mathcal D_p(a_{i_n})$, as all realize $\mathbf p_{r\restriction \bar a}$. This excludes $\mathcal D_p(a_{i_n})<D$; similarly, $D<\mathcal D_p(a_{i_0})$ is ruled out, proving (1). 

Note that (1) implies $\theta(x,\bar a)\vdash a_{i_k}\triangleleft^{\mathbf p}x\triangleleft^{\mathbf p}a_{i_{k+1}}$. 
Also note that $a_{i_k}\triangleleft^{\mathbf p}x\triangleleft^{\mathbf p}a_{i_{k+1}}$ determines a complete type in $S(\bar a)$, as $a_{i_k}\triangleleft^{\mathbf p}d\triangleleft^{\mathbf p}a_{i_{k+1}}$ implies that $(a_{i_0},\ldots,a_{i_k},d,a_{i_{k+1}},\ldots,a_{i_n})$ is a Morley sequence in $\mathbf p_r$ over $\emptyset$; this type is $q_{\restriction \bar a}(x)$, which is isolated. In particular, the locus of $a_{i_k}\triangleleft^{\mathbf p}x\triangleleft^{\mathbf p}a_{i_{k+1}}$ is definable, so by Proposition \ref{Prop middle definable iff simple and convex}, $p$ is convex and simple. This contradicts the initial assumptions and completes the proof of Claim.
\end{proofC}

\smallskip
Let the model $M_{\mathbb I}$ be given by the claim. To complete the proof of the theorem, it suffices to note that since $M_{\mathbb I}$ omits $\Pi_{\mathbb I}(x)$ and since $A_I\subseteq M_{\mathbb I}$, the classes $(\mathcal D_p(a_i)\mid i\in I)$ are the only $\mathcal D_p$-classes represented in $p(M_{\mathbb I})$, so $(p(M_{\mathbb I})/\mathcal D_p,<)\cong \mathbb I$.
\end{proof}

\section{Semiintervals and shifts}\label{Section_shifts}
  
In this section, we first introduce the notions of semiintervals and shifts and then prove Theorem \ref{Theorem1_shift}. The motivation for these comes from the work of Baizhanov, Kulpeshov, and others  \cite{Baizhanov2006}, \cite{Kulpeshov2007}, and \cite{Alibek}.  

Let $(D,<)$ be a linear order. We say that $S\subseteq D$ is a {\it semiinterval of $(D,<)$} if $S$ is convex in $(D,<)$ and $a=\min S$ exists; in that case, we also say that $S$ is a semiinterval with minimum $a$, and denote it by $S_a$ to emphasize $a=\min S_a$. In this paper, we will be dealing with linear orders $(D,<)$ and families of semiintervals $\mathcal S=(S_a\mid a\in D)$ that are definable over some small set of parameters.

\begin{Definition}\label{Definition_family_semiintervals}
Let $(D,<)$ be a linear order and let $\mathcal S=(S_x\mid x\in D)$ be a family of semiintervals. 
\begin{enumerate}[(a),left=0em,labelsep=.7em]
\item $\mathcal S$ is an {\it $A$-definable family of semiintervals of $(D,<)$} if $(D,<)$ is $A$-definable and there exists an $L_A$-formula $S(x,y)$ such that $S_a=S(a,\Mon)$ holds for all $a\in D$.
\item The family $\mathcal S$ is {\it monotone} if $\supin (S_x)\subseteq \supin (S_y)$ for all $x\leqslant y$ from $D$. (Recall that $\supin(S_x)=\{d\in D\mid (\exists t\in S_x)\ d\leqslant t\}$ is the smallest inital part of $D$ that contains $S_x$.)
\end{enumerate}
\end{Definition}  

Suppose that $\mathcal S=(S_x\mid x\in D)$ is an $A$-definable family of semiintervals of $(D,<)$, defined by $S(x,y)$. Recursively, define: 
\begin{center}
 $S^{(1)}(x,y):=S(x,y)$  \ \ and \ \ $S^{(n+1)}(x,y):=(\exists t)(S^{(n)}(x,t)\land S(t,y))$ \  ($n\in\mathbb N$).     
\end{center}
Further in the text, we will use the following notation:
\begin{center}$S_x^n:=S^{(n)}(x,D)$,\ \ \ $\mathcal S^{n}:=(S^n_x\mid x\in D)$ \ \ \ and \ \ \ $S^{\omega}_x:=\bigcup_{n\in\mathbb N} S_x^n$.  
\end{center}
 
\begin{Observation}\label{Lemma_semiint_verybasic1}
Suppose that $\mathcal S=(S_x\mid x\in D)$ is an $A$-definable family of semiintervals of $(D,<)$, defined by the $L_A$-formula $S(x,y)$.  
\begin{enumerate}[(a),left=0em,labelsep=.7em]
\item $\mathcal S^{n}$ is an $A$-definable family of semiintervals of $(D,<)$; if $\mathcal S$ is monotone, then so is $\mathcal S^n$ for all $n\in \mathbb N$.  
\item  $(S_x^n\mid n\in\mathbb N)$ is a $\subseteq$-increasing sequence for all $x\in D$. If $S_x^n=S_x^{n+1}$ for some $n\in \mathbb N$, then $S_x^n=S_x^m$ for all $m\geqslant n$, so $S_x^n=S_x^\omega$. Hence, the sequence is strictly increasing or eventually constant.
\item $(S^{\omega}_x\mid x\in D)$ is a family of semiintervals of $(D,<)$; it is monotone if $\mathcal S$ is monotone.
\item $\models S^{(n+m)}(x,y)\leftrightarrow (\exists t)(S^{(n)}(x,t)\land S^{(m)}(t,y))$, so 
$S^{n+m}_a=\bigcup_{t\in S_a^n}S_t^m$ for all $m\in\mathbb N$.
\end{enumerate}
\end{Observation}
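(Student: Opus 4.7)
The whole observation is driven by the identity $S_a^{n+1}=\bigcup_{t\in S_a^n}S_t$, which is immediate from the recursive definition $S^{(n+1)}(x,y):=(\exists t)(S^{(n)}(x,t)\land S(t,y))$. I begin with (d), proving the formula equivalence $\models S^{(n+m)}(x,y)\leftrightarrow(\exists t)(S^{(n)}(x,t)\land S^{(m)}(t,y))$ by induction on $m$: the base case $m=1$ is the definition, and the step unfolds the outer existential via the inductive hypothesis and then rearranges quantifiers. Evaluating at $\Mon$ yields the set identity $S_a^{n+m}=\bigcup_{t\in S_a^n}S_t^m$. Each $S^{(n)}$ is patently an $L_A$-formula by a trivial induction, so $\mathcal S^n$ is automatically an $A$-definable family.

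For (a), I induct on $n$ that $S_a^n$ is a semiinterval with minimum $a$; the base is the hypothesis. For the step I apply the union identity. The inclusion $S_a^n\subseteq S_a^{n+1}$ holds because each $t\in S_a^n$ lies in $S_t$; in particular $a\in S_a\subseteq S_a^{n+1}$, and any $y\in S_a^{n+1}$ lies in some $S_t$ with $t\in S_a^n$, giving $y\geqslant t\geqslant a$, so $\min S_a^{n+1}=a$. For convexity, take $y_1<y<y_2$ with $y_i\in S_a^{n+1}$. If $y\in S_a^n$, the inclusion above finishes the job; otherwise $y>a$ together with convexity of $S_a^n$ and $y\notin S_a^n$ force $y$ to be a strict upper bound of $S_a^n$, hence so is $y_2$. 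Then $y_2\in S_a^{n+1}\setminus S_a^n$ means $y_2\in S_{t_2}$ for some $t_2\in S_a^n$, and $t_2<y<y_2\leqslant\sup S_{t_2}$ places $y$ in the convex set $S_{t_2}\subseteq S_a^{n+1}$. For the monotonicity clause I induct again: given $\mathcal S^n$ monotone and $x\leqslant y$, take $t\in S_x^n$. Either $t\in S_y^n$ and $S_t\subseteq S_y^{n+1}$, or $t\notin S_y^n$; in the latter case monotonicity of $\mathcal S^n$ forbids $t$ from being a strict upper bound of $S_y^n$ (such bounds would bound $S_x^n$), forcing $t<y$, whence monotonicity of $\mathcal S$ gives $\sup S_t\leqslant\sup S_y\leqslant\sup S_y^{n+1}$. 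Either way a strict upper bound of $S_y^{n+1}$ bounds $S_t$, and so bounds $S_x^{n+1}$.

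Parts (b) and (c) follow with little work. The inclusion $S_a^n\subseteq S_a^{n+1}$ was noted above; if equality holds at step $n$, the union identity yields $S_a^{n+2}=\bigcup_{t\in S_a^{n+1}}S_t=\bigcup_{t\in S_a^n}S_t=S_a^{n+1}$ and stability propagates by induction. For (c), $S_a^\omega$ is an increasing union of semiintervals with common minimum $a$, hence itself a semiinterval with that minimum; monotonicity transfers because any strict upper bound of $S_y^\omega$ bounds every $S_y^n$, hence every $S_x^n$ by monotonicity of $\mathcal S^n$, and therefore bounds $S_x^\omega$. There is no substantive obstacle here: the content of the observation is the elementary geometric picture in which each $S_t$ sits above $S_a^n$ with minimum $t\in S_a^n$, so their union stacks up into another semiinterval with the same minimum $a$.
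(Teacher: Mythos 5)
Your proof is correct; the paper states this as an Observation and supplies no proof, and your argument via the union identity $S_a^{n+1}=\bigcup_{t\in S_a^n}S_t$ (an instance of (d)) is exactly the routine verification being left to the reader. The only two points requiring genuine care --- convexity of $S_a^{n+1}$ when the middle point lies strictly above $S_a^n$, and monotonicity of $\mathcal S^{n+1}$ when $t\in S_x^n\smallsetminus S_y^n$ --- are both handled correctly (in the latter case your observation that monotonicity of $\mathcal S^n$ forces $t<y$ is the key step).
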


\begin{Definition}\label{Definition_shift} 
Let $\mathcal S=(S_x\mid x\in D)$ be a family of semiintervals of $(D,<)$. We say that the semiinterval $S_a\in\mathcal S$ is an {\em $\mathcal S$-shift} if the sequence $(S_a^n\mid n\in\mathbb N)$ is strictly increasing. 
\end{Definition}

\begin{Remark}\label{Remark_shift} 
Let $\mathcal S$ be a family of semiintervals of $(D,<)$ and let $S_a\in \mathcal S$. By Observation \ref{Lemma_semiint_verybasic1}(b) we have two options: either $S_a$ is an $\mathcal S$-shift, or $S_a^n=S^{\omega}_a$ holds for some $n\in\mathbb N$. 
\end{Remark}

\begin{Example}
(a) Consider the theory $T=\Th(\mathbb Q,<,S)$ where $<$ is the usual order on the rationals and $S$ is defined by: \ $(x,y)\in S$ if and only if $x\leqslant y<x+\sqrt{2}$. Here, the formula $S(x,y)$ defines a monotone family of semiintervals with $S_0^n=\{y\mid 0\leqslant y<n\sqrt 2\}$, so $S_0$ is a shift. Since $T$ is weakly o-minimal, $I(T,\aleph_0)=2^{\aleph_0}$ follows by Theorem \ref{Theorem1_shift}; it also follows by Theorem \ref{Theorem trivial implies simple and convex}, since the unique type $p\in S_1(T)$ is trivial and non-simple.

(b) A typical example of a monotone family without shifts is as follows. Let $(\Mon,<,E)$ be a linear order with a convex equivalence relation $E$. Define $S(x,y):=x\leqslant y\land E(x,y)$; $S_x$ is a final part of the class $[x]_E$ that begins at $x$. Here $S_x^n=S_x$ holds for all $n$, so $S_x$ is not a shift. 
\end{Example}

\begin{Lemma}\label{Lemma_shift_properties}
Suppose that $\mathcal S$ is a monotone $A$-definable family of semiintervals of $(D,<)$, $a\in D$, and $b\in S^{\omega}_a$. Then $S^{\omega}_b$ is a final part of $S^{\omega}_a$, and if $S_a$ is an $\mathcal S$-shift, then so is $S_b$. 
\end{Lemma}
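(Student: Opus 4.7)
The plan is to establish the sharper identity $S^\omega_b = \{d \in S^\omega_a \mid d \geq b\}$, from which the ``final part'' assertion is immediate. The two inclusions rest on the two main tools available: Observation \ref{Lemma_semiint_verybasic1}(d) (the ``chain rule'' $S^{n+m}_a = \bigcup_{t \in S^n_a} S^m_t$) and the monotonicity of $\mathcal S^m$ granted by Observation \ref{Lemma_semiint_verybasic1}(a).

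For the inclusion $\subseteq$, I would fix $n$ with $b \in S^n_a$ and apply the chain rule to get $S^m_b \subseteq S^{n+m}_a \subseteq S^\omega_a$ for every $m$, so $S^\omega_b \subseteq S^\omega_a$; and since $b = \min S^m_b$, every element of $S^\omega_b$ is $\geq b$. For the reverse inclusion, take $d \in S^m_a$ with $d \geq b$; note $b \geq a$ because $b \in S^n_a$ and $a = \min S^n_a$, so by monotonicity of $\mathcal S^m$ one has $\sup S^m_a \leq \sup S^m_b$. The main obstacle here is correctly reading this inequality: by the convention on $\sup$, it only says that every strict upper bound of $S^m_b$ is a strict upper bound of $S^m_a$. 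One then uses convexity of $S^m_b$ to observe that $d \geq b = \min S^m_b$ together with $d \notin S^m_b$ would force $d$ to be a strict upper bound of $S^m_b$, hence of $S^m_a$, contradicting $d \in S^m_a$. Therefore $d \in S^m_b \subseteq S^\omega_b$.

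For the shift preservation, I would argue by contradiction. Suppose $S_a$ is an $\mathcal S$-shift but $S_b$ is not; by Remark \ref{Remark_shift} there exists $k$ with $S^k_b = S^\omega_b$. Using the identity just established, $S^\omega_a$ splits as $\{d \in S^\omega_a \mid d \leq b\} \cup S^\omega_b$, and the first piece lies in $S^n_a$ since $S^n_a$ is convex with minimum $a$ and contains $b$. The chain rule gives $S^{n+k}_a \supseteq S^k_b = S^\omega_b$, and $S^{n+k}_a \supseteq S^n_a$ trivially, so $S^{n+k}_a = S^\omega_a$. This contradicts the fact (Observation \ref{Lemma_semiint_verybasic1}(b)) that when $S_a$ is a shift, the sequence $(S^m_a)_{m \in \mathbb N}$ is strictly $\subseteq$-increasing and hence no $S^m_a$ equals $S^\omega_a$.
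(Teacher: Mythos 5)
Your proposal is correct and follows essentially the same route as the paper: both parts rest on the chain rule $S^{n+m}_a=\bigcup_{t\in S_a^n}S_t^m$ and the monotonicity of $\mathcal S^m$ for the ``final part'' claim, and on the eventual-constancy characterization of non-shifts together with $S^k_b\subseteq S^{n+k}_a$ for shift preservation. Your version merely makes explicit the identity $S^\omega_b=\{d\in S^\omega_a\mid d\geqslant b\}$ and the convexity step that the paper leaves as ``clearly''; these are faithful elaborations, not a different argument.
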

\begin{proof}
Choose $n\in\mathbb N$ such that $b\in  S^n_a$. As we observed in \ref{Lemma_semiint_verybasic1}(d), $S^{n+m}_a=\bigcup_{x\in S_a^n}S_x^m$ holds for all $m\in\mathbb N$. This, together with $b\in S_a^n$ implies $S^{m+n}_a\supseteq S^m_b$, so $S_a^\omega\supseteq S_b^\omega$ holds. 
Since the family $\mathcal S^m$ is monotone for all $m\in\mathbb N$, $a\leqslant b$ implies $\supin (S_a^m)\subseteq \supin (S_b^m)$ and thus $\supin (S_a^\omega)\subseteq \supin (S_b^\omega)$. This, together with $S_a^\omega\supseteq S_b^\omega$ implies that $S_b^\omega$ is a final part of $S_a^\omega$ and proves the first claim. 
To prove the second, assume that $S_b$ is not an $\mathcal S$-shift; we prove that $S_a$ is also not an $\mathcal S$-shift. By Remark \ref{Remark_shift}, there is $k\in\mathbb N$ with $S_b^k=S_b^\omega$. By the above proved, we have $S^k_b\subseteq S_a^{k+n}$; since $S_b^\omega=S_b^k$ is a final part of $S_a^\omega$, we conclude that the set $S_a^{k+n}$ contains a final part of $S_a^\omega$. Clearly, this implies $S_a^{k+n}=S_a^\omega$, so $S_a$ is not an $\mathcal S$-shift by Remark \ref{Remark_shift}. This proves the second claim.     
\end{proof}

In the next lemma, we prove that every definable family of semiintervals can be slightly modified to become monotone. 

\begin{Lemma}\label{Prop_exists_monotonefamily}
Suppose that $\mathcal S=(S_x\mid x\in D)$ is an $A$-definable family of semiintervals of $(D,<)$ and $a\in D$. 
Then there is an $Aa$-definable monotone family of semiintervals $\mathcal R=(R_x\mid x\in D)$ such that $R_a=S_a$ and $R_a^{\omega}=S_a^{\omega}$. Moreover, if $S_a$ is an $\mathcal S$-shift, then $R_b$ is an $\mathcal R$-shift for all $b\in R_a^{\omega}$.    
\end{Lemma}
\begin{proof}
Using $a$ as an additional parameter, define the family $\mathcal R$ by letting $R_x=\{x\}$ for $x<a$, and $R_x=[x,+\infty)_D\cap\bigcup_{a\leqslant t\leqslant x}S_t$ for $x\geqslant a$. Clearly, $\mathcal R$ is a monotone $Aa$-definable family, $R_a=S_a$,  $S_x\subseteq R_x$ for all $x\in D$, and $S_a^k\subseteq R_a^k$ for all $k\in\mathbb N$. 

\begin{Claim*}
   $b\in S_a^n$ \ implies \ $R_b^k\subseteq S_a^{n+k}$ \ for all $k,n\in\mathbb N$.
\end{Claim*}
\begin{proofC}
We proceed by induction on $k$. For $k=1$ we prove that for all $n\in\mathbb N$, $b\in S_a^n$ implies $R_b\subseteq S_a^{n+1}$. We have \ $R_b\subseteq \bigcup_{a\leqslant t\leqslant b}S_t\subseteq \bigcup_{t\in S_a^n}S_t=S_a^{n+1}$;  here, the first inclusion follows by the definition of $R_b$, the second by $b\in S_a^n$, and the equality by the definition of $S_a^{n+1}$. Assume that the claim holds for some $k\in\mathbb N$: for all $n\in\mathbb N$, $b\in S_a^n$ implies $R_b^k\subseteq S_a^{n+k}$. Fix $n\in\mathbb N$ and $b\in S_a^n$, and we prove $R_b^{k+1}\subseteq S_a^{n+k+1}$. We have:
\begin{center}
    $R_b^{k+1}=\bigcup_{t\in R_b^k}R_t\subseteq \bigcup_{t\in S_a^{n+k}}R_t  \subseteq S_a^{n+k+1}$;
\end{center}
Here, the equality holds by the definition, the first inclusion follows from the induction hypothesis as $b\in S_a^n$, and the second holds as $R_t\subseteq S_a^{n+k+1}$ for all $t\in S_a^{n+k}$ by the base case (the case $k=1$). This proves the claim. 
\end{proofC}

By the claim, as $a\in S_a$, $S_a^k\subseteq R_a^k\subseteq  S_a^{k+1}$ holds for all $k\in\mathbb N$. Clearly, this implies $R_a^{\omega}=S_a^{\omega}$, proving the first assertion.
To prove the second, assume that $S_a$ is an $\mathcal S$-shift, that is, the family $(S_a^k\mid k\in\mathbb N)$ is strictly increasing. Then $S_a^k\subseteq R_a^k\subseteq  S_a^{k+1}$ implies that the family $(R_a^k\mid k\in\mathbb N)$ is also strictly increasing, so $R_a$ is an $\mathcal R$-shift. By Lemma \ref{Lemma_shift_properties}, $R_b$ is an $\mathcal R$-shift for all $b\in  R^{\omega}_a$, proving the proposition. 
\end{proof}

\subsection{Proof of Theorem \ref{Theorem1_shift}}

In this subsection, we will prove Theorem \ref{Theorem1_shift}. 
Throughout, assume that $T$ is a countable weakly quasi-o-minimal theory with respect to $<$, $\mathcal S=(S_x\mid x\in \Mon)$ is a definable family of semiintervals of $(\Mon,<)$, $c_0\in \Mon$, and $S_{c_0}$ is an $\mathcal S$-shift. For any type $p\in S_1(A)$, $\mathbf p$ will denote the pair $(p,<)$. We need to prove $I(T,\aleph_0)=2^{\aleph_0}$.

Absorb in the language the parameter $c_0$ and a finite set of parameters needed to define the family $\mathcal S$; note that this does not affect the desired conclusion $I(T,\aleph_0)=2^{\aleph_0}$.
By Lemma \ref{Prop_exists_monotonefamily}, after possibly modifying $\mathcal S$, we can also assume that $\mathcal S$ is monotone. Next, redefine $\mathcal S$ by setting $S_x=\{x\}$ for all $x<c_0$, and modify the order $<$ by moving the interval $(-\infty, c_0)$ to the right end. Notice that the redefined family is still monotone and $0$-definable, and that $c_0=\min \Mon$. 
Therefore, for the rest of this section, we will assume:  \begin{enumerate}[label=$\bullet$,left=1em,labelsep=.7em]
\item  $\mathcal S=(S_x\mid x\in\Mon)$ \ is a $0$-definable, monotone family of semiintervals of $(\Mon,<)$;
\item $c_0=\min \Mon$ \ and \ $S_{c_0}$  is an $\mathcal S$-shift; $S^{1}_{c_0}\subset S^{2}_{c_0}\subset S^{3}_{c_0}\subset\dots$ is a family of initial parts of $\Mon$. 
\end{enumerate}
The following notation is convenient:
\begin{enumerate}[label=$\bullet$,left=1em,labelsep=.7em]
    \item $\mathcal C:=S^{\omega}_{c_0} =\bigcup_{n\in\mathbb N} S^{n}_{c_0}$; 
    \item  $\Pi_A(x):=\{\phi(x)\in L_A\mid \mbox{$\phi(\Mon)$ is   convex and contains a final part of $\mathcal C$}\}$; $\Pi(x):=\Pi_{\emptyset}(x)$; \ \ 
    \item $S_{\Pi}(A):=\{p\in S_1(A)\mid \Pi_A(x)\subseteq p(x)\}$. 
\end{enumerate}

\begin{Remark}\label{Remark_mathcal C basic}
\begin{enumerate}[(a),left=0em,labelsep=.7em] 
\item $\mathcal C$ is an initial part of $(\Mon,<)$, and $\mathcal C<x$ is $0$-type-definable. For all $n\in \mathbb N$, the formula $S^n_{c_0}<x$ belongs to $\Pi_A(x)$.
 \item  Lemma \ref{Lemma_shift_properties} implies that $S_c$ is an $\mathcal S$-shift and $S^\omega_c$ is a final part of $\mathcal C$ for all $c\in \mathcal C$. 
 \item $\Pi_A(x)$ is a partial type, $(\mathcal C<x)\subseteq \Pi_A(x)$, and $\Pi_A(x)$ is finitely satisfiable in $\mathcal C$.
\end{enumerate}
\end{Remark}

We will say that $p\in S_1(A)$ is a {\it $\mathcal C$-type} if $p$ is finitely satisfiable but not realized in $\mathcal C$. $\mathcal C$-types have extension property: if $p\in S_1(A)$ is a $\mathcal C$-type and $A\subseteq B$, then there is a $\mathcal C$-type $q\in S_1(B)$ with $p\subseteq q$.
Most of our efforts are aimed at proving Corollary \ref{Corollary_Ctype_trivial}, which states that every $\mathcal C$-type over a finite domain is trivial. Using this fact and Theorem \ref{Theorem trivial implies simple and convex}, we will be able to quickly complete the proof of Theorem \ref{Theorem1_shift}. 

\begin{Lemma}\label{Lemma_Pi_basic}
\begin{enumerate}[(a),left=0em,labelsep=.7em]
\item  For all $p\in S_1(A)$ the following conditions are equivalent:\\
\begin{enumerate*}[label=(\arabic*),itemjoin={\hspace{3em}}]
    \item  \ $p\in S_{\Pi}(A)$; 
    \item  \ $p$ is a $\mathcal C$-type;
    \item  \ $\mathcal C=\{x\in\Mon\mid x< p(\Mon)\}$. 
\end{enumerate*}
\item  $\mathcal C\cup\Pi_A(\Mon)$ is an initial part of $\Mon$.
\item  $\Pi_A(x)$ is the minimal interval type from $IT(A)$ consistent with $\mathcal C<x$. 
\item For all $A\subseteq B$, $S_{\Pi}(B)=\{(\mathbf p_l)_{\restriction B}\mid p\in S_{\Pi}(A)\}$.  
\end{enumerate}
\end{Lemma}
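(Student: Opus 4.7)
I will prove the four parts in the order (c), (b), (a), (d). The central geometric fact, secured in (c), is that $\Pi_A(\Mon)$ is a convex set lying immediately above $\mathcal{C}$: every $e\in\Mon$ with $e>\mathcal{C}$ satisfies $\Pi_A(\Mon)\leqslant e$ (either $e\in\Pi_A(\Mon)$ or $e>\Pi_A(\Mon)$); equivalently, no element of $\Mon$ sits strictly between $\mathcal{C}$ and $\Pi_A(\Mon)$. Once this is in hand, condition (3) becomes the statement that $p(\Mon)$ sits at the very bottom of $\Pi_A(\Mon)$, and the remaining parts follow by direct bookkeeping.

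For \emph{(c)} I first verify $\Pi_A\in IT(A)$. Consistency holds because any finite conjunction $\phi_1\wedge\dots\wedge\phi_n$ from $\Pi_A$ has locus containing $\bigcap_i F_i$, a non-empty final part of $\mathcal{C}$; here I use that $\mathcal{C}$ has no maximum, which follows from the strictly $\subsetneq$-increasing chain $(S^n_{c_0})_n$ of initial parts of $\Mon$. For maximality among convex $L_A$-formulae, suppose $\phi\notin\Pi_A$: if $\phi(\Mon)\cap\mathcal{C}$ is non-empty but not cofinal in $\mathcal{C}$, then convexity forces $\phi(\Mon)<c'$ for some $c'\in\mathcal{C}$, and picking $n$ with $c'\in S^n_{c_0}$ gives $\phi(\Mon)\subseteq S^n_{c_0}$, so $\phi$ is blocked by $\lnot S^{(n)}(c_0,x)\in\Pi_A$; if instead $\phi(\Mon)\cap\mathcal{C}=\emptyset$ and $\phi(\Mon)\neq\emptyset$, then $\phi(\Mon)>\mathcal{C}$, so $\phi^-(x)$ contains $\mathcal{C}$ entirely, placing $\phi^-\in\Pi_A$ and blocking $\phi$. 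Consistency with $\mathcal{C}<x$ is immediate from $\{x>S^n_{c_0}:n\in\mathbb{N}\}\subseteq\Pi_A$. For minimality, let $\Pi'\in IT(A)$ be consistent with $\mathcal{C}<x$ and pick $e\models\Pi'$ with $e>\mathcal{C}$; assuming $\Pi'<\Pi_A$ strictly, pick $\psi\in\Pi_A\setminus\Pi'$, so by maximality of $\Pi'$ the formula $\psi$ is inconsistent with $\Pi'$; but $\psi(\Mon)$ is convex and contains both some $f<e$ (from its final part of $\mathcal{C}$) and some $d>e$ (from $\Pi_A(\Mon)\subseteq\psi(\Mon)$), so $e\in[f,d]\subseteq\psi(\Mon)$, contradicting $\psi(\Mon)\cap\Pi'(\Mon)=\emptyset$. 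Part \emph{(b)} is now immediate: if $x\in\mathcal{C}\cup\Pi_A(\Mon)$ and $y<x$, then either $x\in\mathcal{C}$ (so $y\in\mathcal{C}$), or $x\in\Pi_A(\Mon)$ and $y\notin\mathcal{C}$; in the latter case $y>\mathcal{C}$ gives $\Pi_A(\Mon)\leqslant y$, which combined with $y<x\in\Pi_A(\Mon)$ and convexity of $\Pi_A(\Mon)$ forces $y\in\Pi_A(\Mon)$.

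For \emph{(a)} I run the cycle (1)$\Rightarrow$(3)$\Rightarrow$(2)$\Rightarrow$(1). The implication (1)$\Rightarrow$(3) is immediate from $p(\Mon)\subseteq\Pi_A(\Mon)$ together with the key fact (ruling out $x_0<p(\Mon)$ with $x_0>\mathcal{C}$). For (3)$\Rightarrow$(2), non-realization is immediate from $\mathcal{C}<p(\Mon)$; for finite satisfiability of $p$ in $\mathcal{C}$, if some $\phi\in p$ had $\phi(\Mon)\cap\mathcal{C}=\emptyset$, then $\phi(\Mon)>\mathcal{C}$, and a compactness argument (each finite fragment of the partial type $\{\mathcal{C}<x\}\cup\{x<\phi(\Mon)\}$ is realized inside $\mathcal{C}$) produces, by saturation, an $x_0$ with $\mathcal{C}<x_0<\phi(\Mon)\supseteq p(\Mon)$, violating (3). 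For (2)$\Rightarrow$(1), suppose $\phi\in\Pi_A$ but $\lnot\phi\in p$; completeness of $p$ forces exactly one of $\phi^-,\phi^+$ into $p$; the case $\phi^-\in p$ puts $p(\Mon)$ strictly below the final part $F\subseteq\mathcal{C}$ contained in $\phi(\Mon)$, producing $a\models p$ with $a<c\in\mathcal{C}$, so $a\in\mathcal{C}$, contradicting non-realization; the case $\phi^+\in p$ together with finite satisfiability in $\mathcal{C}$ demands some $c\in\mathcal{C}$ strictly above $F$, impossible since $F$ is upward-closed in $\mathcal{C}$. For \emph{(d)}, given $p\in S_\Pi(A)$ set $p_0:=p\restriction\emptyset$; then $\Pi\subseteq\Pi_A\subseteq p$ restricts to $\Pi\subseteq p_0$, so $p_0\in S_\Pi(\emptyset)$; since the ambient theory is weakly quasi-o-minimal, $p_0$ is weakly o-minimal, so by Fact~\ref{Fact_basic_S_p_wom}(b), $((\mathbf{p}_0)_l)_{\restriction A}$ is the leftmost element of $S_{p_0}(A)$; if $p$ were not leftmost, some $q\in S_{p_0}(A)$ with $q<p$ would give $e\in q(\Mon)\subseteq p_0(\Mon)$ with $e<p(\Mon)$, and (3) applied to $p$ would place $e\in\mathcal{C}$, contradicting (3) for $p_0$ (which gives $p_0(\Mon)\cap\mathcal{C}=\emptyset$). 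Conversely, for $p=((\mathbf{p}_0)_l)_{\restriction A}$, the same leftmost-extension argument verifies (3) for $p$ from (3) for $p_0$, and (a) then delivers $p\in S_\Pi(A)$. The main technical obstacle throughout is the convexity-based argument underlying (c): establishing that no element of $\Mon$ lies strictly between $\mathcal{C}$ and $\Pi_A(\Mon)$; once that is in place, everything else unwinds routinely.
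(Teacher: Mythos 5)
Your proof is correct in substance but organized quite differently from the paper's, and the reorganization is genuinely interesting. The paper proves (a) first, by the cycle (1)$\Rightarrow$(2)$\Rightarrow$(3)$\Rightarrow$(1) — using the extension property of $\mathcal C$-types for (2)$\Rightarrow$(3) and a compactness trick with the formula $\psi(x)=(\forall y)(y\leqslant x\rightarrow y\in S^n_{c_0}\cup\phi(\Mon))$ for (3)$\Rightarrow$(1) — and only afterwards deduces (b) from (a) and (c) from (b). You invert the dependency: you first establish directly that $\Pi_A$ is an interval type and that it is minimal among those consistent with $\mathcal C<x$, extract from minimality the geometric fact that nothing lies strictly between $\mathcal C$ and $\Pi_A(\Mon)$, and then let (b) and (a) fall out. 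Your cycle (1)$\Rightarrow$(3)$\Rightarrow$(2)$\Rightarrow$(1) replaces the paper's two compactness/extension arguments with a single saturation argument (for (3)$\Rightarrow$(2), where you should realize ``$x<\phi(\Mon)$'' by the single $L_A$-formula $(\forall y)(\phi(y)\rightarrow x<y)$ so that the type is over a small set) and a clean $\phi^-/\phi^+$ case split for (2)$\Rightarrow$(1). Part (d) is essentially the paper's argument phrased via ``$p$ is the $<_p$-minimum of $S_{p_0}(A)$'' instead of ``$p(\Mon)$ is an initial part of $p_0(\Mon)$''; these are the same thing by Fact \ref{Fact_basic_S_p_wom}(b). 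Your approach buys a cleaner logical architecture (one geometric lemma drives everything); the paper's buys shorter individual steps.

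The one place where your justification is incomplete is (1)$\Rightarrow$(3). You say it is immediate from $p(\Mon)\subseteq\Pi_A(\Mon)$ and the key fact, but those two ingredients only exclude a lower bound $x_0$ of $p(\Mon)$ with $x_0>\Pi_A(\Mon)$; they do not exclude $x_0\in\Pi_A(\Mon)$ with $x_0<p(\Mon)$, which is exactly what would happen if $p(\Mon)$ sat strictly inside the upper part of $\Pi_A(\Mon)$. What is missing is that $\Pi_A(\Mon)$ is the convex hull of $p(\Mon)$ for every completion $p$ of the interval type $\Pi_A$ — this is Lemma \ref{Lemma_interval_types}(e), which applies once you have shown in (c) that $\Pi_A\in IT(A)$, and it rules out the second case because $x_0$ would then lie between two realizations of $p$ while being below all of them. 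With that one citation added, the argument is complete.
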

\begin{proof} (a) (1)$\Rightarrow$(2) Assume $p\in S_\Pi(A)$. Then $(\mathcal C<x)\subseteq \Pi_A(x)\subseteq p(x)$, so $p$ is not realized in $\mathcal C$. To prove that $p$ is a $\mathcal C$-type, it remains to show that $p(x)$ is finitely satisfiable in $\mathcal C$. Suppose that an $L_A$-formula $\phi(x)$ is not realized in $\mathcal C$. Then the set of realizations of (a formula) $x<\phi(\Mon)$ is convex in $\Mon$ and contains $\mathcal C$; hence $(x<\phi(\Mon))\in \Pi_A(x)\subseteq p(x)$ and thus $p(\Mon)<\phi(\Mon)$. In particular, $\phi(x)\notin p$. Therefore, $p$ contains only formulae that are realized in $\mathcal C$; $p$ is a $\mathcal C$-type. 

(2)$\Rightarrow$(3) Suppose that $p$ is a $\mathcal C$-type. We need to prove that $\mathcal C$ is the set of all lower bounds of $p(\Mon)$. Since $p$ is not realized in $\mathcal C$, which is an initial part of $\Mon$, we conclude $\mathcal C< p(\Mon)$, so any $c\in\mathcal C$ is a lower bound of $p(\Mon)$. To prove (3), it remains to show that there are no other lower bounds; assuming $\mathcal C<a$ we will prove $a\nless p(\Mon)$. Since $p$ is a $\mathcal C$-type, by the extension property there is a $b\in p(\Mon)$ such that $\tp(b/aA)$ is a $\mathcal C$-type. Then, since $(x<a)\in \tp(c/aA)$ for all $c\in \mathcal C$, we have $(x<a)\in \tp(b/aA)$. 
Now, $b<a$ and $b\in p(\Mon)$ together imply $a\nless p(\Mon)$, so $a$ is not a lower bound of $p(\Mon)$.

(3)$\Rightarrow$(1) Suppose that $\mathcal C$ is the set of all lower bounds of $p(\Mon)$. To prove $p\in S_{\Pi}(A)$, i.e. $\Pi_A(x)\subseteq p(x)$, assume $\phi(x)\in\Pi_A(x)$; we need to prove $\phi(x)\in p$.
Since the set $\phi(\Mon)$ contains a final part of $\mathcal C$, 
there is a $k\in\mathbb N$ such that $\mathcal C\subseteq S^k_{c_0}\cup \phi(\Mon)$. 
Put $\psi(x):=(\forall y)(y\leqslant x\rightarrow y\in S^k_{c_0}\cup \phi(\Mon))$; then $\psi(\Mon)=in(\phi(\Mon))$ is the smallest initial part of $D$ that contains $\phi(\Mon)$. Clearly, 
$\mathcal C\subseteq \psi(\Mon)$, so by compactness  $\{\psi(x)\land\ S^n_{c_0}<x\mid n\in\mathbb N\}$ is consistent; let $b$ realize it. Then $\mathcal C<b$, so, by (3), $b$ is not a lower bound of $p(\Mon)$. Hence, there is $a\models p$ such that $a\leqslant b$. 
Then $\models \psi(b)$ implies $\models \phi(a)$, so $\phi(x)\in p$. Therefore, $\Pi_A(x)\subseteq p(x)$.

(b) Assuming $b<a$ and $a\in \mathcal C\cup \Pi_A(\Mon)$ we need to prove $b\in \mathcal C\cup \Pi_A(\Mon)$. This is immediate if $a\in \mathcal C$ or $b\in\mathcal C$. So assume $a,b\notin\mathcal C$ and let $p=\tp(a/A), q=\tp(b/A)$. Then $p\in S_{\Pi}(A)$ as $a\in \Pi_A(\Mon)$. By (1)$\Leftrightarrow$(3) from part (a), $\mathcal C$ is the set of all lower bounds of $p(\Mon)$. In particular, $b$ is not a lower bound of $p(\Mon)$, so $a'<b<a$ holds for some $a'\models p$. It follows that $\conv(p(\Mon))=\conv(q(\Mon))$ by Lemma \ref{Lemma_interval_types}(f), so $p(\Mon)$ and $q(\Mon)$ have the same set of lower bounds, hence $q\in S_{\Pi}(A)$ follows by (1)$\Leftrightarrow$(3) from part (a). Therefore, $b\in\Pi_A(\Mon)$, as desired.   

(c) We prove that $\Pi_A(x)$ is an interval type, i.e. a maximal consistent set of convex $L_A$-formulae. Let $\phi(x)$ be a convex $L_A$-formula such that $\phi(x)\notin \Pi_A(x)$; we show that $\phi(x)$ is inconsistent with $\Pi_A(x)$. As $\phi(x)$ is convex and $\phi(x)\notin \Pi_A(x)$, the set $\phi(\Mon)$ does not contain a final part of $\mathcal C$. Then one of the convex components of $\lnot\phi(\Mon)$, say $D$, contains a final part of $\mathcal C$. This implies that the formula expressing $x\in D$ belongs to $\Pi_A(x)$. Since $x\in D$ is inconsistent with $\phi(x)$, it follows that $\phi(x)$ is inconsistent with $\Pi_A(x)$, as desired. Therefore, $\Pi_A(x)$ is an interval type. The minimality of $\Pi_A(x)$ follows from (b).

(d) Let $p\in S_{\Pi}(A)$. By (1)$\Leftrightarrow$(3) from part (a), $\mathcal C$ is the set of all the lower bounds of $p(\Mon)$. Let $p_B=(\mathbf p_l)_{\restriction B}$.   
By the definition of $\mathbf p_l$, the set $p_B(\Mon)$ is an initial part of $p(\Mon)$, so $p(\Mon)$ and $p_B(\Mon)$ share the same set, $\mathcal C$, of lower bounds; $p_B\in S_{\Pi}(B)$ follows by part (a) of the lemma. Therefore, $\{(\mathbf p_l)_{\restriction B}\mid p\in S_{\Pi}(A)\}\subseteq S_{\Pi}(B)$. 
To prove the converse, suppose that $q_B\in S_{\Pi}(B)$ and let $q=(q_B)_{\restriction A}$; clearly, $q\in S_{\Pi}(A)$. Now, part (a) of the lemma applies to both $q$ and $q_B$, so the sets $q(\Mon)$ and $q_B(\Mon)$ have the same set, $\mathcal C$, of lower bounds. This, together with the fact that $q_B(\Mon)$ is a convex subset of $q(\Mon)$, implies that $q_B(\Mon)$ is an initial part of $q(\Mon)$, so $q_B=(\mathbf q_l)_{\restriction B}$, proving the converse. Therefore, $S_{\Pi}(B)=\{(\mathbf p_l)_{\restriction B}\mid p\in S_{\Pi}(A)\}$.
\end{proof}

Next, we consider forking as a binary relation on $\Pi_A(\Mon)$: 
\begin{enumerate}[label=$\bullet$,left=1em,labelsep=.7em]
    \item $\mathcal D_{\Pi_A}:=\{(x,y)\in\Pi_A(\Mon)^2\mid x\dep_A y\}$; \ \ \ \ $\mathcal D_{\Pi_A}(a):=\{x\in \Pi_A(\Mon)\mid (x,a)\in\mathcal D_{\Pi_{A}}\}$.
\end{enumerate}
Recall that a set $D\subseteq \Mon$ is $\Pi_A$-bounded if there are $a_1,a_2\in \Pi_A(\Mon)$ such that $a_1<D<a_2$.

\begin{Lemma}\label{Lemma_forking_on_D_Pi}
\begin{enumerate}[(a),left=0em,labelsep=.7em]
\item  $(a,b)\in \mathcal D_{\Pi_{A}}$ \ if and only if \ $\tp(a/Ab)$ contains a $\Pi_A$-bounded formula.
\item  $\mathcal D_{\Pi_A}$ is a convex equivalence relation on $\Pi_A(\Mon)$.
\item  $\mathcal S^{\omega}_a\subseteq \mathcal D_{\Pi_A}(a)$ \ holds for all $a\in\Pi_A(\Mon)$.
\item $S_a$ is an $\mathcal S$-shift for all $a\in \Pi(\Mon)$.
\end{enumerate}
\end{Lemma}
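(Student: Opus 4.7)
The plan is to dispatch (a) and (b) by adapting arguments already present in the paper, and then handle (c), where the shift hypothesis is used essentially.

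For part (a), the argument closely mirrors the proof of Lemma~\ref{Lemma_forking_on_interval_type}(a): the type $p=\tp(a/A)$ is weakly o-minimal because $T$ is weakly quasi-o-minimal, so $(a,b)\in\mathcal{D}_{\Pi_A}$ gives by Fact~\ref{Fact_basic_S_p_wom}(c) a relatively $p$-bounded formula in $\tp(a/Ab)$, which by compactness can be upgraded to a $\Pi_A$-bounded formula by intersecting with a suitable $\theta\in p$; conversely, $\Pi_A(\Mon)$ is the convex hull of $p(\Mon)$ by Lemma~\ref{Lemma_interval_types}(d), so every $\Pi_A$-bounded formula is already $p$-bounded and hence witnesses forking. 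For part (b), apply Theorem~\ref{Theorem5} to the family $\mathcal{F}:=\{(q,<):q\in S_{\Pi_A}(A)\}$, which is part of a single $\delta_A$-class by Lemma~\ref{Lemma_wqom_interval_type}(a); since $\mathcal{F}(\Mon)=\Pi_A(\Mon)$ and $\mathcal{D}_\mathcal{F}=\mathcal{D}_{\Pi_A}$ by construction, Theorem~\ref{Theorem5}(b) delivers the convex equivalence relation directly.

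The substance lies in part (c). Fix $a\in\Pi_A(\Mon)$ and $b\in S^n_a$; I first show $b\in\Pi_A(\Mon)$, arguing by contradiction. Suppose $\psi\in\Pi_A$ has $\models\lnot\psi(b)$; since $\psi(\Mon)$ is convex and contains $\Pi_A(\Mon)\ni a$, and since $b\geqslant a$, this forces $b>\psi(\Mon)$. Define
\[
\chi(x)\;:=\;\lnot(\exists y)\bigl(S^{(n)}(x,y)\land(\forall z)(\psi(z)\to z<y)\bigr),
\]
an $L_A$-formula. Monotonicity of $\mathcal{S}^n$ (Observation~\ref{Lemma_semiint_verybasic1}(a)) makes $\chi(\Mon)$ an initial part of $(\Mon,<)$, hence convex; and for any $c\in\mathcal{C}$, Lemma~\ref{Lemma_shift_properties} gives $S^n_c\subseteq S^\omega_c\subseteq\mathcal{C}$, which is disjoint from the final part $\{y:y>\psi(\Mon)\}$ because $\psi(\Mon)\supseteq\Pi_A(\Mon)$ already extends above $\mathcal{C}$. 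Thus $\mathcal{C}\subseteq\chi(\Mon)$, so $\chi\in\Pi_A$; but then $a\in\Pi_A(\Mon)$ yields $\models\chi(a)$, contradicting $b\in S^n_a$ with $b>\psi(\Mon)$.

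For the forking half of (c), I will verify that $\phi(x):=S^{(n)}(x,b)\in\tp(a/Ab)$ is $\Pi_A$-bounded and conclude by (a). The same shift argument gives $\phi(\Mon)\cap\mathcal{C}=\emptyset$ (any $x\in\mathcal{C}\cap\phi(\Mon)$ would force $b\in S^n_x\subseteq\mathcal{C}$, contradicting $b\in\Pi_A(\Mon)$), hence $\phi(\Mon)>\mathcal{C}$; the partial type $\Pi_A(x)\cup\{(\forall y)(\phi(y)\to x<y)\}$ over $Ab$ is then finitely satisfiable inside $\mathcal{C}$ (a realization in $\mathcal{C}$ of any finite subtype of $\Pi_A$ automatically lies below $\phi(\Mon)$), producing $a_1\in\Pi_A(\Mon)$ with $a_1<\phi(\Mon)$. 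For the upper bound, $\phi(\Mon)\leqslant b$, and any realization $a_2$ of $(\mathbf{q}_r)_{\restriction Ab}$ with $q=\tp(b/A)$ lies in $q(\Mon)\subseteq\Pi_A(\Mon)$ and satisfies $a_2>b$. Hence $\phi$ is $\Pi_A$-bounded and part (a) yields $a\dep_A b$. The main obstacle is the construction of $\chi$: this is the only point where the shift-theoretic content (monotonicity of $\mathcal{S}^n$ together with Lemma~\ref{Lemma_shift_properties}) is used essentially, and once $b\in\Pi_A(\Mon)$ is in hand the rest of (c) is a straightforward compactness argument combined with (a).
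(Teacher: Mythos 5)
Your proposal is correct; parts (a) and (b) coincide with the paper, which simply cites Lemma \ref{Lemma_forking_on_interval_type} (whose proof is exactly what you reproduce: the convex-hull reduction of $p$-boundedness to $\Pi_A$-boundedness, and Theorem \ref{Theorem5}(b) with the order normalized via Lemma \ref{Lemma_wqom_interval_type}(b)). Part (c) is where you genuinely diverge. The paper chooses $b'\models p=\tp(a/A)$ with $a$ left $\mathbf p$-generic over $b'$, observes that $\tp(a/Ab')$ is then a $\mathcal C$-type, and transfers the statement ``$S_x<b'$'', true for all $x\in\mathcal C$, to $x=a$ by finite satisfiability; this traps $S_a$ between $a$ and $b'$ inside $\Pi_A(\Mon)$, and $S^n_a\subseteq\mathcal D_{\Pi_A}(a)$ follows by induction on $n$ using Observation \ref{Lemma_semiint_verybasic1}(d) and the fact that $\mathcal D_{\Pi_A}$ is an equivalence relation. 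You instead fix $b\in S^n_a$ and work with the formula $S^{(n)}(x,b)\in\tp(a/Ab)$ directly: the auxiliary convex formula $\chi\in\Pi_A$ forces $S^n_a\subseteq\Pi_A(\Mon)$, and then $S^{(n)}(\Mon,b)$ is bounded below by a coheir of $\Pi_A$ over $Ab$ and above by a right $\mathbf q$-generic element, so (a) applies. Both arguments rest on the same coheir idea (finite satisfiability in $\mathcal C$); yours avoids the induction on $n$ and produces, for each $b\in S^{\omega}_a$, an explicit $\Pi_A$-bounded formula witnessing $a\dep_A b$, at the cost of the extra formula $\chi$. Two steps you assert without proof but which do hold and deserve a line each: that $\chi(\Mon)$ is an initial part is not immediate --- if $x'\leqslant x$, some $y\in S^n_{x'}$ lies in $F=\{z: z>\psi(\Mon)\}$, and $S^n_x\cap F=\emptyset$, then $S^n_x$ sits inside the initial part $\Mon\smallsetminus F$, so $y$ is a strict upper bound of $S^n_x$ and hence, by monotonicity of $\mathcal S^n$, of $S^n_{x'}$, contradicting $y\in S^n_{x'}$; and the passage from $\phi(\Mon)\cap\mathcal C=\emptyset$ to $\phi(\Mon)>\mathcal C$ uses that $\mathcal C$ is an initial part of $(\Mon,<)$ containing $c_0=\min\Mon$.
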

\begin{proof}
(a) and (b) follow from Lemma \ref{Lemma_forking_on_interval_type}. 

(c) Let $a\in \Pi_A(\Mon)$ and we prove that $S_a$ is $\Pi_A$-bounded. 
Put $p=\tp(a/A)$. Choose $b\in p(\Mon)$ such that $a$ is left $\mathbf p$-generic over $b$. By Lemma \ref{Lemma_Pi_basic}(a,d), $\tp(a/bA)$ is a $\mathcal C$-type. Note that for all $c\in\mathcal C$ we have $S_c<b$, so the formula (expressing) $S_x<b$ belongs to $\tp_x(a/bA)$. In particular, $S_a<b$ and thus $a\leqslant S_a<b$. Therefore, $x\in S_a$ is a $\Pi_A$-bounded formula, so $S_a\subseteq \mathcal D_{\Pi_A}(a)$; $\mathcal S^{n}_a\subseteq \mathcal D_{\Pi_A}(a)$ follows by induction, hence $\mathcal S^{\omega}_a\subseteq \mathcal D_{\Pi_A}(a)$.

(d) Let $a\in \Pi(\Mon)$. Consider the partial type $\Sigma=\{S_x^n\subset S_x^{n+1}\mid n\in\mathbb N\}$. By Remark \ref{Remark_mathcal C basic}(b), this partial type is realized by any element of $\mathcal C$; since, by Lemma \ref{Lemma_Pi_basic}(a), $\tp(a)$ is a $\mathcal C$-type, $a$ realizes $\Sigma$. Therefore, $S_a$ is an $\mathcal S$-shift.        
\end{proof}

\begin{Definition}
Let $D\subseteq \Mon$ and $a\in D$. We say that $a$ is an {\it $\mathcal S$-minimal element of $D$} if $a=c_0$ or if there exists  $b< D$ such that $a\in S_b$.
\end{Definition}
Usually, when we say that $a\in D$ is $\mathcal S$-minimal, the minimality is with respect to $D$. Note that if $D$ is definable, then ``$x$ is an $\mathcal S$-minimal element of $D$'' is also definable.  

\begin{Lemma}\label{Lemma_S_minimal_exist}

\begin{enumerate}[(a),left=0em,labelsep=.7em]
\item  Every definable set $D$ that intersects $\mathcal C\cup \Pi(\Mon)$ contains an $\mathcal S$-minimal element. 

\item ($T$ small) \ If $D$ is $\bar b$-definable and intersects $\mathcal C\cup \Pi(\Mon)$, then there is an $\mathcal S$-minimal element $a\in D$ such that $\tp(a/\bar b)$ is isolated. 

\item  Suppose that $D\subseteq \mathcal C\cup \Pi(\Mon)$ is the locus of some isolated type, say $q\in S_1(A)$. Then every element of $D$ is $\mathcal S$-minimal. Moreover, whenever $D$ intersects some $\mathcal D_{\Pi_A}$-class, it is contained entirely within that class; in particular, $D$ is bounded in $(\mathcal C\cup\Pi(\Mon),<)$.  
\end{enumerate}
\end{Lemma}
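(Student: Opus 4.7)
The plan for (a) is to reduce to showing that the $\bar b$-definable subset $D_0 := \{a \in D : a \text{ is } \mathcal S\text{-minimal in } D\}$ is nonempty; once this is established, the ``moreover'' clause follows from the smallness of $T$, since isolated types are dense in $S_1(\bar b)$ and any isolated $p \in S_1(\bar b)$ with $p \vdash x \in D_0$ will be realized by an $\mathcal S$-minimal element with isolated type over $\bar b$.

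For the nonemptiness of $D_0$ I distinguish cases based on where $D$ first meets $\mathcal C \cup \Pi(\Mon)$. If $c_0 \in D$, then $c_0 \in D_0$ by definition. If $c_0 \notin D$ but $D \cap \mathcal C \neq \emptyset$, let $n \geq 1$ be minimal with $D \cap S^{n}_{c_0} \neq \emptyset$; then $S^{n-1}_{c_0}$ (interpreted as $\{c_0\}$ when $n = 1$) is an initial part of $\Mon$ disjoint from $D$, so $S^{n-1}_{c_0} < D$, and Observation~\ref{Lemma_semiint_verybasic1}(d) lets us write any $a \in D \cap S^{n}_{c_0}$ as $a \in S_t$ for some $t \in S^{n-1}_{c_0}$, witnessing $a \in D_0$. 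The remaining case is $D \cap \mathcal C = \emptyset$ with $D \cap \Pi(\Mon) \neq \emptyset$; here the key move is to pick $a \in D \cap \Pi_{\bar b}(\Mon)$: by Lemma~\ref{Lemma_Pi_basic}(a) the type $\tp(a/\bar b) \in S_\Pi(\bar b)$ is a $\mathcal C$-type, hence finitely satisfiable in $\mathcal C$. Letting $M_D(x)$ be the $\bar b$-formula expressing that $x$ is $\mathcal S$-minimal in $D$, if $\neg M_D(a)$ held then the $\bar b$-formula $x \in D \land \neg M_D(x) \in \tp(a/\bar b)$ would be realized in $\mathcal C$, contradicting $D \cap \mathcal C = \emptyset$; hence $a \in D_0$.

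For (b), set $p_0 := p {\restriction} \emptyset \in S_\Pi(\emptyset)$; since $1$-types of a weakly quasi-o-minimal theory are weakly o-minimal, $p_0$ is a weakly o-minimal type and Fact~\ref{Fact_basic_S_p_wom}(a) makes $D$ a convex subset of $p_0(\Mon)$. Claim~1 follows from (a) applied with $\bar b = \bar c$: the nonempty $\bar c$-definable set $D_0$ must coincide with $D$ because $p$ is isolated and the $\bar c$-formula ``$x \in D_0$'' is consistent with $p$, hence lies in $p$. For Claim~2, given $d_1 \in D \cap [a]_{\mathcal D_{\Pi_A}}$ we have $d_1 \in \Pi_A(\Mon) \cap p_0(\Mon)$, and by Lemma~\ref{Lemma_Pi_basic}(d) this intersection is the locus of $q_0 := (\mathbf{p_0})_l{\restriction}A$, so $\tp(d_1/A) = q_0$. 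I then combine the convexity of $D$ in $p_0(\Mon)$, the convexity of the class $[d_1]_{\mathcal D_{\Pi_A}}$ (Lemma~\ref{Lemma_forking_on_D_Pi}(b)), and the $\bar c$-definability plus isolation of $D$ to force $D \subseteq q_0(\Mon) \cap [d_1]_{\mathcal D_{\Pi_A}}$.

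The main obstacle is securing $D \cap \Pi_{\bar b}(\Mon) \neq \emptyset$ in the delicate case of (a): a priori $D \cap \Pi(\Mon)$ can lie strictly above $\Pi_{\bar b}(\Mon)$, and in this sub-case one must pass to a larger interval type in $IT(\bar b')$ for some $\bar b' \supseteq \bar b$ (e.g., adjoining a witness from $D \cap \Pi(\Mon)$) and exploit the convexity and maximality of interval types from Lemma~\ref{Lemma_interval_types} before reapplying the $\mathcal C$-type argument. A parallel subtlety appears in Claim~2, where the $\mathcal D_{\Pi_A}$-class is $A$-invariant but not $\bar c$-invariant, so spreading membership across $D$ requires combining the structural descriptions of Lemma~\ref{Lemma_forking_on_D_Pi} with convexity to rule out $D$ straddling adjacent $\mathcal D_{\Pi_A}$-classes or dropping out of $q_0(\Mon)$.
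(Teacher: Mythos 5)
Your handling of the easy parts is fine and matches the paper: the cases $c_0\in D$ and $D\cap\mathcal C\neq\emptyset$, the ``moreover'' clause via smallness, and the first assertion of (b) via isolation. But both hard steps have genuine gaps.

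In part (a), in the case $D\cap\mathcal C=\emptyset$ and $D\cap\Pi(\Mon)\neq\emptyset$, your ``key move'' of picking $a\in D\cap\Pi_{\bar b}(\Mon)$ is not merely delicate --- it is impossible: if such an $a$ existed, $\tp(a/\bar b)$ would be finitely satisfiable in $\mathcal C$ and would contain the $L_{\bar b}$-formula $x\in D$, forcing $D\cap\mathcal C\neq\emptyset$. So the sub-case you flag as the ``main obstacle'' is in fact the whole case, and your proposed remedy (adjoining a witness $d\in D\cap\Pi(\Mon)$ to the parameters and passing to a larger interval type) does not work: $\tp(d/\bar b d)$ is realized, hence not a $\mathcal C$-type, and $\Pi_{\bar b d}(\Mon)\subseteq\Pi_{\bar b}(\Mon)$ only shrinks. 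The missing idea is to quantify the parameters out. Writing $D=\phi(\Mon,\bar b)$ and letting $\sigma(y,\bar z)$ say ``$y$ is an $\mathcal S$-minimal element of $\phi(\Mon,\bar z)$'', the already-proved case gives that the parameter-free formula $(\forall\bar z)\bigl(\phi(x,\bar z)\rightarrow(\exists y)\,\sigma(y,\bar z)\bigr)$ holds of every $c\in\mathcal C$; since $\tp(d/\emptyset)$ is finitely satisfiable in $\mathcal C$ for any $d\in D\cap\Pi(\Mon)$, this formula lies in $\tp(d/\emptyset)$, and instantiating $\bar z=\bar b$ produces the $\mathcal S$-minimal element.

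In (b), the containment $D\subseteq[d_1]_{\mathcal D_{\Pi_A}}$ does not follow from convexity of $D$ in $p_0(\Mon)$ together with convexity of the class: two convex sets can overlap without either containing the other, so convexity alone cannot rule out $D$ straddling adjacent $\mathcal D_{\Pi_A}$-classes. The paper's argument is genuinely combinatorial and uses the $\mathcal S$-minimality of \emph{every} element of $D$ (from the first assertion) together with the monotonicity of $\mathcal S$: a minimality witness $b$ for $a$ satisfies $b<D$ and $a\in S_b$, whence $\mathcal D_{\Pi_A}(a)=\mathcal D_{\Pi_A}(b)$ by Lemma \ref{Lemma_forking_on_D_Pi}(c); and if some $d\in D$ lay outside $S_a\cup S_b$, its own minimality witness $b'$ would satisfy $b'<a$, hence $\sup S_{b'}\leqslant\sup S_a$ by monotonicity, while $S_a<d\in S_{b'}$ gives $\sup S_a<\sup S_{b'}$ --- a contradiction, so $D\subseteq S_a\cup S_b\subseteq\mathcal D_{\Pi_A}(a)$. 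Neither monotonicity nor Lemma \ref{Lemma_forking_on_D_Pi}(c) appears in your sketch, and without them I do not see how your outline closes.
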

\begin{proof} 
(a) First, consider the case $D\cap \mathcal C\neq 0$. If $c_0\in D$, then $c_0$ is $\mathcal S$-minimal in $D$. Assume $c_0\notin D$ and let $n\in\mathbb N$ be minimal with $S^n_{c_0}\cap D\neq 0$. We claim that every element $a\in S^n_{c_0}\cap D$ is $\mathcal S$-minimal in $D$.  If $n=1$  then $c_0$ witnesses the $\mathcal S$-minimality of $a$ in $D$, as $c_0<D$ and $a\in S_{c_0}$. If $n>1$, then $a\in S^n_{c_0}$ implies that $a\in S_d$ holds for some $d\in S^{n-1}_{c_0}$;  the assumed minimality of $n$ implies $d<D$, so $d$ witnesses the $\mathcal S$-minimality of $a$ in $D$.  

The second case is $D\cap\Pi(\Mon)\neq 0$. Let $\phi(x,\bar b)$ be a formula that defines $D$ and let $d\in D\cap \Pi(\Mon)$. 
Let $\sigma(y,\bar z)$ be an $L$-formula that says ``$y$ is an $\mathcal S$-minimal element of $\phi(\Mon,\bar z)$''. 
By the first case, for each $\bar b'\in \Mon^{|\bar b|}$ the following holds: if the set $\phi(\Mon,\bar b')$
meets $\mathcal C$ then $\phi(\Mon,\bar b')$ has an $\mathcal S$-minimal element. Therefore, 
for all $c\in\mathcal C$ 
we have: \ $\models  (\forall \bar z)(\phi(c,\bar z)  \rightarrow (\exists y)\, \sigma(y,\bar z))$. Since $\tp(d)$ is finitely satisfiable in $\mathcal C$, the same holds with $c$ replaced by $d$; $D$ contains an $\mathcal S$-minimal element. 

(b) Suppose that $D$ is $\bar b$-definable and that it intersects $\mathcal C\cup\Pi(\Mon)$. Let $\phi(x)$ be an $L_{\bar b}$-formula that expresses ``$x$ is an $\mathcal S$-minimal element of $D$''; it is consistent by part (a), so, since $T$ is small, $\phi(x)$ belongs to some isolated type of $S_1(\bar b)$.

(c) Suppose that $\phi(x)$ defines $D$ and isolates $q$. By (a), we know that some $x\in D$ satisfies the formula ``$x$ is an $\mathcal S$-minimal element of $\phi(\Mon)$''. Since $D$ is $A$-invariant, all elements of $D$ satisfy that formula. This proves the first assertion. To prove the second, assume that $a\in D\cap \Pi_A(\Mon)$; we need to show $D\subseteq \mathcal D_{\Pi_A}(a)$. By the first claim, $a$ is $\mathcal S$-minimal in $D$, so there is a $b\in \Pi(\Mon)$ witnessing that: $b<D$ and $a\in S_b$. By Lemma \ref{Lemma_forking_on_D_Pi}(c), we have $\mathcal S^{\omega}_b\subseteq \mathcal D_{\Pi_A}(b)$, so $a\in S_b$ implies $\mathcal D_{\Pi_A}(a)=\mathcal D_{\Pi_A}(b)$. Therefore, to complete the proof, it suffices to prove $D\subseteq S_a\cup S_b$.  

Suppose on the contrary that there is $d\in D\smallsetminus (S_a\cup S_b)$. 
Note that $S_a\cup S_b$ is a convex set, so $b\in S_b$ and $b<D$ together imply $(S_b\cup S_a)<d$. By the above proved, $d\in D$ is $\mathcal S$-minimal; let $b'\in \Pi_A(\Mon)$ be a witness for that: $b'<D$ and $d\in S_{b'}$. Then $b'<a$ as $b'<D\ni a$, so the monotonicity of $\mathcal S$ implies $\supin (S_{b'})\subseteq\supin (S_a)$. On the other hand,  $S_a<d$ and $d\in S_{b'}$ imply $\supin (S_{a})\subset\supin (S_{b'})$. Contradiction.
\end{proof}

\begin{Lemma}\label{Lemma_every_convexshift_istrivial}  ($T$ small) \ 
Every convex $\mathcal C$-type over a finite domain is trivial.  
\end{Lemma}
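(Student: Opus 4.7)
The plan is to apply Lemma \ref{Lemma_trivial_iff_Dp=Dq}: to establish triviality of $p$ it suffices to show that for every $B\supseteq A$, every nonforking extension $q$ of $p$ in $S(B)$, and every $b\models q$, one has $\mathcal D_q(b)=\mathcal D_p(b)$. Since the inclusion $\mathcal D_p(b)\subseteq\mathcal D_q(b)$ is automatic and the cases $q=(\mathbf p_r)_{\restriction B}$ and $q=(\mathbf p_l)_{\restriction B}$ are symmetric, I focus on $q=(\mathbf p_r)_{\restriction B}$. First I fix a convexity witness $\theta(x)\in p$ from Proposition \ref{Proposition_convex_type_witness}, so that $p(\Mon)=\theta(\Mon)\cap \Pi_A(\Mon)$ and $p(\Mon)$ is convex in $(\theta(\Mon),<)$.

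Then I suppose for contradiction that $b'\in\mathcal D_q(b)\setminus\mathcal D_p(b)$; so $b,b'\models q$, $b\dep_B b'$ but $b\ind_A b'$. By Lemma \ref{Lemma_forking_on_interval_type}(a) applied at $B$, the witness for $b\dep_B b'$ is a $\Pi_B$-bounded $L_{Bb}$-formula $\phi(y,b,\bar c)\in\tp(b'/Bb)$ with $\bar c\in B$; using weak quasi-o-minimality I decompose $\phi(\Mon,b,\bar c)$ into convex components and shrink to the component containing $b'$, so $\phi(\Mon,b,\bar c)$ may be assumed convex. Now I apply the ``moreover'' clause of Lemma \ref{Lemma_S_minimal_exist}(a) to the $L_{Bb}$-definable set $D:=\phi(\Mon,b,\bar c)\cap\theta(\Mon)$, which meets $\Pi(\Mon)$ because $b'\in p(\Mon)\subseteq\Pi(\Mon)$: this yields an $\mathcal S$-minimal element $e\in D$ whose type over $Bb$ is isolated by some formula $\psi(y,b,\bar c)$, and I arrange $\psi$ to imply ``$y$ is $\mathcal S$-minimal in $D$'', so there is an $\mathcal S$-witness $e_0\in\dcl(Bb\bar c)$ with $e_0<D$ and $e\in S_{e_0}$. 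By Lemma \ref{Lemma_S_minimal_exist}(b), the locus $\psi(\Mon,b,\bar c)$ lies inside a single $\mathcal D_{\Pi_{Bb}}$-class, which is $\mathcal D_{\Pi_{Bb}}(b')$ by boundedness of $D$.

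The main obstacle, and the technical heart of the proof, is to replace the $L_{Bb}$-witness of forking by an $L_{Ab}$-witness so that $b\dep_A b'$ follows, contradicting $b\ind_A b'$. The key leverage is that $b$ is right-$\mathbf p$-generic over $B$, which by Theorem \ref{Theorem_triangle_mathcal F}(h) forces every $\mathcal D_p$-class over $A$ generated by parameters from $B\setminus A$ to lie strictly to the left of $b$; in particular the $\mathcal S$-witness $e_0$, and the convex hull endpoints of $D\cap p(\Mon)$, lie in a configuration whose relevant $\mathcal S^{\omega}$-features are $L_{Ab}$-definable. Concretely, I intend to exploit that $\mathcal S$ is $0$-definable and $\mathcal S^{\omega}_{e_0}$ captures both $e$ and $b'$ within $\mathcal D_{\Pi_A}(b')$, so one can replace $e_0$ by an $L_{Ab}$-definable element below $D$ with the same $\mathcal S^\omega$-envelope and thereby exhibit a $\Pi_A$-bounded $L_{Ab}$-formula satisfied by $b'$ but witnessing $b\dep_A b'$. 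Making this replacement rigorous, likely via a compactness argument using the tp-universal property machinery of Fact \ref{Fact_L_P_sentence} together with Lemma \ref{Lemma_shift_properties}, is the principal hurdle.
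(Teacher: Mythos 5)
Your reduction via Lemma \ref{Lemma_trivial_iff_Dp=Dq} is a legitimate reformulation of triviality, and the preparatory steps (fixing a convexity witness $\theta$, shrinking the forking witness to a convex component, invoking $\mathcal S$-minimality to get an isolated type inside a $\mathcal D_{\Pi}$-class) are all individually sound. But the step you yourself flag as ``the principal hurdle'' is not a technical detail to be filled in later --- it \emph{is} the entire content of the lemma. You must convert a $\Pi_B$-bounded witness of $b\dep_B b'$ into a $\Pi_A$-bounded $L_{Ab}$-formula in $\tp(b'/Ab)$, i.e.\ produce an $Ab$-definable bounded set containing $b'$, which is precisely what the contradiction hypothesis $b\ind_A b'$ says cannot exist. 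The mechanism you propose --- replacing the $\mathcal S$-witness $e_0\in\dcl(Bb)$ by an ``$L_{Ab}$-definable element below $D$ with the same $\mathcal S^\omega$-envelope'' --- has no justification: $e_0$ genuinely depends on the parameters from $B$, nothing forces $\dcl(Ab)\cap p(\Mon)$ to contain an element in the required position, and the chain $b'\dep_{Bb}e$, $e\dep_A e_0$ does not compose into anything over $A$ (forking over the larger set $Bb$ does not descend to forking over $A$; the inclusion $\mathcal D_p(b)\subseteq\mathcal D_q(b)$ only goes the way you already noted).

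A telling symptom is that your argument nowhere uses the smallness of $T$, which is a standing assumption of this subsection and is used essentially in the paper's proof. The paper argues entirely differently: assuming a convex $\mathcal C$-type is non-trivial, it extracts a $\mathbf p_l$-triangle $(a_3,a_2,a_1)$, uses the $\mathcal S$-minimality machinery (your Lemma \ref{Lemma_S_minimal_exist}) to arrange that $\tp(a_1/a_2a_3A)$ and then $\tp(a_2/a_1a_3A)$ are isolated, and concludes that in a prime model over $a_1a_3A$ the $\mathcal D_p$-classes are densely ordered with each class containing the locus of an isolated type. Cuts in this dense order then yield $2^{\aleph_0}$ pairwise inconsistent partial types over the finite set $a_1a_3A$, contradicting smallness. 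So the shift structure is used to manufacture isolated types \emph{inside} forking classes and thereby contradict smallness, not to transfer forking witnesses from $B$ down to $A$. As written, your proposal has a genuine gap at its central step, and the route you sketch for closing it does not appear viable.
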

\begin{proof}
Toward contradiction assume that $A_0$ is finite and $r\in S_1(A_0)$ is a non-trivial convex $\mathcal C$-type.   
By Remark \ref{Remark_notrivial_wom_witness}(c), there is a finite Morley sequence, $\bar b$, in $\mathbf{r}_l$ over $A_0$  such that the type $p=(\mathbf{r}_l)_{\restriction A_0\bar b}$ is non-trivial, as witnessed by a triangle; put $A=A_0\bar b$. Note that Lemma \ref{Lemma_Pi_basic}(a,d) guarantees:

(1) \ $p\in S_1(A)$ is a $\mathcal C$-type and $p\in S_{\Pi}(A)$.

\noindent
Also note that $p$ is convex, as a complete extension of a convex weakly o-minimal type $r$. Since $\Pi_A(x)$ is an interval type and since $p\in S_{\Pi}(A)$ is convex, $p$ is an isolated point of $S_{\Pi}(A)$. Then there is a formula $\theta(x)$ such that:  

(2) \ $\theta(x)\in p$ \ and \ $\Pi_A(x)\cup\{\theta(x)\}\vdash p(x)$.

\noindent 
Note that this guarantees that $p(\Mon)$ is convex in $(\theta(\Mon),<)$. 
Let $(a_3,a_2,a_1)$ be a $\mathbf p_l$-triangle over $A$: 

(3) \  $\mathcal D_p(a_1)<\mathcal D_p(a_2) <\mathcal D_p(a_3)$,  \ and   \ $a_1\dep_{Aa_3} a_2$.

\noindent 
By Lemma \ref{Lemma_forking_on_D_Pi}, the classes $\mathcal D_{\Pi_A}(a_1),\mathcal D_{\Pi_A}(a_2)$, and  $\mathcal D_{\Pi_A}(a_3)$ are convex; they are disjoint because $a_1,a_2$, and $a_3$ are pairwise independent over $A$. Therefore, $\mathcal D_{\Pi_A}(a_1)<\mathcal D_{\Pi_A}(a_2) <\mathcal D_{\Pi_A}(a_3)$.
Now, we claim that a triple $(a_1,a_2,a_3)$ can be found to also satisfy the following:

(4) \ $\tp(a_1/Aa_2a_3)$ is isolated.

\noindent
To prove (4), start with a triple $(a_1',a_2,a_3)$ satisfying $\mathcal D_{\Pi_A}(a_1')<\mathcal D_{\Pi_A}(a_2) <\mathcal D_{\Pi_A}(a_3)$ and $a_1'\dep_{Aa_3} a_2$. Put $q=(\mathbf p_l)_{\restriction a_3A}$; clearly, $q(\Mon)$ is an initial part of $p(\Mon)$ and $a_1',a_2\in q(\Mon)$. Note that $q(\Mon)$ is a convex subset of $(\theta(\Mon),<)$, as $p(\Mon)$ is. Choose a formula $\phi(x,y)\in\tp(a_1'a_2/Aa_3)$ that implies $x<y\land \theta(x)\land \theta(y)$ and witnesses $a_1'\dep_{Aa_3} a_2$: 
$\phi(x,a_2)$ is $q$-bounded. Then $\phi(\Mon,a_2)\subseteq \theta(\Mon)$ implies that $\phi(\Mon,a_2)$ is a $q$-bounded subset of $(\theta(\Mon), <)$, so since $q(\Mon)$ is a convex subset of $(\theta(\Mon),<)$, we derive $\phi(\Mon,a_2)\subseteq q(\Mon)$.

Let $a_1$ be an $\mathcal S$-minimal element of $\phi(\Mon,a_2)$ with $\tp(a_1/Aa_2a_3)$ isolated; it exists by Lemma \ref{Lemma_S_minimal_exist}. Clearly, $a_1\models q$ and the triple $(a_1,a_2,a_3)$ satisfy condition (4). We will prove that (3) is also satisfied. 
 Choose $b\in\Pi_A(\Mon)$ to witness the $\mathcal S$-minimality of $a_1$ in $\phi(\Mon,a_2)$: $b< \phi(\Mon,a_2)$ and $a_1\in S_b$.
By Lemma \ref{Lemma_forking_on_D_Pi}(b), $a_1\in S_b$ implies $\mathcal D_{\Pi_A}(a_1)=\mathcal D_{\Pi_A}(b)$. In addition, $b<\phi(\Mon,a_2)$ and $\models \phi(a_1',a_2)$ imply $b\leqslant a_1'$, which together with $\mathcal D_{\Pi_A}(a_1')<\mathcal D_{\Pi_A}(a_2)$ implies $\mathcal D_{\Pi_A}(b)<\mathcal D_{\Pi_A}(a_2)$. Hence $\mathcal D_{\Pi_A}(a_1)<\mathcal D_{\Pi_A}(a_2)<\mathcal D_{\Pi_A}(a_3)$. As $\mathcal D_p(a_i)\subseteq \mathcal D_{\Pi_A}(a_i)$ clearly holds for $i=1,2,3$, we conclude $\mathcal D_{p}(a_1)<\mathcal D_{p}(a_2)<\mathcal D_{p}(a_3)$. Since $a_1\in\phi(\Mon,a_2)$ and since the formula $\phi(x,a_2)$ has been chosen to witness $a_1'\dep_{Aa_3} a_2$, we conclude $a_1\dep_{Aa_3} a_2$. Therefore, the triple $(a_1,a_2,a_3)$ satisfies conditions (3) and (4).

\smallskip
From now on assume that $(a_1,a_2,a_3)$ satisfies conditions (3) and (4). Before stating the next claim, recall from the proof of (4) that the type $q=\tp(a_1/Aa_3)=\tp(a_2/Aa_3)$ is convex and that $q(\Mon)$ is convex in $(\theta(\Mon), <)$. We claim:

(5) \ $\tp(a_2/Aa_1a_3)$ is isolated.

\noindent 
We know  that $a_2\dep_{Aa_3} a_1$ holds  and that the type $\tp(a_1/Aa_2a_3)$ is isolated,  so there is a formula  $\phi'(x,y)\in\tp(a_1,a_2/Aa_3)$ implying $x<y\land \theta(x)\land \theta(y)$ such that  $\phi'(a_1,x)$ is $q$-bounded and $\phi'(x,a_2)$ isolates $\tp(a_1/Aa_2a_3)$. 
As $\phi'(a_1,\Mon)$ is a $q$-bounded subset of $\theta(\Mon)$ and $q(\Mon)$ is convex in $(\theta(\Mon),<)$, we conclude $\phi'(a_1,\Mon)\subseteq q(\Mon)$. 
Choose $a_2'\in\phi'(a_1,\Mon)$ such that $\tp(a_2'/Aa_1a_3)$ is isolated; then $a_2\in q(\Mon)$. Now, both $a_2$ and $a_2'$ realize the type $q\in S_1(Aa_3)$, so $a_2\equiv a_2'\,(Aa_3)$. Since the formula $\phi'(x,a_2)$ isolates a complete type in $S_1(Aa_2a_3)$, the formula $\phi'(x,a_2')$ isolates the $a_3A$-conjugate of that type in $S_1(Aa_2'a_3)$. Then $\models \phi'(a_1,a_2)\land \phi'(a_1,a_2')$ implies $a_1a_2\equiv a_1a_2'\,(Aa_3)$. Since $\tp(a_2'/Aa_1a_3)$ is isolated, so is $\tp(a_2/Aa_1a_3)$; this proves (5).

\smallskip 
Let $\psi(a_1,y,a_3)$ be a formula that isolates $\tp(a_2/Aa_1a_3)$ and let $M$ be a countable model prime over $Aa_1a_3$. We claim:

(6) \ The set $\{\mathcal D_p(x)\mid x\in p(M)\}$ is densely ordered by $<$.

\noindent
We need to prove that for all $b_1,b_3\in p(M)$ satisfying $\mathcal D_p(b_1)<\mathcal D_p(b_3)$ there exists a $b_2\in M$ with $\mathcal D_p(b_1)<\mathcal D_p(b_2)<\mathcal D_p(b_3)$.  Note that $b_1b_3\equiv a_1a_3\,(A)$, so the formula $\psi(b_1,y,b_3)$ isolates a complete type in $S_1(Ab_1b_3)$; that type is realized by some $b_2\in M$. Then $a_1a_2a_3\equiv b_1b_2b_3\, (A)$ and, in particular, $\mathcal D_p(b_1)<\mathcal D_p(b_2)<\mathcal D_p(b_3)$. This proves (6). 

\smallskip 
Let  $(b_i\mid i\in\mathbb Q)$ be an increasing sequence of elements of $p(M)$ with $\mathcal D_p(b_i)<\mathcal D_p(b_j)$ for all rational numbers $i<j$. As $M$ is prime over $Aa_1a_3$, for each $i\in\mathbb Q$ the type $\tp(b_i/Aa_1a_3)$ is isolated, by $\phi_i(x)$ say. By Lemma \ref{Lemma_S_minimal_exist}(c) we know $\phi_i(\Mon)\subseteq \mathcal D_p(b_i)$. This, together with $\mathcal D_p(b_i)<\mathcal D_p(b_j)$, implies $\phi_i(\Mon)<\phi_j(\Mon)$. It is easy to see that for each irrational number $r$ the following set:
$$p_r(x):=\{\phi_i(\Mon)<x\mid i\in\mathbb Q \land i<r\}\cup \{x<\phi_j(\Mon)\mid j\in\mathbb Q\land r<j\}$$
is consistent. Our final claim in this proof is:

(7) \ $p_r(x)\cup p_s(x)$ is inconsistent for all irrational numbers $r\neq s$.

\noindent
To prove the claim, suppose that $r<s$ are irrational. Choose $i\in \mathbb Q$ with $r<i<s$. Then $(x<\phi_i(\Mon))\in p_r$ and $(\phi_i(\Mon)<x)\in p_s$, so $p_r(x)\cup p_s(x)$ is inconsistent.
We have found $2^{\aleph_0}$ pairwise inconsistent partial types over $Aa_1a_3$, so $T$ is not small. Contradiction.
\end{proof}

\begin{Corollary}\label{Corollary_Ctype_trivial} ($T$ small) \
Every $\mathcal C$-type over a finite domain is trivial.
\end{Corollary}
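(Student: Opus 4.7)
\medskip
\noindent\textbf{Proof plan.} The strategy is to reduce the general case to Lemma \ref{Lemma_every_convexshift_istrivial} by producing, alongside the given type, a convex $\mathcal C$-type over the same domain and then transferring triviality across weak non-orthogonality via Theorem \ref{Theorem_nwor preserves triviality}(b).

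Fix a $\mathcal C$-type $p \in S_1(A)$ with $A$ finite. By Lemma \ref{Lemma_Pi_basic}(a), $p \in S_\Pi(A)$. The first step is to exhibit a convex $\mathcal C$-type $q \in S_\Pi(A)$. Here I would invoke smallness of $T$, which is a standing hypothesis in this section: since $A$ is finite, $S_1(A)$ is countable; being a countable compact Hausdorff space, it is scattered, so its nonempty closed subspace $S_\Pi(A)$ contains an isolated point $q$. Such a $q$ is isolated in $S_\Pi(A)$ by a single $L_A$-formula $\theta(x)$, meaning $\Pi_A(x) \cup \{\theta(x)\} \vdash q(x)$; by Corollary \ref{Corollary_convex_iff_isolated_in_IT}, this is exactly the condition that $q$ be a convex weakly o-minimal type.

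With $q$ in hand, Lemma \ref{Lemma_every_convexshift_istrivial} yields that $q$ is trivial. Since both $p$ and $q$ lie in $S_\Pi(A)$, they extend the common interval type $\Pi_A$, so Lemma \ref{Lemma_wqom_interval_type}(a) delivers $\delta_A(\mathbf p, \mathbf q)$ and in particular $p \nwor q$. Theorem \ref{Theorem_nwor preserves triviality}(b), which asserts that triviality is preserved under $\nwor$ of weakly o-minimal types over a common domain, then transfers triviality from $q$ to $p$, completing the argument.

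The only non-immediate link is the existence of the convex witness $q$, and so that is the main obstacle. It resolves by combining three facts already in place: smallness forces the Stone subspace $S_\Pi(A)$ to be scattered and hence to have isolated points; isolation inside $S_\Pi(A)$ is equivalent to convexity by Corollary \ref{Corollary_convex_iff_isolated_in_IT}; and by Lemma \ref{Lemma_wqom_interval_type}(a) the members of $S_\Pi(A)$ form a single $\delta_A$-class, so triviality of any one member propagates to all of them. No further calculation is needed beyond chaining these invocations.
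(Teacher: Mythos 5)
Your proof is correct and follows essentially the same route as the paper: produce a convex completion $q$ of the same interval type via smallness, apply Lemma \ref{Lemma_every_convexshift_istrivial} to get triviality of $q$, and transfer it to $p$ through $p\nwor q$ using Theorem \ref{Theorem_nwor preserves triviality}(b). Your spelled-out justification for the existence of the convex witness (scattered Stone space plus Corollary \ref{Corollary_convex_iff_isolated_in_IT}) is exactly the detail the paper leaves implicit, and you correctly cite part (b) of the transfer theorem where the paper's text mistakenly points to part (a).
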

\begin{proof}
Suppose that $A$ is finite and that $p\in S_1(A)$ is a $\mathcal C$-type. Since $T$ is small, there is a convex type $q$ that extends the same interval type as $p$; clearly, $p\nwor q$. By Lemma \ref{Lemma_every_convexshift_istrivial}, $q$ is trivial, so by Theorem \ref{Theorem_nwor preserves triviality}(a), $p\nwor q$ implies that $p$ is also trivial. 
\end{proof}

\begin{Lemma}
\label{Lema_S_Pi_isfinite} ($I(\aleph_0,T)<2^{\aleph_0}$)
Every type in $S_{\Pi}(\emptyset)$ is convex and simple. $S_{\Pi}(\emptyset)$ is a finite set.
\end{Lemma}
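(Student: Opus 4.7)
The plan is to combine the machinery already developed. First, for any $p \in S_{\Pi}(\emptyset)$, Lemma \ref{Lemma_Pi_basic}(a) identifies $p$ as a $\mathcal{C}$-type over $\emptyset$. Since $T$ is weakly quasi-o-minimal, $p$ is weakly o-minimal (with respect to $<$), and Corollary \ref{Corollary_Ctype_trivial} applies to the finite domain $\emptyset$, yielding that $p$ is trivial. Under the hypothesis $I(\aleph_0,T)<2^{\aleph_0}$, Theorem \ref{Theorem trivial implies simple and convex} then gives that $p$ is both simple and convex. This handles the first assertion with almost no additional work.

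For the finiteness of $S_{\Pi}(\emptyset)$, the idea is to deploy Corollary \ref{Corollary_convex_iff_isolated_in_IT}: since each $p \in S_{\Pi}(\emptyset)$ is a convex weakly o-minimal type extending the interval type $\Pi(x) \in IT(\emptyset)$, every such $p$ is an isolated point of $S_{\Pi}(\emptyset)$. But $S_{\Pi}(\emptyset) = \{p \in S_1(\emptyset) \mid \Pi(x) \subseteq p(x)\}$ is a closed subspace of the (compact, Hausdorff) Stone space $S_1(\emptyset)$, hence itself compact. A compact Hausdorff space in which every point is isolated is discrete and compact, therefore finite.

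The key step I would spell out explicitly is the reduction from ``convex'' to ``isolated in $S_{\Pi}(\emptyset)$'' via Corollary \ref{Corollary_convex_iff_isolated_in_IT}, making sure that the hypothesis there — that $\Pi$ is an interval type over $A$ in some $A$-definable linear order — is satisfied, which follows from Lemma \ref{Lemma_Pi_basic}(c) saying $\Pi(x)$ is the minimal element of $IT(\emptyset)$ consistent with $\mathcal{C}<x$. None of the steps should present a real obstacle, since the heavy lifting was done in Lemma \ref{Lemma_every_convexshift_istrivial} and Theorem \ref{Theorem trivial implies simple and convex}; the present lemma is essentially a bookkeeping consequence of those results together with compactness of the Stone space.
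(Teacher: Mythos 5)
Your proposal is correct and follows essentially the same route as the paper: identify types in $S_{\Pi}(\emptyset)$ as $\mathcal C$-types via Lemma \ref{Lemma_Pi_basic}, conclude triviality from Corollary \ref{Corollary_Ctype_trivial}, apply Theorem \ref{Theorem trivial implies simple and convex} to get convexity and simplicity, and then use Corollary \ref{Corollary_convex_iff_isolated_in_IT} together with compactness of the closed subspace $S_{\Pi}(\emptyset)$ to deduce finiteness. No gaps.
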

\begin{proof}Clearly, $T$ is small.
By Lemma \ref{Lemma_Pi_basic} the set $S_{\Pi}(\emptyset)$ consists of all completions of the interval type $\Pi(x)$, so $S_{\Pi}(\emptyset)$ is a closed subspace of $S_1(\emptyset)$. Corollary \ref{Corollary_Ctype_trivial} guarantees that each $p\in S_{\Pi}(\emptyset)$ is trivial, and then Theorem \ref{Theorem trivial implies simple and convex} implies that such $p$ is both convex and simple. 
By Corollary \ref{Corollary_convex_iff_isolated_in_IT}, convex types are isolated points of $S_{\Pi}(\emptyset)$, so $S_{\Pi}(\emptyset)$ is a discrete closed subspace of $S_1(\emptyset)$; $S_{\Pi}(\emptyset)$ is finite. 
\end{proof}

\begin{Lemma}[$I(\aleph_0,T)<2^{\aleph_0}$]\label{Lema_forking_reldef} 
$\mathcal D_{\Pi}$ is a relatively $0$-definable equivalence relation on $\Pi(\Mon)$. 
\end{Lemma}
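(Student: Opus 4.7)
The plan starts by invoking Lemma~\ref{Lema_S_Pi_isfinite} to enumerate $S_{\Pi}(\emptyset)=\{p_{1},\dots ,p_{k}\}$ as a finite set of types, each both convex and simple. Simplicity of $p_{i}$ yields a formula $\phi_{i}(x,y)\in L_{\emptyset}$ that relatively defines $\mathcal D_{p_{i}}$ within $p_{i}(\Mon)^{2}$; convexity of $p_{i}$, combined with Corollary~\ref{Corollary_convex_iff_isolated_in_IT}, gives a formula $\theta_{i}(x)\in p_{i}$ isolating $p_{i}$ inside $S_{\Pi}(\emptyset)$, so that for $x\in\Pi(\Mon)$ one has $x\models p_{i}\iff \ \models\theta_{i}(x)$. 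Replacing $\theta_{i}$ by $\theta_{i}\wedge\bigwedge_{j<i}\neg\theta_{j}$, we may take the sets $\theta_{i}(\Mon)$ to be pairwise disjoint on $\Pi(\Mon)$.

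The geometric picture guiding the construction is that every $\mathcal D_{\Pi}$-class $C$ is convex in $(\Pi(\Mon),<)$ and meets any given $p_{i}(\Mon)$ either trivially or in a single $\mathcal D_{p_{i}}$-class, with these pieces fitting together convexly to form $C$. This suggests trying the formula
\[
\psi(x,y)\;:=\;x=y\;\vee\;\bigvee_{i=1}^{k}\exists z_{1},z_{2}\Bigl[\theta_{i}(z_{1})\wedge\theta_{i}(z_{2})\wedge z_{1}\leq \min(x,y)\wedge\max(x,y)\leq z_{2}\wedge\phi_{i}(z_{1},z_{2})\Bigr].
\]
The forward direction is immediate from convexity of $\mathcal D_{\Pi}$-classes: given $p_{i}$-witnesses $z_{1},z_{2}$ bracketing $(x,y)$ with $\phi_{i}(z_{1},z_{2})$, one has $z_{1}\dep_{\emptyset}z_{2}$, and the $\mathcal D_{\Pi}$-class of $z_{1}$ engulfs $[z_{1},z_{2}]\cap\Pi(\Mon)$, so $x\dep_{\emptyset}y$.

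The main obstacle will be the reverse implication. Starting from $x\dep_{\emptyset}y$ in $\Pi(\Mon)$, one must produce an index $i$ and witnesses $z_{1}\leq \min(x,y)$, $\max(x,y)\leq z_{2}$ in a common $\mathcal D_{p_{i}}$-class. The plan is to show, using Lemma~\ref{Lemma_forking_on_D_Pi}(c) to place the shift chain $\mathcal S_{a}^{\omega}$ inside $\mathcal D_{\Pi}(a)$, together with the density and strong transitivity properties (Corollary~\ref{Cor_triangle_mathbf_pq_properties}(b), Lemma~\ref{Lemma_trivial_strong_trans}), and the fact that each $p_{i}$ has convex hull $\Pi(\Mon)$, that for every $a\in\Pi(\Mon)$ at least one $p_{i}$ supplies elements cofinal and coinitial within $\mathcal D_{\Pi}(a)$. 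If a boundary configuration arises in which the extremes $\min(x,y)$ and $\max(x,y)$ lie in different types and sit at the endpoints of their common class, the fix is either to strengthen $\psi$ by permitting a bounded-length chain of cross-type witnesses (remaining first-order by finiteness of $k$), or to apply Proposition~\ref{Prop middle definable iff simple and convex} to each $p_{i}$ to definably describe the relevant ``middle'' strips and patch them together via the $\theta_{i}$'s, reducing the problem to a finite Boolean combination of formulas already known to be relatively definable.
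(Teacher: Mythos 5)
Your overall strategy --- reduce via Lemma \ref{Lema_S_Pi_isfinite} to the finitely many simple, convex types in $S_{\Pi}(\emptyset)$ --- is the right one, but the formula $\psi$ you write down does not define $\mathcal D_{\Pi}$, and neither of your proposed repairs is carried out. There are two concrete problems. First, soundness: the equivalence $x\models p_i\Leftrightarrow\ \models\theta_i(x)$ holds only for $x\in\Pi(\Mon)$, whereas your existential witnesses $z_1,z_2$ range over all of $\Mon$. A pair with $\models\theta_i(z_1)\land\theta_i(z_2)\land\phi_i(z_1,z_2)$ but $z_1,z_2\notin\Pi(\Mon)$ (say $z_1\in\mathcal C$ and $z_2$ above $\Pi(\Mon)$) gives no forking information at all, since $\phi_i$ is only a \emph{relative} definition of $\mathcal D_{p_i}$ within $p_i(\Mon)^2$; so ``the forward direction is immediate'' is not justified as stated. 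Second, completeness, which you do flag: a $\mathcal D_{\Pi}$-class $C$ meets each $p_i(\Mon)$ in a single $\mathcal D_{p_i}$-class, but these pieces may be arranged disjointly and consecutively inside $C$ (all of $C\cap p_1(\Mon)$ below all of $C\cap p_2(\Mon)$), in which case no single $p_i$ supplies bracketing witnesses for $x\models p_1$, $y\models p_2$; your hope that some $p_i$ is cofinal and coinitial in every class can fail. The chain repair is workable in principle, but it needs the same care about witnesses actually lying in $\Pi(\Mon)$ and about abutting (rather than overlapping) convex pieces, none of which is supplied.

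The paper's proof sidesteps the bracketing formula entirely. Fixing $a\in\Pi(\Mon)$, it uses simplicity \emph{together with} convexity of each $q\in S_{\Pi}(\emptyset)$ (this is where Proposition \ref{Prop middle definable iff simple and convex} earns its keep) to extend $\mathcal D_q$ to a genuinely definable convex equivalence relation $E_q$ on $\Mon$; it then picks, for each of the finitely many $q$ with $q\nfor\tp(a)$, some $a_q\models q$ with $a_q\dep a$, and observes that $\mathcal D_{\Pi}(a)$ is the finite union of the classes $[a_q]_{E_q}$ (on $\Pi(\Mon)$). The fact that $\dep$ is an equivalence relation on realizations of weakly o-minimal types (Theorem \ref{Theorem_nonorthogonality}(c)) plays the role your chain of cross-type witnesses was meant to play. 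Definability of $\mathcal D_{\Pi}(a)$ plus $\Aut_a(\Mon)$-invariance gives $a$-definability, and finiteness of $S_{\Pi}(\emptyset)$ gives uniformity over $\emptyset$. This is essentially your second proposed fix; if you pursue your route, carry that version out rather than the bracketing formula.
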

\begin{proof}
We need to prove that the set $\mathcal D_{\Pi}(a)=\{x\in\Pi(\Mon)\mid x\dep a\}$ is $a$-definable for all $a\in \Pi(\Mon)$.
By Lemma \ref{Lema_S_Pi_isfinite} every $p\in S_{\Pi}(\emptyset)$ is simple, so  there exists a definable convex equivalence relation $E_p$ on $\Mon$ such that $E_{p\restriction p(\Mon)}=\mathcal D_p$. For each $q\in S_{\Pi}(\emptyset)$  with $q\nfor \tp(a)$ choose an element $a_q\in q(\Mon)$ with $a_q\dep a$. Note that the class $[a_q]_{E_q}$ contains exactly those realizations of $q$ that fork with $a$, so $\mathcal D_{\Pi}(a)=\bigcup \{[a_q]_{E_q}\mid q\nfor \tp(a), q\in S_{\Pi}(\emptyset)\}$; given that $S_{\Pi}(\emptyset)$ is finite, as proved in Lemma \ref{Lema_S_Pi_isfinite}, it follows that $D_{\Pi}(a)$ is definable; since it is $\Aut_a(\Mon)$-invariant, it is $a$-definable. 
\end{proof}

\begin{proof}[Proof of Theorem \ref{Theorem1_shift}]
We need to prove $I(\aleph_0,T)=2^{\aleph_0}$. By way of contradiction,  assume $I(\aleph_0,T)<2^{\aleph_0}$. Then Lemma \ref{Lema_forking_reldef} applies: $\mathcal D_{\Pi}$ is a convex relatively $0$-deinable equivalence relation on $\Pi(\Mon)$.  
Since $\Pi(\Mon)$ is a convex subset of $\Mon$, there is a $0$-deinable convex equivalence relation $E$ on $\Mon$ such that $E_{\restriction \Pi(\Mon)}=\mathcal D_{\Pi}$. Let $a\in \Pi(\Mon)$. Then $[a]_E=\mathcal D_{\Pi}(a)$ because $\mathcal D_{\Pi}(a)$ is a $\Pi$-bounded subset of $\Pi(\Mon)$.
By Lemma \ref{Lemma_forking_on_D_Pi}, we know that
$S_x\subset \mathcal D_{\Pi}(x)$ holds for all $x\in [a]_E$, so \ $\models \forall x(x\in [a]_E \rightarrow  S_x \subseteq [a]_E)$. 

Since $\tp(a)$ is a $\mathcal C$-type, \ $\models (\forall x)(x\in [c]_E \rightarrow  S_x \subseteq [c]_E)$ holds for some $c\in\mathcal C$. 
Then $S^{\omega}_c\subseteq [c]_E$. By Lemma \ref{Lemma_shift_properties},  $S_c$ is an $\mathcal S$-shift, so the sequence $(S_c^n\mid n\in\mathbb N\}$ is strictly increasing. By compactness there exists an $b\in[c]_E$  such that $S_c^n<b$ holds for all $n\in\mathbb N$, that is, $S^{\omega}_c<b$. By Lemma \ref{Lemma_shift_properties} the set $S^{\omega}_c$ is a final part of $\mathcal C$, so we conclude $\mathcal C<b$. Then, since the class $[c]_E$ is convex, there exists $b'<b$ with $b'\in[c]_E\cap \Pi(\Mon)$. By our choice of $E$, the class $[b']_E=\mathcal D_\Pi(b')$ is disjoint from $\mathcal C$. Hence $c\notin[b']_E$.
Contradiction. The proof of Theorem \ref{Theorem1_shift} is now complete. 
\end{proof}

It is routine to generalize Theorem \ref{Theorem1_shift} to the local weakly quasi-o-minimal context as follows.

\begin{Theorem}
Suppose that for some finite set of parameters $A$, there exist a definable weakly quasi-o-minimal order over $A$ possessing an $A$-definable family of semiintervals that includes a shift. Then $I(T,\aleph_0)=2^{\aleph_0}$. 
\end{Theorem}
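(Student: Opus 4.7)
The plan is to re-run the proof of Theorem \ref{Theorem1_shift} verbatim, replacing the ambient monster $\Mon$ by the given $A$-definable set. Let $D$ be the domain of the weakly quasi-o-minimal order $<$ over $A$, let $\mathcal S=(S_x\mid x\in D)$ be the monotone $A$-definable family of semiintervals, and let $S_{c_0}$ be the $\mathcal S$-shift. First I would absorb the finite parameter set $A\cup\{c_0\}$ into the language, which preserves $I(T,\aleph_0)$; then I would invoke Lemma \ref{Lemma_exists_monotonefamily} to further assume $c_0=\min D$ and $\mathcal S$ monotone with $S_{c_0}$ still an $\mathcal S$-shift.

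The key observation is that, by the local proposition from \cite{MTwom} quoted just before the theorem, every complete type $p\in S_1(\bar b)$ containing $x\in D$ is weakly o-minimal with respect to $<$. Therefore all machinery developed in Sections \ref{Section_preliminaries}--\ref{Section_trivial_wom} (direct non-orthogonality, triviality, $\mathcal D_p$, $\delta_A$-classes, the order $<^{\mathcal F}$, and in particular the interval-type analysis of Lemmas \ref{Lemma_wqom_interval_type} and \ref{Lemma_forking_on_interval_type}) applies to such $p$, with the sole modification that every interval type $\Pi$ under consideration satisfies $\Pi(\Mon)\subseteq D$. With this in hand, I would repeat the setup of Section \ref{Section_shifts}: define $\mathcal C:=S_{c_0}^{\omega}$ (an initial part of $D$), take $\Pi_A(x)$ to consist of those $L_A$-formulae whose realization sets form convex subsets of $D$ containing a final part of $\mathcal C$, and define $\mathcal C$-types and $S_\Pi(A)$ accordingly.

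I would then re-verify Lemmas \ref{Lemma_Pi_basic}, \ref{Lemma_forking_on_D_Pi}, \ref{Lemma_S_minimal_exist}, \ref{Lemma_every_convexshift_istrivial} and Corollary \ref{Corollary_Ctype_trivial} in this relativized form; their original proofs use only the (now available) weak o-minimality of the types in $S_\Pi(A)$, the $A$-definability of the semiinterval family, and smallness of $T$, so they carry over line for line. Assuming towards a contradiction that $I(\aleph_0,T)<2^{\aleph_0}$, Lemmas \ref{Lema_S_Pi_isfinite} and \ref{Lema_forking_reldef} then yield that every $p\in S_\Pi(\emptyset)$ is convex and simple, that $S_\Pi(\emptyset)$ is finite, and that $\mathcal D_{\Pi}$ is relatively $\emptyset$-definable on $\Pi(\Mon)$. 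Since $\Pi(\Mon)$ is a convex subset of the $\emptyset$-definable set $D$, this relatively $\emptyset$-definable convex equivalence extends to a $\emptyset$-definable convex equivalence relation $E$ on $D$; the final compactness argument of the proof of Theorem \ref{Theorem1_shift} — producing $c\in\mathcal C$ with $[c]_E$ closed under $\mathcal S$, then $b\in[c]_E$ above $S_c^{\omega}$ by compactness, then (via Lemma \ref{Lemma_shift_properties}) some $b'<b$ with $b'\in[c]_E\cap\Pi(\Mon)$ whose $E$-class is disjoint from $\mathcal C$ — proceeds unchanged and yields the contradiction.

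There is no genuinely new obstacle; the main thing to check (but not to write out in detail) is the purely bookkeeping fact that every use of weak quasi-o-minimality in Section \ref{Section_shifts} is actually consumed only through weakly o-minimal types of elements of $D$ and through convex subsets of $D$. Because $\mathcal C$, $\Pi_A(\Mon)$, the iterated semiintervals $S_x^n$, and the classes $[a]_E$ are all built from $D$-definable data, this verification is immediate, and the theorem follows.
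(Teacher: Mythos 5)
Your proposal is correct and coincides with the paper's intent: the paper itself offers no written argument beyond the remark that the generalization is routine, and what you describe — relativizing the entire Section 5 argument to the $A$-definable set $D$, using the quoted local characterization of weak quasi-o-minimality to recover weak o-minimality of all relevant $1$-types, and checking that every lemma is consumed only through such types and convex subsets of $D$ — is exactly that routine. No genuinely different route or new idea is involved.
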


\section{Examples}\label{Section_Examples}

In \cite[Section 6]{Herwig}, the authors
introduce a construction that, for a given relational structure $\mathcal P$ expanding the ordered rationales, produces a densely ordered structure $\mathcal N[\mathcal P]$ (in a different language). They show that if $\mathcal P$ is $\aleph_0$-categorical weakly o-minimal and 2-indiscernible, then $\mathcal N[\mathcal P]$ also has these properties and is isomorphic to some expansion of $\mathcal P$. Starting with $(\mathbb Q,<)$ and iterating the process of obtaining expansions in that way $\omega$ times, they produce an $\aleph_0$-categorical weakly o-minimal structure whose theory fails to be $n$-ary for any $n\in\mathbb N$. 

A careful inspection of the basic construction leads to the conclusion that for any weakly o-minimal $\mathcal P$, the structure $\mathcal N[\mathcal P]$ remains weakly o-minimal and 1-indiscernible, yet it is also atomic over a copy of $\mathcal P$ that can be defined in $\mathcal N[\mathcal P]^{eq}$; note that this implies
$I(\Th(\mathcal N[\mathcal P]),\aleph_0)=I(\Th(\mathcal P),\aleph_0)$. 
Using this, we find examples of weakly o-minimal theories $T_1$ and $T_2$ with 3 countable models that satisfy conditions \ref{C3} and \ref{C4}, respectively; recall that each of them implies $I(T,\aleph_0)=2^{\aleph_0}$ if $T$ is binary and weakly quasi-o-minimal.  
Our examples are $T_1=\Th(\mathcal N[\mathcal P_1])$ and $T_2=\Th(\mathcal N[\mathcal P_2])$, where $\mathcal P_1$ and $\mathcal P_2$ are Ehrenfeucht's examples, obtained by adding an increasing or decreasing $\omega$-sequence of constants to $(\mathbb Q,<)$. $T_1$ and $T_2$ are also examples of weakly o-minimal theories with 3 countable models that are not almost $\aleph_0$-categorical; recall that $T$ is almost $\aleph_0$-categorical if for all $n$ and all types $p_1(x_1),\ldots, p_n(x_n)\in S_1(T)$ the type $\bigcup_{1\leqslant i\leqslant n}p_i(x_i)$ has finitely many extensions in $S_n(T)$.  
 
The domain of $\mathcal N[\mathcal P]$ can be viewed as a set of branches of a dense meet-tree, which is also 0-definable in $\mathcal N[\mathcal P]^{eq}$ and whose levels are 0-definably identified with elements of the copy of $\mathcal P$ in $\mathcal N[\mathcal P]^{eq}$ by the level function. 
We will reinterpret the original construction by considering 3-sorted structures, with different sorts for levels, elements, and branches of the tree. That environment allows for a relatively simple description of types of $\Th(\mathcal N[\mathcal P])$ in terms of those of $\Th(\mathcal P)$. Using this, we show that $T_1$ and $T_2$ have a unique 1-type over $\emptyset$ that is nontrivial, but all nonalgebraic 1-types over a finite nonempty domain are trivial.

By a {\it meet-tree} we mean a strictly ordered structure $(P,\triangleleft)$ such that the set of predecessors of each element is totally ordered by $\triangleleft$, and any pair of elements $a,b$ of $P$ has the greatest lower bound which is also called {\it the meet} of $a$ and $b$; we will denote it by $\mm(a,b)$. A {\it subtree} of $P$ is any subset that is closed for meets; a {\it branch} of $P$ is any maximal totally ordered subset. Maximal elements are called {\it leaves}. 
For $a\in P$  
the set $C_a=\{x\in P\mid a\trianglelefteq x\}$ is {\it the closed cone} centered at $a$. The relation $\sim_a$ defined by $x\sim_a y\Leftrightarrow a\triangleleft \mm(x,y)$ is an equivalence relation on the set $C_a\smallsetminus \{a\}$; $\sim_a$-classes are called {\it open cones} centered at $a$, and $\mathcal C_a$ denotes the set of all of them. Note that each cone, open or closed, is a subtree.  

A {\it dense meet-tree} has two additional properties: every branch is a dense endless linear order, and for all $a\in P$ there are infinitely many open cones centered at $a$. 
$(P,\triangleleft,<)$ is an {\it ordered tree} if $(P,\triangleleft)$ is a meet-tree and $<$ is a linear order that extends $\triangleleft$ such that every closed cone is convex in $(P,<)$; in that case, every open cone is also convex.

The basic structure that we will consider is the following:
$$\mathcal M=(\mathcal L(M), \mathcal T(M),\mathcal B(M), \triangleleft,\mm,<,<_{\mathcal L},\Lev, \ell,R),\ \text{ where:}$$
\begin{enumerate}[$\bullet$,left=1em,labelsep=.7em] 
\item $\mathcal B(M)$ is the set of all functions $a:\mathbb Q\to \mathbb Q$ with finite support ($\{x\in\mathbb Q\mid a(x)\neq 0\}$ is finite); 
\item $\mathcal T(M)$ is the set of all functions with finite support that map $(-\infty,r)$ to $\mathbb Q$ for some $r\in \mathbb Q$, that is the set of all $a_{\restriction(-\infty,r)}$ for $a\in\mathcal B(M)$ and $r\in\mathbb Q$; 
\item $\triangleleft$ denotes the strict inclusion on pairs of elements of $\mathcal B(M)\cup\mathcal T(M)$. 

  \item Notice that $(\mathcal B(M)\cup\mathcal T(M),\triangleleft)$ is a meet-tree; where the meet is defined by 
  \[\mm(a,b)=\bigcup 
\{a_{\restriction(-\infty,r)}\colon\ a_{\restriction(-\infty,r)}=b_{\restriction(-\infty,r)}\ \land\ r\in\mathbb Q\};\]
\item $<$ is the lexicographic order on $\mathcal B(M)\cup\mathcal T(M)$ that extends $\triangleleft$: $a<b$ if and only if $a\triangleleft b$ or $a(r)<b(r)$ where $r\in\mathbb Q$ is the least number such that $a(r)\neq b(r)$;
\item $\mathcal L(M)=\mathbb Q\cup\{\infty_{\mathcal L}\}$ and $<_{\mathcal L}$ is a natural order on $\mathbb Q$ with added infinitely large element $\infty_{\mathcal L}$; 
\item $\Lev:\mathcal B(M)\cup\mathcal T(M)\to\mathcal L(M)$ is the level function: $\Lev(a)=\sup(\mathrm{dom}(a))$ for $a\in \mathcal T(M)$ and $\Lev(a)=\infty_{\mathcal L}$ for $a\in \mathcal B(M)$.
\end{enumerate}
The binary function $\ell$ and the quaternary relation $R$ are defined on $\mathcal B(M)$ in terms of the already defined relations: 
\begin{enumerate}[$\bullet$,left=1em,labelsep=.7em] 
\item $\ell:\mathcal B(M)^2\to \mathcal L(M)$ is defined by $\ell(x,y):=\Lev(\mm(x,y))$;
\item $R\subset \mathcal B(M)^{4}$ is defined by $R(x,y;z,t)\Leftrightarrow \ell(x,y)<_{\mathcal L}\ell(z,t)$. 
\end{enumerate}

\begin{Observation}\label{Obs1}
\begin{enumerate}[(a),left=0em,labelsep=.7em] 
\item $(\mathcal T(M),\triangleleft)$ is a dense meet-tree.
    \item $\mathcal B(M)$ is a $<$-dense set of leafs of the tree $(\mathcal B(M)\cup\mathcal T(M),\triangleleft)$: for all $a_1< a_2$ there is a $b\in\mathcal B(M)$ such that $a_1<b<a_2$; if, in addition, $a_1\triangleleft a_2$, then $a_1\triangleleft \mm(a_2,b)\triangleleft a_2$.
    \item $(\mathcal B(M)\cup\mathcal T(M),\triangleleft,<)$ is an ordered tree: It is easy to see that each closed cone $C_a=\{x\mid a\trianglelefteq x\}$ is a $<$-convex subset of $\mathcal B(M)\cup\mathcal T(M)$ with $a=\min C_a$. If $a\in \mathcal T(M)$, then $(C_a\smallsetminus \{a\},<)$ is a dense endless linear order and $C_a^{\mathcal B}$ its dense subset, where here and later, $C_a^{\mathcal B}$ denotes $C_a\cap \mathcal B(M)$.
    
    \item For all $a,b\in \mathcal B(M)\cup\mathcal T(M)$ we have: 
        \begin{enumerate}[$\bullet$,left=0em,labelsep=.7em]
        \item  $C_a\subset C_b$ \ if and only if $C_a^{\mathcal B}\subset C_b^{\mathcal B}$ \ \ and \ \ $C_a= C_b$ \ if and only if $C_a^{\mathcal B}= C_b^{\mathcal B}$;
        \item   $C_a<C_b$ \ if and only if $C_a^{\mathcal B}< C_b^{\mathcal B}$;
        \item  $a\triangleleft b$ \ if and only if \ $C_a^{\mathcal B}\supset C_b^{\mathcal B}$.
        \end{enumerate} 
    \item For all $a\in \mathcal T(M)$, each open cone centered at $a$ is of the form $C_a(q):=\{x\in\mathcal T(M)\cup\mathcal B(M)\mid a\triangleleft x \land x(r)=q\}$, where $\dom(a)=(-\infty,r)_{\mathbb Q}$; note that each $C_a(q)$ is $<$-convex and: $C_a(q)< C_a(q')$ if and only if $q<q'$. Thus, the set $\mathcal C_a=\{C_a(q)\mid q\in\mathbb Q\}$ is ordered by $<$, and $q\mapsto C_a(q)$ is an isomorphism of ordered rationales and $(\mathcal C_a,<)$. In particular,  $(\mathcal C_a,<)$ is a dense endless order. 
\end{enumerate}
\end{Observation}

Now, consider the structure $\mathcal N=(\mathcal B(M),<_{\mathcal B},R)$, where $<_{\mathcal B}$ is the restriction of $<$ to $\mathcal B(M)$. Note that $M= \dcl_{\mathcal M}(\mathcal B(M))$. In the next lemma, we show that $\mathcal M$ is quantifier-free interpretable in $\mathcal N$, which means that each sort of $\mathcal M$ is identified with $D/_\sim$, where $D$ and $\sim$ are a quantifier-free definable set and an equivalence relation of $\mathcal N$, as is any basic relation defined of $\mathcal M$.  

\begin{Lemma}\label{Lemma_example_MisoNeq} 
$\mathcal M$ is isomorphic to a structure that is 0-definable in $\mathcal N^{eq}$ and, moreover, is defined by quantifier-free formulas in the language of $\mathcal N$.
\end{Lemma}
\begin{proof} Note that in $\mathcal M$, the formulae $\ell(x,y)\ \epsilon \ \ell(z,t)$, where $\epsilon\in\{<_{\mathcal L},\leqslant_{\mathcal L},=\}$, and where $x,y,z,t$ range over $\mathcal B(M)$, are quantifier-free expressible in the language of $\mathcal N$.
So are the following:
\begin{enumerate}[$\bullet$,left=1em,labelsep=.7em]
    \item  $z\in C_{\mm(x,y)}^{\mathcal B} \Leftrightarrow \ell(x,y)\leqslant_{\mathcal L} \ell(x,z)$;
    \item  $C_{\mm(x,y)}^{\mathcal B}<_{\mathcal B}z\Leftrightarrow (x<_{\mathcal B}z\land \ell(x,z)<_{\mathcal L}\ell(x,y)$;
    \item  $C_{\mm(x,y)}^{\mathcal B}\subseteq C_{\mm(z,t)}^{\mathcal B} \Leftrightarrow (x\in C_{\mm(z,t)}^{\mathcal B}\land y\in C_{\mm(z,t)}^{\mathcal B})$;
    \item  $C_{\mm(x,y)}^{\mathcal B}\cap C_{\mm(z,t)}^{\mathcal B}=\emptyset \Leftrightarrow \lnot (C_{\mm(x,y)}^{\mathcal B}\subseteq C_{\mm(z,t)}^{\mathcal B}\vee C_{\mm(x,y)}^{\mathcal B}\supseteq C_{\mm(z,t)}^{\mathcal B})$;
    \item  $C_{\mm(x,y)}^{\mathcal B}<_{\mathcal B}C_{\mm(z,t)}^{\mathcal B}\Leftrightarrow (C_{\mm(x,y)}^{\mathcal B}\cap C_{\mm(z,t)}^{\mathcal B}=\emptyset \land C_{\mm(x,y)}^{\mathcal B}<_{\mathcal B}z)$.
\end{enumerate}
Now, we find an adequate copy of $\mathcal L(M)$ in $\mathcal N^{eq}$. 
Let $\sim$ be the relation defined by $\ell(x_1,y_1)=\ell(x_2,y_2)$. We can naturally identify $\mathcal B(M)^2/_\sim$ with $\mathcal L(M)$ and the quotient map $\mathcal B(M)^2\to\mathcal B(M)^2/_\sim$ with the function $l$, so that $<_{\mathcal L}$ is defined by $[(x_1,y_1)]_{\sim}<_{\mathcal L}[(x_2,y_2)]_{\sim}\Leftrightarrow R(x_1,y_1;x_2,y_2)$. 

$\mathcal T(M)$ is naturally identified with the set $\{(x,y)\in \mathcal B(M)^2\mid x\neq  y\}/_{\sim_1}$, where $\sim_1$ is defined by: 
$$(x,y)\sim_1(z,t)  \ \mbox{ if and only if } \ C_{\mm(x,y)}^{\mathcal B}= C_{\mm(z,t)}^{\mathcal B}.$$ 
$\triangleleft$ is defined as follows: For $z\in \mathcal B(M)$,  $[(x,y)]_{\sim_1}\triangleleft z$ is defined by $z\in C_{\mm(x,y)}^{\mathcal B}$, and  
\begin{center}$[(x,y)]_{\sim_1} \triangleleft [(z,t)]_{\sim_1}$ if and only if  $C_{\mm(x,y)}^{\mathcal B} \supset C_{\mm(z,t)}^{\mathcal B}$.
\end{center}
The function $m$ is defined as follows:  For distinct $x,y\in\mathcal B(M)$ and distinct $z,t\in\mathcal B(M)$, define:  $\mm(x,x)= x$, $\mm(x,y)=[(x,y)]_{\sim_1}$, and:
\[ \mm([(x,y)]_{\sim_1},z)= \mm(z,[(x,y)]_{\sim_1}):= 
    \begin{cases*}
      [(x,y)]_{\sim_1} & if $[(x,y)]_{\sim_1} \triangleleft z$ \\
      [(x,z)]_{\sim_1} & if $[(x,y)]_{\sim_1}$ and $z$ are $\triangleleft$-incomparable; 
    \end{cases*}
\]
\[ 
    \mm([(x,y)]_{\sim_1},[(z,t)]_{\sim_1}) =
    \begin{cases*}
      [(x,y)]_{\sim_1} & if $[(x,y)]_{\sim_1} \trianglelefteq [(z,t)]_{\sim_1}$ \\
       [(z,t)]_{\sim_1} & if $[(z,t)]_{\sim_1} \triangleleft [(x,y)]_{\sim_1}$ \\
      [(x,z)]_{\sim_1} & if $[(x,y)]_{\sim_1}$ and $ [(z,t)]_{\sim_1}$ are $\triangleleft$-incomparable. 
    \end{cases*}
\]
$\Lev$ is defined by: $\Lev(x)=[(x,x)]_\sim$ for $x\in \mathcal B(M)$ and $\Lev([(x,y)]_{\sim_1})=[(x,y)]_{\sim}$ for distinct $x,y\in\mathcal B(M)$. 
To find an adequate copy of the order $<$, it suffices to observe that for all $u,v\in \mathcal T(M)\cup\mathcal B(M)$:
\begin{center} $u<v$ \ is and only if \  $u\neq v\land (u\triangleleft v\vee C_u^{\mathcal B}<_{\mathcal B} C_v^{\mathcal B})$.
\end{center}
This completes the proof.
\end{proof}

\noindent{\bf Convention.} \  For the rest of this section, $\mathcal P$ denotes a 
{\em saturated} expansion of $(\mathcal L(M), <_{\mathcal L},\infty_{\mathcal L})$ obtained by adding a countable set of new relations.   
$\mathcal M[\mathcal P]$ is the expansion of $\mathcal M$ obtained by adding all the new relations of $\mathcal P$ to the $\mathcal L$-sort.

\begin{Lemma}\label{Lemma_Aut_L(M)_properties}  
\begin{enumerate}[(a),left=0em,labelsep=.7em]
    \item If $a,b\in \mathcal B(M)\cup \mathcal T(M)$ are such that $\Lev(a)=\Lev(b)$, then there is an automorphism $f\in \Aut_{\mathcal L(M)}(\mathcal M[\mathcal P])$ with $f(a)=b$. 
    \item Suppose that $C$ is a cone (open or closed) centered at $a$,  $f\in\Aut_{\mathcal L(M)}(\mathcal M[\mathcal P])$, and $f(C)=C$. Then the mapping defined by $F(x)=f(x)$ for $x\in C$ and $F(x)=x$ for $x\notin C$ belongs to $\Aut_{\mathcal L(M)}(\mathcal M[\mathcal P])$.
    \item For $a\in\mathcal T(M)$, every automorphism of $(\mathcal C_a,<)$ has an extension in $\Aut_{\mathcal L(M)\cup\{a\}}(\mathcal M[\mathcal P])$.
    \item Every automorphism of $\mathcal P$ has an extension in $\Aut(\mathcal M[\mathcal P])$.
\end{enumerate}
\end{Lemma}
\begin{proof}
(a) Suppose that $\Lev(a)=\Lev(b)=r$. For each $x\in \mathcal B(M)\cup \mathcal T(M)$ let $f(x):\dom(x)\to \mathbb Q$ be defined by: $f(x)(t)=x(t)$ for $t\geqslant r$ and $f(x)(t)=x(t)+b(t)-a(t)$ for $t<r$. It is easy to see that $f\in\Aut_{\mathcal L(M)}(\mathcal M[\mathcal P])$ and $f(a)=b$.

(b) Easy.

(c) Let $f$ be an automorphism of $(\mathcal C_a,<)$ and let $r=\Lev(a)$. Observation \ref{Obs1}(e) gives $\mathcal C_a=\{C_a(q)\mid q\in\mathbb Q\}$, where $x\in C_a({x(r)})$ for all $x\triangleright a$, and the isomorphism $g$ between the ordered rationales and $(\mathcal C_a,<)$ defined by $q\mapsto C_a(q)$. For each $x\triangleright a$ let $F(x)$ denote the function $x$ with the value at $r$ redefined by $F(x)(r)=g^{-1}fg(x(r))$; for all other $x$ let $F(x)=x$. Then $F\in \Aut_{\mathcal L(M)}(\mathcal M[\mathcal P])$ and $F(C_q)=f(C_q)$ for all $q\in\mathbb Q$. 

(d) Let $f\in \Aut_{\mathcal L(M)}(\mathcal P)$. It is then fairly easy to check that the map $\hat f$ extending $f$ by setting $\dom(\hat f(a))=f[\dom(a)]$ and $\hat f(a)(f(q))=a(q)$ for all $a\in\mathcal B(M)\cup\mathcal T(M)$ and $q\in\dom(a)$ is an automorphism of $\mathcal M[\mathcal P]$.
\end{proof}

For a tuple $\bar a=(a_0,\dots,a_n)$ of elements of $\mathcal B(M)\cup \mathcal T(M)$ define: 
\begin{center}$Tr(\bar a)=( \mm(a_i,a_j) \mid 0\leqslant i\leqslant j\leqslant n)$ \ 
 and  \ $\ell(\bar a)=(\ell(a_i,a_j)\mid 0\leqslant i\leqslant j\leqslant n)$.
\end{center}
Note that $Tr(\bar a)$ is the subtree of $\mathcal B(M)\cup \mathcal T(M)$ generated by $\bar a$.
Let $\bar x=(x_0,\dots,x_n)$, and let $\bar y=(y_{ij}\mid 0\leqslant i\leqslant j\leqslant n)$. Let $\Delta_{\bar a}(\bar x,\bar y)$ be the conjunction of the following formulae:
\begin{enumerate}[label=$\bullet$,left=1em,labelsep=0.7em]
    \item $\ell(x_i,x_j)=y_{ij}$, \ for all $0\leqslant i\leqslant j\leqslant n$,
    \item $\mm(x_i,x_j)=\mm(x_k,x_l)$, \ for all $0\leqslant i,j,k,l\leqslant n$ such that $\mm(a_i,a_j)=\mm(a_k,a_l)$, and
    \item $\mm(x_i,x_j)<\mm(x_k,x_l)$, \ for all $0\leqslant i,j,k,l\leqslant n$ such that $\mm(a_i,a_j)<\mm(a_k,a_l)$.
\end{enumerate}
Note that the second and third item describe the isomorphism type of $(Tr(\bar a),\mm,<)$ viewed as an ordered subtree of $(\mathcal B(M)\cup\mathcal T(M),\mm,<)$; in particular, there are only finitely many formulas $\Delta_{\bar a}(\bar x,\bar y)$ for a fixed $n=|\bar a|$. Also note that $\mathcal M\models\Delta_{\bar a}(\bar b,\bar c)$ implies $\ell(\bar b)=\bar c$ and $\Delta_{\bar a}=\Delta_{\bar b}$. 
 
\begin{Lemma}\label{Lemma_example_qfmain}
\begin{enumerate}[(a),left=0em,labelsep=0.7em]
    \item Tuples $\bar a$ and $\bar b$ from $\mathcal B(M)\cup\mathcal T(M)$ are conjugated by some element of $\Aut_{\mathcal L(M)}(\mathcal M[\mathcal P])$ if and only if $\mathcal M[\mathcal P]\models\Delta_{\bar a}(\bar b,\ell(\bar a))$.
    \item For all $\mathcal M'\equiv \mathcal M[\mathcal P]$ and all $\bar a\in \mathcal B(M')\cup\mathcal T(M')$, $\Delta_{\bar a}(\bar x,\ell(\bar a))\vdash  \tp^{\mathcal M'}(\bar a/\mathcal L(M'))$. In particular, $\mathcal M'$ is atomic over $\mathcal L(M')$.
\item For all $\mathcal M'\equiv \mathcal M$ and all $\bar c\in \mathcal L(M')$, $\tp^{(\mathcal L(M'),<_{\mathcal L})}(\bar c)\vdash \tp^{\mathcal M'}(\bar c)$. 

    \item Let $\mathcal M_1,\mathcal M_2\equiv \mathcal M[\mathcal P]$ be countable. Then every isomorphism of $\mathcal L(M_1)$ and $\mathcal L(M_2)$, viewed as models of $\Th(\mathcal P)$, extends to an isomorphism of $\mathcal M_1$ and $\mathcal M_2$. 
    \item $\mathcal M[\mathcal P]$ is saturated.
    \item $\tp^{\mathcal P}_{\bar y,\bar z}(\ell(\bar a),\bar c)\cup\{\Delta_{\bar a}(\bar x,\bar y)\}\vdash \tp^{\mathcal M[\mathcal P]}_{\bar x,\bar z}(\bar a,\bar c)$ \  for all $\bar a\in\mathcal B(M)\cup\mathcal T(M)$ and all $\bar c\in \mathcal L(M)$.
    \item $\tp^{\mathcal P}(\bar c/\ell(\bar a))\vdash \tp^{\mathcal M[\mathcal P]}(\bar c/\bar a)$ \  for all $\bar a\in\mathcal B(M)\cup\mathcal T(M)$ and all $\bar c\in \mathcal L(M)$.
\end{enumerate} 
\end{Lemma} 
\begin{proof}
(a) 
It suffices to prove the claim in the case where $\bar a$ is a subtree of $\mathcal B(M)\cup\mathcal T(M)$. The left-to-right implication is clear and we prove the reverse by induction on $|\bar a|$. 
For $|\bar a|=1$, the conclusion follows by Lemma \ref{Lemma_Aut_L(M)_properties}(a). 

Assume that (a) holds for all subtrees of $\mathcal B(M)\cup\mathcal T(M)$ with fewer than $n$ elements. Let $\bar a$ be a subtree of size $n$ and suppose that $\mathcal M\models \Delta_{\bar a}(\bar b,\ell(\bar a))$.  
Let $a$ and $b$ be the roots of the trees $\bar a$ and $\bar b$, respectively. By Lemma \ref{Lemma_Aut_L(M)_properties}(a) some member of $\Aut_{\mathcal L(M)}(\mathcal M)$ moves $b$ to $a$, so replacing $\bar b$ by its image we can assume $a=b$. 

Let $C_1<C_2<\dots<C_k$ ($C_1'<C_2'<\dots<C_k'$) list all the open cones from $\mathcal C_{a}$ that contain elements of $\bar a$ ($\bar b$). By Lemma \ref{Lemma_Aut_L(M)_properties}(c) some member of $\Aut_{\mathcal L(M)\cup\{a\}}(\mathcal M)$ moves $(C_1,C_2,..,C_k)$ to $(C_1',C_2',\dots,C_k')$. Therefore, we can assume $C_i=C_i'$ ($1\leqslant i\leqslant k$). 
Notice that $C_i\cap \bar a$ and $C_i\cap\bar b$ are subtrees of $C_i$ of size less than $n$, 
and that $\mathcal M\models\Delta_{C_i\cap \bar a}(C_i\cap \bar b,\ell(c_i\cap \bar a))$. 
By the induction hypothesis, some $f_i\in\Aut_{\mathcal L(M)}(\mathcal M)$ moves $C_i\cap \bar b$ to $C_i\cap\bar a$. Then $f_i(a)=a$: To justify this, suppose that $a_0$ and $b_0$ are the roots of $C_i\cap \bar a$ and $C_i\cap\bar b$, respectively. Note that $f_i(a_0)=b_0$. Also note that $a\in \dcl(a_0,\Lev(a))$, as evidenced by the formula $x\triangleleft a_0\land \Lev(x)=\Lev(a)$. Then $\models f_i(a)\triangleleft b_0\land \Lev(f_i(a))=\Lev(a)$. Since the formula $ x\triangleleft b_0\land \Lev(x)=\Lev(a)$ witnesses $a\in \dcl(b_0,\Lev(a))$, we conclude $f_i(a)=a$. This together with $f_i(C_i\cap \bar a)=C_i\cap\bar b$ implies $f(C_i)=C_i$, so by Lemma \ref{Lemma_Aut_L(M)_properties}(b) we can assume that $f_i$ is the identity map outside of $C_i$. Then the composition $f_1\circ f_2\circ\dots\circ f_k$ belongs to $\Aut_{\mathcal L(M)}(\mathcal M)$ and moves $\bar a$ to $\bar b$, completing the proof.

(b) From (a) we immediately derive $\Delta_{\bar a}(\bar x,\ell(\bar a))\vdash  \tp^{\mathcal M[\mathcal P]}(\bar a/\mathcal L(M))$. This is expressible by a set of sentences of $\Th(\mathcal M[\mathcal P])$: For a fixed $n$, there are only finitely many distinct formulae $\Delta_{\bar a}(\bar x,\bar y)$, where $|\bar a|=n$; say, $\Delta_{\bar a_1},\dots,\Delta_{\bar a_n}$, hence $\mathcal M[\mathcal P]\models \forall \bar x
\,\bigvee_{i=1}^n\exists \bar y\,\Delta_{\bar a_i}(\bar x,\bar y)$. For each $i\leqslant n$ and for each formula $\psi(\bar x,\bar y,\bar z)$, where $\bar z$ are variables for the $\mathcal L$-sort, the sentence
\[\forall \bar x,\bar x',\bar y,\bar z\,(\Delta_{\bar a_i}(\bar x,\bar y)\land \Delta_{\bar a_i}(\bar x',\bar y) \rightarrow (\psi(\bar x,\bar y, \bar z) \leftrightarrow \psi(\bar x',\bar y,\bar z))\]
is satisfied in $\mathcal M[\mathcal P]$ and guaranties that for all $\bar a'\in \mathcal M[\mathcal P]$ realizing $\exists \bar y\, \Delta_{\bar a_i}(\bar x,\bar y)$, all realizations of $\Delta_{\bar a_i}(\bar x,\ell(\bar a'))$ have the same $\psi$-type over $\mathcal L(M)$ as $\bar a'$. The same holds for $\mathcal M'$. Since each  $n$-tuple of $\mathcal B(M')\cup \mathcal T(M')$ satisfies $\exists \bar y\, \Delta_{\bar a_i}(\bar x,\bar y)$ for some $i\leqslant n$, the desired conclusion follows.

(c) Suppose that $\bar c \equiv^{(\mathcal L(M),<_{\mathcal L})} \bar c'$ and let $f\in \Aut(\mathcal L(M),<_{\mathcal L})$ move $\bar c$ to $\bar c'$. By Lemma \ref{Lemma_Aut_L(M)_properties}(d), $f$ extends to an automorphism of $\mathcal M$. Therefore, $\tp^{(\mathcal L(M),<_{\mathcal L})}(\bar c)\vdash \tp^{\mathcal M}(\bar c)$. Since  $\tp^{(\mathcal L(M),<_{\mathcal L})}(\bar c)$ can be expressed by a single formula,  $\tp^{(\mathcal L(M'),<_{\mathcal L})}(\bar c)\vdash \tp^{\mathcal M'}(\bar c)$ holds for all $\mathcal M'\equiv \mathcal M$ and all $\bar c\in \mathcal L(M')$.

(d) Suppose that $\mathcal M_1,\mathcal M_2\models  \Th(\mathcal M[\mathcal P])$ and that $f:\mathcal L(M_1)\to \mathcal L(M_2)$ is an isomorphism of models of $\Th(\mathcal P)$. Now, consider the reducts of $\mathcal M_1$ and $\mathcal M_2$ to the language of $\mathcal M$. By (c), $f$ is a partial elementary mapping between the $\mathcal L$-sorts of these structures.  By (b), each of these structures is atomic over its $\mathcal L$-sort, so there is an isomorphism $\hat f:M_1\to M_2$ that extends $f$.
Since $\hat f$ preserves all the relations named by the language of $\mathcal P$, and all the relations named by the language of $\mathcal M$, $\hat f$ is an isomorphism of $\mathcal M_1$ and $\mathcal M_2$.

(e) First, we see that $\mathcal M[\mathcal P]$ is universal: every countable $\mathcal M'\equiv \mathcal M[\mathcal P]$ embeds into $\mathcal M[\mathcal P]$. Since $\mathcal P$ is saturated, it is routine to find a countable $\mathcal M''\succ\mathcal M'$ whose $\mathcal L$-sort, viewed as a model of $\Th(\mathcal P)$, is isomorphic to $\mathcal P$; let $f:\mathcal L(M)\to \mathcal L(M'')$ be an isomorphism. By (d), $f$ lifts to an isomorphism of $\mathcal M[\mathcal P]$ and $\mathcal M''$, so $\mathcal M'$ can be embedded into $\mathcal M[\mathcal P]$; $\mathcal M[\mathcal P]$ is universal.
In particular, the theory $\Th(\mathcal M[\mathcal P])$ is small, so it has a countable saturated model $\mathcal M_1$. Note that the $\mathcal L$-sort of $\mathcal M_1$, viewed as a model of $\Th(\mathcal P)$, is isomorphic to $\mathcal P$. By (d), this isomorphism extends to an isomorphism of $\mathcal M[\mathcal P]$ and $\mathcal M_1$. Therefore, $\mathcal M[\mathcal P]$ is saturated. 

(f)  Suppose that $\ell(\bar a')\bar c'\equiv^{\mathcal P} \ell(\bar a)\bar c$ and $\mathcal M[\mathcal P]\models \Delta_{\bar a}(\bar a',\bar \ell(\bar a'))$. We need to prove $\bar a\bar c\equiv^{\mathcal M[\mathcal P]} \bar a'\bar c'$. Let $f\in\Aut(\mathcal P)$ be such that $f(\ell(\bar a'),\bar c')=(\ell(\bar a),\bar c)$. By (d), $f$ extends to $\hat f\in \Aut(\mathcal M[\mathcal P])$. 
Denote $\bar a''=\hat f(\bar a')$. Then $\mathcal M[\mathcal P]\models \Delta_{\bar a}(\bar a'',\ell(\bar a))$, so by (b) we have 
$\bar a''\equiv \bar a\,(\mathcal L(M))$. In particular, $\bar a\bar c\equiv^{\mathcal M[\mathcal P]}\bar a''\bar c$; combining this with $\hat f(\bar a',\bar c')=(\bar a'',\bar c)$, we deduce $\bar a\bar c\equiv^{\mathcal M[\mathcal P]} \bar a'\bar c'$.

(g) Follows from (f).
\end{proof}

\begin{Corollary}\label{Corollary_M_alleph0_categorical}
 \begin{enumerate}[(a),left=0em,labelsep=0.7em]
    \item $\Th(\mathcal M)$ is $\aleph_0$-categorical and after naming $\infty_{\mathcal L}$ eliminates quantifiers.
    \item $\Th(\mathcal N)$ is $\aleph_0$-categorical, weakly o-minimal, and eliminates quantifiers. There is a unique type $p\in S_1(\Th(\mathcal N))$.
\end{enumerate}
 \end{Corollary}
\begin{proof}
(a) It suffices to show that for all $n$ there are finitely many types 
$\tp(\bar a,\bar c)$, where $\bar a\in\mathcal B(M)\cup\mathcal T(M)$, $\bar c\in \mathcal L(M)$, and $|\bar a\bar c|=n$,  and that each of these types is isolated by a quantifier-free formula.
Applying Lemma \ref{Lemma_example_qfmain}(f) to $\mathcal P= (\mathcal L(M),<_{\mathcal L},\infty_{\mathcal L})$, we have $\tp^{\mathcal P}_{\bar y,\bar z}(\ell(\bar a),\bar c)\cup\{\Delta_{\bar a}(\bar x,\bar y)\}\vdash \tp_{\bar x,\bar z}(\bar a,\bar c)$. Here, $\tp^{\mathcal P}_{\bar y,\bar z}(\ell(\bar a),\bar c)$ is isolated by a quantifier free formula in the language $\{<_{\mathcal L},\infty_{\mathcal L}\}$, and $\Delta_{\bar a}(\bar x,\bar y)$ is quantifier free. Since there are only finitely many possibilities (nonequivalent modulo $\Th(\mathcal M[\mathcal P])$) for these formulas, the desired conclusion follows. 

(b) First, we prove that $\mathrm{qftp}^\mathcal N(\bar a)\forces \tp^\mathcal N(\bar a)$ for all tuples $\bar a\in \mathcal B(M)$; clearly, this implies the $\aleph_0$-categoricity and elimination of quantifiers. So, suppose that $\bar a,\bar b\in\mathcal B(M)$ are such that $\bar a\equiv_{\mathrm{qf}}^\mathcal N\bar b$. Since by Lemma \ref{Lemma_example_MisoNeq}, $\mathcal M$ is quantifier free interpretable in $\mathcal N^{eq}$, we have $\bar a\equiv_{\mathrm{qf}}^\mathcal M\bar b$. Then $\bar a\equiv_{\mathrm{qf}}\bar b$ in the structure $\mathcal M$ with $\infty_{\mathcal L}$ named, so by (a) we conclude $\bar a\equiv^\mathcal M\bar b$. Therefore, $\bar a\equiv^\mathcal N\bar b$, as desired.

Regarding the weak o-minimality, it suffices to check that every $\{<_{\mathcal B},R\}$-atomic formula with one free variable (and parameters) defines a convex subset of $(\mathcal B(M),<_{\mathcal B})$; this verification is rather straightforward.  
The uniqueness of $p\in S_1(\Th(\mathcal N))$ is immediate from Lemma \ref{Lemma_Aut_L(M)_properties}(a). 
\end{proof}

Let $\mathcal N[\mathcal P]$ be the expansion of $\mathcal N$ defined as follows. For each $k$-ary relation $S$ of $\mathcal P$, we add a new $2k$-ary relation $S_{\mathcal B}$ defined on $\mathcal B(M)$ by:
\begin{center}
$S_{\mathcal B}(x_1,y_1,x_2,y_2,\dots,x_k,y_k) \ \Leftrightarrow \ S(\ell(x_1,y_1), \dots,\ell(x_k,y_k).$
\end{center}
Note that the interpretation of $\mathcal M$ in $\mathcal N^{eq}$ given in Lemma \ref{Lemma_example_MisoNeq}, when expanded by the obvious interpretations of the new relations $S$ in terms of $S_{\mathcal B}$, becomes an isomorphic copy of $\mathcal M[\mathcal P]$ in $\mathcal N[\mathcal P]^{eq}$. Therefore, theories $\Th(\mathcal M[\mathcal P])$ and $\Th(\mathcal N[\mathcal P])$ are bi-interpretable. 

\begin{Proposition}\label{Prop_Pwom_implies_MPwom}
Each of the following properties of the theory $\Th(\mathcal P)$: weak o-minimality with respect to $<_{\mathcal L}$, $\aleph_0$-categoricity, and quantifier elimination, carries over to the theory $\Th(\mathcal N[\mathcal P])$.
\end{Proposition}
\begin{proof} If $\Th(\mathcal P)$ is $\aleph_0$-categorical, or if it admits quantifier elimination, then the desired conclusion is easily obtained from Lemma \ref{Lemma_example_qfmain}(f). Assume that $\Th(\mathcal P)$ is weakly o-minimal.
Since $\mathcal M[\mathcal P]$ is saturated,
it is enough to show that $\mathcal N[\mathcal P]$ is weakly o-minimal; equivalently, that the set of realizations of every complete 1-type over a finite parameter set in $\mathcal N[\mathcal P]$ is convex with respect to $<_{\mathcal B}$. 
Let $\bar b$ be a subtree of $(\mathcal B(M)\cup\mathcal T(M),\triangleleft,\mm)$ and let $a\in \mathcal B(M)$ be nonalgebraic over $\bar b$. We will prove that the locus of $\tp(a/\bar b)$, call it $P$, is a convex subset of $\mathcal B(M)$. 
Let $c=\max_{\triangleleft}\{\mm(a,b)\mid b\in \bar b\}$ and choose $b_0\in\bar b$ such that $\mm(a,b_0)=c$. Note that $c\in \mathcal T(M)$ as $a$ is nonalgebraic over $\bar b$, and that $c\bar b$ and $ac\bar b$ are subtrees of $(\mathcal B(M)\cup\mathcal T(M),\mm)$.
Also note that by the maximality of $c$ no element of the open cone $C_c(a)$ contains an element of $\bar b$, so $a$ is a leaf of the subtree $ac\bar b$ in which $c$ is its immediate predecessor; also,
for each $b_i\in C_c\cap \bar b$ either $C_c(b_i)<C_c(a)$ or $C_c(a)<C_c(b_i)$.  

\smallskip
Case 1. \ $c\in \dcl(\bar b)$. 

In this case, changing $\bar b$ with $c\bar b$, which is also a subtree, does not alter $P$, so we can assume $c\in\bar b$.  
Let $\Phi(x,c,\bar b)$ be the conjunction of the formula $x\in C_c^{\mathcal B}$ and all formulas of the forms $x<C_c(b_i)$ and $C_c(b_i)<x$ ($b_i\in\bar b\cap C_c$) that are satisfied by $a$. Note that $\Phi(M,c,\bar b)$ is a convex subset of $\mathcal B(M)$. 
We claim that $\Phi(x,c,\bar b)$ describes the isomorphism type of $ac\bar b$, viewed as a subtree of the ordered tree $(\mathcal B(M)\cup \mathcal T(M),\triangleleft, <)$. Let $a'\in \Phi(M,c,\bar b)$. First, we see that $ac\bar b$ and $a'c\bar b$ have the same $<$-order type: Let $b_i\in\bar b$. If $b_i\in C_c$, then by the choice of $\Phi$ we have $\models a'<b_i\leftrightarrow a<b_i$; if $b_i\notin C_c$, then since $C_c^{\mathcal B}$ is $<$-convex, $C_c<b_i$ or $b_i<C_c$ holds; in either case, $\models a'<b_i\leftrightarrow a<b_i$ holds.  
It remains to verify that
$a'c\bar b$ is a subtree isomorphic to $ac\bar b$; for this, it suffices to show that the open cone $C_c(a')$ does not contain any element of $\bar b$. If some $b_i\in  \bar b$ were in $C_c(a')$, then we would have $b_i\in C_c$, so by the definition of $\Phi$ one of $a'<C_c(b_i)$ and $C_c(b_i)<a'$ would be valid and contradict $a'\in C_c(b_i)$.
Therefore, $\Phi(x,c,\bar b)$ describes the isomorphism type of the ordered subtree $ac\bar b$. Since $\ell(ac\bar b)\subset \dcl(\bar b)$ and the levels of the corresponding elements of $ac\bar b$ and $a'c\bar b$ are equal,
we conclude $\models \Phi(x,c,\bar b)\rightarrow\Delta_{ac\bar b}(xc\bar b,\ell(ac\bar b))$ and, by Lemma \ref{Lemma_example_qfmain}(b), deduce
$\Phi(x,c,\bar b)\vdash \tp(a/c\bar b)$.  Thus, $P=\Phi(M,c,\bar b)$, so $P$ is convex in $\mathcal B(M)$.   

\smallskip 

Case 2. \ $c\notin \dcl(\bar b)$. 

In this case, $c\triangleleft b_0$. Denote $q=\tp(c/\bar b)$ and $D=q(M)$, and let $d=\Lev(c)$.  
Clearly, $D$ is infinite, and we claim that it is convex in $(-\infty,b_0)_{\triangleleft}$. By Lemma \ref{Lemma_example_qfmain}(g) we have $\tp^{\mathcal P}(d/\ell(\bar b))\vdash \tp(d/\bar b)$, so since $\Th(\mathcal P)$ is weakly o-minimal, the locus of $\tp(d/\bar b)$ is $<_{\mathcal L}$-convex in $\mathcal L(M)$. Since
the function $\Lev\restriction (-\infty,b_0)_{\triangleleft}$ is a $\bar b$-definable isomorphism of the orders $(-\infty,b_0)_{\triangleleft}$ and $(-\infty,\Lev(b_0))_{<_{\mathcal L}}$, $\Lev(c)=d$ implies that this function maps $D$ onto the locus of $\tp(d/\bar b)$; since the latter is $<_{\mathcal L}$-convex in $\mathcal L(M)$, $D$ is convex in $(-\infty,b_0)_{\triangleleft}$, which proves the claim. 
From now on, assume that $b_0<a$; the other case is similar.
We will prove:
\begin{equation} 
q(\mm(x,b_0))\cup\{b_0<x\land x\in\mathcal B(M)\}\vdash \tp_x(a/\bar b).
\end{equation}
Denote $\Pi(x,\bar b):=q(\mm(x,b_0))\cup\{b_0<x\land x\in\mathcal B(M)\}$.
First, we prove that the locus $\Pi(M,\bar b)$ is convex in $(\mathcal B(M),<_{\mathcal B})$. Choose $b'\in\mathcal B(M)$ with $b_0\trianglelefteq b'$ and consider the function $f:(b',\infty)_{<_{\mathcal B}}\to (b',-\infty)_{\triangleleft}$ defined by $f(x)=\mm(x,b')$. It is easy to see that $f$ is decreasing and surjective. Since $D$ is a convex in $(b',-\infty)_{\triangleleft}$ and $f^{-1}(D)= \Pi(M,\bar b)$, we conclude that $\Pi(M,\bar b)$ is convex in $(b',\infty)_{<_{\mathcal B}}$. 
Next, we claim
\begin{equation} 
\mbox{ for all $b_i\in\bar b$ one of $\Pi(x,\bar b)\vdash b_i<x$ and $\Pi(x,\bar b)\vdash x<b_i$ holds.}
\end{equation}
Fix $b_i\in\bar b$. There are two subcases to consider, and the first is when $C_{b_i}^{\mathcal B}$ and $\Pi(M,\bar b)$ are disjoint. Then, since $C_{b_i}^{\mathcal B}$ and $\Pi(M,\bar b)$ are convex, one of $C_{b_i}^{\mathcal B}<_{\mathcal B}\Pi(M,\bar b)$ and $\Pi(M,\bar b)<_{\mathcal B} C_{b_i}^{\mathcal B}$ holds; here, the first option implies $b_i<\Pi(M,\bar b)$ and the second $\Pi(M,\bar b)< b_i$, as desired. In the second subcase, some element of $\Pi(M,\bar b)$ belongs to $C_{b_i}^{\mathcal B}$; without loss, let it be $a$. Then $b_i\triangleleft a$ and $\mm(b_i,a)=b_i$, so by the maximality of $c$ and since $c\notin \bar b$, we have $b_i\triangleleft c$ and thus $b_i\triangleleft D$. Since every element of $\Pi(M,\bar b)$ is $\triangleleft$-above some element of $D$, we conclude $b_i<\Pi(M,\bar b)$, completing the proof of (2).

Now, we can finish the proof of (1). Let $a'\in \Pi(M,\bar b)$; then $c':=\mm(a',b_0)\in D$, $b_0<a'$, and $a'\in\mathcal B(M)$; we need to prove $a\equiv a'\,(\bar b)$. Since $c$ and $c'$ realize the type $q\in S(\bar b)$, after moving $c'$ to $c$ by some $\bar b$-automorphism and replacing $a'$ by its image, we can assume $c=c'$. As in the proof of (a), we show that $ac\bar b$ and $a'c\bar b$ are isomorphic
over $\ell(ac\bar b)=\ell(a'c\bar b)$ as subtrees of the ordered tree $\mathcal B(M)\cup\mathcal T(M)$. By (2) these subtrees have the same $<$-order type, so it remains to show that the open cone $C_c(a)$ contains no element of $\bar b$: otherwise, if some $b_i\in \bar b$ were in $C_c(a')$, then we would have $C_{b_i}^{\mathcal B}\cap \Pi(M,\bar b)\neq \emptyset$ which, by the proof of (2) implies $b_i\triangleleft D$ and contradicts $c\triangleleft b_i$. This completes the proof of (1). Finally, from (1) we obtain $P=\Pi(M,\bar b)$, so $P$ is convex in $\mathcal B(M)$.

In both cases, we proved that $P$ is convex in $\mathcal B(M)$, completing the proof of the proposition.
\end{proof}

\begin{Proposition}\label{Prop_I=}
$I(\Th(\mathcal P),\aleph_0)=I(\Th(\mathcal M[\mathcal P]),\aleph_0)=I(\Th(\mathcal N[\mathcal P]),\aleph_0)$.  
\end{Proposition}
\begin{proof}
By Lemma \ref{Lemma_example_qfmain},  $\tp^{\mathcal P}(\bar c/\ell(\bar a))\vdash \tp^{\mathcal M[\mathcal P]}(\bar c/\bar a)$ \ holds for all $\bar a\in\mathcal B(M)\cup\mathcal T(M)$ and all $\bar c\in \mathcal L(M)$. This implies that the theory $\Th(\mathcal P)$ is interdefinable with the theory of the relativization of the structure $\mathcal M[\mathcal P]$ to its $\mathcal L$-sort. By the Omitting types theorem, every countable model of $\Th(\mathcal P)$ is isomorphic to the $\mathcal L$-sort of some countable model of $\Th(\mathcal M[\mathcal P])$. Since by Lemma \ref{Lemma_example_qfmain}, every countable   $\mathcal M'\models \Th(\mathcal M[\mathcal P])$ is atomic over $\mathcal L(M')$, the isomorphism type of $\mathcal M'$ is uniquely determined by the isomorphism type of its $\mathcal L$-sort, viewed as a model of $\Th(\mathcal P)$; $I(\Th(\mathcal P),\aleph_0)=I(\Th(\mathcal M[\mathcal P]),\aleph_0)$ follows. The second equality holds because the theories $\Th(\mathcal M[\mathcal P])$ and $\Th(\mathcal N[\mathcal P])$ are bi-interpretable. 
\end{proof}

\begin{Example}\label{Example_nonC4}
(A weakly o-minimal theory with 3 countable models for which condition \ref{C4} fails)\\
Let $\mathcal P_1$ be a variant of Ehrenfeucht's example of a theory with 3 countable models: $\mathcal P_1=(\mathbb Q\cup\{\infty_{\mathcal L}\},<,c_n)_{n\in\omega}$ where $(c_n)$ is an increasing sequence of constants (interpreted as unary relations), with $\lim c_n=\sqrt 2$. $\mathcal P_1$ is saturated, while $\Th(\mathcal P_1)$ is o-minimal and eliminates quantifiers. 
Consider the theory $T_1=\Th(\mathcal N[\mathcal P_1])$. By Proposition \ref{Prop_Pwom_implies_MPwom}, $T_1$ is weakly o-minimal, and by Proposition \ref{Prop_I=}, $T_1$ has 3 countable models. 
There is a unique type $p\in S_1(T_1)$. If $a,b\in \mathcal N[\mathcal P_1]$ and $a<b$, then by Proposition \ref{Prop_I=}, $\tp(a,b)$ is uniquely determined by the type $\tp^{\mathcal P_1}(\mm(a,b))$; in particular, there are infinitely many such types and $\mathcal E_p$ is infinite. 
Using elimination of quantifiers, we see that the type $q(x)=p(x)\cup\{\Lev(x,a)>_{\mathcal L} c_n\mid n\in\omega\}\cup\{a<x\}$ is complete and nonisolated. Since the locus of $q$ is a bounded subset of $\mathcal N[\mathcal P_1]$, $q$ is a forking extension of $p$. Therefore, $q\in S_1(a)$ is a nonisolated forking extension of $p$, so condition \ref{C4} fails for $T_1$. Since the partial type $p(x)\cup p(y)$ has infinitely many completions, $T_1$ is not almost $\aleph_0$-categorical. 
\end{Example}

\begin{Example}(A weakly o-minimal theory with 3 countable models for which condition \ref{C3} fails)\\
Let $\mathcal P_2$ be as $\mathcal P_1$, but with the sequence $(c_n)$ decreasing with $\lim c_n=\sqrt 2$. 
Consider the theory $T_2=\Th(\mathcal N[\mathcal P_2])$. Let $p\in S_1(T_2)$ and let $a\models p$. The forking extensions of $p$ in $S_1(a)$ are precisely those satisfying $\ell(x,a)>c_n$ for some $n\in \omega$. The set of all their realizations, $\bigcup_{n\in\omega}\{x\in \mathcal N[\mathcal P_2]\mid \ell(x,a)>c_n \}$, is not definable, so $p$ is not simple. Therefore, condition \ref{C3} fails for $T_2$. As in Example \ref{Example_nonC4}, we conclude that $T_2$ is not almost $\aleph_0$-categorical.  
\end{Example}

\begin{Proposition}\label{Proposition_trivial types in N}
Let $T=\Th(\mathcal N[\mathcal P])$, where $\mathcal P$ is a saturated weakly o-minimal expansion of $(\mathbb Q\cup\{\infty_{\mathcal L}\},< ,\infty_{\mathcal L})$ with all complete 1-types over a finite domain trivial. Let $p$ be the unique type in $S_1(T)$.
\begin{enumerate}[(a),left=0em,labelsep=.7em]
    \item $p$ is non-trivial, but all $q\in S_1(A)$ are trivial for all finite $A\neq\emptyset$.
    \item For some infinite $A$ there is a non-trivial type $q\in S_1(A)$.  
\end{enumerate}
\end{Proposition}
\begin{proof} Let $\mathbf p=(p,<)$.

(a) Choose $a_1,a_2,a_3\models p$ with $a_1\triangleleft^{\mathbf p}a_2 \triangleleft^{\mathbf p}a_3$ with $\mm(a_1,a_2)=\mm(a_1,a_3)$. Then $a_3\dep a_1a_2$ because the set defined by $\mm(a_1,a_2)=\mm(a_1,x)$ is bounded in $\mathcal B(M)$, so $(a_1,a_2,a_3)$ is not a Morley sequence in $\mathbf p_r$ and $p$ is non-trivial. 

Next, we prove that $q=\tp(a/\bar b)$ is trivial for all $a\in \mathcal B(M')$. Without loss of generality, assume that $\bar b$ is a subtree of $\mathcal B(M')\cup \mathcal T(M')$ and $a\notin\dcl(\bar b)$. Let $c=\max_{\triangleleft}\{\mm(a,b)\mid b\in \bar b\}$ and choose $b_0\in\bar b$ such that $\mm(a,b_0)=c$. We use the case analysis from the proof of Proposition \ref{Prop_Pwom_implies_MPwom}.

Case 1. \ $c\in \dcl(\bar b)$. 

In this case, we can assume $c\in\bar b$. 
In Proposition \ref{Prop_Pwom_implies_MPwom}, we proved that $\Phi(x,c,\bar b)\vdash q(x)$, where $\Phi(x,c,\bar b)\in q$ is the conjunction of $x\in C_c^{\mathcal B}$ and all formulas of the forms $x<C_c(b_i)$ and $C_c(b_i)<x$ ($b_i\in\bar b\cap C_c$) that are satisfied by $a$. To prove that $q$ is trivial, it suffices to show that any two sequences $A^k=(a_i^k\mid i\in\omega)$ of realizations of $q$ ($k=1,2$) with $C_c(a_i^k)<C_c(a_j^k)$ for all $i<j<\omega$ have the same type over $\bar b$.  
It is easy to see that $\ell(A^1c\bar b)=\ell(A^2c\bar b)$ and $\Delta_{A_1c\bar b}=\Delta_{A^2c\bar b}$, so by Lemma \ref{Lemma_example_qfmain}(f) we deduce $A^1\equiv A^2\,(\bar b)$, as desired. 
  
 Case 2. \ $c\notin \dcl(\bar b)$. 

In this case, since $c\in \dcl(a\bar b)\smallsetminus \dcl(\bar b)$, we have $q\nwor \tp(c/\bar b)$, so it suffices to prove that $\tp(c/\bar b)$ is weakly o-minimal and trivial. Let $d=\Lev(c)$. Then $c$ and $d$ are interdefinable over $\bar b$ because the function $\Lev\restriction (-\infty,b_0)_{\triangleleft}$ is a definable isomorphism of orders $(-\infty,b_0)_{\triangleleft}$ and $(-\infty,\Lev(b_0))_{<_{\mathcal L}}$. By our assumptions on $\mathcal P$, the type $\tp^{\mathcal P}(d/\ell(\bar b))$ is weakly o-minimal and trivial. Then so is $\tp(d/\bar b)$, because $\tp^{\mathcal P}(d/\ell(\bar b))\vdash \tp(d/\bar b)$ holds by Lemma \ref{Lemma_example_qfmain}(g) and nonforking extensions of a trivial type are all trivial by Theorem \ref{Theorem_nwor preserves triviality}(a). By interdefinability, $\tp(c/\bar b)$ is weakly o-minimal and trivial, as desired. 

\smallskip
(b) By the weak o-minimality of $\Th(\mathcal P) $ there is a $<_{\mathcal L}$-maximal type $r\in S_1(\emptyset)$ that satisfies $x<_{\mathcal L}\infty_{\mathcal L}$; by our assumptions, $r$ is weakly o-minimal and trivial.
Denote $\mathbf r=(r,<_{\mathcal L})$ and choose a $\triangleleft^{\mathbf r}$-increasing sequence $(d_i\mid i\in \omega)$. Fix $b\in\mathcal B(M)$. For each $n\in\omega$, let $c_n$ be the unique element of $\mathcal T(M)$ such that  $c_n\triangleleft b$ and $\Lev(c_n)=d_n$. Let $C=\{c_n\mid n\in \omega\}$. 
Then $\tp(b/C)$ is determined by $\{c_n\triangleleft x\mid n\in\omega\}\cup \{x\in\mathcal B(M')\}$. 
Since $\{x\in\mathcal B(M)\cup\mathcal T(M)\mid c_n\triangleleft x \text{ for all } n\in\omega\}$ is a dense meet-tree, arguing as in the proof of the non-triviality of $p$ in (a), one shows that $\tp(b/C)$ is non-trivial. Now, we find parameters $A\subset \mathcal B(M)$ such that $\tp(b/A)$ is non-trivial.
Choose a sequence $a_0<b_0<a_1<b_1<\dots<a_n<b_n< \dots<b$ of realizations of $\tp(b)$ such that $\mm(a_n,b_n)=c_n$ for all $n\in \omega$. Let $A=\{a_n,b_n\mid n\in\omega\}$. Then $bAC$ is a subtree and $\tp(bA/C)$ is uniquely determined by the above conditions, so $\tp(b/C)\vdash \tp(b/A)$. As in the proof of (a), we conclude that $\tp(b/A)$ is non-trivial. 
\end{proof}

\begin{Example}(A non-binary weakly o-minimal theory $T$ such that every complete 1-type over a finite domain is trivial)\ 
Let $\mathcal N_c$ be the model $\mathcal N$ with one named element. By Proposition \ref{Proposition_trivial types in N} the theory $T=\Th(\mathcal N_c)$ satisfies the listed conditions. The same conclusion holds for theories $T_1$ and $T_2$ expanded by a constant. 
\end{Example}

\printbibliography

\end{document}